\providecommand{\U}[1]{\protect\rule{.1in}{.1in}}
\newtheorem{theorem}{Theorem}[section]
\newtheorem{ass}[theorem]{Assumption}
\newtheorem{corollary}[theorem]{Corollary}
\newtheorem{definition}[theorem]{Definition}
\newtheorem{example}[theorem]{Example}
\newtheorem{lemma}[theorem]{Lemma}
\newtheorem{proposition}[theorem]{Proposition}
\newtheorem{remark}[theorem]{Remark}
\begin{document}
\title[Semiclassical measures and integrable systems]{Semiclassical completely integrable systems : long-time dynamics and observability via two-microlocal Wigner measures}
\author[N. Anantharaman]{Nalini Anantharaman}
\address[N. Anantharaman]{Universit\'{e} Paris 11, Math\'{e}matiques, B\^{a}t. 425, 91405
ORSAY
Cedex, FRANCE} \email{Nalini.Anantharaman@math.u-psud.fr}
\author[C. Fermanian]{Clotilde~Fermanian-Kammerer}
\address[C. Fermanian]{LAMA UMR CNRS 8050,
Universit\'e Paris Est - Cr\'eteil \\
61, avenue du G\'en\'eral de Gaulle\\
94010 CR\'ETEIL Cedex\\ FRANCE}
\email{clotilde.fermanian@u-pec.fr}
\author[F. Maci\`a]{Fabricio Maci\`a}
\address[F. Maci\`a]{Universidad Polit\'{e}cnica de Madrid. DCAIN, ETSI Navales. Avda. Arco de la
Victoria s/n. 28040 MADRID, SPAIN} \email{Fabricio.Macia@upm.es}
\thanks{N. Anantharaman wishes to acknowledge the support of Agence
Nationale de la Recherche, under the grant ANR-09-JCJC-0099-01. F.
Maci{\`a} takes part into the visiting faculty program of ICMAT
and is partially supported by grants MTM2010-16467 (MEC), ERC
Starting Grant 277778}

\begin{abstract}
We look at the long-time behaviour of
solutions to a semi-classical Schr\"odinger equation on the
torus. We consider time scales which go to infinity when the
semi-classical parameter goes to zero and we associate with each
time-scale the set of semi-classical measures associated with all
possible choices of initial data. On each classical invariant torus, the structure of semi-classical measures is described in terms of two-microlocal measures, obeying explicit propagation laws.

We apply this construction in two directions. We first analyse the regularity of semi-classical measures, and
we emphasize the existence of a
threshold~: for  time-scales below this threshold,  the set of
semi-classical measures contains measures which are singular with
respect to Lebesgue measure in the ``position'' variable, while at
(and beyond) the threshold, all  the semi-classical measures are
absolutely continuous in the ``position'' variable, reflecting the dispersive properties of the equation.
Second, the techniques of two-microlocal analysis introduced in the paper are used to prove semiclassical observability estimates. The results apply as well to general quantum completely integrable systems.
\end{abstract}
\maketitle

\newcommand{\nwc}{\newcommand}
\nwc{\nwt}{\newtheorem}
\nwt{coro}{Corollary}
\nwt{ex}{Example}
\nwt{prop}{Proposition}
\nwt{defin}{Definition}


\nwc{\mf}{\mathbf} 
\nwc{\blds}{\boldsymbol} 
\nwc{\ml}{\mathcal} 


\nwc{\lam}{\lambda}
\nwc{\del}{\delta}
\nwc{\Del}{\Delta}
\nwc{\Lam}{\Lambda}
\nwc{\elll}{\ell}

\nwc{\IA}{\mathbb{A}} 
\nwc{\IB}{\mathbb{B}} 
\nwc{\IC}{\mathbb{C}} 
\nwc{\ID}{\mathbb{D}} 
\nwc{\IE}{\mathbb{E}} 
\nwc{\IF}{\mathbb{F}} 
\nwc{\IG}{\mathbb{G}} 
\nwc{\IH}{\mathbb{H}} 
\nwc{\IN}{\mathbb{N}} 
\nwc{\IP}{\mathbb{P}} 
\nwc{\IQ}{\mathbb{Q}} 
\nwc{\IR}{\mathbb{R}} 
\nwc{\IS}{\mathbb{S}} 
\nwc{\IT}{\mathbb{T}} 
\nwc{\IZ}{\mathbb{Z}} 
\def\bbbone{{\mathchoice {1\mskip-4mu {\rm{l}}} {1\mskip-4mu {\rm{l}}}
{ 1\mskip-4.5mu {\rm{l}}} { 1\mskip-5mu {\rm{l}}}}}
\def\bbleft{{\mathchoice {[\mskip-3mu {[}} {[\mskip-3mu {[}}{[\mskip-4mu {[}}{[\mskip-5mu {[}}}}
\def\bbright{{\mathchoice {]\mskip-3mu {]}} {]\mskip-3mu {]}}{]\mskip-4mu {]}}{]\mskip-5mu {]}}}}
\nwc{\setK}{\bbleft 1,K \bbright}
\nwc{\setN}{\bbleft 1,\cN \bbright}
 \newcommand{\Lim}{\mathop{\longrightarrow}\limits}


\nwc{\va}{{\bf a}}
\nwc{\vb}{{\bf b}}
\nwc{\vc}{{\bf c}}
\nwc{\vd}{{\bf d}}
\nwc{\ve}{{\bf e}}
\nwc{\vf}{{\bf f}}
\nwc{\vg}{{\bf g}}
\nwc{\vh}{{\bf h}}
\nwc{\vi}{{\bf i}}
\nwc{\vI}{{\bf I}}
\nwc{\vj}{{\bf j}}
\nwc{\vk}{{\bf k}}
\nwc{\vl}{{\bf l}}
\nwc{\vm}{{\bf m}}
\nwc{\vM}{{\bf M}}
\nwc{\vn}{{\bf n}}
\nwc{\vo}{{\it o}}
\nwc{\vp}{{\bf p}}
\nwc{\vq}{{\bf q}}
\nwc{\vr}{{\bf r}}
\nwc{\vs}{{\bf s}}
\nwc{\vt}{{\bf t}}
\nwc{\vu}{{\bf u}}
\nwc{\vv}{{\bf v}}
\nwc{\vw}{{\bf w}}
\nwc{\vx}{{\bf x}}
\nwc{\vy}{{\bf y}}
\nwc{\vz}{{\bf z}}
\nwc{\bal}{\blds{\alpha}}
\nwc{\bep}{\blds{\epsilon}}
\nwc{\barbep}{\overline{\blds{\epsilon}}}
\nwc{\bnu}{\blds{\nu}}
\nwc{\bmu}{\blds{\mu}}
\nwc{\bet}{\blds{\eta}}



\nwc{\bk}{\blds{k}}
\nwc{\bm}{\blds{m}}
\nwc{\bM}{\blds{M}}
\nwc{\bp}{\blds{p}}
\nwc{\bq}{\blds{q}}
\nwc{\bn}{\blds{n}}
\nwc{\bv}{\blds{v}}
\nwc{\bw}{\blds{w}}
\nwc{\bx}{\blds{x}}
\nwc{\bxi}{\blds{\xi}}
\nwc{\by}{\blds{y}}
\nwc{\bz}{\blds{z}}


\nwc{\cA}{\ml{A}}
\nwc{\cB}{\ml{B}}
\nwc{\cC}{\ml{C}}
\nwc{\cD}{\ml{D}}
\nwc{\cE}{\ml{E}}
\nwc{\cF}{\ml{F}}
\nwc{\cG}{\ml{G}}
\nwc{\cH}{\ml{H}}
\nwc{\cI}{\ml{I}}
\nwc{\cJ}{\ml{J}}
\nwc{\cK}{\ml{K}}
\nwc{\cL}{\ml{L}}
\nwc{\cM}{\ml{M}}
\nwc{\cN}{\ml{N}}
\nwc{\cO}{\ml{O}}
\nwc{\cP}{\ml{P}}
\nwc{\cQ}{\ml{Q}}
\nwc{\cR}{\ml{R}}
\nwc{\cS}{\ml{S}}
\nwc{\cT}{\ml{T}}
\nwc{\cU}{\ml{U}}
\nwc{\cV}{\ml{V}}
\nwc{\cW}{\ml{W}}
\nwc{\cX}{\ml{X}}
\nwc{\cY}{\ml{Y}}
\nwc{\cZ}{\ml{Z}}

\nwc{\fA}{\mathfrak{a}}
\nwc{\fB}{\mathfrak{b}}
\nwc{\fC}{\mathfrak{c}}
\nwc{\fD}{\mathfrak{d}}
\nwc{\fE}{\mathfrak{e}}
\nwc{\fF}{\mathfrak{f}}
\nwc{\fG}{\mathfrak{g}}
\nwc{\fH}{\mathfrak{h}}
\nwc{\fI}{\mathfrak{i}}
\nwc{\fJ}{\mathfrak{j}}
\nwc{\fK}{\mathfrak{k}}
\nwc{\fL}{\mathfrak{l}}
\nwc{\fM}{\mathfrak{m}}
\nwc{\fN}{\mathfrak{n}}
\nwc{\fO}{\mathfrak{o}}
\nwc{\fP}{\mathfrak{p}}
\nwc{\fQ}{\mathfrak{q}}
\nwc{\fR}{\mathfrak{r}}
\nwc{\fS}{\mathfrak{s}}
\nwc{\fT}{\mathfrak{t}}
\nwc{\fU}{\mathfrak{u}}
\nwc{\fV}{\mathfrak{v}}
\nwc{\fW}{\mathfrak{w}}
\nwc{\fX}{\mathfrak{x}}
\nwc{\fY}{\mathfrak{y}}
\nwc{\fZ}{\mathfrak{z}}


\nwc{\tA}{\widetilde{A}}
\nwc{\tB}{\widetilde{B}}
\nwc{\tE}{E^{\vareps}}
\nwc{\tk}{\tilde k}
\nwc{\tN}{\tilde N}
\nwc{\tP}{\widetilde{P}}
\nwc{\tQ}{\widetilde{Q}}
\nwc{\tR}{\widetilde{R}}
\nwc{\tV}{\widetilde{V}}
\nwc{\tW}{\widetilde{W}}
\nwc{\ty}{\tilde y}
\nwc{\teta}{\tilde \eta}
\nwc{\tdelta}{\tilde \delta}
\nwc{\tlambda}{\tilde \lambda}
\nwc{\ttheta}{\tilde \theta}
\nwc{\tvartheta}{\tilde \vartheta}
\nwc{\tPhi}{\widetilde \Phi}
\nwc{\tpsi}{\tilde \psi}
\nwc{\tmu}{\tilde \mu}

\nwc{\To}{\longrightarrow} 

\nwc{\ad}{\rm ad}
\nwc{\eps}{\epsilon}
\nwc{\ep}{\epsilon}
\nwc{\vareps}{\varepsilon}

\def\bom{\mathbf{\omega}}
\def\om{{\omega}}
\def\ep{\epsilon}
\def\tr{{\rm tr}}
\def\diag{{\rm diag}}
\def\Tr{{\rm Tr}}
\def\i{{\rm i}}
\def\mi{{\rm i}}
\def\e{{\rm e}}
\def\sq2{\sqrt{2}}
\def\sqn{\sqrt{N}}
\def\vol{\mathrm{vol}}
\def\defi{\stackrel{\rm def}{=}}
\def\t2{{\mathbb T}^2}
\def\s2{{\mathbb S}^2}
\def\hn{\mathcal{H}_{N}}
\def\shbar{\sqrt{\hbar}}
\def\A{\mathcal{A}}
\def\N{\mathbb{N}}
\def\T{\mathbb{T}}
\def\R{\mathbb{R}}
\def\RR{\mathbb{R}}
\def\Z{\mathbb{Z}}
\def\C{\mathbb{C}}
\def\O{\mathcal{O}}
\def\Sp{\mathcal{S}_+}
\def\Lap{\triangle}
\nwc{\lap}{\bigtriangleup}
\nwc{\rest}{\restriction}
\nwc{\Diff}{\operatorname{Diff}}
\nwc{\diam}{\operatorname{diam}}
\nwc{\Res}{\operatorname{Res}}
\nwc{\Spec}{\operatorname{Spec}}
\nwc{\Vol}{\operatorname{Vol}}
\nwc{\Op}{\operatorname{Op}}
\nwc{\supp}{\operatorname{supp}}
\nwc{\Span}{\operatorname{span}}

\nwc{\dia}{\varepsilon}
\nwc{\cut}{f}
\nwc{\qm}{u_\hbar}

\def\hto0{\xrightarrow{\hbar\to 0}}
\def\htoo{\stackrel{h\to 0}{\longrightarrow}}
\def\rto0{\xrightarrow{r\to 0}}
\def\rtoo{\stackrel{r\to 0}{\longrightarrow}}
\def\ntoinf{\xrightarrow{n\to +\infty}}

\providecommand{\abs}[1]{\lvert#1\rvert}
\providecommand{\norm}[1]{\lVert#1\rVert}
\providecommand{\set}[1]{\left\{#1\right\}}

\nwc{\la}{\langle}
\nwc{\ra}{\rangle}
\nwc{\lp}{\left(}
\nwc{\rp}{\right)}

\nwc{\bequ}{\begin{equation}}
\nwc{\be}{\begin{equation}}
\nwc{\ben}{\begin{equation*}}
\nwc{\bea}{\begin{eqnarray}}
\nwc{\bean}{\begin{eqnarray*}}
\nwc{\bit}{\begin{itemize}}
\nwc{\bver}{\begin{verbatim}}

%\nwc{\eal}{\end{align}}
\nwc{\eequ}{\end{equation}}
\nwc{\ee}{\end{equation}}
\nwc{\een}{\end{equation*}}
\nwc{\eea}{\end{eqnarray}}
\nwc{\eean}{\end{eqnarray*}}
\nwc{\eit}{\end{itemize}}
\nwc{\ever}{\end{verbatim}}

\newcommand{\defeq}{\stackrel{\rm{def}}{=}}

\section{Introduction}

\subsection{The Schr\"odinger equation in the large time and high frequency
r\'egime}

This article is concerned with the dynamics of the linear equation
\begin{equation}
\left\{
\begin{array}
[c]{l} ih\partial_{t}\psi_{h}\left(  t,x\right)
=\left(H(hD_{x})+h^2 \mathbf{V}_h(t) \right)\,\psi_{h}\left(
t,x\right)  ,\qquad(t,x)\in\R\times\T^{d},\\
{\psi_{h}}_{|t=0}=u_{h},
\end{array}
\right.  \label{e:eq}
\end{equation}
on the torus $\mathbb{T}^{d} :=\left(
\mathbb{R/}2\pi\mathbb{Z}\right)  ^{d}$, with $H$ a smooth,
real-valued function on $(\R^{d})^*$ (the dual of $\R^{d}$), and
$h>0$. In other words, $H$ is a function on the cotangent bundle
$T^*\T^d=\T^d\times (\R^{d})^*$ that does not depend on the $d$ first
variables, and thus gives rise to a completely integrable
Hamiltonian flow. For the sake of simplicity, we shall assume that
$H\in\mathcal{C}^{\infty}\left(  \mathbb{R}^{d}\right)  $. However
the smoothness assumption on $H$ can be relaxed to
$\mathcal{C}^{k}$, where $k$ large enough, in most results of this
article. The lower order term $\mathbf{V}_h(t)$ is a
bounded self-adjoint operator (possibly depending on $t$ and $h$).
We assume that the map $t\mapsto \| \mathbf{V}_h(t)\|_{{\mathcal
L}(L^2(\T^d))}$ is  in $L^1_{loc}(\R)\cap L^\infty(\R)$, uniformly with respect to
$h$. This condition ensures the existence of a semi-group
associated with the operator $H(hD_x)+h^2\mathbf{V}_h(t)$ (see
Appendice B in  \cite{D-G}, Proposition B.3.6).

We are interested in the simultaneous limits $h\rightarrow0^{+}$
(high frequency limit) and $t\rightarrow+ \infty$ (large time evolution). Our
results give a description of the limits of sequences of \textquotedblleft
position densities\textquotedblright\ $\left\vert \psi_{h}\left(
t_{h},x\right)  \right\vert ^{2}$ at times $t_{h}$ that tend to infinity as
$h\rightarrow0^{+}$.

\begin{remark}\label{r:Hh}In future applications, it will be interesting to note as of now that we may allow $H=H_h$ to depend on the parameter $h$, in such a way that
$H_h$ converges to some limit $H_0$ in the $\cC^k$ topology on
compact sets, for $k$ sufficiently large. For instance, we allow
$H_h(\xi)=H(\xi + h \omega)$, where $\omega\in (\R^{d})^*$ is a
fixed vector.\end{remark}

To be more specific, let us denote by $S_{h}(t,s)$ the semigroup
associated with the operator $H(hD_{x})+h^2\mathbf{V}_h(t)$ and set
$S^t_h=S_h(t,0)$.
Fix a $\emph{time}$ \emph{scale}, that is, a function
\begin{align*}
\tau:\IR_{+}^{\ast}  &  \longrightarrow\IR_{+}^{\ast}\\
h  &  \longmapsto\tau_{h},
\end{align*}
such that $\liminf_{h\rightarrow0^{+}}\tau
_{h}>0$ (actually, we shall be mainly concerned in scales that go to
$+\infty$ as $h\rightarrow0^{+}$). Consider a family of initial conditions
$(u_{h})$, normalised in $L^{2}(\mathbb{T}^{d})$: $\left\Vert u_{h}\right\Vert
_{L^{2}\left(  \mathbb{T}^{d}\right)  }=1$ for $h>0$, and $h$-oscillating in
the terminology of \cite{GerardMesuresSemi91, GerLeich93}, \emph{i.e.}:
\begin{equation}
\limsup_{h\rightarrow0^{+}}\left\Vert \mathbf{1}_{\left[  R,+\infty\right[  }\left(
-h^{2}\Delta\right)  u_{h}\right\Vert _{L^{2}\left(  \mathbb{T}^{d}\right)
}\Lim_{R\To\infty}0,
 \label{e:hosc}
\end{equation}
where $\mathbf{1}_{\left[  R,+\infty\right[  }$ is the characteristic function of
the interval $\left[  R,+\infty\right[  $. Our main object of interest is the
density $\left\vert S_{h}^{t}u_{h}\right\vert ^{2}$, and we introduce the
probability measures on $\mathbb{T}^{d}$:
\[
\mathbb{\nu}_{h}\left(  t,dx\right)  :=\left\vert S_{h}^{t}u_{h}(x)\right\vert
^{2}dx;
\]
the unitary character of $S_{h}^{t}$ implies that $\nu_{h}\in\mathcal{C}
\left(  \mathbb{R};\mathcal{P}\left(  \mathbb{T}^{d}\right)  \right)
$ (in what follows, $\mathcal{P}\left(  X\right)  $ stands for the
set of probability measures on a Polish space $X$).

To study the long-time behavior of the dynamics, we rescale time
by $\tau_{h}$ and look at the time-scaled probability densities:
\begin{equation}
\nu_{h}\left(  \tau_{h}t,dx\right)  . \label{e:nuht}
\end{equation}
When $t\not =0$ is fixed and $\tau_{h}$ grows too rapidly, it is a notoriously
difficult problem to obtain a description of the limit points (in the
weak-$\ast$ topology) of these probability measures as $h\rightarrow0^{+}$,
for rich enough families of initial data $u_{h}$. See for instance
\cite{Schub-largetimes, Paul11} in the case where the underlying classical
dynamics is chaotic, the $u_{h}$ are a family of lagrangian states, and
$\tau_{h}=h^{-2+\epsilon}$. In completely integrable situations, such as the
one we consider here, the problem is of a different nature, but rapidly leads
to intricate number theoretical issues \cite{MarklofPoisson, MarklofPoisson2,
MarklofSquares}.

We soften the problem by considering the family of probability measures
\eqref{e:nuht} as elements of $L^{\infty}\left(  \mathbb{R};\mathcal{P}\left(
\mathbb{T}^{d}\right)  \right)  $. Our goal will be to give a precise
description of the set $\mathcal{M}\left(  \tau\right)  $ of their
accumulation points in the weak-$\ast$ topology for $L^{\infty}\left(
\mathbb{R};\mathcal{P}\left(  \mathbb{T}^{d}\right)  \right)  $, obtained as
$\left(  u_{h}\right)  $ varies among all possible sequences of initial data
$h$-oscillating and normalised in $L^{2}\left(  \mathbb{T}^{d}\right)  $.

The compactness of $\mathbb{T}^{d}$ ensures that $\mathcal{M}\left(
\tau\right)  $ is non-empty. Having $\nu\in\mathcal{M}\left(  \tau\right)  $
is equivalent to the existence of a sequence $(h_{n})$ going to $0$ and of a
normalised, $h_{n}$-oscillating sequence $\left(  u_{h_{n}}\right)  $ in
$L^{2}\left(  \mathbb{T}^{d}\right)  $ such that:
\begin{equation}
\label{e:averagelim}\lim_{n\rightarrow+\infty}\frac{1}{\tau_{h_{n}}}
\int_{\mathbb{\tau}_{h_{n}}a}^{\tau_{h_{n}}b}\int_{\mathbb{T}^{d}}\chi\left(
x\right)  \left\vert S_{h_{n}}^{t}u_{h_{n}}\left(  x\right)  \right\vert
^{2}dxdt=\int_{a}^{b}\int_{\mathbb{T}^{d}}\chi\left(  x\right)  \nu\left(
t,dx\right)  dt,
\end{equation}
for every real numbers $a<b$ and every $\chi\in\mathcal{C}\left(
\mathbb{T}^{d}\right)  $. In other words, we are averaging the densities
$\left\vert S_{h}^{t}u_{h}(x)\right\vert ^{2}$ over time intervals of size
$\tau_{h}$. This averaging, as we shall see, makes the study more tractable.

If case \eqref{e:averagelim} occurs, we shall say that $\nu$ is obtained
through the sequence $\left(  u_{h_{n}}\right)  $. To simplify the notation, when
no confusion can arise, we shall simply write that $h\To0^{+}$ to mean that we
are considering a discrete sequence $h_{n}$ going to $0^{+}$, and we shall
denote by $\left(  u_{h}\right)  $ (instead of $\left(  u_{h_{n}}\right)  $)
the corresponding family of functions.

\begin{remark}
\label{r:boundedtime}When the function $\tau$ is bounded, the
convergence of $\nu_{h}\left(  \tau_{h}t,\cdot\right)  $ to an
accumulation point $\nu\left( t,\cdot\right)  $ is locally uniform
in $t$. According to Egorov's theorem (see, for instance,
\cite{EvansZworski}), $\nu$ can be completely described in terms
of semiclassical defect measures of the corresponding sequence of
initial data $\left(  u_{h}\right)  $, transported by the
classical Hamiltonian flow $\phi_{s}:T^{\ast}\mathbb{T}^{d}\To
T^{\ast }\mathbb{T}^{d}$ generated by $H$, which in this case is
completely integrable~:
\begin{equation}\label{def:phis}
\phi_{s}(x,\xi):=(x+sdH(\xi),\xi).
\end{equation}
As an example, take $\tau_h=1$ and consider the case where the initial data
$u_{h}$ are coherent states~: fix $\rho\in\mathcal{C}_{c}^{\infty}\left(
\mathbb{R}^{d}\right)  $ with $\left\Vert \rho\right\Vert _{L^{2}\left(
\mathbb{R}^{d}\right)  }=1$, fix $\left(  x_{0},\xi_{0}\right)  \in
\mathbb{R}^{d}\times\mathbb{R}^{d}$, and let $u_{h}\left(  x\right)  $ be the
$2\pi\mathbb{Z}^{d}$-periodization of the following coherent state:
\[
\frac{1}{h^{d/4}}\rho\left(  \frac{x-x_{0}}{\sqrt{h}}\right)  e^{i\frac
{\xi_{0}}{h}\cdot x}.
\]
Then $\nu_{h}\left(  t,\cdot\right)  $ converges, for every $t\in\mathbb{R}$,
to:
\[
\delta_{x_{0}+tdH\left(  \xi_{0}\right)  }\left(  x\right)  .
\]
\end{remark}

When the time scale $\tau_{h}$ is unbounded, the $t$-dependence of elements
$\nu\in\mathcal{M}\left(  \tau\right)  $ is not described by such a simple
propagation law. From now on we shall only consider the case where $\tau
_{h}\Lim_{h\To 0} +\infty$.

The problem of describing the elements in $\mathcal{M}\left(  \tau\right)  $
for some time scale $\left(  \tau_{h}\right)  $ is related to several aspects
of the dynamics of the flow $S_{h}^{t}$ such as dispersive effects and unique
continuation. In \cite{AnantharamanMaciaSurv, MaciaDispersion} the reader will
find a description of these issues in the case where the operator $S_{h}
^{t}$ is the semiclassical Schr\"{o}dinger propagator $e^{iht\Delta}$
corresponding to the Laplacian on an arbitrary compact Riemannian manifold.
 In that
setting, the time scale $\tau_{h}=1/h$ appears in a natural way, since it
transforms the semiclassical propagator into the non-scaled flow
$e^{ih\tau_{h}t\Delta}=e^{it\Delta}$. The possible accumulation points of
sequences of probability densities of the form $|e^{it\Delta}u_{h}|^{2}$
depend on the nature of the dynamics of the geodesic flow.
When
the geodesic flow has the Anosov property (a very strong form of chaos, which holds on negatively curved manifolds), the results in \cite{AnRiv} rule
out concentration on sets of small dimensions, by proving lower bounds on the
Kolmogorov-Sinai entropy of semiclassical defect measures.
Even in the apparently simpler case that the geodesic flow is completely
integrable, different type of concentration phenomena may occur, depending on
fine geometrical issues (compare the situation in Zoll manifolds
\cite{MaciaAv} and on flat tori \cite{MaciaTorus, AnantharamanMacia}).

\subsection{Semiclassical defect measures}

Our results are more naturally described in terms of \emph{Wigner
distributions }and \emph{semiclassical measures} (these are the semiclassical
version of the \emph{microlocal defect measures} \cite{GerardMDM91, TartarH},
and have also been called \emph{microlocal lifts} in the recent
literature about quantum unique ergodicity, see for instance the celebrated paper \cite{LindenQUE}). The \emph{Wigner distribution} associated
to $u_{h}$ (at scale $h$) is a distribution on the cotangent bundle $T^{\ast
}\mathbb{T}^{d}$, defined by
\begin{equation}
\int_{T^{\ast}\mathbb{T}^{d}}a(x,\xi)w_{u_{h}}^{h}(dx,d\xi)=\left\langle
u_{h},\Op_{h}(a)u_{h}\right\rangle _{L^{2}(\mathbb{T}^{d})},\qquad
\mbox{ for all }a\in\mathcal{C}_{c}^{\infty}(T^{\ast}\mathbb{T}^{d}),
\label{e:initialwigner}
\end{equation}
where $\Op_{h}(a)$ is the operator on $L^{2}(\mathbb{T}^{d})$ associated to
$a$ by the Weyl quantization. The reader not familiar with these objects can
consult the appendix of this article or the book \cite{EvansZworski}. For the moment, just recall that $w_{u_{h}}^{h}$ extends naturally to
smooth functions $\chi$ on $T^{\ast}\mathbb{T}^{d}=\mathbb{T}^{d}
\times(\R^{d})^*$ that depend only on the first coordinate, and in this case we have
\begin{equation}
\int_{T^{\ast}\mathbb{T}^{d}}\chi(x)w_{u_{h}}^{h}(dx,d\xi)=\int_{\mathbb{T}
^{d}}\chi(x)|u_{h}(x)|^{2}dx. \label{e:proj}
\end{equation}
The main object of our study will be the (time-scaled) Wigner distributions
corresponding to solutions to (\ref{e:eq}):
\[
w_{h}(t,\cdot):=w_{S_{h}^{\tau_{h}t}u_{h}}^{h}
\]
The map $t\longmapsto w_{h}(t,\cdot)$ belongs to $L^{\infty}(\R;\cD^{\prime
}\left(  T^{\ast}\mathbb{T}^{d}\right)  )$, and is uniformly bounded in that
space as $h\To0^{+}$ whenever $\left(  u_{h}\right)  $ is normalised in
$L^{2}\left(  \mathbb{T}^{d}\right)  $. Thus, one can extract subsequences
that converge in the weak-$\ast$ topology on $L^{\infty}(\R;\cD^{\prime
}\left(  T^{\ast}\mathbb{T}^{d}\right)  )$. In other words, after possibly
extracting a subsequence, we have
\[
\int_{\R}\int_{T^*\T^d}\varphi(t)a(x,\xi)w_{h}(t,dx,d\xi)dt\Lim_{h\To0}\int_{\R}\int_{T^*\T^d }
\varphi(t)a(x,\xi)\mu(t,dx,d\xi)dt
\]
for all $\varphi\in L^{1}(\R)$ and $a\in\mathcal{C}_{c}^{\infty}(T^{\ast
}\mathbb{T}^{d})$, and the limit $\mu$ belongs to $L^{\infty}\left(
\mathbb{R};\mathcal{M}_{+}\left(  T^{\ast}\mathbb{T}^{d}\right)  \right)
$ (here $\mathcal{M}_{+}\left(  X\right)  $ denotes the set of positive
Radon measures on a Polish space $X$).

The set of limit points thus obtained, as $\left(  u_{h}\right)  $ varies
among normalised sequences, will be denoted by $\mathcal{\widetilde{M}}\left(
\tau\right)  $. We shall refer to its elements as (time-dependent)
semiclassical measures.

Moreover, if $\left(  u_{h}\right)  $ is $h$-oscillating (see~(\ref{e:hosc})), it follows that
$\mu\in L^{\infty}\left(  \mathbb{R};\mathcal{P}\left(  T^{\ast}\mathbb{T}
^{d}\right)  \right)  $ and identity (\ref{e:proj}) is also verified in the
limit~:
\[
\int_{a}^{b}\int_{\mathbb{T}^{d}}\chi\left(  x\right)  |S_{h}^{\tau_{h}t}
u_{h}\left(  x\right)  |^{2}dxdt\Lim_{h\To0}\int_{a}^{b}\int_{T^{\ast
}\mathbb{T}^{d}}\chi\left(  x\right)  \mu\left(  t,dx,d\xi\right)  dt,
\]
for every $a<b$ and every $\chi\in\mathcal{C}^{\infty}\left(  \mathbb{T}
^{d}\right)  $. Therefore, $\mathcal{M}\left(  \tau\right)  $ coincides with
the set of projections onto $x$ of semiclassical measures in
$\mathcal{\widetilde{M}}\left(  \tau\right)  $ corresponding to $h$-oscillating sequences \cite{GerardMesuresSemi91,GerLeich93}.

It is also shown in the appendix that the elements of $\mathcal{\widetilde{M}
}\left(  \tau\right)  $ are measures that are {\it $H$-invariant}, by which we mean that they are
invariant
under the action of the hamiltonian flow $\phi_s$ defined in~(\ref{def:phis}).

\subsection{Results on the regularity of semiclassical measures}

The main results in this article are aimed at obtaining a precise description
of the elements in $\mathcal{\widetilde{M}}\left(  \tau\right)  $ (and, as a
consequence, of those of $\mathcal{M}\left(  \tau\right)  $). We first present
a regularity result which emphasises the critical character of the time scale
$\tau_{h}=1/h$ in situations in which the Hessian of $H$ is non-degenerate,
definite (positive or negative).

\begin{theorem}
\label{t:main}(1) If $\tau_{h}\ll1/h$ then $\mathcal{M}\left(
\tau\right)  $ contains elements that are singular with respect to
the Lebesgue measure $dtdx$. Actually,
$\mathcal{\widetilde{M}}\left(  \tau\right)  $ contains all
measures invariant by the flow~$\phi_s$ defined in~(\ref{def:phis}).

(2) Suppose $\tau_{h}\sim1/h$ or $\tau_{h}\gg1/h$. Assume that the Hessian
$d^{2}H(\xi)$ is definite for all~$\xi$. Then
\[
{\mathcal{M}}\left(  \tau\right)  \subseteq L^{\infty}\left(  \mathbb{R}
;L^{1}\left(  \mathbb{T}^{d}\right)  \right)  ,
\]
in other words
the elements of $\mathcal{M}\left(  \tau\right)  $ are absolutely
continuous with respect to $dtdx$.
\end{theorem}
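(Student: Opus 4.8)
The plan is to realise every $\phi_s$-invariant probability measure by an explicit sequence of initial data, exploiting the fact that below the threshold the rescaled propagator $S_h^{\tau_h t}$ acts, to leading order, as pure classical transport. Fix $\xi_0\in(\R^d)^*$ and a symbol $a(x,\xi)=e^{im\cdot x}\theta(\xi)$. Taylor-expanding $H$ at $\xi_0$, the quadratic and higher order terms contribute to the phase of $e^{-i\tau_h t H(hD)/h}$ a quantity of size $O(h\tau_h)+O(h^2\tau_h|m|^3)\to 0$, while Duhamel's formula bounds the contribution of the perturbation $h^2\mathbf V_h$ by $O(h\tau_h\,\|\mathbf V_h\|_{L^\infty})\to 0$; a direct computation on the Fourier side then gives
\[
\big\langle S_h^{\tau_h t}u_h,\Op_h(a)S_h^{\tau_h t}u_h\big\rangle=\big\langle u_h,\Op_h(a\circ\phi_{\tau_h t})u_h\big\rangle+o(1),
\]
uniformly for $t$ in compact sets, as long as $u_h$ is localised in frequency within a $\delta_h$-neighbourhood of $\xi_0$ with $h\ll\delta_h\ll 1/\tau_h$; the constraint $\delta_h\tau_h\to 0$ is exactly what controls the error created by the variation of $dH$ over that neighbourhood, and such a $\delta_h$ exists precisely because $\tau_h\ll 1/h$. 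Given an arbitrary $\phi_s$-invariant $\mu$, an elementary construction on the torus (coherent states modulated by slowly varying amplitudes, superposed over a fine partition of the $\xi$-projection of $\mu$) produces an $h$-oscillating normalised sequence $(u_h)$ with $w^h_{u_h}\to\mu$ and with this frequency concentration; since $\int a\circ\phi_{\tau_h t}\,d\mu=\int a\,d\mu$ for \emph{every} value of $\tau_h t$ by invariance, evaluating the right-hand side above using the frequency concentration of $u_h$ yields $w_h(t,\cdot)\to\mu$ for every $t$, so $\mu\otimes dt\in\widetilde{\mathcal M}(\tau)$. Taking $\xi_0$ with $dH(\xi_0)$ in a rational direction and $\mu=\delta_{\xi_0}\otimes\rho$ with $\rho$ carried by a single closed orbit of $x\mapsto x+s\,dH(\xi_0)$ then exhibits an element of $\mathcal M(\tau)$ singular with respect to $dt\,dx$.

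\textbf{Part (2): strategy.} This is the crux, and the plan is to argue by second microlocalisation along the resonances of $H$. Let $\mu\in\widetilde{\mathcal M}(\tau)$; it is $H$-invariant and a probability measure. Partition $(\R^d)^*$ according to the resonance lattice $\Lambda(\xi):=\{m\in\Z^d:\ m\cdot dH(\xi)=0\}$. On the stratum $\{\Lambda(\xi)=\{0\}\}$ the orbit $s\mapsto x+s\,dH(\xi)$ is equidistributed on $\T^d$, so $H$-invariance forces the fibre of $\mu$ over such a $\xi$ to be uniform in $x$; integrating in $\xi$, the $x$-projection of this piece of $\mu$ is absolutely continuous. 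The work concerns the lower-dimensional strata $\{\Lambda(\xi)=\Lambda\}$, $\Lambda\neq\{0\}$ — of positive codimension in $\xi$ but possibly carrying mass; the extreme case is an isolated critical point of $H$, where $dH=0$ and the torus-flow is trivial. On a neighbourhood of such a stratum I would introduce two-microlocal Wigner distributions testing against symbols that also resolve the frequency variable at the lattice scale transverse to $\Lambda$, extract two-microlocal measures, and derive their propagation law from \eqref{e:eq} by inserting $H(\xi)=H(\xi_0)+dH(\xi_0)\cdot(\xi-\xi_0)+\tfrac12(\xi-\xi_0)\cdot d^2H(\xi_0)(\xi-\xi_0)+O(|\xi-\xi_0|^3)$: the linear term is absorbed in the (trivial, on this stratum) transport, the cubic remainder is negligible at the relevant scale, and the quadratic term governs the dynamics of the two-microlocal measure through the quadratic Hamiltonian $\tfrac12\eta\cdot d^2H(\xi_0)\eta$ in the transverse variables — generating a genuine Schr\"odinger/Heisenberg evolution when $\tau_h\asymp 1/h$, and, when $\tau_h\gg 1/h$, an evolution so fast that the time-averaging built into the definition of $\mathcal M(\tau)$ forces the two-microlocal measure to be invariant under it. The perturbation $h^2\mathbf V_h$, of the same order as the quadratic phases when $\tau_h\asymp 1/h$, enters only as a bounded zeroth-order term in this propagation law.

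\textbf{Part (2): closing the argument.} Here the definiteness of $d^2H(\xi_0)$ is used decisively: the generator $\tfrac12\eta\cdot d^2H(\xi_0)\eta$, restricted to the transverse directions, is again a \emph{non-degenerate} quadratic form, so the two-microlocal dynamics has exactly the structure of the original completely integrable flow with one resonance removed. One can therefore run the same dichotomy one level down — along the non-resonant leaves of $\tfrac12\eta\cdot d^2H(\xi_0)\eta$ the two-microlocal measure is Lebesgue in $x$ (by invariance under an equidistributed orbit, or by the vanishing of the corresponding time-averages), while the residual concentration on the resonant sub-leaves is peeled off by a further microlocalisation. Each step strictly lowers the dimension of the ambient frequency space, and non-degeneracy is preserved at every step, so the recursion terminates after at most $d$ rounds and every piece it produces has an absolutely continuous $x$-projection. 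Summing over the countably many, mutually $\xi$-singular strata and their two-microlocal refinements, one concludes that the $x$-projection of $\mu$ — hence, given the time-averaging in the definition of $\mathcal M(\tau)$, every element of $\mathcal M(\tau)$ — lies in $L^\infty(\R;L^1(\T^d))$.

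\textbf{Main obstacle.} The hard part is entirely in Part (2): building a two-microlocal symbolic calculus adapted to the flow and rigorously establishing the propagation laws for the two-microlocal measures — pinning down the correct scales, showing that the cubic remainder of $H$ is genuinely harmless there, correctly incorporating the bounded perturbation $\mathbf V_h$ (which is \emph{not} negligible when $\tau_h\asymp 1/h$), and, for $\tau_h\gg 1/h$, extracting the additional averaging that upgrades ``propagates along'' to ``is invariant under'' the quadratic flow. A secondary difficulty is organising the recursion over strata so that it is genuinely finite and the resulting decomposition of $\mu$ is clean; this is precisely the point where one needs $d^2H(\xi)$ to be definite at \emph{every} $\xi$, and not merely non-degenerate at isolated points.
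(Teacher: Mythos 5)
Your Part (1) is essentially the paper's argument (periodized wave packets with spatial width $\varepsilon_h\gg h\tau_h$, hence frequency spread $\ll 1/\tau_h$, giving uniform orbit measures; then superposition and closedness of $\widetilde{\mathcal M}(\tau)$ to reach all invariant measures), and your identification of the error term $\tau_h h/\varepsilon_h$ coming from the variation of $dH$ over the frequency support is exactly right. Part (2) also has the correct architecture — resonant partition by the modules $\Lambda_\xi$, second microlocalization transverse to $I_\Lambda$, the quadratic form $\tfrac12\eta\cdot d^2H(\sigma)\eta$ governing the transverse dynamics, and a finite recursion over strictly decreasing submodules in which definiteness is needed at every stage. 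But there are two genuine gaps.

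First, you never supply the mechanism that actually produces absolute continuity at the critical scale. The two-microlocal object splits into a part supported at $\eta=\infty$ and a part at finite $\eta$. Your stated reasons for regularity ("invariance under an equidistributed orbit, or the vanishing of the corresponding time-averages") only apply to the at-infinity part, which is a genuine measure invariant under the two commuting flows $\phi_s^0,\phi_s^1$ and which the paper peels off recursively until the rank drops to one and the invariances force constancy in $x$. The finite-$\eta$ part $\tilde\mu_\Lambda$ is \emph{not} controlled by any classical invariance: when $\tau_h\sim 1/h$ it is a measure on $(\la\Lambda\ra/\Lambda)\times I_\Lambda$ with values in positive trace-class operators on $L^2_\omega(\R^d,\Lambda)$, propagated by the Heisenberg equation, and its $x$-marginal is absolutely continuous \emph{because} it is of the form $\int\operatorname{Tr}(m_a\,M_\Lambda)\,dm_\Lambda=\int\sum_j\lambda_j\int a\,|\varphi_j|^2\,dy\,dm_\Lambda$ with $\varphi_j\in L^2$ (Theorem \ref{prop:opvame}). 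This trace-class representation is the crux of the theorem — it is where "quantum" regularity replaces classical equidistribution — and your proposal asserts the conclusion without it.

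Second, your plan for $\tau_h\gg 1/h$ — run the same two-microlocal calculus and argue that time-averaging forces invariance under the fast quadratic evolution — breaks down at the level of the symbolic calculus: the quantization of $a(x,\xi,\tau_h\eta(\xi))$ is bounded uniformly only when $(h\tau_h)$ is bounded (the $\xi$-derivatives grow like $(h\tau_h)^{|\beta|}$, so Calder\'on--Vaillancourt fails for $h\tau_h\to\infty$), and the whole construction of Sections 2--4 is restricted to $h\tau_h$ bounded. The paper instead deduces the case $\tau_h\gg1/h$ from the case $\tau_h=1/h$ by a comparison of time scales (Propositions \ref{prop:hierarchy1} and \ref{p:convsm}): every element of $\mathcal M(\tau)$ at the faster scale is, for a.e.\ $t$, a unit-time average of a convex combination of elements of $\mathcal M(1/h)$, and absolute continuity is stable under such averages. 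You would need either this reduction or a genuinely different calculus at the faster scale.
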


The proof of (1) in Theorem~\ref{t:main} relies on the construction of examples, while the proof of~(2) is based on the forthcoming Theorem \ref{t:precise}, which contains a careful analysis of the case $\tau_h=1/h$ (see section~\ref{sec:1/h}). A comparison argument between different time-scales allows to treat the case $\tau_h \gg 1/h$ (see section~\ref{subsec:hierarchy}).

Note also that the construction leading to Theorem~\ref{t:main} (2) also yields observability results~: see section~\ref{sec:obs} below. Finally, we point out in Section~\ref{sec:gen} that Theorem~\ref{t:main} extends to general quantum completely integrable systems.
 An interesting and immediate by-product of Theorem~\ref{t:main} is the following corollary.

\begin{corollary}
Theorem \ref{t:main}(2) applies in particular when the data $(u_{h})$ are
eigenfunctions of $H(hD_{x})$, and shows (assuming the Hessian of $H$ is
definite) that the weak limits of the probability measures $|u_{h}(x)|^{2} dx$
are absolutely continuous.
\end{corollary}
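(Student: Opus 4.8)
The plan is to observe that an eigenfunction of $H(hD_x)$ is a \emph{stationary} solution of \eqref{e:eq} (taking $\mathbf{V}_h\equiv 0$, or more generally whenever $\mathbf{V}_h(t)$ leaves the eigenvector invariant), so that the time-averaging procedure defining $\mathcal{M}(\tau)$ is vacuous, and then to invoke Theorem~\ref{t:main}(2) directly. Concretely, let $(u_h)$ be normalised in $L^2(\mathbb{T}^d)$ with $H(hD_x)u_h=E_h u_h$; then $S_h^{t}u_h=e^{-itE_h/h}u_h$, hence $|S_h^{\tau_h t}u_h(x)|^2=|u_h(x)|^2$ for every $t$ and every time scale $\tau$. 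In particular, for all $a<b$ and every $\chi\in\mathcal{C}(\mathbb{T}^d)$,
\[
\frac{1}{\tau_h}\int_{\tau_h a}^{\tau_h b}\int_{\mathbb{T}^d}\chi(x)\,|S_h^{t}u_h(x)|^2\,dx\,dt=(b-a)\int_{\mathbb{T}^d}\chi(x)\,|u_h(x)|^2\,dx .
\]
By weak-$\ast$ compactness of $\mathcal{P}(\mathbb{T}^d)$ we may extract a subsequence along which $|u_h(x)|^2\,dx$ converges to some $\nu_\infty\in\mathcal{P}(\mathbb{T}^d)$; comparing with \eqref{e:averagelim}, the sequence $(u_h)$ then produces, for the scale $\tau_h=1/h$, the element $\nu\in\mathcal{M}(\tau)$ given by $\nu(t,dx)=\nu_\infty(dx)$, which is constant in $t$.

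Next I would verify that $(u_h)$ is admissible in the setting of Theorem~\ref{t:main}, i.e. that it is normalised (assumed) and $h$-oscillating in the sense of \eqref{e:hosc}. Since $u_h$ is a superposition of Fourier modes $e^{ik\cdot x}$, $k\in\mathbb{Z}^d$, with $H(hk)=E_h$, and $-h^2\Delta$ acts on $e^{ik\cdot x}$ by $|hk|^2$, the $h$-oscillation of $(u_h)$ amounts precisely to the requirement that the frequencies $hk$ appearing in $u_h$ remain in a fixed bounded set as $h\to0^+$; this holds, for example, whenever the eigenvalues $E_h$ stay bounded and the level sets of $H$ are compact, and in any case one restricts attention to such sequences (indeed $\mathcal{M}(\tau)$ is, by definition, generated only by $h$-oscillating sequences). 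Granting this, Theorem~\ref{t:main}(2), applied with $\tau_h=1/h$ (so that $\tau_h\sim1/h$ and $\liminf_{h\to0^+}\tau_h>0$) and with $d^2H(\xi)$ definite for all $\xi$, gives $\mathcal{M}(\tau)\subseteq L^\infty(\mathbb{R};L^1(\mathbb{T}^d))$. Applied to the element $\nu(t,dx)=\nu_\infty(dx)$ constructed above, this forces $\nu_\infty$ to be absolutely continuous with respect to the Lebesgue measure $dx$; since every weak-$\ast$ accumulation point of $|u_h(x)|^2\,dx$ arises as such a $\nu_\infty$ for a suitable subsequence, the corollary follows.

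I do not expect any genuine obstacle beyond Theorem~\ref{t:main}(2) itself: the entire content of the corollary is that, for stationary states, the long-time average coincides with the (unaveraged) position density, so that absolute continuity of $\nu_\infty$ is a direct specialisation of the general result. The only two points requiring a word of care are the handling of the lower-order term $\mathbf{V}_h(t)$ — for the argument one only needs $u_h$ to remain an eigenvector of the full generator, the cleanest instance being $\mathbf{V}_h\equiv 0$ — and the verification of the $h$-oscillation hypothesis \eqref{e:hosc}, which, as noted above, holds automatically under a mild properness assumption on $H$ and is in any event built into the definition of $\mathcal{M}(\tau)$.
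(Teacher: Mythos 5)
Your proof is correct and follows exactly the route the paper intends: for a stationary state the time-averaged densities coincide with $|u_h|^2\,dx$, so the corollary is an immediate specialisation of Theorem \ref{t:main}(2) (the paper itself records this via the inclusion $\mathcal{M}(\infty)\subseteq\mathcal{M}(\tau)$ and gives no further argument). Your side remarks on taking $\mathbf{V}_h\equiv 0$ and on the $h$-oscillation hypothesis being built into the definition of $\mathcal{M}(\tau)$ are both appropriate.
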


Note that statement (2) of Theorem \ref{t:main} has already been proved in the
case $H(\xi)=|\xi|^{2}$ in~\cite{BourgainQL97} and~\cite{AnantharamanMacia}
with different proofs (the proof in the second reference extends to the
$x$-dependent Hamiltonian $\left\vert \xi\right\vert ^{2}+h^{2}V\left(
x\right)  $). However, the extension to more general $H$ of the method in
\cite{AnantharamanMacia} is not straightforward, even in the case where
$H(\xi)=\xi\cdot A\xi$, where $A$ is a symmetric linear map~: $(\IR^{d}
)^{*}\To \IR^{d}$ (i.e. the Hessian of $H$ is constant), the difficulty
arising when $A$ has irrational coefficients.

\medskip

Let us now comment on the assumptions of the theorem. We first want to emphasize that the conclusion of Theorem \ref{t:main}(2) may fail if the condition on the
Hessian of $H$ is not satisfied.

\medskip

\begin{example}
Fix $\omega\in\mathbb{R}^{d}$ and take $H\left(  \xi\right)
=\xi\cdot\omega$ and $\mathbf{V}_h(t)=0$. Let $\mu_{0}$ be an
accumulation point in $\cD^{\prime}\left(  T^{\ast
}\mathbb{T}^{d}\right)  $ of the Wigner distributions $\left(
w_{u_{h}} ^{h}\right)  $ defined in \eqref{e:initialwigner},
associated to the initial data $(u_{h})$. Let
$\mu\in\mathcal{\widetilde{M}}\left(  \tau\right) $ be the limit
of $w_{S_{h}^{\tau_{h}t}u_{h}}^{h}$ in $L^{\infty}(\R;\cD^{\prime
}\left(  T^{\ast}\mathbb{T}^{d}\right) )$. Then an application of
Egorov's theorem (actually, a particularly simple adaptation of
the proof of Theorem 4 in \cite{MaciaAv}) gives the relation,
valid for any time scale $\left( \tau_{h}\right)  $~:
\[
\int_{T^{\ast}\mathbb{T}^{d}}a\left(  x,\xi\right)  \mu\left(  t,dx,d\xi
\right)  =\int_{T^{\ast}\mathbb{T}^{d}}\left\langle a\right\rangle \left(
x,\xi\right)  \mu_{0}\left(  dx,d\xi\right)  ,
\]
for any $a\in\mathcal{C}_{c}^{\infty}\left(  T^{\ast}\mathbb{T}^{d}\right)  $
and a.e. $t\in\mathbb{R}$. Here $\left\langle a\right\rangle $ stands for
the average of $a$ along the Hamiltonian flow $\phi_{s}$, that is in our case
\[
\left\langle a\right\rangle \left(  x,\xi\right)  =\lim_{T\rightarrow\infty
}\frac{1}{T}\int_{0}^{T}a\left(  x+s\omega,\xi\right)  ds.
\]
Hence, as soon as $\omega$ is resonant (in the sense of \S \ref{s:decompo})
and $\mu_{0}=\delta_{x_{0}}\otimes\delta_{\xi_{0}}$ for some $\left(
x_{0},\xi_{0}\right)  \in T^{\ast}\mathbb{T}^{d}$, the measure $\mu$ will be
singular with respect to $dtdx$.
\end{example}

\noindent It is also easy to provide counter-examples where the Hessian of $H$ is
non-degenerate, but not definite.

\begin{example} On the two-dimensional torus $\IT^{2}$,
consider $H(\xi)=\xi_{1}^{2}-\xi_{2}^{2}$, where $\xi=(\xi
_{1},\xi_{2})$. Take for $(u_{h}(x_{1},x_{2}))$ the periodization of
\[
\frac{1}{\left(  2\pi h\right)  ^{1/2}}\rho\left(  \frac{x_{1}-x_{2}}
{h}\right)
\]
where $\rho\in\mathcal{C}_{c}^{\infty}\left(  \mathbb{R}\right)  $ satisfies
$\left\Vert \rho\right\Vert _{L^{2}\left(  \mathbb{R}\right)  }=1$. Then the
functions $u_{h}$ are eigenfunctions of $H(hD_{x})$ for the eigenvalue $0$ and the measures
$|u_{h}(x_{1},x_{2})|^{2}dx_{1}\,dx_{2}$ obviously concentrate on the diagonal
$\{x_{1}=x_{2}\}$.
\end{example}

\noindent Note however that in this example the system is {\em
isoenergetically degenerate} at $\xi=0$. Recall the definition of
{\em isoenergetic non-degeneracy}~: the Hamiltonian $H$ is
isoenergetically non-degenerate at $\xi$ if for all $\eta\in
(\R^d)^*$, and $\lambda\in\R$,
$$
 dH(\xi)\cdot \eta=0 \mbox{ and } d^2H(\xi)\cdot\eta=\lambda
dH(\xi) \Longrightarrow (\eta, \lambda)=(0, 0).
$$
Definiteness of the Hessian implies isoenergetic non-degeneracy at
all $\xi$ such that $dH(\xi)\not= 0$. In view of the previous
example, one may wonder whether isoenergetic non-degeneracy is a
sufficient assumption for our results. In Section \ref{s:suff} we
give a sufficient set of assumptions for our results which is
weaker than definiteness, but is not implied by isoenergetic
non-degeneracy except in dimension $d=2$. As a conclusion,
isoenergetic non-degeneracy is sufficient for all our results in
dimension $d=2$, but not in dimensions $d\geq 3$, as is finally
shown by the following counter-example~:

\begin{example}
Take $d=3$. On $(\R^3)^*$ consider $H(\xi)=\xi_{1}^{2}+\xi_{2}^2-
\xi_3^3$, and let $u_{h}(x_{1},x_{2}, x_3)$ be the periodization
of
\[
\frac{1}{\left(  2\pi \eps\right)  ^{1/2}}\rho\left(
\frac{x_{2}+x_{3}} {\eps}\right)e^{i\frac{\alpha x_1
+x_2+x_3}{h}},
\]
where $\rho\in\mathcal{C}_{c}^{\infty}\left(  \mathbb{R}\right)  $
, $\left\Vert \rho\right\Vert _{L^{2}\left(  \mathbb{R}\right)
}=1$, and $\eps=\eps(h)$ tends to $0$ with $\eps(h)\gg h$. Note
that $u_h$ is an eigenfunction of $H(h D_x)$. The Wigner measures
of $(u_h)$ concentrate on the set $\{\xi_1=\alpha,
\xi_2=\xi_3=1\}$ where the system is isoenergetically
non-degenerate if $\alpha\not= 0$. Its projection on $\T^3$ is
supported on the hyperplane $\{x_2+x_3=0\}$.
\end{example}

\noindent In Section \ref{s:halpha} we present an example communicated to us by J.
Wunsch showing that absolute continuity of the elements of $\mathcal{M}\left(
1/h\right)  $ may fail in the presence of a subprincipal symbol of order
$h^{\beta}$ with $\beta\in\left(  0,2\right)  $ even in the case $H\left(
\xi\right)  =\left\vert \xi\right\vert ^{2}$. We also show in Section~\ref{s:halpha} that absolute continuity may fail for the elements of
$\mathcal{M}\left(  1/h\right)  $ when $H\left(  \xi\right)  =\left\vert
\xi\right\vert ^{2k}$, $k\in\mathbb{N}$ and $k>1$; a situation where the
Hessian is degenerate at $\xi=0$.

\medskip
\medskip

We point out that Theorem \ref{t:main}(2) admits a microlocal refinement, which allows us to
deal with more general Hamiltonians $H$ whose Hessian is not necessarily
definite at every $\xi\in\mathbb{R}^{d}$. Given $\mu\in\mathcal{\widetilde{M}
}\left(  \tau\right)  $ we shall denote by $\bar{\mu}$ the image of $\mu$
under the map $\pi_{2}:(x,\xi)\longmapsto\xi$. For ${\mathbf V}_h(t)=\Op_h(V(t, x, \xi))$ with $V\in C^\infty(\R\times T^* \T^d)$, it is shown in the appendix
that $\bar{\mu}$ does not depend on $t$ if $\tau_h\ll h^{-2}$: in this case we have $\bar{\mu
}=\left(  \pi_{2}\right)  _{\ast}\mu_{0}$, where the measure $\mu_{0}$ is an
accumulation point in $\mathcal{D}^{\prime}\left(  T^{\ast}\mathbb{T}
^{d}\right)  $ of the sequence $\left(  w_{u_{h}}^{h}\right)  $. For simplicity we restrict our attention to that case in the following theorem~:

\begin{theorem}
\label{t:lagrangian} Assume that $\mathbf{V}_h(t)=\Op_h(V(t,
\cdot))$ with $V\in C^\infty(\R\times T^* \T^d)$ bounded.

Let $\mu\in\mathcal{\widetilde{M}}\left(  1/h\right)  $
and denote by $\mu_{\xi}(t,\cdot)$ the disintegration of $\mu(t,\cdot)$ with
respect to the variable $\xi$, \emph{i.e. }for every $\theta\in L^{1}\left(
\mathbb{R}\right)  $ and every bounded measurable function~$f$:
\[
\int_{\mathbb{R}}\theta(t)\int_{\mathbb{T}^{d}\times\mathbb{R}^{d}}f(x,\xi
)\mu(t,dx,d\xi)dt=\int_{\mathbb{R}}\theta\left(  t\right)  \int_{\mathbb{R}
^{d}}\left(  \int_{\mathbb{T}^{d}}f(x,\xi)\mu_{\xi}(t,dx)\right)  \bar{\mu
}(d\xi)dt.
\]
Then for $\bar{\mu}$-almost every $\xi$ where $d^{2}H(\xi)$ is definite, the
measure $\mu_{\xi}(t,\cdot)$ is absolutely continuous.
\end{theorem}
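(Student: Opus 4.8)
The plan is to obtain Theorem~\ref{t:lagrangian} from the two-microlocal description of the elements of $\widetilde{\cM}(1/h)$ provided by the analysis of the preceding sections (Theorem~\ref{t:precise}), after microlocalising in the frequency variable near a point $\xi_0$ at which $d^2H(\xi_0)$ is definite. The mechanism I want to exploit is that definiteness forces every effective quadratic form that occurs in the propagation of the two-microlocal measures --- at each step of a finite descent --- to be again definite, hence non-degenerate, so that the associated Schr\"odinger dynamics on a lower-dimensional torus disperses; in this way $d^2H(\xi_0)$ plays the role that the Laplacian plays in the flat model $H(\xi)=|\xi|^2$.

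I would first carry out the classical reduction. For a primitive submodule $\Lambda\subset\IZ^d$ of rank $r$, set $I_\Lambda:=\{\xi:\ dH(\xi)\cdot k=0\ \forall k\in\Lambda\}$; near $\xi_0$ the map $dH$ is a diffeomorphism, so $I_\Lambda$ is a submanifold of dimension $d-r$ with $T_{\xi_0}I_\Lambda=(d^2H(\xi_0))^{-1}(\Lambda^\perp)$. Since definiteness makes the bilinear form $(v,w)\mapsto\la d^2H(\xi_0)v,w\ra$ non-degenerate on $\Lam\otimes\IR$, one checks that $\IR^d=T_{\xi_0}I_\Lambda\oplus(\Lam\otimes\IR)$ --- equivalently, $H$ is not flat along the resonant directions $\Lam\otimes\IR$ --- and this is exactly the geometric input that fails in the indefinite counter-examples of the Introduction, where the null cone of $d^2H$ meets $\Lam\otimes\IR$. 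Because $\mu$ is $H$-invariant (appendix), for $\bar\mu$-a.e.\ $\xi$ the disintegration $\mu_\xi(t,\cdot)$ is invariant under translation by $dH(\xi)$; if the line $\IR\, dH(\xi)$ is dense in $\IT^d$ (resonance module $\{0\}$) this already makes $\mu_\xi(t,\cdot)$ normalised Lebesgue, hence absolutely continuous. So it suffices to treat, for each primitive $\Lambda$ of rank $r\ge1$, the $\xi\in I_\Lambda$ whose resonance module is $\Lambda$ and at which $d^2H(\xi)$ is definite, and these may be taken near a fixed such $\xi_0$.

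For such a $\Lambda$ I would second-microlocalise the time-rescaled Wigner distributions $w_h(t,\cdot)$ along the directions normal to $I_\Lambda$, identified via the above splitting with $\Lam\otimes\IR$, at the lattice scale $h$. Writing a frequency $k=\xi_0/h+m$ and expanding, $H(hk)/h^2=H(\xi_0)/h^2+dH(\xi_0)\cdot m/h+\tfrac12\la d^2H(\xi_0)m,m\ra+O(h|m|^3)$, so over the propagation time $t/h^2$ the linear term carries the classical transport (which equidistributes along the non-resonant subtorus, since $dH(\xi_0)\perp\Lambda$), the quadratic term produces an $O(1)$ phase exactly when $|m|=O(1)$, and the cubic and higher terms are $o(1)$; the lower order term $h^2\mathbf V_h(t)=h^2\Op_h(V(t,\cdot))$ passes to the limit as a bounded, possibly time-dependent, potential on the resonant torus (averaged over the non-resonant directions). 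The two-microlocal calculus then produces, along a subsequence, a measure $\mu^\Lambda_{\mathrm{fin}}(t,\cdot)$ supported where $|m|=O(1)$ and a measure $\mu^\Lambda_\infty(t,\cdot)$ carried by $\IS^{r-1}$ (the blown-up normal direction $\omega$ at infinity), both invariant under translation in $x$ along the non-resonant subtorus, so that their $x$-dependence factors through $\IT_\Lambda:=(\Lam\otimes\IR)/\Lambda$, of dimension $r$. The finite-distance piece evolves as a solution of the Schr\"odinger equation on $\IT_\Lambda$ with the constant-coefficient operator $\tfrac12\la d^2H(\xi_0)D_z,D_z\ra$ (plus the bounded potential), which is definite; being an $L^2(\IT_\Lambda)$-profile evolving under a unitary group, it has absolutely continuous position density. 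The piece at infinity at a direction $\omega$ is transported on $\IT_\Lambda$ along $\Pi_\Lambda(d^2H(\xi_0)\omega)\ne0$, where $\Pi_\Lambda$ is the projection onto $\Lam\otimes\IR$ along $T_{\xi_0}I_\Lambda$: if this vector spans a dense line, the time-average is normalised Lebesgue, and if not, one iterates the construction around the corresponding proper submodule $\Lambda''\subsetneq\Lambda$ of $\IT_\Lambda$, whose effective operator is the restriction of $d^2H(\xi_0)$ to $\Lambda''\otimes\IR$ --- again definite. The descent terminates because $\mathrm{rank}\,\Lambda$ strictly decreases, the base case $r=1$ being the classical statement that solutions of $i\partial_t w=aD_x^2w$ on $\IT^1$ have absolutely continuous position densities. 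Reassembling --- countable sums, tensoring with Lebesgue in the non-resonant directions, and push-forwards, all of which preserve absolute continuity --- gives that $\mu_\xi(t,\cdot)$ is absolutely continuous for $\bar\mu$-a.e.\ $\xi$ at which $d^2H(\xi)$ is definite.

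The step I expect to be the main obstacle is the two-microlocal calculus along the \emph{curved} submanifold $I_\Lambda$: unlike in the case $H(\xi)=|\xi|^2$, where $I_\Lambda=\Lambda^\perp$ is linear, one has to straighten $I_\Lambda$ and control the resulting errors --- in particular the cubic remainder of the Taylor expansion and the non-commutation of the straightening diffeomorphism with $\Op_h$ --- so that they stay negligible over the very long time $t/h^2$. Relatedly, one must check at every stage of the descent that the operator governing the effective dynamics is precisely the restriction of $d^2H(\xi_0)$ to the current resonant subspace; this is where definiteness enters in an essential, non-perturbative manner, and without it --- as the indefinite examples of the Introduction demonstrate --- the descent may bottom out at a degenerate, non-dispersive equation and absolute continuity can genuinely fail.
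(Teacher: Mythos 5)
Your proposal follows essentially the same route as the paper: decompose $\mu$ over primitive resonance submodules, second-microlocalise along $I_\Lambda$ at the scale $\tau_h=1/h$ so as to split the measure into a finite-distance part (which inherits a trace-class-operator-valued structure propagated by the Heisenberg equation for $\frac12 d^2H(\sigma)D_y\cdot D_y+\la V\ra_\Lambda$, whence absolute continuity of its $x$-marginal) and a part at infinity, treated by an iterative descent along strictly decreasing submodules, the strict decrease being exactly what definiteness of $d^2H$ guarantees. The difficulties you single out --- straightening the curved $I_\Lambda$ via the coordinates $(\sigma(\xi),\eta(\xi))$ and identifying the effective operator at each stage as the restriction of $d^2H(\xi_0)$ --- are precisely the ones the paper resolves (together with the Bloch--Floquet parameter $\omega$, which your sketch elides but which is needed to realise the finite-distance limit as a measure valued in trace-class operators on $L^2_\omega(\R^d,\Lambda)$), so this is the paper's own argument.
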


\noindent Let us introduce the closed set
\[
C_{H}:=\left\{  \xi\in\R^{d}:\, d^{2}H(\xi)\text{ is not definite}\right\}  .
\]
The following consequence of Theorem \ref{t:lagrangian} provides a refinement
on Theorem \ref{t:main}(2), in which the global hypothesis on the Hessian of
$H$ is replaced by a hypothesis on the sequence of initial data.

\begin{corollary}
\label{c:mainsharp}Suppose $\nu\in\mathcal{M}\left(  1/h\right)  $ is obtained
through an $h$-oscillating sequence $\left(  u_{h}\right)  $ having a
semiclassical measure $\mu_{0}$ such that $\mu_{0}\left(  \T^{d}\times
C_{H}\right)  =0$. Then $\nu$ is absolutely continuous with respect to $dtdx$.
\end{corollary}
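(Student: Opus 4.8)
The plan is to deduce Corollary~\ref{c:mainsharp} from Theorem~\ref{t:lagrangian} by a disintegration argument, using the fact that under the hypothesis $\mu_0(\T^d\times C_H)=0$ the ``bad'' set of frequencies $C_H$ carries no mass of $\bar\mu$. First I would recall that, as stated just before Theorem~\ref{t:lagrangian}, when $\tau_h=1/h$ and ${\mathbf V}_h(t)=\Op_h(V(t,\cdot))$ with $V$ smooth and bounded, the projection $\bar\mu=(\pi_2)_*\mu(t,\cdot)$ is independent of $t$ and equals $(\pi_2)_*\mu_0$. Hence for every $\theta\in L^1(\R)$,
\[
\int_{\R}\theta(t)\,\bar\mu(C_H)\,dt=\Bigl(\int_\R\theta(t)\,dt\Bigr)(\pi_2)_*\mu_0(C_H)=\Bigl(\int_\R\theta(t)\,dt\Bigr)\mu_0(\T^d\times C_H)=0,
\]
so $\bar\mu(C_H)=0$.

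Next I would write, for $\chi\in\cC^\infty(\T^d)$ and $\theta\in L^1(\R)$, the disintegration identity from Theorem~\ref{t:lagrangian} applied to $f(x,\xi)=\chi(x)$:
\[
\int_{\R}\theta(t)\int_{\T^d}\chi(x)\,\nu(t,dx)\,dt=\int_{\R}\theta(t)\int_{\R^d}\Bigl(\int_{\T^d}\chi(x)\,\mu_\xi(t,dx)\Bigr)\bar\mu(d\xi)\,dt,
\]
where I use that $\nu$ is the projection onto $x$ of $\mu$, i.e. $\int_{\T^d}\chi(x)\nu(t,dx)=\int_{\T^d\times\R^d}\chi(x)\mu(t,dx,d\xi)$. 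Since $\bar\mu(C_H)=0$, the inner $\xi$-integral may be restricted to $\R^d\setminus C_H$, where by definition $d^2H(\xi)$ is definite; Theorem~\ref{t:lagrangian} then gives that $\mu_\xi(t,\cdot)$ is absolutely continuous with respect to $dx$ for $\bar\mu$-a.e. such $\xi$ and a.e.\ $t$. Writing $\mu_\xi(t,dx)=g(t,\xi,x)\,dx$ with $g\ge 0$ measurable and $\int_{\T^d}g(t,\xi,x)\,dx=1$, we obtain
\[
\int_{\T^d}\chi(x)\,\nu(t,dx)=\int_{\T^d}\chi(x)\Bigl(\int_{\R^d\setminus C_H}g(t,\xi,x)\,\bar\mu(d\xi)\Bigr)dx
\]
for a.e.\ $t$, so $\nu(t,\cdot)=\bigl(\int_{\R^d\setminus C_H}g(t,\xi,\cdot)\,\bar\mu(d\xi)\bigr)dx$ is absolutely continuous; a Fubini/Tonelli argument shows the density is in $L^1(\R\times\T^d)$ locally, and in fact $\nu\in L^\infty(\R;L^1(\T^d))$ since $\|\nu(t,\cdot)\|_{L^1}=1$. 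This yields absolute continuity of $\nu$ with respect to $dtdx$.

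I do not expect a serious obstacle here: the corollary is essentially a bookkeeping consequence of the theorem once one knows $\bar\mu(C_H)=0$. The one point deserving care is the measurability of the disintegration map $\xi\mapsto\mu_\xi(t,\cdot)$ jointly in $(t,\xi)$ and the passage to densities, so that the integral $\int_{\R^d\setminus C_H}g(t,\xi,\cdot)\bar\mu(d\xi)$ genuinely defines an $L^1$ density; this is handled by the standard disintegration theorem for measures on Polish spaces (the measurable family of conditional measures exists and is unique $\bar\mu$-a.e.), together with the Radon--Nikodym theorem applied fiberwise on the set where Theorem~\ref{t:lagrangian} guarantees absolute continuity. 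A secondary point is simply to invoke correctly the statement preceding Theorem~\ref{t:lagrangian}, which requires $\tau_h\ll h^{-2}$ — satisfied here since $\tau_h=1/h$ — to ensure $\bar\mu$ is $t$-independent and equals $(\pi_2)_*\mu_0$.
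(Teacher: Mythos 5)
Your argument is correct and is precisely the route the paper intends: the corollary is stated as an immediate consequence of Theorem~\ref{t:lagrangian}, and your two ingredients — that $\bar\mu=(\pi_2)_*\mu_0$ (valid since $\tau_h=1/h\ll h^{-2}$) gives $\bar\mu(C_H)=0$, followed by integrating the $\bar\mu$-a.e.\ absolutely continuous fibers $\mu_\xi(t,\cdot)$ over $\R^d\setminus C_H$ — are exactly the bookkeeping the authors leave implicit. Your use of the $h$-oscillation hypothesis to identify $\nu$ with the $x$-projection of $\mu$ is also the right place to invoke it.
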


\subsection{Second-microlocal structure of the semiclassical measures}\label{sec:1/h}

Theorem \ref{t:lagrangian} is a consequence of a more detailed result on the
structure of the elements of $\mathcal{\widetilde{M}}\left(  1/h\right)  $ on which we focus in this paragraph. We
follow here the strategy of~\cite{AnantharamanMacia} that we adapt to a
general Hamiltonian $H(\xi)$. The proof relies on a decomposition of the
measure associated with the primitive submodules of $(\Z^{d})^*$. Before stating
it, we must introduce some notation.

Recall that $(\R^d)^*$ is the dual of $\R^d$. Later in the paper, we will sometimes identify both by working in the canonical basis of $\R^d$. We will denote by $(\Z^d)^*$ the lattice in $(\R^d)^*$ defined by $(\Z^d)^*=\{\xi\in
(\R^d)^*, \xi.n\in\Z, \:\forall n\in \Z^d\}$.
We call a submodule $\Lambda\subset(\Z^{d})^*$ primitive if $\left\langle
\Lambda\right\rangle \cap(\mathbb{Z}^{d})^*=\Lambda$ (here $\left\langle
\Lambda\right\rangle $ denotes the linear subspace of $(\mathbb{R}^{d})^*$ spanned
by $\Lambda$). Given such a submodule we define:
\begin{equation}
\label{def:ILambda}I_{\Lambda}:=\left\{  \xi\in(\mathbb{R}^{d})^*: dH\left(
\xi\right)\cdot k  =0,\;\forall k\in\Lambda\right\}  .
\end{equation}
We note that $I_{\Lambda}\setminus C_{H}$ is a smooth submanifold.

We define also $L^{p}\left(  \mathbb{T}^{d},\Lambda\right)  $ for $p\in\left[
1,\infty\right]  $ to be the subspace of $L^{p}\left(  \mathbb{T}^{d}\right)
$ consisting of the functions~$u $ such that $\widehat{u}\left(  k\right)  =0$
if $k\in(\IZ^{d})^*\setminus\Lambda$ (here $\widehat{u}\left(  k\right)  $ stand for
the Fourier coefficients of~$u$). Given $a\in\mathcal{C}_{c}^{\infty}\left(
T^{\ast}\mathbb{T}^{d}\right)  $ and $\xi\in\mathbb{R}^{d}$, denote by
$\left\langle a\right\rangle _{\Lambda}\left(  \cdot,\xi\right)  $ the
orthogonal projection of $a\left(  \cdot,\xi\right)  $ on $L^{2}\left(
\mathbb{T}^{d},\Lambda\right)  $:
\begin{equation}
\label{def:aLambda}\langle a\rangle_{\Lambda}(x, \xi)= \sum_{k\in\Lambda} \widehat
a_{k}(\xi) {\frac{\mathrm{e}^{ikx}}{(2\pi)^{d}}}
\end{equation}
Note that if $a$ only has frequencies in $\Lambda$, then $\langle a\rangle_\Lambda=a$.

For $\omega $ in the torus $\la\Lambda %
\ra/\Lambda $, we denote by $L_{\omega }^{2}(\R^{d},\Lambda )$ the
subspace of $L_{\text{loc}}^{2}(\R^{d})\cap \mathcal{S}^{\prime
}(\mathbb{R}^{d})$ formed by the functions whose Fourier transform
is supported in $\Lambda -\omega $.  Each $L_{\omega }^{2}(\R^{d},\Lambda )$ has a natural Hilbert space structure.

We denote by $m_{\left\langle a\right\rangle _{\Lambda}}\left(
\xi\right)  $ the operator acting on each $L^{2}_\omega\left(
\mathbb{R}^{d} ,\Lambda\right)  $ by multiplication by
$\left\langle a\right\rangle _{\Lambda}\left(  \cdot,\xi\right)
$.

\begin{theorem}
\label{t:precise}(1) Let $\mu\in\mathcal{\widetilde{M}}\left(  1/h\right)  $. For
every primitive submodule $\Lambda\subset(\Z^{d})^*$ there exists a positive
measure $\mu_{\Lambda}^{{\rm final}}\in L^{\infty}\left(  \mathbb{R};\mathcal{M}_{+}\left(
T^{\ast}\mathbb{T}^{d}\right)  \right)  $ supported on $\mathbb{T}^{d}\times
I_{\Lambda}$ and invariant by the Hamiltonian flow $\phi_{s}$ such that~: for
every $a\in\mathcal{C}_{c}^{\infty}\left(  T^{\ast}\mathbb{T}^{d}\right)  $
that vanishes on $\T^{d}\times C_{H}$ and every $\theta\in L^{1}\left(
\mathbb{R}\right)  $:
\begin{equation}
\int_{\mathbb{R}}\theta\left(  t\right)  \int_{T^{\ast}\mathbb{T}^{d}}a\left(
x,\xi\right)  \mu\left(  t,dx,d\xi\right)  dt=\sum_{\Lambda\subseteq
\mathbb{Z}^{d}}\int_{\mathbb{R}}\theta\left(  t\right)  \int_{\mathbb{T}
^{d}\times I_{\Lambda}}a\left(  x,\xi\right)  \mu_{\Lambda}^{{\rm final}}\left(
t,dx,d\xi\right)  dt, \label{e:mh1}
\end{equation}
the sum being taken over all primitive submodules of $(\mathbb{Z}^{d})^*$.

\medskip
 In addition, there exists a measure $\bar{\mu}_{\Lambda}(t)$ on $(\la\Lambda\ra/\Lambda)\times I_{\Lambda}$ and a measurable family $\left\{ N_{\Lambda}(t, \omega, \xi) \right\}_{t\in\R, \omega\in\la\Lambda\ra/\Lambda,  \xi \in I_{\Lambda}}$ of non-negative, symmetric, trace-class operators acting on
$L^{2}_\omega\left(  \mathbb{R}^{d},\Lambda\right)  $, such that
the following holds:
\begin{equation}
\int_{\mathbb{T}^{d}\times I_{\Lambda}}a\left(  x,\xi\right)  \mu_{\Lambda
}^{{\rm final}}\left(  t,dx,d\xi\right)  =\int_{(\la\Lambda\ra/\Lambda)\times I_{\Lambda}}\operatorname{Tr}
\left(
m_{\left\langle a\right\rangle_{\Lambda}}\left(  \xi\right)
 N_{\Lambda}(t, \omega, \xi)
 \right)
\bar{\mu}_{\Lambda}(t, d\omega, d\xi)  .
\label{e:mh2}
\end{equation}

(2) If $\mathbf{V}_h(t)=\Op_h(V(t, \cdot))$ with $V\in
\cC^\infty(\R\times T^* \T^d)$, then $\bar{\mu}_{\Lambda}$ does not
depend on $t$, and $N_{\Lambda}(t,\omega, \xi)$ depends
continuously on $t$, and solves the Heisenberg equation labelled
below as (\emph{Heis}$_{\Lambda, \omega, \xi}$).

When the Hessian of $H$ is definite, formula (\ref{e:mh1}) holds
for every $a\in\mathcal{C}_{c}^{\infty}\left( T^{\ast}\mathbb{T}
^{d}\right)  $ and therefore completely describes $\mu$.
\end{theorem}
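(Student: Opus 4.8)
The plan is to follow and generalize the strategy of \cite{AnantharamanMacia}. The starting point is the observation that, since the elements of $\mathcal{\widetilde M}(1/h)$ are $H$-invariant (as noted in the appendix), the measure $\mu(t,\cdot)$ admits, for each fixed $\xi$, a Fourier decomposition in $x$ supported in the resonant directions of $dH(\xi)$. Concretely, for a primitive submodule $\Lambda\subset(\Z^d)^*$ one isolates the part of $\mu$ which "lives at frequencies in $\Lambda$" and is carried by $I_\Lambda$. I would first set up a partition of $(\Z^d)^*$ (equivalently of the frequency side of $T^*\T^d$) according to the primitive submodule generated by the resonances of a given covector $\xi$, mimicking the decomposition $(\R^d)^*=\bigsqcup_\Lambda \Omega_\Lambda$ of \S\ref{s:decompo}: $\Omega_\Lambda$ is the set of $\xi$ whose exact resonance module is $\Lambda$. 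On each piece $I_\Lambda\setminus C_H$ the submanifold structure (noted right before the statement) lets one run a normal-form/second-microlocalization argument transverse to $I_\Lambda$.

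The core is the second microlocalization along $\T^d\times I_\Lambda$. For each $a\in\cC_c^\infty(T^*\T^d)$ vanishing on $\T^d\times C_H$, I would test $\mu(t,\cdot)$ against $a$ and against a two-parameter family of symbols that resolve the scale $h$ transverse to $I_\Lambda$ and a coarser scale on the submodule directions; passing to the limit produces two objects: a "compact-at-infinity" piece and a "concentrating" piece, the latter being a measure on the blown-up space carrying operator-valued data. The operator $N_\Lambda(t,\omega,\xi)$ arises as the second-microlocal Wigner transform of the part of $S_h^{t/h}u_h$ concentrated near $\T^d\times I_\Lambda$ at the relevant scale, disintegrated over $\omega\in\la\Lambda\ra/\Lambda$ (the quasi-momentum attached to the Fourier support condition defining $L^2_\omega(\R^d,\Lambda)$) and over $\xi\in I_\Lambda$; that it is non-negative, symmetric and trace-class follows from the standard positivity/normalization properties of Wigner transforms, exactly as in the flat case. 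Summing the contributions of all $\Lambda$ and absorbing the lower-dimensional overlaps (which have zero mass, since the $\Omega_\Lambda$ partition $(\R^d)^*$) yields \eqref{e:mh1}, and the disintegration identity \eqref{e:mh2} is then just the definition of $N_\Lambda$ together with the fact that $\langle a\rangle_\Lambda$ is the symbol seen by the second-microlocal operator (averaging in $x$ over the directions transverse to $\la\Lambda\ra$, because those oscillations are killed in the limit). Invariance of $\mu_\Lambda^{\mathrm{final}}$ under $\phi_s$ and its support in $\T^d\times I_\Lambda$ are inherited from the $H$-invariance of $\mu$ and from the construction.

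For part (2): when $\mathbf V_h(t)=\Op_h(V(t,\cdot))$, the subprincipal term contributes at exactly the order $h^2\cdot h^{-1}=h$ after time rescaling by $1/h$, i.e. at the same order as the second-microlocal variable, so it survives in the limiting equation for $N_\Lambda$ but not in the equation for $\bar\mu_\Lambda$; hence $\bar\mu_\Lambda$ is $t$-independent. The evolution of $N_\Lambda(t,\omega,\xi)$ is obtained by differentiating the second-microlocal Wigner transform in $t$ and using the Schr\"odinger equation \eqref{e:eq}: the free part $H(hD_x)$ produces, after the transverse blow-up and because $dH$ vanishes on $\Lambda$ along $I_\Lambda$, an effective Hamiltonian quadratic in the transverse variable (governed by $d^2H(\xi)$ restricted to $\la\Lambda\ra$), while $h^2\mathbf V_h$ produces the multiplication by $V$; together they give the Heisenberg equation (\emph{Heis}$_{\Lambda,\omega,\xi}$). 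Continuity in $t$ of $N_\Lambda$ follows from this equation together with the uniform trace-norm bound. Finally, when $d^2H(\xi)$ is definite everywhere, $C_H=\emptyset$ and the restriction "$a$ vanishes on $\T^d\times C_H$" is vacuous, so \eqref{e:mh1} holds for all $a\in\cC_c^\infty(T^*\T^d)$ and thus determines $\mu$ completely.

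The main obstacle I expect is the second-microlocal propagation/normal-form step producing (\emph{Heis}$_{\Lambda,\omega,\xi}$) for a general $H$: unlike the case $H(\xi)=|\xi|^2$, where the transverse Hessian is constant and the relevant reduced dynamics is explicit, here one must justify that, on $I_\Lambda\setminus C_H$, a smooth normal form makes $H$ look like (linear along $I_\Lambda$) $+$ (nondegenerate quadratic transverse), control the remainder uniformly, and check that the resulting $\xi$-dependent family of operators $N_\Lambda(t,\omega,\xi)$ is genuinely measurable in $\xi$ and trace-class with a locally uniform bound. Handling the boundary $C_H$ (where the normal form degenerates) is precisely why the test functions $a$ are required to vanish there, and why part of the measure — the "first-microlocal" remainder on $\T^d\times C_H$ — is not described by \eqref{e:mh2}.
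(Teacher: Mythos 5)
Your overall architecture (resonant partition by primitive submodules, a second microlocalization along $\T^d\times I_\Lambda$ at scale $h$, an operator-valued limit object yielding the trace formula, and a Heisenberg propagation law coming from the $h^2$ perturbation surviving at time scale $1/h$) matches the paper's Sections \ref{sec:reson}--\ref{s:finitedistance}. But there is a genuine gap at the heart of your argument: a \emph{single} second microlocalization does not suffice to prove \eqref{e:mh1}--\eqref{e:mh2}. When you split the Wigner distribution near $\T^d\times I_\Lambda$ into a piece at finite distance in the transverse variable $\eta$ and a piece homogeneous at infinity, only the finite-distance piece $\tilde\mu_\Lambda$ is captured by the operator-valued object $N_\Lambda$ and the trace formula. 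The piece at infinity, $\tilde\mu^\Lambda$, is \emph{not} negligible and is \emph{not} "absorbed": it is a nontrivial positive measure on the sphere at infinity $\mathbb{S}\la\Lambda\ra$, invariant under the two flows $\phi_s^0$ and $\phi_s^1$, and it carries in general a positive fraction of the mass of $\mu\rceil_{\T^d\times R_\Lambda}$. Your sketch gives no mechanism for describing it, so \eqref{e:mh1} as you derive it would be missing exactly this part.

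The paper's resolution (Section \ref{s:successive}) is an iterative procedure: $\tilde\mu^{\Lambda_1}$ is itself decomposed according to primitive submodules $\Lambda_2\subset\Lambda_1$ determined by the resonances of the \emph{second} Hamiltonian $\eta\mapsto d^2H(\sigma)\eta$, and one performs a further microlocalization in a new variable $\eta_2$, producing again a finite-distance part (which contributes a trace formula on $L^2_\omega(\R^d,\Lambda_2)$) and a part at infinity, and so on. The induction terminates because definiteness of $d^2H$ on the relevant subspaces forces the chain $\Lambda_1\supset\Lambda_2\supset\cdots$ to be strictly decreasing, and when the rank reaches $1$ or $0$ the invariances force the remaining measure to be constant in $x$ (Remark \ref{r:nice}). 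The measure $\mu_\Lambda^{\rm final}$ in \eqref{e:mh1} is then a sum over all strictly decreasing chains ending at $\Lambda$, which is also why the hypothesis on $d^2H(\xi)$ (or Assumption \ref{ass:A}) enters: it guarantees the splitting $\la\Lambda_k\ra=\la\Lambda_{k+1}\ra\oplus(d^2H(\xi)\cdot\la\Lambda_{k+1}\ra^\perp\cap\la\Lambda_k\ra)$ needed at each stage. Without this hierarchy of successive microlocalizations your argument cannot close, and the role of $C_H$ is precisely that the iteration breaks down there.
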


\begin{remark}
The arguments in Section 6.1 of \cite{AnantharamanMacia} show that Theorem
\ref{t:lagrangian} is a consequence of Theorem~\ref{t:precise}. Therefore, in
this article only the proof of Theorem \ref{t:precise} will be presented.
\end{remark}

Theorem \ref{t:precise} has been proved for $H\left(  \xi\right)  =\left\vert
\xi\right\vert ^{2}$ in \cite{MaciaTorus} for $d=2$ and in
\cite{AnantharamanMacia} for the $x$-dependent Hamiltonian $\left\vert
\xi\right\vert ^{2}+h^{2}V\left(  x\right)  $ in arbitrary dimension  (in these papers the parameter~$\omega$ does not appear, and all measures have Dirac masses at $\omega=0$).

The measures $\mu_{\Lambda}^{{\rm final}}$ in equation \eqref{e:mh1} are obtained as the final step of an interative procedure that involves a process of successive microlocalizations along nested
sequences of submanifolds in frequency space. 

Theorem \ref{t:precise}(2) allows to describe the dependence of
$\mu$ on the parameter $t$. This is a subtle issue since, as was noticed in \cite{MaciaAv,
MaciaTorus}, the semiclassical measures of the sequence of initial
data $\left(  u_{h}\right) $ do not determine uniquely the time
dependent semiclassical measure~$\mu$. Thus, when $\mathbf{V}_h(t)=\Op_h(V(t, \cdot))$ with
$V\in \cC^\infty(\R\times T^* \T^d)$, the measure $\mu_{\Lambda}^{{\rm final}}\left(
t,dx,d\xi\right)  $  is fully
determined by the measures $\bar{\mu}_{\Lambda}$ and the family of operators
$N_{\Lambda}(0,\omega, \xi)$, which are objects determined by
the initial data $(u_{h})$. The $N_{\Lambda}(t,\omega, \xi)$ are obtained
from $N_{\Lambda}(0,\omega, \xi)$ by propagation along a Heisenberg equation (Heis$_{\Lambda, \omega, \xi}$), written in Theorem \ref{prop:opvame}, which is the evolution equation of operators that
comes from the following Schr\"odinger equation in $L^2_\omega(\R^d,\Lambda)$~:
\begin{equation}\tag{{\rm S}$_{\Lambda, \omega, \xi}$}
i\partial _{t}v=\left( \frac{1}{2}d^{2}H(\xi )D_{y}\cdot D_{y}+\la
V(\cdot ,\xi )\ra_{\Lambda }\right) v.  \label{e:schrodLam}
\end{equation}%
This process gives
an explicit construction of $\mu$ in terms of the initial data.
Full details on the structure of these objects are provided in
Sections \ref{s:finitedistance} and \ref{s:successive}.

Theorem \ref{t:precise} is stated for the time scale $\tau_{h}=1/h$; if $\tau _{h}\ll1/h$, the elements of
$\mathcal{\widetilde{M}}\left(  \tau\right)  $ can also be described by
a similar result (see Section \ref{s:rec}) involving expression
(\ref{e:mh1}). However, in that case, the propagation law  involves classical transport
rather than propagation along a Schr\"{o}dinger flow, and as a
result Theorem \ref{t:main}(2) does not hold for $\tau_{h}\ll1/h$.

Second microlocalisation has been used in the 80's for studying
propagation of singularities (see \cite{B,BLer,De,Leb}). The
two-microlocal construction performed here is in the spirit of
that done in \cite{NierScat, Fermanian2micro, FermanianShocks} in
Euclidean space in the context of semi-classical measures. We also
refer the reader to the articles \cite{VasyWunsch09, VasyWunsch11,
Wunsch10} for related work regarding the study of the wave-front
set of solutions to semiclassical integrable systems.

When the Hessian of $H$ is constant Theorem \ref{t:precise} gives a complement
to the results announced in \cite{AnantharamanMacia} (where the argument was
only valid when the Hessian has rational coefficients).

\subsection{Hierarchy of time scales\label{subsec:hierarchy}}

In this section, we discuss the dependence of
the set $\mathcal{M}\left(  \tau\right)  $ on the time scale $\tau$. The following proposition allows to derive Theorem~\ref{t:main}(2) for $\tau_h\gg 1/h$ from the result about $\tau_h=1/h$. Denote by $\mathcal{M}_{\text{av}}\left(  \tau\right)  $ the subset of
$\mathcal{P}\left(  \mathbb{T}^{d}\right)  $ consisting of measures of the
form:
\[
\int_{0}^{1}\nu\left(  t,\cdot\right)  dt,\quad\text{where }\nu\in
\operatorname{Conv}\mathcal{M}\left(  \tau\right)  \text{.}
\]
where $\operatorname{Conv}X$ stands for the convex hull of a set
$X\subset L^{\infty}\left(  \mathbb{R};\mathcal{P}\left(
\mathbb{T}^{d}\right)  \right)  $ with respect to the weak-$\ast$
topology.
We have the following result.
\begin{proposition}
\label{prop:hierarchy1}
Suppose $\left(  \tau_{h}\right)  $ and $\left(  \tau'_{h}\right)   $ are time
scales tending to infinity and such that $\tau'_{h}\ll\tau_{h}$. Then:
\[
\mathcal{M}\left(  \tau\right)
\subseteq L^{\infty}\left(  \mathbb{R};\mathcal{M}_{\mathrm{av}}\left(
\tau'\right)  \right)  .
\]
\end{proposition}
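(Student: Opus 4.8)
The statement is a purely soft "averaging" fact: a slow time scale $\tau'_h \ll \tau_h$ cannot see anything finer than the average of the $\tau'$-dynamics over a unit window, so accumulation points for the $\tau$-scale must, at each macroscopic time $t$, lie in $\mathcal{M}_{\mathrm{av}}(\tau')$. The plan is to take $\nu \in \mathcal{M}(\tau)$, realize it through a sequence $(u_{h_n})$ as in \eqref{e:averagelim}, and then re-read the same sequence, at the \emph{same} times, through the lens of the finer scale $\tau'$. Concretely, fix $a<b$ and $\chi \in \mathcal{C}(\T^d)$; I would partition $[\tau_{h}a,\tau_{h}b]$ into $\sim (b-a)\tau_h/\tau'_h$ consecutive sub-intervals of length $\tau'_h$, say $[\tau'_h m, \tau'_h(m+1)]$ for $m$ ranging over an index set $J_h$ of cardinality $\sim (b-a)\tau_h/\tau'_h$. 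Then
\[
\frac{1}{\tau_{h}(b-a)}\int_{\tau_h a}^{\tau_h b}\!\!\int_{\T^d}\chi(x)\,|S_h^{t}u_h(x)|^2\,dx\,dt
\approx \frac{1}{\#J_h}\sum_{m\in J_h}\Bigl(\frac{1}{\tau'_h}\int_{\tau'_h m}^{\tau'_h(m+1)}\!\!\int_{\T^d}\chi(x)\,|S_h^{t}u_h(x)|^2\,dx\,dt\Bigr),
\]
with an error $O(\tau'_h/\tau_h)\to 0$ coming from the boundary intervals (using $\|\chi\|_\infty$ and $\|u_h\|_{L^2}=1$). Each inner average is, by definition, the pairing of $\chi$ against $\nu_h(\tau'_h\,\cdot\,,dx)$ over the unit interval $[m,m+1]$; translating the time origin by $\tau'_h m$ (i.e. replacing $u_h$ by $S_h^{\tau'_h m}u_h$, still normalized and $h$-oscillating) shows each such term is the pairing of $\chi$ against a measure which, along a suitable subsequence, converges into $\mathcal{M}_{\mathrm{av}}(\tau')$. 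Hence the right-hand side is a limit of convex combinations (an average over $m\in J_h$) of such pairings, so it lands in the closed convex hull of $\{\langle\chi,\rho\rangle : \rho\in\mathcal{M}_{\mathrm{av}}(\tau')\}$.

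The technical heart is to make the "along a suitable subsequence" uniform enough to conclude for $\nu$ itself. I would proceed by a diagonal/compactness argument: $\mathcal{M}_{\mathrm{av}}(\tau')$ is a subset of $\mathcal{P}(\T^d)$, which is weak-$*$ compact and metrizable; by Proposition~\ref{prop:hierarchy1}'s hypotheses $\mathcal{M}(\tau')$ — hence $\operatorname{Conv}\mathcal{M}(\tau')$ and then $\mathcal{M}_{\mathrm{av}}(\tau')$ — is a closed, convex, weak-$*$ compact subset (closedness of $\mathcal{M}(\tau')$ under weak-$*$ limits is the standard extraction-of-subsequence fact recalled in the introduction; convexity and compactness of the averaged set then follow). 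So it suffices to show: for a countable dense family of test functions $\chi_\ell$ and rationals $a<b$, the pairing $\int_a^b\int_{\T^d}\chi_\ell\,\nu(t,dx)\,dt/(b-a)$ belongs to $\{\langle\chi_\ell,\rho\rangle:\rho\in\mathcal{M}_{\mathrm{av}}(\tau')\}$ — equivalently, that $\frac{1}{b-a}\int_a^b\nu(t,\cdot)\,dt \in \mathcal{M}_{\mathrm{av}}(\tau')$ for all rational $a<b$ — and then a Lebesgue-differentiation argument in $t$ upgrades this to $\nu(t,\cdot)\in\mathcal{M}_{\mathrm{av}}(\tau')$ for a.e. $t$, which is exactly the claimed inclusion $\mathcal{M}(\tau)\subseteq L^\infty(\R;\mathcal{M}_{\mathrm{av}}(\tau'))$. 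For the first of these, the displayed decomposition above exhibits $\frac{1}{b-a}\int_a^b\int\chi\,\nu\,dt$ as a limit of averages $\frac{1}{\#J_{h_n}}\sum_{m} \langle\chi,\,\text{(translated unit-window average of }|S_{h_n}^{\tau'_{h_n}t}v_{h_n,m}|^2)\rangle$ with $v_{h_n,m}=S_{h_n}^{\tau'_{h_n}m}u_{h_n}$; by a further diagonal extraction in $n$ (and, if needed, along $m$ via a second compactness argument, since all the unit-window averages live in the fixed compact set $\mathcal{M}_{\mathrm{av}}(\tau')$) each summand converges into $\mathcal{M}_{\mathrm{av}}(\tau')$, an average of points of a compact convex set is again in that set, and weak-$*$ limits of such averages stay in it.

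I expect the main obstacle to be bookkeeping the two limits ($h\to 0$ and the partition into $\tau'_h$-windows) so that they genuinely commute after extraction — in particular, ensuring that the convergence of the individual unit-window averages $\nu_{h}(\tau'_h(\cdot + m),dx)$ need not be uniform in $m$, which is why I route everything through the \emph{compactness} of $\mathcal{M}_{\mathrm{av}}(\tau')$ rather than trying to control each window separately: once one knows each window average lies within $o(1)$ of the compact set $\mathcal{M}_{\mathrm{av}}(\tau')$ (uniformly in $m$, which does follow from a uniform-in-$m$ restatement of \eqref{e:averagelim} and the definition of $\mathcal{M}(\tau')$ applied to the shifted data), the arithmetic average over $m$ is automatically within $o(1)$ of $\mathcal{M}_{\mathrm{av}}(\tau')$ by convexity, and the limit is in it by closedness. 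A secondary point requiring care is the $h$-oscillation and normalization of the shifted data $S_{h}^{\tau'_h m}u_h$: normalization is immediate from unitarity of $S_h^t$, and $h$-oscillation is preserved because $S_h^t$ commutes with $H(hD_x)$ up to the bounded perturbation $h^2\mathbf V_h$, so the frequency localization \eqref{e:hosc} is stable (this is precisely the kind of observation already used implicitly in the introduction). Once these are in place, the proof is genuinely soft and uses no dynamical input beyond unitarity.
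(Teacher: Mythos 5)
Your argument is correct and is essentially the paper's own proof: the partition of $[\tau_h a,\tau_h b]$ into windows of length $\tau'_h$, each rewritten as a unit-window average of the $\tau'$-rescaled dynamics with time-shifted (still normalised and $h$-oscillating) data, is exactly Lemma~\ref{l:mesint}; your convexity-plus-compactness step is Lemma~\ref{l:cconv}, where the paper uses Hahn--Banach separation instead of a weak-$\ast$ metric argument (an equivalent device); and the passage from interval averages to a.e.\ $t$ via Lebesgue differentiation and sequential compactness of $\operatorname{Conv}\mathcal{M}(\tau')$ is the proof of Proposition~\ref{p:convsm}. No gaps.
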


\medskip

It is also important to clarify the link between the time-dependent Wigner distributions and
those associated with eigenfunctions. Eigenfunctions are the most commonly
studied objects in the field of quantum chaos, however, we shall see that they do
not necessarily give full information about the time-dependent Wigner distributions.
For the sake of simplicity, we state the results that follow in
the case $\mathbf{V}_h(t)=0$, although they easily generalise to the
case in which $\mathbf{V}_h(t)$ does not depend on $t$. Start noting
that the spectrum of $H\left( hD_{x}\right)$ coincides with
$H\left( h\mathbb{Z}^{d}\right) $; given $E_{h}\in
\operatorname{sp}(H\left( hD_{x}\right))$ the corresponding
normalised eigenfunctions are of the form:
\begin{equation}
u_{h}\left(  x\right)  =\sum_{H\left(  hk\right)  =E_{h}}c_{k}^{h}e^{ik\cdot
x},\quad\text{with}\quad\sum_{k\in\mathbb{Z}^{d}}\left\vert c_{k}
^{h}\right\vert ^{2}=\frac{1}{\left(  2\pi\right)  ^{d}}. \label{e:neigf}
\end{equation}
In addition, one has:
\[
\nu_{h}\left(  \tau_{h}t,\cdot\right)  =\left\vert S_{h}^{\tau_{h}t}
u_{h}\right\vert ^{2}=\left\vert u_{h}\right\vert ^{2},
\]
independently of $\left(  \tau_{h}\right)  $ and $t$. Let us denote by
$\mathcal{M}\left(  \infty\right)  $ the set of accumulation points in
$\mathcal{P}\left(  \mathbb{T}^{d}\right)  $ of sequences $\left\vert
u_{h}\right\vert ^{2}$ where $\left(  u_{h}\right)  $ varies among all
possible $h$-oscillating sequences of normalised eigenfunctions (\ref{e:neigf}), we have
$$
\mathcal{M}\left(  \infty\right)  \subseteq
\mathcal{M}\left(  \tau\right).$$
As a consequence of  Theorem~\ref{t:main}, we obtain the following result.

\begin{corollary}
All eigenfunction
limits $\mathcal{M}\left(  \infty\right)  $ are absolutely continuous under
the definiteness assumption on the Hessian of $H$.
\end{corollary}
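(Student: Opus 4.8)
The plan is to deduce this as an immediate consequence of Theorem~\ref{t:main}(2) together with the inclusion $\mathcal{M}(\infty)\subseteq\mathcal{M}(\tau)$, valid for every time scale $\tau$. Fix an $h$-oscillating sequence $(u_h)$ of normalised eigenfunctions of $H(hD_x)$ as in \eqref{e:neigf}, say $H(hD_x)u_h=E_h u_h$, and suppose (passing to a subsequence) that $|u_h|^2\to\nu$ weakly-$\ast$ in $\mathcal{P}(\mathbb{T}^d)$. The goal is to show $\nu$ is absolutely continuous with respect to Lebesgue measure on $\mathbb{T}^d$. Since $\mathbf{V}_h(t)=0$, we have $S_h^t u_h=e^{-itE_h/h}u_h$, hence $\nu_h(\tau_h t,\cdot)=|S_h^{\tau_h t}u_h|^2=|u_h|^2$ for every $t$ and every choice of time scale. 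Choosing $\tau_h=1/h$, the constant-in-$t$ map $t\mapsto\nu$ therefore satisfies \eqref{e:averagelim} trivially (the time average of a constant equals that constant), so $t\mapsto\nu$ belongs to $\mathcal{M}(1/h)$.

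Next, apply Theorem~\ref{t:main}(2): under the hypothesis that $d^2H(\xi)$ is definite for all $\xi$, one has $\mathcal{M}(1/h)\subseteq L^\infty(\mathbb{R};L^1(\mathbb{T}^d))$. Thus the element $t\mapsto\nu$ is absolutely continuous with respect to $dt\,dx$ on $\mathbb{R}\times\mathbb{T}^d$, i.e.\ it can be written $F(t,x)\,dt\,dx$ with $F\in L^\infty(\mathbb{R};L^1(\mathbb{T}^d))$. Comparing this with the fact that the measure does not depend on $t$ and using Fubini's theorem, for a.e.\ $t_0\in\mathbb{R}$ one gets $\nu(dx)=F(t_0,x)\,dx$ with $F(t_0,\cdot)\in L^1(\mathbb{T}^d)$; in particular $\nu$ is absolutely continuous. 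This proves $\mathcal{M}(\infty)\subseteq L^1(\mathbb{T}^d)$.

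The argument is formal once Theorem~\ref{t:main}(2) (equivalently, the structural Theorem~\ref{t:precise}) is available, so there is no substantial obstacle; the only point meriting a word of care is the passage from absolute continuity in the pair $(t,x)$ to absolute continuity in $x$ alone, which is precisely where the stationarity of eigenfunctions — the independence of $\nu_h(\tau_h t,\cdot)$ from $t$ — enters. Alternatively, one may invoke Corollary~\ref{c:mainsharp} directly: the definiteness assumption gives $C_H=\emptyset$, so any semiclassical measure $\mu_0$ of $(u_h)$ trivially satisfies $\mu_0(\mathbb{T}^d\times C_H)=0$, and the conclusion follows at once.
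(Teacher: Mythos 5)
Your proof is correct and follows exactly the route the paper intends: the observation that eigenfunction densities are stationary gives $\mathcal{M}(\infty)\subseteq\mathcal{M}(1/h)$, and Theorem~\ref{t:main}(2) (or Corollary~\ref{c:mainsharp} with $C_H=\emptyset$) then yields absolute continuity. The paper states this as an immediate consequence without further detail, so your filling in of the Fubini step is a faithful elaboration of the same argument.
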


A time scale of special importance is the one related to the minimal spacing of
eigenvalues~: define
\begin{equation}
\tau_{h}^{H}:=h\sup\left\{  \left\vert E_{h}^{1}-E_{h}^{2}\right\vert
^{-1}\;:\;E_{h}^{1}\neq E_{h}^{2},\;E_{h}^{1},E_{h}^{2}\in H\left(
h\mathbb{Z}^{d}\right)  \right\}  . \label{e:tauh}
\end{equation}
It is possible to have $\tau_{h}^{H}=\infty$: for instance, if $H\left(
\xi\right)  =\left\vert \xi\right\vert ^{\alpha}$ with $0<\alpha<1$ or
$H\left(  \xi\right)  =\xi\cdot A\xi$ with $A$ a real symmetric matrix that is
not proportional to a matrix with rational entries (this is the
content of the Oppenheim conjecture, settled by Margulis \cite{MargulisDani,
MargulisOpp}). In some other situations, such as $H\left(  \xi\right)
=\left\vert \xi\right\vert ^{\alpha}$ with $\alpha>1$, (\ref{e:tauh}) is
finite~:\ $\tau_{h}^{H}=h^{1-\alpha}$.

\begin{proposition}
\label{p:conv} If $\tau_{h}\gg\tau_{h}^{H}$ one has:
\[
\mathcal{M}\left(  \tau\right)  =\operatorname{Conv}\mathcal{M}
\left(  \infty\right)  .
\]

\end{proposition}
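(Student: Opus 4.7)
The plan is to establish both inclusions via the spectral decomposition of the propagator $S_h^t=e^{-itH(hD_x)/h}$, using that the hypothesis $\tau_h\gg\tau_h^H$ forces the nonzero eigenvalue gaps of $H(hD_x)/h$ to be much larger than $1/\tau_h$, so that time averages over $[\tau_h a,\tau_h b]$ select the diagonal of the spectral resolution.

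For the inclusion $\mathcal{M}(\tau)\subseteq\operatorname{Conv}\mathcal{M}(\infty)$, I take $\nu\in\mathcal{M}(\tau)$ obtained from an $h$-oscillating normalised sequence $(u_h)$ and write $u_h=\sum_{k\in\Z^d}a_k^h\,e^{ik\cdot x}$. Testing against $\varphi\in L^1(\R)$ and $\chi\in \cC^\infty(\T^d)$, a direct Fourier computation yields
\[
\frac{1}{\tau_h}\int_{\R}\varphi(t/\tau_h)\int_{\T^d}\chi(x)\,|S_h^tu_h(x)|^2\,dx\,dt=\sum_{k,n}\overline{a_{k+n}^h}\,a_k^h\,\hat\chi(n)\,\widehat\varphi\!\left(-\tau_h\tfrac{H(h(k+n))-H(hk)}{h}\right).
\]
The frequency in the last factor either vanishes (when $H(hk)=H(h(k+n))$) or is bounded below in absolute value by $\tau_h/\tau_h^H\to\infty$, by the very definition of $\tau_h^H$. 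By Riemann--Lebesgue, $\widehat\varphi\in C_0(\R)$, and the smoothness of $\chi$ gives $\hat\chi\in\ell^1(\Z^d)$, so the off-diagonal contribution is bounded by $\varepsilon_h\,\|a^h\|_{\ell^2}^2\,\|\hat\chi\|_{\ell^1}$ with $\varepsilon_h\to 0$. In the limit, $\nu$ is therefore independent of $t$ and coincides with the weak-$\ast$ limit of the diagonal density $\sum_E\alpha_E^h\,\tilde\mu_E^h$, where $\alpha_E^h=\|\Pi_Eu_h\|^2$ and $\tilde\mu_E^h=\bigl|\Pi_Eu_h/\|\Pi_Eu_h\|\bigr|^2dx$ is the probability density of a normalised eigenfunction of $H(hD_x)$.

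To recognise the limit as an element of $\operatorname{Conv}\mathcal{M}(\infty)$, I would lift to probability measures on $\mathcal{P}(\T^d)$: set $\rho^h=\sum_E\alpha_E^h\,\delta_{\tilde\mu_E^h}$, extract a weak-$\ast$ accumulation point $\rho$, and identify the limit with the barycentre $\int\mu\,d\rho(\mu)$. To ensure $\operatorname{supp}(\rho)\subseteq\mathcal{M}(\infty)$, one needs the eigenfunction sequences that appear to be themselves $h$-oscillating; this is achieved by preliminary truncation via the spectral projector $P_R=\mathbf{1}_{[0,R]}(-h^2\Delta)$ (which commutes with $H(hD_x)$ as both are functions of $D_x$). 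Replacing $u_h$ by $P_Ru_h$ forces every $\Pi_Eu_h$ to have Fourier support in $\{|hk|\le\sqrt R\}$, making each $\tilde\mu_E^h$ manifestly $h$-oscillating; since $\|u_h-P_Ru_h\|\to 0$ uniformly in $h$ as $R\to\infty$, a diagonal extraction concludes.

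For the reverse inclusion, the trivial containment $\mathcal{M}(\infty)\subseteq\mathcal{M}(\tau)$ follows from $|S_h^tu_h|^2=|u_h|^2$ whenever $u_h$ is an eigenfunction. To realise a convex combination $\alpha\nu_1+(1-\alpha)\nu_2$ of two elements of $\mathcal{M}(\infty)$, I form $u_h=\sqrt\alpha\,u_h^{(1)}+\sqrt{1-\alpha}\,u_h^{(2)}$ from eigenfunctions with \emph{distinct} eigenvalues (which guarantees orthogonality, hence $L^2$-normalisation): the cross-term in $|S_h^tu_h|^2$ oscillates at frequency at least $1/\tau_h^H$, so its time-average over $[\tau_h a,\tau_h b]$ is $O(\tau_h^H/\tau_h)\to 0$. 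The argument extends to finite, then countable convex combinations, and closedness of the set of limits in the weak-$\ast$ topology delivers the full closed convex hull. The main obstacle is the barycentre step in the first direction: one must control, uniformly in $h$, how the $h$-oscillation of $(u_h)$ is distributed among the many eigenspaces and rule out pathological concentration in $E$; the frequency truncation $P_R$ is the key technical device that makes this possible.
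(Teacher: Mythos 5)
Your forward inclusion follows the same core mechanism as the paper (Section 7, Proposition \ref{p:muconv} and its corollary): write the time-averaged Wigner/position density in the eigenbasis, observe that the off-diagonal terms carry frequencies $\tau_h|H(hk)-H(hj)|/h\geq\tau_h/\tau_h^H\to\infty$, and conclude that only the diagonal $\sum_E\|P_Eu_h\|^2\,\tilde\mu_E^h$ survives. The paper tests against $\theta$ with compactly supported Fourier transform so the off-diagonal terms vanish exactly for small $h$, while you use Riemann--Lebesgue for general $\varphi\in L^1$; these are interchangeable. Where you genuinely diverge is the identification of the limit as an element of the closed convex hull: the paper's Lemma \ref{l:cconv} runs a Hahn--Banach separation argument directly on the convex combinations of Wigner distributions, whereas you lift to $\mathcal{P}(\mathcal{P}(\T^d))$, extract a limit of $\rho^h=\sum_E\alpha_E^h\delta_{\tilde\mu_E^h}$, and take a barycentre. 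Both work (your barycentre step ultimately also rests on a separation argument, plus a portmanteau argument to place $\supp\rho$ inside $\mathcal{M}(\infty)$), so this is a cosmetic rather than substantive difference. Your $P_R$ truncation to guarantee that the eigenfunction pieces are themselves $h$-oscillating is a point the paper glosses over when passing from $\widetilde{\mathcal{M}}(\infty)$ (defined without $h$-oscillation) to $\mathcal{M}(\infty)$ (defined with it); making it explicit is a genuine improvement in rigour.

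The weak point is the reverse inclusion, which the paper dispatches with the single sentence that it follows ``by reversing the steps.'' Your construction $u_h=\sqrt{\alpha}\,u_h^{(1)}+\sqrt{1-\alpha}\,u_h^{(2)}$ silently assumes two things that are not granted for arbitrary $\nu_1,\nu_2\in\mathcal{M}(\infty)$: (i) that the two eigenfunction families realising $\nu_1$ and $\nu_2$ can be taken along a \emph{common} subsequence $h_n\to0$ (the defining subsequences could a priori be disjoint, in which case there is no single value of $h$ at which both densities are simultaneously close to their limits), and (ii) that their eigenvalues can be taken \emph{distinct}; if $E_h^{(1)}=E_h^{(2)}$ along the common subsequence, orthogonality fails, $u_h$ is not normalised, and the cross term $\Re\bigl(u_h^{(1)}\overline{u_h^{(2)}}\bigr)$ does not oscillate in time, so it is not killed by the averaging. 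Note that the diagonal reduction from the forward direction shows that elements of $\mathcal{M}(\tau)$ are limits of sums $\sum_E\alpha_E^h\tilde\mu_E^h$ containing \emph{one} density per eigenvalue, so the equal-eigenvalue case cannot simply be absorbed into the same scheme. You should either explain how to reduce to the case of distinct eigenvalues and a common subsequence, or acknowledge that this point requires an additional argument; as written, the step would fail for a pair $\nu_1,\nu_2$ realised only by eigenfunctions sharing the same eigenvalue.
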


This result is a consequence of the more general results presented in Section
\ref{s:hierarchy}.

Note that Proposition \ref{p:conv} allows to complete the description of
$\mathcal{M}\left(  \tau\right)  $ in the case $H\left(  \xi\right)
=\left\vert \xi\right\vert ^{2}$ as the time scale varies.

\begin{remark}
Suppose $H\left(  \xi\right)  =\left\vert \xi\right\vert ^{2}$, or more
generally, that $\tau_{h}^{H}\sim1/h$ and the Hessian of $H$ is definite.
Then:
\[
\begin{array}
[c]{ll}
\text{if }\tau_{h}\ll1/h, & \exists\nu\in\mathcal{M}\left(  \tau\right)
\text{ such that }\nu\perp dtdx;\smallskip\\
\text{if }\tau_{h}\sim1/h, & \mathcal{M}\left(  \tau\right)  \subseteq
L^{\infty}\left(  \mathbb{R};L^{1}\left(  \mathbb{T}^{d}\right)  \right)
;\smallskip\\
\text{if }\tau_{h}\gg1/h & \mathcal{M}\left(  \tau\right) =
\operatorname{Conv}\mathcal{M}\left(  \infty\right)  .
\end{array}
\]

\end{remark}

Finally, we point out that in this case the regularity of semiclassical measures
can be precised. The elements in $\mathcal{M}\left( \infty\right)$
are trigonometric polynomials when $d=2$, as shown in \cite{JakobsonTori97}; and in general they are more regular than
merely absolutely continuous, see \cite{Aissiou, JakobsonTori97,
JakNadToth01}. The same phenomenon occurs with those elements in
$\mathcal{M}\left( 1/h\right)$ that are obtained through sequences
whose corresponding semiclassical measures do not charge $\{ \xi=0
\}$, see \cite{AJM11}.

\subsection{Application to semiclassical and non-semiclassical observability estimates}

As was already shown in \cite{AnantharamanMacia} for the case
$H\left( \xi \right) =\left\vert \xi \right\vert ^{2}$ the
characterization of the structure of the elements in
$\mathcal{M}\left( 1/h\right) $ implies quantitative, unique
continuation-type estimates for the solutions of the
Schr\"{o}dinger equation (\ref{e:eq}) known as observability
inequalities. This is the case again in this setting; here we
shall prove the following result.

\begin{theorem}
\label{t:semiclassicalObs}Let $U\subset \mathbb{T}^{d}$ open and nonempty, $%
T>0$ and $\chi \in \cC_{c}^{\infty }(\mathbb{R}^{d})$ such that
$\supp\chi \cap C_{H}=\emptyset $. Assume that
$\mathbf{V}_h(t)=\Op_h(V(t, \cdot))$ with $V\in \cC^\infty(\R\times
T^* \T^d)$ bounded. Then the following are equivalent:\smallskip

\noindent i) \emph{Semiclassical observability estimate}. There exists $C=C(U,T,\chi
)>0$ and $h_{0}>0$ such that:%
\begin{equation}
\left\Vert \chi \left( hD_{x}\right) u\right\Vert _{L^{2}(\mathbb{T}%
^{d})}^{2}\leq C\int_{0}^{T}\int_{U}\left\vert S_{h}^{t/h}\chi \left(
hD_{x}\right) u\left( x\right) \right\vert ^{2}dxdt,  \label{e:scobs}
\end{equation}%
for every $u\in L^{2}(\mathbb{T}^{d})$ and $h\in \left( 0,h_{0}\right] $.

\noindent ii) \emph{Unique continuation in }$\mathcal{\widetilde{M}}\left( 1/h\right) $%
. For every $\mu \in \mathcal{\widetilde{M}}\left( 1/h\right) $ with $%
\overline{\mu }(\supp\chi )\neq 0$ and $\overline{\mu }(C_{H})=0$ (recall that $\overline{\mu }$ is the image of $\mu$ under the projection $\pi_2$) one has:%
\begin{equation*}
\int_{0}^{T}\mu \left( t,U\times \supp\chi \right) dt\neq 0.
\end{equation*}

Besides, any of i) or ii) is implied by the following statement.

\medskip

\noindent iii) \emph{Unique continuation for the family of Schr\"{o}dinger equations (S$_{\Lambda, \omega, \xi}$)}. For
every $\Lambda \subset \mathbb{Z}^{d}$, every $\xi \in \supp\chi $ with $%
\Lambda \subseteq dH(\xi)^\perp$ and every $\omega\in \la\Lambda \ra/\Lambda $, one has the following unique continuation
property: if $v\in \cC\left( \mathbb{R};L_{\omega }^{2}(\mathbb{R}^{d},\Lambda
)\right) $ solves the Schr\"odinger equation
(S$_{\Lambda, \omega, \xi}$) and $v|_{\left( 0,T\right) \times U}=0$ then $v=0$.
\end{theorem}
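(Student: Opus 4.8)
The plan is to deduce all the implications from the second-microlocal structure theorem, Theorem~\ref{t:precise}, which expresses any $\mu\in\widetilde{\mathcal{M}}(1/h)$ as a superposition over primitive submodules $\Lambda$ of the measures $\mu_\Lambda^{\mathrm{final}}$, themselves disintegrated via the trace formula \eqref{e:mh2} against the operators $N_\Lambda(t,\omega,\xi)$ solving the Heisenberg equations attached to the Schr\"odinger flows (S$_{\Lambda,\omega,\xi}$). The equivalence i)$\Leftrightarrow$ii) follows the now-standard functional-analytic scheme (as in \cite{AnantharamanMacia,MaciaDispersion}): for the contrapositive of i)$\Rightarrow$ii), if \eqref{e:scobs} fails, choose $h_n\to0$ and normalised $u_{h_n}=\chi(h_nD_x)u_{h_n}$ so that the right-hand side tends to $0$; extract a semiclassical measure $\mu\in\widetilde{\mathcal{M}}(1/h)$; the frequency cutoff forces $\overline{\mu}$ to be supported in $\supp\chi$, hence $\overline{\mu}(\supp\chi)=1\neq 0$ and $\overline{\mu}(C_H)=0$ since $\supp\chi\cap C_H=\emptyset$, while $h$-oscillation (automatic here, as $\chi$ is compactly supported) together with \eqref{e:proj} in the limit gives $\int_0^T\mu(t,U\times\supp\chi)\,dt=0$, contradicting ii). For ii)$\Rightarrow$i), run the classical argument by contradiction: negating i) produces a sequence whose normalised "observed mass" $\int_0^T\int_U|S_{h_n}^{t/h_n}u_{h_n}|^2$ goes to $0$; one must rule out a possible loss of mass at infinity in $\xi$, which is handled by the cutoff $\chi(hD_x)$ and a standard normal-families/diagonal argument, yielding a limiting $\mu$ contradicting ii). I would write this part compactly, referring to Section~6 of \cite{AnantharamanMacia} for the routine details.

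The substantive new content is iii)$\Rightarrow$ii). Given $\mu\in\widetilde{\mathcal{M}}(1/h)$ with $\overline{\mu}(\supp\chi)\neq 0$ and $\overline{\mu}(C_H)=0$, I want to show $\int_0^T\mu(t,U\times\supp\chi)\,dt\neq 0$. Using Theorem~\ref{t:precise}, apply \eqref{e:mh1} with test functions $a$ approximating $\mathbf 1_U(x)\mathbf 1_{\supp\chi}(\xi)$ (legitimate since $a$ vanishes on $\T^d\times C_H$ up to the null set $\overline{\mu}(C_H)=0$); this reduces the claim to showing that for at least one primitive $\Lambda$ and a positive-$\bar\mu_\Lambda$-measure set of $(\omega,\xi)$ with $\xi\in I_\Lambda\cap\supp\chi$ one has $\int_0^T\operatorname{Tr}(m_{\mathbf 1_U}(\xi)N_\Lambda(t,\omega,\xi))\,dt>0$. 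Since $N_\Lambda(t,\omega,\xi)$ is a non-negative trace-class operator on $L^2_\omega(\R^d,\Lambda)$, and $m_{\mathbf 1_U}$ is multiplication by (the $\Lambda$-average of) $\mathbf 1_U$, the trace vanishing on $(0,T)$ would force, spectrally decomposing $N_\Lambda$, every eigenvector $v$ in its range to satisfy $\mathbf 1_U\,(e^{-itL_{\Lambda,\omega,\xi}}v)=0$ for a.e. $t\in(0,T)$, where $L_{\Lambda,\omega,\xi}=\tfrac12 d^2H(\xi)D_y\cdot D_y+\langle V(\cdot,\xi)\rangle_\Lambda$ is the generator of (S$_{\Lambda,\omega,\xi}$); note $\xi\in I_\Lambda$ exactly means $\Lambda\subseteq dH(\xi)^\perp$, matching the hypothesis of iii). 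But then $v(t):=e^{-itL}v$ is a solution of (S$_{\Lambda,\omega,\xi}$) vanishing on $(0,T)\times U$, so iii) forces $v=0$; hence $N_\Lambda(t,\omega,\xi)=0$ for such $(\omega,\xi)$, and summing over $\Lambda$ we would get $\overline{\mu}(\supp\chi)=0$, a contradiction. (One must be a little careful that the relevant $\xi$ ranges over $I_\Lambda\cap\supp\chi$ rather than all of $\supp\chi$, but since $\mu_\Lambda^{\mathrm{final}}$ is supported on $\T^d\times I_\Lambda$ this is automatic, and the condition $\Lambda\subseteq dH(\xi)^\perp$ in iii) is precisely $\xi\in I_\Lambda$.)

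The main obstacle is the measurability/selection issue in iii)$\Rightarrow$ii): $N_\Lambda(t,\omega,\xi)$ is only a \emph{measurable} family of trace-class operators, so one cannot literally "pick an eigenvector" uniformly; instead I would integrate against $\bar\mu_\Lambda$ from the start, note that $(\omega,\xi)\mapsto\int_0^T\operatorname{Tr}(m_{\mathbf 1_U}(\xi)N_\Lambda(t,\omega,\xi))\,dt$ is measurable and non-negative, and argue that if it vanishes $\bar\mu_\Lambda$-a.e. then, by Fubini and positivity of $N_\Lambda$, for $\bar\mu_\Lambda$-a.e. $(\omega,\xi)$ and every $t\in(0,T)$ the operator $m_{\mathbf 1_U}^{1/2}e^{-itL}N_\Lambda e^{itL}m_{\mathbf 1_U}^{1/2}$ has zero trace, hence is zero, hence $\mathbf 1_U\,e^{-itL}(\operatorname{Ran}N_\Lambda)=\{0\}$; applying the unique continuation statement iii) to each vector in the (separable) range yields $N_\Lambda(t,\omega,\xi)=0$ a.e., and then \eqref{e:mh1}--\eqref{e:mh2} give $\mu(t,\T^d\times\supp\chi)=0$ for a.e.\ $t$, contradicting $\overline{\mu}(\supp\chi)\neq 0$ (using that $\overline\mu$ is $t$-independent by Theorem~\ref{t:precise}(2)). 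The remaining implications are then free: ii)$\Rightarrow$i) was already established, and i) $\Leftrightarrow$ ii) closes the cycle, so iii) implies both.
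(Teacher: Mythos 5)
Your proposal is correct and takes essentially the same route as the paper: the equivalence i)$\Leftrightarrow$ii) is obtained by identifying the negation of \eqref{e:scobs} with the existence of a measure $\mu\in\widetilde{\mathcal{M}}(1/h)$ satisfying the three conditions of \eqref{e:mnul}, and iii)$\Rightarrow$ii) follows from Theorem~\ref{t:precise} by diagonalising $N_{\Lambda}^{0}(\omega,\sigma)$ and propagating each eigenvector by the flow of (S$_{\Lambda,\omega,\sigma}$), exactly as you describe. The only slip is presentational: both of your paragraphs on the equivalence in fact argue $\neg$i)$\Rightarrow\neg$ii), but since the extraction argument is reversible this still yields the full equivalence, and your measurable-selection worry is resolved in the paper just as you suggest, by working with the spectral decomposition of $N_{\Lambda}^{0}$ rather than choosing eigenvectors pointwise.
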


This result will be proved as a consequence of the structure Theorem \ref%
{t:precise}.

\begin{remark}
\label{r:ucp}The unique continuation property for (\ref{e:schrodLam}) stated
in Theorem \ref{t:semiclassicalObs}, iii) is known to hold in any of the
following two cases:

i) $V(\cdot,\xi)$ is analytic in $\left( t,x\right) $ for every
$\xi$. This is a consequence of Holmgren's uniqueness theorem (see
\cite{Tataru})

ii) $V(\cdot,\xi)$ is smooth (or even continuous outside of a set
of null Lebesgue measure) for every $\xi$ and does not depend on
$t$ (see Theorem \ref{t:qobs} below).
\end{remark}

\begin{corollary}\label{c:obs}
Let $U$, $T$, $\chi $, and $\mathbf{V}_h(t)$ be as in Theorem
\ref{t:semiclassicalObs}, and suppose that $V$ satisfies any of
the two conditions in Remark \ref{r:ucp}. Then the semiclassical
observability estimate (\ref{e:scobs}) holds.
\end{corollary}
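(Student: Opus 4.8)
The plan is to deduce Corollary \ref{c:obs} directly from Theorem \ref{t:semiclassicalObs} together with Remark \ref{r:ucp}. The logical skeleton is: Theorem \ref{t:semiclassicalObs} asserts that statement (iii) --- the unique continuation property for the family of Schr\"odinger equations (S$_{\Lambda,\omega,\xi}$) --- implies the semiclassical observability estimate (i), which is exactly \eqref{e:scobs}. So it suffices to verify that (iii) holds under either of the hypotheses of Remark \ref{r:ucp}. But this verification \emph{is} the content of Remark \ref{r:ucp}: if $V(\cdot,\xi)$ is analytic in $(t,x)$ for each $\xi$, then Holmgren's theorem (via \cite{Tataru}) gives unique continuation for (S$_{\Lambda,\omega,\xi}$); if $V(\cdot,\xi)$ is smooth (or continuous off a null set) and $t$-independent for each $\xi$, then the forthcoming Theorem \ref{t:qobs} gives it. Either way, (iii) is satisfied, hence (i) is satisfied, hence \eqref{e:scobs} holds. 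So the proof is essentially a two-line chaining of already-available implications.

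Concretely, here is how I would carry it out. First, fix $U$, $T$, $\chi$ and $\mathbf{V}_h(t)=\Op_h(V(t,\cdot))$ with $V\in\cC^\infty(\R\times T^*\T^d)$ bounded and $\supp\chi\cap C_H=\emptyset$, as in the statement of Theorem \ref{t:semiclassicalObs}, and assume $V$ satisfies one of the two conditions of Remark \ref{r:ucp}. Second, I would observe that these conditions are pointwise in $\xi$: for the analytic case, for every $\xi$ the coefficient $\langle V(\cdot,\xi)\rangle_\Lambda$ appearing in (S$_{\Lambda,\omega,\xi}$) is analytic in $(t,x)$ --- indeed it is an average over a subtorus of an analytic function --- so Holmgren's uniqueness theorem applies to any $v\in\cC(\R;L^2_\omega(\R^d,\Lambda))$ solving (S$_{\Lambda,\omega,\xi}$) and vanishing on $(0,T)\times U$, forcing $v=0$; for the $t$-independent smooth (or a.e.-continuous) case, the same conclusion follows from Theorem \ref{t:qobs}. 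Hence statement (iii) of Theorem \ref{t:semiclassicalObs} holds for \emph{every} $\Lambda\subset\Z^d$, every $\xi\in\supp\chi$ with $\Lambda\subseteq dH(\xi)^\perp$, and every $\omega\in\langle\Lambda\rangle/\Lambda$. Third, I would invoke the last sentence of Theorem \ref{t:semiclassicalObs}, namely that (iii) implies (i), to conclude that the semiclassical observability estimate \eqref{e:scobs} holds with some constant $C=C(U,T,\chi)$ and some $h_0>0$.

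I do not expect any serious obstacle here, since Corollary \ref{c:obs} is by design an immediate packaging of Theorem \ref{t:semiclassicalObs} and Remark \ref{r:ucp}; the only point requiring a word of care is the passage from the analyticity (resp.\ smoothness/$t$-independence) of $V$ to the corresponding property of the averaged coefficient $\langle V(\cdot,\xi)\rangle_\Lambda$ that actually enters (S$_{\Lambda,\omega,\xi}$). This is harmless: averaging over the compact subtorus $\langle\Lambda\rangle/\Lambda$ (equivalently, keeping only the Fourier modes $k\in\Lambda$) preserves analyticity in $(t,x)$, preserves smoothness, and preserves $t$-independence, so whichever hypothesis of Remark \ref{r:ucp} is in force transfers verbatim to the operator on the right-hand side of (S$_{\Lambda,\omega,\xi}$). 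With that remark in hand the corollary follows. One should also note that $C$ and $h_0$ are not claimed to be uniform over the choice of $V$ within the allowed class --- they are furnished by Theorem \ref{t:semiclassicalObs} --- which is consistent with the statement of the corollary.
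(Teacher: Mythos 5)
Your proposal is correct and follows exactly the route the paper intends: Corollary \ref{c:obs} is the immediate chaining of the implication (iii) $\Rightarrow$ (i) in Theorem \ref{t:semiclassicalObs} with the assertion of Remark \ref{r:ucp} that (iii) holds under either hypothesis on $V$. Your additional observation that averaging over $\Lambda$ (i.e.\ retaining only the Fourier modes $k\in\Lambda$) preserves analyticity, smoothness and $t$-independence of the coefficient entering (S$_{\Lambda,\omega,\xi}$) is the right point of care and is consistent with what the paper leaves implicit.
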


The nature of the observability estimate (\ref{e:scobs}) is better
appreciated when $H$ is itself quadratic. Suppose:
\begin{equation*}
H_{A,\theta }\left( \xi \right) =\frac{1}{2}A\left( \xi +\theta \right)
\cdot \left( \xi +\theta \right) ,
\end{equation*}%
where $\theta \in \mathbb{R}^{d}$, $A$ is a definite real matrix, and denote
by $\overline{S}^{t}$ the (non-semiclassical) propagator, starting at $t=0$,
associated to $H_{A,\theta }\left( D_{x}\right) +V\left( t,\cdot \right) $.
Clearly, the propagator $S_{h}^{t/h}$ associated to $H_{A,h\theta }\left(
hD_{x}\right) +h^{2}V\left( t,\cdot \right) $ coincides with $\overline{S}%
^{t}$ in this case.

\begin{corollary}
\label{c:quadobs}Let $U\subset \mathbb{T}^{d}$ be a nonempty open set, and $%
T>0$. Let $\mathbf{V}_h(t)=\Op_h(V(t, \cdot))$ with $V\in
\cC^\infty(\R\times T^* \T^d)$ bounded. Suppose that the following
unique continuation result holds:

\noindent For every $\Lambda \subset \mathbb{Z}^{d}$ and every
$\xi \in \mathbb{R}^{d}$ with $\Lambda \subseteq dH(\xi )^{\bot}$, if $v\in \cC\left( \mathbb{R};L_{\omega }^{2}(\mathbb{R}%
^{d},\Lambda )\right) $, $\omega \in \la\Lambda \ra/\Lambda $, solves:%
\begin{equation}
i\partial _{t}v=\left( H_{A,\theta }\left( D_{y}\right) +\la V(\cdot ,\xi )%
\ra_{\Lambda }\right) v  \label{e:schrodL}
\end{equation}%
and $v|_{\left( 0,T\right) \times U}=0$ then $v=0$.

Then there exist $C>0$ such that for every $u\in L^{2}(\mathbb{T}^{d})$ one
has:%
\begin{equation}
\left\Vert u\right\Vert _{L^{2}(\mathbb{T}^{d})}^{2}\leq
C\int_{0}^{T}\int_{U}\left\vert \overline{S}^{t}u\left( x\right) \right\vert
^{2}dxdt.  \label{e:nscobs}
\end{equation}
\end{corollary}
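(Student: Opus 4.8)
The plan is to deduce Corollary~\ref{c:quadobs} from Theorem~\ref{t:semiclassicalObs} and its Corollary~\ref{c:obs}, after reducing the non-semiclassical statement \eqref{e:nscobs} to the semiclassical one \eqref{e:scobs}. First I would set $H=H_{A,\theta}$ (extended to $H_h(\xi)=H_{A,h\theta}(\xi)=\frac12 A(\xi+h\theta)\cdot(\xi+h\theta)$, which is of the type allowed in Remark~\ref{r:Hh}) and observe, exactly as stated just before the corollary, that the semiclassical propagator $S_h^{t/h}$ built from $H_{A,h\theta}(hD_x)+h^2 V(t,\cdot)$ equals the fixed propagator $\overline S^t$; this is the elementary change of variables $H_{A,h\theta}(h\xi)=h^2 H_{A,\theta}(\xi)$ together with the scaling of the potential term. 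Since $A$ is definite, the Hessian $d^2H=A$ is definite everywhere, so $C_H=\emptyset$ and we may take $\chi\in\cC_c^\infty(\R^d)$ identically $1$ on a large ball; the constraint $\supp\chi\cap C_H=\emptyset$ in Theorem~\ref{t:semiclassicalObs} is vacuous here. Note also that the Schr\"odinger equation \eqref{e:schrodL} appearing in the hypothesis is precisely (S$_{\Lambda,\omega,\xi}$) for $H=H_{A,\theta}$, because $\frac12 d^2H(\xi)D_y\cdot D_y=\frac12 A D_y\cdot D_y=H_{A,\theta}(D_y)$ up to the additive constant $H_{A,\theta}(\xi)$ absorbed by a gauge phase $e^{-itH_{A,\theta}(\xi)}$ that does not affect the unique continuation statement. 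Hence the hypothesis of Corollary~\ref{c:quadobs} is exactly statement iii) of Theorem~\ref{t:semiclassicalObs} for this $H$, so by that theorem the semiclassical estimate \eqref{e:scobs} holds for some $C>0$ and all $h\le h_0$: for every $u\in L^2(\T^d)$,
\[
\left\Vert \chi(hD_x)u\right\Vert_{L^2(\T^d)}^2\le C\int_0^T\int_U\bigl|S_h^{t/h}\chi(hD_x)u(x)\bigr|^2\,dx\,dt=C\int_0^T\int_U\bigl|\overline S^t\chi(hD_x)u(x)\bigr|^2\,dx\,dt.
\]

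The remaining point is to pass from the $h$-truncated estimate to the full estimate \eqref{e:nscobs}, and this is where I expect the only real work. Fix $u\in L^2(\T^d)$. Its Fourier support is in $\Z^d$, a fixed (discrete) set, but of course it is not finitely supported, so $\chi(hD_x)u\not\to u$ for fixed $u$ as $h\to 0$ in a way uniform over $u$ — however, for each \emph{fixed} $u$ we do have $\chi(hD_x)u\to u$ in $L^2(\T^d)$ as $h\to 0$, since $\chi\equiv 1$ near $0$ and $\sum_{|k|>R}|\hat u(k)|^2\to 0$. Applying \eqref{e:scobs} with a sequence $h_n\to0$ and using that $\overline S^t$ is strongly continuous and unitary (so $\overline S^t\chi(h_nD_x)u\to\overline S^t u$ in $L^2$, uniformly in $t\in[0,T]$ by $\|\overline S^t(\chi(h_nD_x)u-u)\|=\|\chi(h_nD_x)u-u\|$), the left side converges to $\|u\|_{L^2(\T^d)}^2$ and the right side converges to $C\int_0^T\int_U|\overline S^t u(x)|^2\,dx\,dt$ by dominated convergence. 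This yields \eqref{e:nscobs} with the same constant $C$. The only subtlety to check carefully is the interchange of limit and the $\int_0^T\int_U$ integral on the right: this is justified because $\mathbf 1_U|\overline S^t\chi(h_nD_x)u|^2\to\mathbf 1_U|\overline S^t u|^2$ in $L^1((0,T)\times\T^d)$, which follows from $\overline S^t\chi(h_nD_x)u\to\overline S^t u$ in $C([0,T];L^2(\T^d))$ and the elementary bound $\bigl|\,|a|^2-|b|^2\,\bigr|\le (|a|+|b|)|a-b|$ together with the uniform bound $\|\overline S^t\chi(h_nD_x)u\|_{L^2}\le\|u\|_{L^2}$.

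In summary the three steps are: (i) identify $S_h^{t/h}=\overline S^t$ via the scaling $H_{A,h\theta}(h\xi)=h^2H_{A,\theta}(\xi)$ and note $C_H=\emptyset$ so that statement iii) of Theorem~\ref{t:semiclassicalObs} is exactly the hypothesis of the corollary, whence \eqref{e:scobs} holds; (ii) choose $\chi$ equal to $1$ on larger and larger balls and let $h\to0$ along a sequence, using $\chi(hD_x)u\to u$ in $L^2(\T^d)$ for each fixed $u$; (iii) pass to the limit in both sides of \eqref{e:scobs}, the left side by continuity of the norm and the right side by the $L^1$-convergence argument above, obtaining \eqref{e:nscobs}. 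The main obstacle — and it is a mild one — is ensuring the limit passes through the space-time integral on the observed side; no new PDE input is needed beyond Theorem~\ref{t:semiclassicalObs}, only the strong continuity and unitarity of $\overline S^t$ on $L^2(\T^d)$ and elementary functional analysis.
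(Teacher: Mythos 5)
Your overall architecture is the intended one: invoke Theorem \ref{t:semiclassicalObs} to obtain the semiclassical estimate \eqref{e:scobs}, use the identity $S_h^{t/h}=\overline S^t$, and remove the frequency cutoff by fixing one $\chi$ with $\chi(0)=1$ and letting $h\to0^+$ for each fixed $u$. That last step is correct and clean: $\chi(hD_x)u\to u$ in $L^2(\T^d)$ by dominated convergence on the Fourier side, and the unitarity of $\overline S^t$ passes the limit through the observation term, so \eqref{e:nscobs} follows with the same constant.

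There is, however, a genuine error in the step where you match the hypothesis of the corollary with statement iii) of Theorem \ref{t:semiclassicalObs}. You assert that $\frac12 d^{2}H(\xi)D_{y}\cdot D_{y}=\frac12 AD_{y}\cdot D_{y}$ equals $H_{A,\theta}(D_{y})$ up to an additive constant. This is false for $\theta\neq0$: $H_{A,\theta}(D_{y})=\frac12 AD_{y}\cdot D_{y}+A\theta\cdot D_{y}+\frac12 A\theta\cdot\theta$, and the drift $A\theta\cdot D_{y}$ is a first-order operator, not a scalar. It cannot in general be gauged away by $v\mapsto e^{i\theta\cdot y}v$ inside the class $L^{2}_{\omega}(\R^{d},\Lambda)$ (that multiplication preserves the Bloch--Floquet classes only when $\theta\in\la\Lambda\ra$), and it genuinely changes the unique continuation problem, since it conjugates the propagator by a time-dependent translation and so turns vanishing on the cylinder $(0,T)\times U$ into vanishing on a tilted set. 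Hence, as written, the hypothesis of the corollary is neither statement iii) for $H=H_{A,\theta}$ nor for $H_{0}(\xi)=\frac12 A\xi\cdot\xi$. The correct route is the one the paper prepares in Remark \ref{r:Hh} and in the remark closing \S\ref{s:second}: apply Theorems \ref{t:precise} and \ref{t:semiclassicalObs} to the $h$-dependent family $H_{h}(\xi)=H_{A,h\theta}(\xi)=H_{0}(\xi+h\theta)$. In the proof of Theorem \ref{prop:opvame} the model operator comes from expanding $h^{-2}H_{h}(\sigma+hD_{y})=h^{-2}H_{0}(\sigma+h(D_{y}+\theta))$, whose quadratic term is $\frac12 A(D_{y}+\theta)\cdot(D_{y}+\theta)=H_{A,\theta}(D_{y})$, the lower-order terms acting as scalars on $L^{2}_{\omega}(\R^{d},\Lambda)$ and dropping out of the Heisenberg commutator. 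This is precisely why the corollary's hypothesis is phrased with $H_{A,\theta}(D_{y})$ rather than with $\frac12 AD_{y}\cdot D_{y}$. Once you route the argument through this $h$-dependent version of the theorem, the remainder of your proof (the identification $S_h^{t/h}=\overline S^t$, the observation that $C_H=\emptyset$, and the $h\to0$ limit removing the cutoff) goes through.
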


Note that an estimate such as (\ref{e:nscobs}) implies a unique
continuation result for solutions to (\ref{e:eq}):
$\overline{S}^{t}u|_{U}=0$ for $t\in \left( 0,T\right) $
$\Longrightarrow $ $u=0$. Corollary \ref{c:quadobs} shows in
particular that this (weaker) unique continuation property for
family of quadratic Hamiltonians in equations~(\ref{e:schrodL})
actually implies the stronger estimate (\ref{e:nscobs}). We also
want to stress the fact that Corollary \ref{c:quadobs} establishes
the unique continuation property for perturbations of
pseudodifferential type from the analogous property for
perturbations that are merely multiplication by a potential.

It should be also mentioned that the proof of Theorem 4 in \cite%
{AnantharamanMacia} can be adapted almost word by word to prove estimate (%
\ref{e:nscobs}) in the case when $V$ does not depend on $t$, \emph{%
without relying} in any \emph{a priori }unique continuation result except those for eigenfunctions. In fact,
the function~$V$ can be supposed less regular than smooth: it suffices that it is
continuous outside of a set of null Lebesgue measure.

\begin{theorem}
\label{t:qobs}Suppose $V$ only depends on $x$; let $U\subset \mathbb{T}^{d}$
a nonempty open set, and let~$T>0$. Then (\ref{e:nscobs}) holds; in particular,
any solution $\overline{S}^{t}u$ that vanishes identically on $\left(
0,T\right) \times U$ must vanish everywhere.
\end{theorem}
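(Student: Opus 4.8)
The plan is to deduce Theorem~\ref{t:qobs} from Corollary~\ref{c:quadobs} by verifying its hypothesis, i.e. the unique continuation property for the family of Schr\"odinger equations~\eqref{e:schrodL}, in the case where $V=V(x)$ does not depend on $t$. Fix a primitive submodule $\Lambda\subset(\Z^d)^*$, a frequency $\xi$ with $\Lambda\subseteq dH(\xi)^\perp$, and $\omega\in\la\Lambda\ra/\Lambda$; we must show that a solution $v\in\cC(\R;L^2_\omega(\R^d,\Lambda))$ of
\[
i\partial_t v=\bigl(H_{A,\theta}(D_y)+\la V(\cdot,\xi)\ra_\Lambda\bigr)v
\]
which vanishes on $(0,T)\times U$ must vanish identically. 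The point is that, since the potential $\la V(\cdot,\xi)\ra_\Lambda$ is time-independent, the generator $P_{\Lambda,\xi}:=H_{A,\theta}(D_y)+\la V(\cdot,\xi)\ra_\Lambda$ is a self-adjoint operator on $L^2_\omega(\R^d,\Lambda)$, so we can pass to its spectral/eigenfunction decomposition and reduce the unique continuation statement for the evolution equation to a unique continuation statement for the eigenfunctions of $P_{\Lambda,\xi}$, exactly as in the argument for eigenfunctions alluded to in the text.

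The key steps, in order, are: (i) Identify the space $L^2_\omega(\R^d,\Lambda)$ with (essentially) an $L^2$ space on the torus $\T_\Lambda:=\la\Lambda\ra/(2\pi\,(\text{dual lattice of }\Lambda))$ — the functions whose Fourier transform is supported on the shifted lattice $\Lambda-\omega$ are exactly (a twist by $e^{-i\omega\cdot y}$ of) periodic functions on this lower-dimensional torus, tensored with a constant in the directions orthogonal to $\la\Lambda\ra$. On this torus the operator $H_{A,\theta}(D_y)$ restricted to $\la\Lambda\ra$-directions is again a (shifted) quadratic form in the momentum, hence $P_{\Lambda,\xi}$ is an elliptic self-adjoint Schr\"odinger-type operator with discrete spectrum and smooth eigenfunctions. (ii) Expand $v(t,\cdot)=\sum_j c_j(t)\phi_j$ in an orthonormal basis of eigenfunctions $\phi_j$ of $P_{\Lambda,\xi}$ with eigenvalue $E_j$; the Schr\"odinger equation forces $c_j(t)=c_j(0)e^{-itE_j}$. (iii) The vanishing of $v$ on $(0,T)\times U$ means $\sum_j c_j(0)e^{-itE_j}\phi_j(x)=0$ for $t\in(0,T)$, $x\in U$; grouping eigenfunctions by eigenvalue and using linear independence of the characters $t\mapsto e^{-itE}$ on the interval $(0,T)$, we get that for every eigenvalue $E$ the (finite-dimensional) spectral projection $w_E:=\sum_{E_j=E}c_j(0)\phi_j$ vanishes on $U$. (iv) Invoke unique continuation for eigenfunctions of the elliptic operator $P_{\Lambda,\xi}$ on the (compact, connected) torus $\T_\Lambda$: an eigenfunction (here $w_E$, which solves $P_{\Lambda,\xi}w_E=Ew_E$) vanishing on the nonempty open set $U\cap\T_\Lambda$ must vanish identically — this follows from Aronszajn's/Carleman-type unique continuation for second-order elliptic operators with smooth (indeed merely $L^\infty$, or continuous off a null set) coefficients, once one knows the relevant second-order part is elliptic. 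Here one has to be slightly careful when $A$ restricted to $\la\Lambda\ra$ is only semidefinite relative to some directions, but since $A$ is definite the restriction of the quadratic form to $\la\Lambda\ra$ is definite, hence elliptic. Thus $w_E=0$ for all $E$, so $c_j(0)=0$ for all $j$ and $v\equiv0$.

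Having established hypothesis of Corollary~\ref{c:quadobs}, the non-semiclassical observability estimate~\eqref{e:nscobs} for $\overline{S}^t$ follows immediately, and the unique continuation statement ``$\overline S^t u|_U=0$ on $(0,T)$ $\Rightarrow u=0$'' is a direct consequence of~\eqref{e:nscobs}. To be precise, one should note that the reduction \eqref{e:schrodL} arises with $H=H_{A,\theta}$, so the condition $\supp\chi\cap C_H=\emptyset$ in Theorem~\ref{t:semiclassicalObs} is automatic ($C_H=\emptyset$ when the Hessian $A$ is definite), and one takes $\chi$ to cut off a large ball and lets the ball grow, using $h$-oscillation of normalized sequences to recover the full $L^2$ norm of $u$ on the left-hand side; this is the same limiting argument used to pass from the microlocalized estimate \eqref{e:scobs} to \eqref{e:nscobs} in the proof of Corollary~\ref{c:quadobs}.

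The main obstacle I expect is step (iv): making the unique continuation for eigenfunctions of $P_{\Lambda,\xi}$ fully rigorous under only the stated regularity on $V$ (continuous outside a null set). The classical Carleman estimates require some mild regularity of the lower-order coefficients; here $\la V(\cdot,\xi)\ra_\Lambda$ is the $\Lambda$-average of $V(\cdot,\xi)$, which is as regular as $V$ in the $\la\Lambda\ra$-directions. If $V$ is merely continuous off a null set, one may need to appeal to a sharper unique continuation theorem (e.g.\ for operators with $L^\infty$ or $L^p$ potentials, in the spirit of Jerison--Kenig) rather than Aronszajn's theorem; alternatively, one can first prove the theorem for smooth $V$ and then pass to the limit, approximating $V$ in a suitable topology and using stability of the observability constant — though the latter is delicate because observability constants need not be stable under perturbation. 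I would therefore prefer to cite directly a unique continuation result valid for the regularity class at hand, reducing the remaining work to the (routine) spectral-decomposition bookkeeping of steps (i)--(iii).
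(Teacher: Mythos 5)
Your route is genuinely different from the paper's: you try to verify hypothesis iii) of Theorem~\ref{t:semiclassicalObs} (unique continuation for the time-dependent equations (S$_{\Lambda,\omega,\xi}$)) directly by spectral decomposition of the time-independent generator, and then invoke Corollary~\ref{c:quadobs}. The paper deliberately avoids this: it proves Theorem~\ref{t:qobs} by an induction on the dimension $d$, in which the low-frequency part (a cut-off $\chi$ near $\xi=0$) is handled by a compactness--uniqueness argument reducing to unique continuation for \emph{eigenfunctions} of $H_{A,\theta}(D_x)+V$, and the high-frequency semiclassical estimate is reduced, via the structure theorem, to observability on $\mathbb{T}^{d-1}$. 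It is Theorem~\ref{t:qobs} itself that then supplies case ii) of Remark~\ref{r:ucp}; the logic is not the other way around.

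The reason the paper takes this detour is exactly where your argument has a genuine gap: step (iii). From $v(t,\cdot)=\sum_j c_j(0)e^{-itE_j}\phi_j$ vanishing on $(0,T)\times U$ you cannot conclude that each spectral component $w_E$ vanishes on $U$ by ``linear independence of the characters $e^{-itE}$ on $(0,T)$''. That principle holds for finite sums (analyticity in $t$), but for infinite sums it requires an Ingham-type gap condition on the frequencies $\{E_j\}$, or $T$ large compared with their density. Already for the flat frequencies $\{n\}_{n\in\mathbb{Z}}$ and $T<2\pi$ there are nonzero $\ell^2$ combinations of $e^{int}$ vanishing on $(0,T)$ (take the Fourier series of a function supported in $(T,2\pi)$). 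The eigenvalues of $H_{A,\theta}(D_y)+\la V(\cdot,\xi)\ra_\Lambda$ on a torus of dimension $\mathrm{rk}\,\Lambda\ge 2$, with $A$ having irrational entries, have no useful gap structure, so the implication you need is false as a general principle; only in rank one (where the eigenvalues behave like $an^2+O(1)$) could an Ingham argument rescue it. Turning qualitative eigenfunction unique continuation into unique continuation for the evolution on a finite time window is precisely the content of the Bardos--Lebeau--Rauch compactness--uniqueness scheme (one must show the space of non-observable data is finite dimensional and invariant under the generator before extracting an eigenfunction), and this is what the induction on $d$ in the paper implements. Your worry about step (iv) is legitimate but secondary; the real obstruction is (iii).
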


In the proof of Theorem \ref{t:qobs}, unique continuation for solutions to
the time dependent Schr\"{o}dinger equation is replaced by a unique
continuation result for eigenfunctions of $H_{A,\theta }\left( D_{x}\right)
+V\left( t,\cdot \right) $. This allows to reduce the proof of (\ref%
{e:nscobs}) to that of a semiclassical observability estimate (\ref{e:scobs}%
) with a cut-off $\chi $ vanishing close to $\xi =0$. At this point, the
validity of (\ref{e:nscobs}) is reduced to the validity of the corresponding
estimate on $\mathbb{T}^{d-1}$. The proof of (\ref{e:nscobs}) is completed
by applying an argument of induction on the dimension $d$. We refer the
reader to the proof of \cite{AnantharamanMacia}, Theorem 4 for additional
details (see also \cite{MaciaDispersion} for a proof in a simpler case in $%
d=2$).

Let us finally mention that Theorem \ref{t:qobs} was first proved
in the case $H\left( \xi \right) =\left\vert \xi \right\vert ^{2}$
in \cite{JaffardPlaques} for $V=0$, in \cite{BurqZworski11} for
$d=2$ and in \cite{AnantharamanMacia} for general $d$, the three
results having rather different proofs. We also refer the reader
to \cite{AnantharamanMaciaSurv, BurqZworskiJAMS, LaurentSurv,
MaciaDispersion} for additional results and
references concerning observability inequalities in the context of Schr\"{o}%
dinger-type equations.

\subsection{Generalization to  quantum completely integrable systems}\label{sec:gen}
Our results may be transferred to more general completely integrable systems as follows.
 Let $(M, dx)$ be a compact manifold of dimension $d$, equipped with a density $dx$. Assume we have a family $(\hat A_1, \ldots, \hat A_d)$ of $d$ commuting self-adjoint $h$-pseudodifferential operators of order $0$ in $h$. By this, we mean an operator $a(x, hD_x)$ where $a$ is in some classical symbol class $S^l$, or may even have an asymptotic expansion $a\sim \sum_{k=0}^{+\infty} h^k a_k$ in this $S^l$ (the term $a_0$ will then be called the principal symbol).
 Let $\tilde H=f(\hat A_1, \ldots, \hat A_d)$ where $f:\R^d\To \R$ is smooth. Let $A=(A_1, \ldots A_d): T^*M \To \R^d$ be the principal symbols of the operators $\hat A_i$. Note that the commutation $[\hat A_i, \hat A_j]=0$ implies the Poisson commutation $\{ A_i, A_j\}=0$. Assume that there is an open subset $W$ of $\R^d$ and a symplectomorphism $T : \T^d\times W\To A^{-1}(W)$ with $A_i\circ T=\xi_j$ (note that, by Arnold-Liouville Theorem,  this situation occurs locally where the differentials of the $A_i$ are linearly independent).
 Then, there exists a Fourier integral operator  $\hat U :L^2(\T^d)\To L^2(M)$ associated with $T$, such that $\hat U \hat U^*=I+O(h^\infty)$ microlocally on $A^{-1}(W)$, and such that
 $$\hat U^*\hat A_j\hat U= hD_{x_j}+\sum_{k\geq 1} h^k S_{j, k}(hD_x)$$
 on $\T^d\times W$ with $S_{j,k}\in{\mathcal C}^\infty(\R^d)$ (see \cite{CdV}, Theorem 78 (1)).

 \medskip

 This may be used to generalize our results to the equation
 \begin{equation}
\left\{
\begin{array}
[c]{l}
ih\partial_{t}\psi_{h}\left(  t,x\right)  =\left(\hat H +h^2 V\right)\,\psi_{h}\left(
t,x\right)  ,\qquad(t,x)\in\R\times M,\\
{\psi_{h}}_{|t=0}=u_{h},
\end{array}
\right.  \label{e:eq2}
\end{equation}
where $V$ is a pseudodifferential operator of order $0$.

\medskip

If $a$ is smooth, compactly supported inside $A^{-1}(W)$, if $\chi$ is supported in $A^{-1}(W)$ taking the value $1$ on the support of $a$, and if $t$ stays in a compact set of $\R$, we have
$$\Op_h (a) S^{\tau_h t}u_h=\Op_h(a)  S^{\tau_h t}\Op_h(\chi)u_h +o(1)$$
as long as $\tau_h\ll h^{-2}$ (or for all $\tau_h$ if $V=0$)
and
$$\Op_h(a) S^{\tau_h t}\Op_h(\chi)u_h=\Op_h(a)\hat U \hat U^* S^{\tau_h t}\hat U \hat U^*\Op_h(\chi)u_h+O(h^\infty).$$
Note that $\hat U^* S^{\tau_h t}\hat U \hat U^*\Op_h(\chi)u_h$
coincides modulo $o(1)$ with $\tilde S^{\tau_h t}\hat U^*\Op_h(\chi)u_h$
where $\tilde S^{\tau_h t}$ is the propagator associated to
\begin{equation}
\begin{array}
[c]{l}
ih\partial_{t}\psi_{h}\left(  t,x\right)  =\left(f_h(hD_{x} ) +h^2 \hat U^*V\hat U\right)\,\psi_{h}\left(
t,x\right)  ,\qquad(t,x)\in\R\times \T^d
\end{array}
.  \label{e:eq3}
\end{equation}
where $f_h(\xi )=f(\xi+\sum_{k\geq 1} h^k S_{k}(\xi))$.

The semiclassical measures associated with equation \eqref{e:eq2}, when restricted to $A^{-1}(W)$, are exactly the images under $T$ of the semiclassical measures coming from \eqref{e:eq3} supported on $\T^d\times W$. Applying Theorem \ref{t:main} to the solutions of \eqref{e:eq3} and transporting the statement by the symplectomorphism $T$, we obtain the following result ~:
\begin{theorem} If $\tau_h\geq h^{-1}$ then the semiclassical measures associated with solutions of~\eqref{e:eq3} are absolutely continuous measures of the lagrangian tori $A^{-1}(\xi)$, for $\bar\mu$-almost every $\xi\in V$ such that $d^2 f(\xi)$ is definite.
\end{theorem}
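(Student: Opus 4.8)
The plan is to obtain this statement as an essentially formal corollary of Theorem~\ref{t:main}(2), the conjugation scheme set up just before the statement doing all the real work. First I would fix an element $\mu$ of the set of semiclassical measures attached to solutions of~\eqref{e:eq2}, restricted to $A^{-1}(W)$, and use the three displayed approximation identities in the excerpt to transport the problem to $\T^d\times W$: for a symbol $a$ compactly supported inside $A^{-1}(W)$ and a cutoff $\chi$ equal to $1$ on $\supp a$, the quantities $\langle \Op_h(a)S^{\tau_h t}u_h, S^{\tau_h t}u_h\rangle$ agree, up to $o(1)$ (using $\tau_h\ll h^{-2}$, or all $\tau_h$ when $V=0$), with the corresponding quantities for $\tilde S^{\tau_h t}\hat U^*\Op_h(\chi)u_h$, which solve~\eqref{e:eq3}. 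Writing $\tilde u_h := \hat U^*\Op_h(\chi)u_h$ (a bounded family in $L^2(\T^d)$ which is $h$-oscillating after the cutoff), this realizes $T^*\bar\mu$ restricted to $\T^d\times W$ as a semiclassical measure in $\mathcal{\widetilde M}(\tau)$ for equation~\eqref{e:eq3}.

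Next I would check that equation~\eqref{e:eq3} is of exactly the form~\eqref{e:eq} covered by the rest of the paper. The Hamiltonian is $f_h(\xi)=f\big(\xi+\sum_{k\ge 1}h^kS_k(\xi)\big)$, which converges in $\cC^k_{\mathrm{loc}}$ to $f(\xi)$ as $h\to 0$; this is precisely the $H=H_h$ situation anticipated in Remark~\ref{r:Hh}, with $H_0=f$. Moreover $\hat U^* V\hat U$ is again a bounded pseudodifferential operator of order $0$, so it fits the hypothesis $\mathbf V_h(t)=\Op_h(V(t,\cdot))$ with $V\in\cC^\infty(\R\times T^*\T^d)$ bounded (one should note the lower order corrections $S_{j,k}$ produce only $h^2$-order and lower terms when expanding $f_h$, which can be absorbed into the subprincipal perturbation; alternatively one invokes Theorem~\ref{t:precise}(2), whose conclusions are insensitive to such terms). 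Since the principal symbol of the Hamiltonian in~\eqref{e:eq3} is $f$, the set $C_{f}=\{\xi: d^2f(\xi)\text{ not definite}\}$ is the relevant degeneracy locus, and on its complement Theorem~\ref{t:main}(2) (equivalently Theorem~\ref{t:lagrangian}) applies: for $\bar\mu$-almost every $\xi\in W$ with $d^2f(\xi)$ definite, the disintegration $\mu_\xi(t,\cdot)$ on $\T^d$ is absolutely continuous. Hence the corresponding measure for~\eqref{e:eq2}, obtained by pushing forward under the symplectomorphism $T$, is absolutely continuous on the lagrangian torus $A^{-1}(\xi)$, which is the assertion. Finally, since $\tau_h\ge h^{-1}$ is exactly the regime $\tau_h\sim 1/h$ or $\tau_h\gg 1/h$, with the latter handled via Proposition~\ref{prop:hierarchy1} as in the proof of Theorem~\ref{t:main}(2), no extra work is needed there.

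The step I expect to require the most care is the transport of measures through the Fourier integral operator $\hat U$: one must verify that $\hat U\hat U^*=I+O(h^\infty)$ microlocally on $A^{-1}(W)$ actually suffices to identify, exactly and not just up to $o(1)$ on test symbols, the semiclassical measure of $S^{\tau_h t}u_h$ on $A^{-1}(W)$ with the $T$-pushforward of the semiclassical measure of $\tilde S^{\tau_h t}\tilde u_h$ supported in $\T^d\times W$. This uses the standard fact that conjugation by an FIO associated with the canonical transformation $T$ transports Wigner measures by $T$ (Egorov-type statement), together with the fact that the cutoffs $a,\chi$ confine everything to the region where $\hat U$ is microlocally unitary so the $O(h^\infty)$ errors are genuinely negligible; one also needs $\tilde u_h$ to remain $L^2$-bounded and $h$-oscillating, which follows from $L^2$-boundedness of $\hat U^*$ and $\Op_h(\chi)$ and the support properties of $\chi$. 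The remaining ingredient, that the $o(1)$ in the first approximation identity requires $\tau_h\ll h^{-2}$ (or $V=0$), is already granted by the discussion preceding the statement and by the appendix result quoted there that $\bar\mu$ is $t$-independent in that regime.
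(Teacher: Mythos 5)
Your proposal is correct and follows essentially the same route as the paper, which likewise obtains the statement by conjugating with the Fourier integral operator $\hat U$, invoking Remark~\ref{r:Hh} to handle the $h$-dependent Hamiltonian $f_h$, applying Theorem~\ref{t:main}(2) (via Theorem~\ref{t:lagrangian}) to equation~\eqref{e:eq3}, and transporting the conclusion by the symplectomorphism $T$. You in fact supply more detail than the paper does on the Egorov-type transport of Wigner measures and on why the subprincipal corrections $S_{j,k}$ are harmless.
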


The observability results could also be rephrased in this more general setting.

\subsection{Organisation of the paper}

When $\tau_h\leq1/h$, the key argument of this article is a second microlocalisation on primitive
submodules which is the subject of Section~\ref{sec:reson} and leads to
Theorems~\ref{mu^Lambda} and~\ref{Thm Properties}.
Sections~\ref{s:finitedistance} and~\ref{s:successive} are devoted to the proof of these two theorems.
At that stage of the paper, the proofs of
 Theorem~\ref{t:precise} and Theorem~\ref{t:main}(2) when $\tau_h\sim 1/h$ are then achieved. Examples are developed in Section~\ref{s:halpha} in order to prove
Theorem~\ref{t:main}(1). Finally, the results concerning hierarchy
of time-scales are proved in Section~\ref{s:hierarchy} (and lead
to Theorem~\ref{t:main} for $\tau_h\gg1/h$), whereas the proof of
Theorem \ref{t:semiclassicalObs} is given in Section
\ref{sec:obs}.

\subsection*{Acknowledgements}

The authors would like to thank Jared Wunsch for communicating to
them the construction in example (3) in Section \ref{s:halpha}.3.
They are also grateful to  Luc Hillairet for helpful discussions
related to some of the results in Section~\ref{s:hierarchy}. Part
of this work was done as F. Maci\`a was visiting the Laboratoire
d'Analyse et de Math\'ematiques Appliqu\'ees at Universit\'e de
Paris-Est, Cr\'eteil. He wishes to thank this institution for its
support and hospitality.


\section{Two-microlocal analysis of integrable systems on $\mathbb{T}^{d}$}
\label{sec:reson}

In this section, we develop the two-microlocal analysis of the elements of $\widetilde {\mathcal M}(\tau)$ that will be at the core of the proof of Theorems~\ref{t:main}, \ref{t:lagrangian} and~\ref{t:precise} in the case where $\tau_h\leq 1/h$. From now on, we shall assume that the time scale $\left(
\tau_{h}\right)
$ satisfies:
\begin{equation}
(h\tau_{h}) \quad \text{is a bounded sequence.} \label{e:htaubdd}
\end{equation}
Note however that the discussion of section~\ref{s:decompo} does not require this assumption.

\subsection{Invariant measures and a resonant partition of
phase-space\label{s:decompo}}

As in \cite{AnantharamanMacia}, the first step in our strategy to characterise the elements in $\widetilde
{\mathcal{M}}\left(  \tau\right)  $ consists in introducing a partition of
phase-space $T^{\ast}\mathbb{T}^{d}$ according to the order of ``resonance'' of its elements, that induces a decomposition of the
measures $\mu\in\widetilde{\mathcal{M}}\left(  \tau\right)  $.

Using the duality $((\mathbb{R}^{d})^{\ast},\mathbb{R}^{d})$, we denote by $A^\perp \subset \R^d$ the orthogonal of a set $A\in (\mathbb{R}^{d})^{\ast}$ and by $B^\perp \subset (\mathbb{R}^{d})^{\ast}$ the orthogonal of a set $B\in \mathbb{R}^{d}$.
 Recall that $\mathcal{L}$ is the family of all primitive submodules of
$(\mathbb{Z}^{d})^{\ast}$ and that with each $\Lambda\in\mathcal{L}$, we
associate the set $I_{\Lambda}$ defined in~(\ref{def:ILambda}):
$I_{\Lambda}=dH^{-1}(\Lambda^{\bot})$. Denote by $\Omega_{j}\subset
\mathbb{R}^{d}$, for $j=0,...,d$, the set of resonant vectors of order exactly
$j$, that is:
\[
\Omega_{j}:=\left\{  \xi\in(\mathbb{R}^{d})^*:\operatorname*{rk}\Lambda_{\xi
}=d-j\right\}  ,
\]
where
\[
\Lambda_{\xi}:=\left\{  k\in(\mathbb{Z}^{d})^{\ast}:k\cdot dH(\xi)=0\right\}=dH(\xi)^\perp \cap \Z^d
.
\]
Note that the sets $\Omega_{j}$ form a partition of $(\mathbb{R}^{d})^*$, and that
$\Omega_{0}=dH^{-1}\left(  \left\{  0\right\}  \right)  $; more generally,
$\xi\in\Omega_{j}$ if and only if the Hamiltonian orbit $\left\{  \phi
_{s}\left(  x,\xi\right)  :s\in\mathbb{R}\right\}  $ issued from any
$x\in\mathbb{T}^{d}$ in the direction $\xi$ is dense in a subtorus of
$\mathbb{T}^{d}$ of dimension $j$.
The set $\Omega:=\bigcup_{j=0}^{d-1}\Omega_{j}$ is usually called the set of
\emph{resonant }momenta, whereas $\Omega_{d}=(\mathbb{R}^{d})^*\setminus\Omega$ is
referred to as the set of \emph{non-resonant } momenta.
Finally, write
\begin{equation}\label{def:RLambda}
R_{\Lambda}:=I_{\Lambda}\cap\Omega_{d-\operatorname*{rk}\Lambda}.
\end{equation}
Saying that $\xi\in R_{\Lambda}$ is equivalent to any of the following statements:

\begin{enumerate}
\item[(i)] for any $x_{0}\in\IT^{d}$ the time-average $\frac{1}{T}\int_{0}
^{T}\delta_{x_{0}+tdH(\xi)}\left(  x\right)  dt$ converges weakly, as
$T\rightarrow\infty$, to the Haar measure on the torus $x_{0}+\mathbb{T}
_{\Lambda^{\perp}}\text{.}$ Here, we have used the notation $\mathbb{T}
_{\Lambda^{\perp}}:=\Lambda^{\perp}/\left(  2\pi\mathbb{Z}^{d}\cap
\Lambda^{\perp}\right)  $, which is a torus embedded in $\T^d$;

\item[(ii)] $\Lambda_{\xi}=\Lambda$.
\end{enumerate}

\noindent Moreover, if $\operatorname*{rk}\Lambda=d-1$ then $R_{\Lambda}=dH^{-1}\left(
\Lambda^{\perp}\setminus\{0\}\right)  =I_{\Lambda}\setminus\Omega_{0}$. Note
that,
\begin{equation}
(\mathbb{R}^{d})^*=\bigsqcup_{\Lambda\in\mathcal{L}}R_{\Lambda}, \label{part}
\end{equation}
that is, the sets $R_{\Lambda}$ form a partition of $(\mathbb{R}^{d})^*$. As a
consequence, any measure $\mu\in{\mathcal{M}}_{+}(T^{\ast}\R^{d})$ decomposes
as
\begin{equation}
\mu=\sum_{\Lambda\in\mathcal{L}}\mu\rceil_{\mathbb{T}^{d}\times R_{\Lambda}}.
\label{dec}
\end{equation}
Therefore, the analysis of a measure $\mu$ reduces to that of $\mu
\rceil_{\T^{d}\times R_{\Lambda}}$ for all primitive submodule $\Lambda$. 
Given an $H$-invariant measure $\mu$, it turns out that $\mu\rceil
_{\T^{d}\times R_{\Lambda}}$ are utterly determined by the Fourier
coefficients of $\mu$ in $\Lambda$. Indeed, define the complex measures on $\mathbb{R}^{d}
$:
\[
\widehat{\mu}_{k} :=\int_{\mathbb{T}^{d}}\frac{e^{-ik\cdot x}}{\left(
2\pi\right)  ^{d/2}}\mu\left(  dx,\cdot\right)  ,\quad k\in\mathbb{Z}^{d},
\]
so that, in the sense of distributions,
\[
\mu\left(  x,\xi\right)  =\sum_{k\in\mathbb{Z}^{d}}\widehat{\mu}_{k}\left(
\xi\right)  \frac{e^{ik\cdot x}}{\left(  2\pi\right)  ^{d/2}}.
\]
Then, the following proposition holds.

\begin{proposition}
\label{prop:decomposition} Let $\mu\in\mathcal{M}_{+}\left(  T^{\ast
}\mathbb{T}^{d}\right)  $ and $\Lambda\in\mathcal{L}$. The distribution:
\[
\left\langle \mu\right\rangle _{\Lambda}\left(  x,\xi\right)  :=\sum
_{k\in\Lambda}\widehat{\mu}_{k}\left(  \xi\right)  \frac{e^{ik\cdot x}
}{\left(  2\pi\right)  ^{d/2}}
\]
is a finite, positive Radon measure on $T^{\ast}\mathbb{T}^{d}$.\newline
Moreover, if $\mu$ is a positive $H$-invariant measure on $T^{\ast}
\mathbb{T}^{d}$, then every term in the decomposition (\ref{dec}) is a
positive $H$-invariant measure, and
\begin{equation}
\mu\rceil_{\mathbb{T}^{d}\times R_{\Lambda}}=\left\langle \mu\right\rangle
_{\Lambda}\rceil_{\mathbb{T}^{d}\times R_{\Lambda}}. \label{eq:muRLambda}
\end{equation}
Besides, identity~(\ref{eq:muRLambda}) is equivalent to the fact that
$\mu\rceil_{\mathbb{T}^{d}\times R_{\Lambda}}$ is invariant by the
translations
\[
\left(  x,\xi\right)  \longmapsto\left(  x+v,\xi\right)  ,\quad\text{for every
}v\in\Lambda^{\perp}.
\]

\end{proposition}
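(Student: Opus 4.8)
The plan is to establish the three assertions of Proposition \ref{prop:decomposition} in sequence, the linchpin being the characterization of the measures $\mu\rceil_{\mathbb{T}^{d}\times R_{\Lambda}}$ by invariance under $\Lambda^{\perp}$-translations. First I would observe that the positivity and finiteness of $\left\langle \mu\right\rangle _{\Lambda}$ comes from the fact that $\left\langle \mu\right\rangle _{\Lambda}$ is the image of $\mu$ under the averaging map
\[
\mu\longmapsto \int_{\mathbb{T}_{\Lambda^{\perp}}}\left(\tau_{v}\right)_{\ast}\mu\,dv,
\]
where $\tau_{v}(x,\xi)=(x+v,\xi)$ and $dv$ is normalized Haar measure on $\mathbb{T}_{\Lambda^{\perp}}=\Lambda^{\perp}/(2\pi\mathbb{Z}^{d}\cap\Lambda^{\perp})$: averaging a positive finite measure against a probability measure yields a positive finite measure, and a direct Fourier computation shows this average has precisely the Fourier coefficients $\widehat\mu_k$ for $k\in\Lambda$ and zero otherwise (here one uses that $v\mapsto e^{ik\cdot v}$ integrates to zero over $\mathbb{T}_{\Lambda^{\perp}}$ exactly when $k\notin\langle\Lambda\rangle$, combined with primitivity of $\Lambda$ to pass between $\langle\Lambda\rangle\cap(\mathbb{Z}^d)^*$ and $\Lambda$). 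This simultaneously proves that $\left\langle \mu\right\rangle _{\Lambda}$ is $\Lambda^{\perp}$-invariant, and that a measure is $\Lambda^{\perp}$-invariant if and only if it equals its own $\Lambda$-projection.

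Next I would treat the $H$-invariance claims. Since the flow $\phi_s$ acts only by translation in $x$ and fixes $\xi$, and the decomposition \eqref{dec} is a decomposition according to the value of $\xi$ (the sets $R_\Lambda\subset(\mathbb{R}^d)^*$ partition momentum space), the restriction $\mu\rceil_{\mathbb{T}^{d}\times R_{\Lambda}}$ is obtained from $\mu$ by multiplying by the indicator of a $\phi_s$-invariant set; hence each term is again $H$-invariant. For the key identity \eqref{eq:muRLambda}, I would argue that on $\mathbb{T}^{d}\times R_{\Lambda}$ the two measures agree because $H$-invariance forces $\mu\rceil_{\mathbb{T}^{d}\times R_{\Lambda}}$ to be invariant under the full subtorus $\mathbb{T}_{\Lambda^{\perp}}$: indeed by characterization (i) of $R_\Lambda$, for $\xi\in R_\Lambda$ the closure of the orbit direction $dH(\xi)$ generates exactly $\mathbb{T}_{\Lambda^{\perp}}$, so the ergodic averages of $\phi_s$ converge to averaging over $\mathbb{T}_{\Lambda^{\perp}}$; applying this fiberwise (disintegrating $\mu$ over $\xi$ and using that $\mu$ is $\phi_s$-invariant, so its disintegration over a.e. $\xi\in R_\Lambda$ is a translation-invariant, hence Haar, measure on the corresponding subtorus in the $x$-fiber) shows $\mu\rceil_{\mathbb{T}^{d}\times R_{\Lambda}}$ is $\Lambda^{\perp}$-translation invariant. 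By the equivalence established in the first paragraph, this is the same as saying it equals its $\Lambda$-projection, i.e. $\left\langle \mu\right\rangle _{\Lambda}$ restricted to the same set. The reverse implication (that \eqref{eq:muRLambda} implies $\Lambda^{\perp}$-invariance of $\mu\rceil_{\mathbb{T}^{d}\times R_{\Lambda}}$) is immediate since $\left\langle \mu\right\rangle _{\Lambda}$ is $\Lambda^{\perp}$-invariant by construction and restriction to $\mathbb{T}^d\times R_\Lambda$ (a $\Lambda^{\perp}$-invariant set in the $x$-variable, being a product) preserves this.

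The step I expect to be the main obstacle is making rigorous the fiberwise ergodic argument in the proof of \eqref{eq:muRLambda}, i.e. the passage from $\phi_s$-invariance of $\mu$ to $\mathbb{T}_{\Lambda^{\perp}}$-invariance of the restriction: one must justify disintegrating $\mu$ with respect to $\xi$, argue that for $\bar\mu$-almost every $\xi\in R_\Lambda$ the conditional measure on the fiber $\mathbb{T}^{d}$ is invariant under translation by the vector subgroup $\mathbb{R}dH(\xi)$ and hence by its closure in $\mathbb{T}^{d}$ — which is $x\mapsto x+\mathbb{T}_{\Lambda^{\perp}}$ precisely by the definition of $R_\Lambda$ via characterization (i) — and then reassemble. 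This requires some care with null sets and with the measurability of the disintegration, but is standard; alternatively one can bypass disintegration entirely by testing against $a(x,\xi)$ of product form, using $\phi_s$-invariance to replace $a(x,\xi)$ by its Birkhoff average $\langle a\rangle(x,\xi)=\lim_{T\to\infty}\frac1T\int_0^T a(x+sdH(\xi),\xi)\,ds$ inside the integral against $\mu$, and noting that for $\xi\in R_\Lambda$ this average equals $\langle a\rangle_{\Lambda}(x,\xi)$, the projection onto $\Lambda$-frequencies — which is exactly what \eqref{eq:muRLambda} asserts after unwinding the Fourier expansion.
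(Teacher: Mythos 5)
Your proof is correct and follows essentially the same route as the paper, which itself only refers to Lemmas 6 and 7 of \cite{AnantharamanMacia}: positivity of $\left\langle \mu\right\rangle _{\Lambda}$ via averaging over translations by $\mathbb{T}_{\Lambda^{\perp}}$, and identity (\ref{eq:muRLambda}) via the equidistribution of the orbit $\{x+s\,dH(\xi)\}$ in $x+\mathbb{T}_{\Lambda^{\perp}}$ for $\xi\in R_{\Lambda}$. The alternative you sketch at the end (replacing $a$ by its Birkhoff average, which coincides with $\left\langle a\right\rangle_{\Lambda}$ on $R_{\Lambda}$) is exactly the mechanism used in the cited lemmas and cleanly sidesteps the disintegration issue you flag.
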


The proof of Proposition~\ref{prop:decomposition} follows the lines of those
of Lemmas~6 and~7 of~\cite{AnantharamanMacia}. We also point out that this
decomposition depends on the function $H$ through the definition of
$I_{\Lambda}$.
In the following, our aim is to determine $\mu$ restricted to $\T^d\times R_\Lambda$ for any $\Lambda\in {\mathcal L}$.


\subsection{Second microlocalization on a resonant  submanifold\label{s:second}}

 Let $\left(
u_{h}\right) $ be a bounded sequence in $L^{2}\left(
\mathbb{T}^{d}\right)  $ and suppose (after extraction of a
subsequence) that its Wigner distributions $w_{h}(t)=w^h_{
S^{t\tau_h}_{h}u_h}$ converge to a
semiclassical measure $\mu\in L^{\infty}\left(  \mathbb{R};\mathcal{M}
_{+}\left(  T^{\ast}\mathbb{T}^{d}\right)  \right)  $ in the
weak-$\ast$ topology of $L^{\infty}\left(
\mathbb{R};\mathcal{D}^{\prime}\left(  T^{\ast
}\mathbb{T}^{d}\right)  \right)  $.

Given $\Lambda\in \cL$, the purpose of this section is to study
the measure
 $\mu\rceil_{\mathbb{T}^{d}\times R_{\Lambda}}$ by performing a second microlocalization along $I_\Lambda$ in
 the spirit of~\cite{Fermanian2micro, FermanianShocks, FermanianGerardCroisements, NierScat, MillerThesis}
and~\cite{AnantharamanMacia, MaciaTorus}. By
Proposition~\ref{prop:decomposition}, it suffices to characterize
the action of $\mu\rceil_{\mathbb{T}^{d}\times R_{\Lambda}}$ on
test functions having only $x$-Fourier modes in $\Lambda$. With
this in mind, we shall introduce two auxiliary ``distributions''
which describe more precisely how $w_{h}\left( t\right)  $
concentrates along $\mathbb{T}^{d}\times I_\Lambda$. They are
actually not mere distributions, but lie in the dual of the class
of symbols $\mathcal{S}_{\Lambda}^{1}$ that we define below.

\medskip

In what follows, we fix $\xi_{0}\in R_{\Lambda}$ such that
$d^{2}H(\xi_{0})$ is definite and,   by applying a cut-off in
frequencies to the data, we restrict our discussion to
normalised sequences of initial data $(u_{h})$ that satisfy:
\[
\widehat{u_{h}}\left(  k\right)  =0,\qquad\text{for }hk\in\mathbb{R}
^{d}\setminus B(\xi_{0};\eps/2),
\]
where $B(\xi_{0},\eps/2)$ is the ball of radius $\eps/2$
centered at $\xi_{0}$. The parameter $\eps>0$ is taken small
enough, in order that
\[
d^{2}H(\xi)\text{ is definite for  all }\xi\in B(\xi_{0},\eps)\text{;}
\]
this implies that $I_{\Lambda}\cap B(\xi_{0},\eps)$ is a
submanifold of dimension $d-\mathrm{rk}\,\Lambda$, everywhere
transverse to $\la\Lambda\ra$, the vector subspace of $(\R^{d})^*$
generated by $\Lambda$. Note that this is actually achieved under
the weaker hypothesis that $d^{2}H(\xi)$ is non-singular and
defines a definite bilinear form on $\la \Lambda \ra \times \la
\Lambda \ra$ (Section \ref{s:suff} gives a set of assumptions which is weaker than definiteness but sufficient for our results). By eventually reducing $\eps$, we have
\[
B(\xi_{0},\eps/2)\subset(I_{\Lambda}\cap
B(\xi_{0},\eps))\oplus\la\Lambda\ra,
\]
by which we mean that any element $\xi\in B(\xi_{0},\eps/2)$ can
be decomposed in a unique way as $\xi=\sigma+\eta$ with $\sigma\in
I_{\Lambda}\cap B(\xi _{0},\eps)$ and $\eta\in\la\Lambda\ra$. We
thus get a map
\begin{align}
F:B(\xi_{0},\eps/2)  & \To\left(  I_{\Lambda}\cap
B(\xi_{0},\eps)\right)
\times\la\Lambda\ra\label{e:defF}\\
\xi & \longmapsto(\sigma(\xi),\eta(\xi))\nonumber
\end{align}

With this decomposition  of the space of frequencies, we associate
two-microlocal test-symbols.

\begin{definition} We denote by
$\mathcal{S}_{\Lambda}^{1}$ the
class of smooth functions $a\left(  x,\xi,\eta\right)  $ on $T^{\ast}
\mathbb{T}^{d}\times \la\Lambda\ra$ that are:

\begin{enumerate}
\item[(i)] compactly supported on $\left(  x,\xi\right)  \in
T^{\ast }\mathbb{T}^{d}$, $\xi\in B(\xi_0,\eps/2)$,\smallskip

\item[(ii)] homogeneous of degree zero at infinity w.r.t.
$\eta\in\la\Lambda\ra$, $\emph{i.e.}$ such that there exist
$R_{0}>0$ and $a_{\rm{hom}}\in \cC_{c}^{\infty}\left(
T^{\ast}\mathbb{T}^{d}\times\mathbb{S}\la\Lambda\ra\right)  $
with
\[
a\left(  x,\xi,\eta\right)  =a_{\rm{hom}}\left(
x,\xi,\frac{\eta }{\left\vert \eta\right\vert }\right)
\text{,\quad for }\left\vert
\eta\right\vert >R_{0}\text{ and }\left(  x,\xi\right)  \in T^{\ast}
\mathbb{T}^{d}
\]
(we have denoted by $\mathbb{S}\la\Lambda\ra$ the unit sphere in
$\la\Lambda\ra\subseteq(\mathbb{R}^{d})^{*}$, identified later on with the sphere at infinity);\smallskip

\item[(iii)] such that their non vanishing Fourier coefficients
(in the $x$
variable) correspond to frequencies $k\in\Lambda$:
\[
a\left(  x,\xi,\eta\right)
=\sum_{k\in\Lambda}\widehat{a}_{k}\left(\xi ,\eta\right)
\frac{e^{ik\cdot x}}{\left(  2\pi\right) ^{d/2}}.
\]
We will also express this fact by saying that $\emph{a}$\emph{ has
only $x$-Fourier modes in $\Lambda$.}
\end{enumerate}
\end{definition}

The index $1$ in the notation ${\mathcal S}^1_\Lambda$ refers to the fact that we have added {\em one} variable ($\eta$) to the standard class of symbols corresponding to the second microlocalisation. In Section~\ref{s:successive}, we will perform successive higher order microlocalisations corresponding to the addition of $k\geq 1$ variables and we will consider spaces denoted ${\mathcal S}^k_\Lambda$.

\medskip

For $a\in{\mathcal S}^1_\Lambda$, we introduce the notation
$$\Op^\Lambda_h(a(x,\xi,\eta)):=\Op_h\left(a\left(x,\xi,\tau_h\eta(\xi)\right)\right).$$
Notice that, for all
$\beta\in\N^{d}$,
\begin{equation}\label{estforgain}
\left\Vert \partial_{\xi}^{\beta}\left(a\left(
x,h\xi,\tau_{h}{\eta(h\xi )}\right)\right)\right\Vert
_{L^{\infty}}\leq C_{\beta }\left(  \tau_{h}{h}\right)
^{|\beta|}.
\end{equation}
The Calder\'{o}n-Vaillancourt theorem (see~\cite{CV} or the
appendix of~\cite{AnantharamanMacia} for a precise statement)
therefore ensures that
there exist $N\in\N$ and $C_{N}>0$ such that
\begin{equation}
\forall a\in{\mathcal{S}}_{\Lambda}^{1},\quad\left\Vert
\Op_{h}^{\Lambda }(a)\right\Vert
_{{\mathcal{L}}(L^{2}(\R^{d}))}\leq C_{N}\sum_{|\alpha|\leq
N}\Vert\partial_{x,\xi,\eta}^{\alpha}a\Vert_{L^{\infty}},\label{eq:CVeta}
\end{equation}
since $\left(  h\tau_{h}\right)  $ is assumed to be bounded. Therefore the family of operators $\Op^\Lambda_h(a)$ is a bounded family of $L^2(\T^d)$.

We are going to use this formalism to decompose the Wigner transform $w_h(t)$.
Let $\chi\in \cC_{c}^{\infty}\left(\la\Lambda\ra\right)  $ be a
nonnegative cut-off function that is identically equal to one near
the origin. For $a\in\cS_{\Lambda}^{1}$,  $R>1$, $\delta<1$, we
decompose $a$ into:
$a(x,\xi,\eta)=\sum_{j=1}^3a_j(x,\xi,\eta)$ with
\begin{eqnarray}
\nonumber
 a_1(x,\xi,\eta)  & :=  & a(x,\xi,\eta)\left(1-\chi\left({\eta\over R}\right)\right)\left(1- \chi\left({\eta(\xi)\over\delta}\right) \right),\\
 \label{def:a2}
 a_2(x,\xi,\eta) & := & a(x,\xi,\eta)\left(1-\chi\left({\eta\over R}\right)\right) \chi\left({\eta(\xi)\over\delta}\right) ,\\
 \label{def:a3}
 a_3(x,\xi,\eta) & := & a(x,\xi,\eta)\chi\left({\eta\over R}\right).
 \end{eqnarray}
 Since any smooth compactly supported function with Fourier modes in $\Lambda$ can be viewed as an element of ${\mathcal S}^1_\Lambda$ (which is constant in the variable $\eta$),  this induces a decomposition of the Wigner
 distribution: $$w_h(t)=w_{h,R, \delta}^{I_\Lambda}\left(  t\right)+w_{I_\Lambda,h,R}\left(  t\right)+  w^{I_\Lambda^c}_{h,R, \delta}\left(  t\right),$$
where:
\[
\left\langle w_{h,R, \delta}^{I_\Lambda}\left(  t\right)
,a\right\rangle :=\int _{T^{\ast}\mathbb{T}^{d}}   a_2\left(
x,\xi,\tau_h \eta(\xi) \right)  w_{h}\left(  t\right) \left(
dx,d\xi\right)  ,
\]
\begin{equation}
\left\langle w_{I_\Lambda,h,R}\left(  t\right)  ,a\right\rangle
:=\int_{T^{\ast }\mathbb{T}^{d}} a_3\left(  x,\xi,\tau_h\eta(\xi)
\right)  w_{h}\left(
t\right)  \left(  dx,d\xi\right)  ,
\end{equation}
and
$$
\left\langle w^{I_\Lambda^c}_{h,R, \delta}\left(  t\right)
,a\right\rangle :=\int_{T^{\ast }\mathbb{T}^{d}} a_1\left(
x,\xi,\tau_h\eta(\xi)\right)  w_{h}\left(
t\right)  \left(  dx,d\xi\right)  ,
$$
that we shall analyse in the limits $h\To 0^+$, $R\To +\infty$ and
$\delta\To 0$ (taken in that order).

\medskip

The distributions $w_{h,R,\delta}^{I_\Lambda}(t)$ and
$w_{I_\Lambda,h,R}(t)$ can be expressed for all $t\in\R$ by
\begin{eqnarray}\label{eq:w1}
\langle w_{h,R,\delta}^{I_\Lambda}(t) ,a\rangle & = & \langle u_h, S^{\tau_ht *}_{h}\Op^\Lambda_h(a_2)S^{\tau_ht}_{h}u_h\rangle_{L^{2}(\T^d)} ,\\
\label{eq:w2}
\langle w_{I_\Lambda,h,R}^{\Lambda}(t) ,a\rangle & = & \langle
u_h,
S^{\tau_ht *}_{h}\Op^\Lambda_h(a_3)S^{\tau_ht}_{h}u_h\rangle_{L^{2}(\T^d)}
.
\end{eqnarray}
As a consequence of~(\ref{eq:CVeta}), both $ w_{h,R, \delta}^{I_\Lambda}$ and
$w_{I_\Lambda,h,R}$ are
bounded in $L^{\infty}\left(  \mathbb{R};\left(  \mathcal{S}_{\Lambda}
^{1}\right)  ^{\prime}\right)  $.

\medskip

The first observation  is that
$$\displaylines{\qquad
\lim_{\delta\To 0}\lim_{R\rightarrow\infty}\lim_{h\rightarrow
0}\int_{\mathbb{R}} \theta(t)\left\langle w^{I_\Lambda^c}_{h,R,
\delta}\left( t\right)  ,a\right\rangle dt \hfill\cr\hfill
=\int_{\mathbb{R}}\int
_{T^{*}\T^d} \theta(t) a_{\rm{hom}}\left(x,\xi,{\eta(\xi)\over
|\eta(\xi)|} \right)\mu(t, dx,d\xi)\rceil_{\T^d\times I_\Lambda^c
}dt\qquad \cr}$$
where $\mu\in\mathcal{\widetilde{M}}\left( \tau_h\right) $ is
the semiclassical measure obtained through the sequence $(u_h)$.
Since $R_\Lambda\subset I_\Lambda$, the restriction of the measure thus obtained to $\T^d\times
R_\Lambda$ vanishes, and we do not need to further analyse
the term involving the distribution $w^{I_\Lambda^c}_{h,R,
\delta}\left( t\right)$.

\medskip

Then,
after possibly extracting
subsequences, one defines limiting objects $\tilde\mu_\Lambda$ and $\tilde\mu^\Lambda$ such that  for every $\varphi\in L^{1}\left(
\mathbb{R}\right)  $ and
$a\in\mathcal{S}_{\Lambda}^{1}$,
\[
\int_{\mathbb{R}}\varphi\left(  t\right)  \left\langle
\tilde{\mu}^{\Lambda
}\left(  t,\cdot\right)  ,a\right\rangle dt:=\lim_{\delta\To 0}\lim_{R\rightarrow\infty}
\lim_{h\rightarrow0^{+}}\int_{\mathbb{R}}\varphi\left( t\right)
\left\langle w_{h,R, \delta}^{I_\Lambda}\left( t\right)
,a\right\rangle dt,
\]
and
\begin{equation}
\int_{\mathbb{R}}\varphi\left(  t\right)  \left\langle
\tilde{\mu}_{\Lambda
}\left(  t,\cdot\right)  ,a\right\rangle dt:=\lim_{R\rightarrow\infty}
\lim_{h\rightarrow0^{+}}\int_{\mathbb{R}}\varphi\left( t\right)
\left\langle
w_{I_\Lambda,h,R}\left(  t\right)  ,a\right\rangle dt. \label{doublelim}
\end{equation}
From the decomposition $w_h(t)=w_{h,R, \delta}^{I_\Lambda}\left(  t\right)+w_{I_\Lambda,h,R}\left(  t\right)+  w^{I_\Lambda^c}_{h,R, \delta}\left(  t\right)$ (when testing against symbols having Fourier modes in $\Lambda$), it is immediate that the measure $\mu\left(  t,\cdot\right)
\rceil_{\mathbb{T}^{d}\times R_{\Lambda}}$ is related to
$\tilde\mu^\Lambda$ and $\tilde\mu_\Lambda$ according to the
following Proposition.

\begin{proposition}\label{p:decomposition}
Let
\[
\mu^{\Lambda}\left(  t,\cdot\right)  :=\int_{\left\langle
\Lambda\right\rangle
}\tilde{\mu}^{\Lambda}\left(  t,\cdot,d\eta\right)  \rceil_{\mathbb{T}
^{d}\times R_{\Lambda}},\quad\mu_{\Lambda}\left(  t,\cdot\right)
:=\int_{\left\langle \Lambda\right\rangle
}\tilde{\mu}_{\Lambda}\left( t,\cdot,d\eta\right)
\rceil_{\mathbb{T}^{d}\times R_{\Lambda}}.
\]
Then both $\mu^{\Lambda}\left(  t,\cdot\right)  $ and
$\mu_{\Lambda}\left(
t,\cdot\right)  $ are $H$-invariant positive measures on $T^{\ast}\mathbb{T}^{d}$and satisfy:
\begin{equation}
\mu\left(  t,\cdot\right)  \rceil_{\mathbb{T}^{d}\times R_{\Lambda}}
=\mu^{\Lambda}\left(  t,\cdot\right)  +\mu_{\Lambda}\left(
t,\cdot\right)  . \label{eq:decomposition}
\end{equation}
\end{proposition}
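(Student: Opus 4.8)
The statement decomposes $\mu(t,\cdot)\rceil_{\T^d\times R_\Lambda}$ into two pieces $\mu^\Lambda$ and $\mu_\Lambda$ coming from the two regimes of the scale of concentration along $I_\Lambda$, and asserts three things about each piece: positivity, $H$-invariance, and that they add up to $\mu\rceil_{\T^d\times R_\Lambda}$. The plan is to first establish the sum formula \eqref{eq:decomposition}, which is essentially bookkeeping, then positivity, which follows from testing against nonnegative symbols, and finally $H$-invariance, which is the genuine content and requires exploiting the structure of $\Op^\Lambda_h$ together with the defining property of $I_\Lambda$ that $dH(\xi)\cdot k=0$ for $k\in\Lambda$ when $\xi\in I_\Lambda$.

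\textbf{Step 1: the sum formula.} First I would fix $a\in\cC^\infty_c(T^*\T^d)$ having only $x$-Fourier modes in $\Lambda$, regarded as an element of $\cS^1_\Lambda$ constant in $\eta$. For such $a$ the three pieces $a_1,a_2,a_3$ of the decomposition \eqref{def:a2}--\eqref{def:a3} still have Fourier modes in $\Lambda$, and the identity $a=a_1+a_2+a_3$ gives $w_h(t)=w^{I_\Lambda}_{h,R,\delta}(t)+w_{I_\Lambda,h,R}(t)+w^{I_\Lambda^c}_{h,R,\delta}(t)$ when tested against $a$. Taking the iterated limits $h\to0$, $R\to\infty$, $\delta\to0$ against $\theta(t)\,dt$, the left-hand side converges to $\int\theta(t)\langle\mu(t,\cdot),a\rangle\,dt$ (since $a$ does not depend on $\eta$, the rescaling $\tau_h\eta(\xi)$ is irrelevant and $w_h(t)$ simply converges to $\mu(t)$), and the three terms on the right converge respectively to the $\eta$-integrals of $\tilde\mu_\Lambda$, $\tilde\mu^\Lambda$, and to the piece of $\mu$ on $\T^d\times I_\Lambda^c$ already identified in the excerpt. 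Restricting to $\T^d\times R_\Lambda$ kills the last term since $R_\Lambda\subset I_\Lambda$; restricting $\tilde\mu_\Lambda,\tilde\mu^\Lambda$ as in the definitions of $\mu^\Lambda,\mu_\Lambda$ in Proposition~\ref{p:decomposition} yields \eqref{eq:decomposition}. One should check that it suffices to test against symbols with Fourier modes in $\Lambda$: this is exactly Proposition~\ref{prop:decomposition}, which says $\mu\rceil_{\T^d\times R_\Lambda}=\langle\mu\rangle_\Lambda\rceil_{\T^d\times R_\Lambda}$, so the restriction to $\T^d\times R_\Lambda$ is determined by the Fourier coefficients in $\Lambda$.

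\textbf{Step 2: positivity.} For $a\in\cS^1_\Lambda$ nonnegative, $a_3=a\,\chi(\eta/R)$ is nonnegative, and $\Op^\Lambda_h(a_3)=\Op_h(a_3(x,\xi,\tau_h\eta(\xi)))$ is, up to $O(h)$ in operator norm (by the Calderón--Vaillancourt bound \eqref{eq:CVeta} applied to a square-root construction, or directly by sharp Gårding, noting the symbol lives in a class with uniform seminorms since $h\tau_h$ is bounded), a nonnegative operator; hence $\langle u_h, S^{\tau_h t\,*}_h\Op^\Lambda_h(a_3)S^{\tau_h t}_h u_h\rangle\geq -Ch$ for a.e.\ $t$, and passing to the limit via \eqref{doublelim} gives $\langle\tilde\mu_\Lambda(t),a\rangle\geq0$; the same for $\tilde\mu^\Lambda$ and $a_2$ (here one must be a little careful because $\chi(\eta(\xi)/\delta)$ is being taken to $0$, but it is nonnegative throughout). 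Integrating in $\eta$ and restricting to $R_\Lambda$ preserves positivity, so $\mu^\Lambda(t,\cdot)$ and $\mu_\Lambda(t,\cdot)$ are positive.

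\textbf{Step 3: $H$-invariance (the main obstacle).} This is where the work is. The strategy is to show that testing $\tilde\mu_\Lambda$ and $\tilde\mu^\Lambda$ against $\{H,a\}=dH(\xi)\cdot\partial_x a$ gives zero. The key computation is Egorov-type: $\tfrac{d}{dt}\langle u_h,S^{\tau_ht\,*}_h\Op^\Lambda_h(a_j)S^{\tau_ht}_h u_h\rangle=\tfrac{i\tau_h}{h}\langle u_h,S^{\tau_ht\,*}_h[H(hD_x)+h^2\mathbf{V}_h,\Op^\Lambda_h(a_j)]S^{\tau_ht}_h u_h\rangle$. One expands the commutator: the principal term is $\tfrac{i\tau_h}{h}\cdot\tfrac{h}{i}\Op_h(\{H,a_j(x,\xi,\tau_h\eta(\xi))\})+O(\tau_h h)$, and the crucial point is that $\partial_{x}$ of the rescaled symbol, paired against $dH(\xi)$, produces $\tau_h\,dH(\xi)\cdot\partial_x a_j$, while the $\partial_\xi$ terms hitting $\eta(\xi)$ bring down factors $\tau_h\, dH(\xi)\cdot(\text{something})$ which, because $\xi$ is localized near $I_\Lambda$ and $\eta(\xi)$ measures the distance to $I_\Lambda$, behave like $O(\tau_h|\eta(\xi)|)$ — which is $O(1)$ on the relevant support but whose contribution must be tracked. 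So one gets, roughly, $\tfrac{d}{dt}\langle w_{I_\Lambda,h,R}(t),a\rangle=\tau_h\langle w_{I_\Lambda,h,R}(t),\{H,a\}\rangle+(\text{terms with }\partial_\eta a)+o(1)$. Multiplying by $\theta(t)/\tau_h$, integrating in $t$, and using that the left side is $\tfrac1{\tau_h}\int\theta'(t)\langle\cdots\rangle\to0$ since $\theta\in\cC^\infty_c$ and $\tau_h\to\infty$: this forces $\int\theta(t)\langle\tilde\mu_\Lambda(t),\{H,a\}\rangle\,dt=0$ for all $\theta$, i.e.\ $\{H,\cdot\}$ annihilates $\tilde\mu_\Lambda$ — but one must handle the $\partial_\eta a$ terms, which vanish in the limit because $a$ is homogeneous of degree $0$ at infinity in $\eta$ so $\partial_\eta a = O(1/|\eta|)=O(1/\tau_h)\to0$ on the support $|\tau_h\eta(\xi)|>R$, while on the compact $\eta$-region they are controlled similarly. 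The same argument applies to $\tilde\mu^\Lambda$ with $a_2$. After integrating in $\eta$ and restricting to $R_\Lambda$, one concludes $\{H,\mu^\Lambda\}=\{H,\mu_\Lambda\}=0$, i.e.\ both are $H$-invariant; alternatively one can invoke the general fact recorded earlier in the excerpt that all elements of $\widetilde{\mathcal M}(\tau)$ are $H$-invariant and argue that each summand inherits invariance via the translation characterization in Proposition~\ref{prop:decomposition}. The delicate bookkeeping of the symbol rescaling in the commutator expansion — making sure the $\tau_h h$ error terms are genuinely negligible and that the $\partial_\xi\eta(\xi)$ chain-rule terms do not spoil the argument — is the real obstacle, and is exactly where the hypothesis that $h\tau_h$ is bounded, together with the transversality of $I_\Lambda$ to $\langle\Lambda\rangle$, gets used.
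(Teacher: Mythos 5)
Your Step 1 and Step 3 essentially follow the paper's route. The sum formula is exactly what the paper calls ``immediate'' from the decomposition $w_h=w^{I_\Lambda}_{h,R,\delta}+w_{I_\Lambda,h,R}+w^{I_\Lambda^c}_{h,R,\delta}$ combined with Proposition~\ref{prop:decomposition}, and the $H$-invariance is obtained, as in the proofs of Theorem~\ref{mu^Lambda} and Lemma~\ref{p:averaging}, by the commutator/Egorov computation. Two remarks on Step 3: since $H$ depends only on $\xi$, the Moyal expansion of $\frac{i}{h}[H(hD_x),\Op_h(b)]$ produces only $x$-derivatives of $b$ (paired with $\xi$-derivatives of $H$), so the chain-rule factors $\tau_h\,d\eta(\xi)$ you worry about never arise in this particular argument — your bookkeeping concern is moot. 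On the other hand, your proposed ``alternative'' (deducing invariance of each summand from the invariance of $\mu$) is not valid: a sum of two positive measures can be flow-invariant without either summand being so, and $\mu^\Lambda$, $\mu_\Lambda$ are not themselves elements of $\widetilde{\mathcal M}(\tau)$.

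The genuine gap is in Step 2, at the critical scale $\tau_h=1/h$. You claim $\langle\tilde\mu_\Lambda(t),a\rangle\geq 0$ for every nonnegative $a\in\cS^1_\Lambda$, justified by sharp G\aa rding ``since $h\tau_h$ is bounded''. Boundedness of $h\tau_h$ yields only Calder\'on--Vaillancourt boundedness; positivity up to $o(1)$ requires a gain, and the symbol $a(x,h\xi,\tau_h\eta(h\xi))$ has $\xi$-derivatives of size $(h\tau_h)^{|\beta|}$, which do not tend to zero when $h\tau_h\sim 1$, so the square-root/G\aa rding remainder is $O(h\tau_h)=O(1)$ and positivity is lost in the limit. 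This is not a repairable technicality: the paper asserts positivity of $\tilde\mu_\Lambda$ itself only when $\tau_h\ll 1/h$ (Theorem~\ref{Thm Properties}(2)); at the critical scale it only asserts — and proves by the entirely different machinery of Section~\ref{s:finitedistance} (the operator $K_h$ and the operator-valued measures $\rho_\Lambda=M_\Lambda m_\Lambda$) — that the \emph{projection} onto $T^*\T^d$ is positive, via $\langle\tilde\mu_\Lambda(t),a\rangle=\int \mathrm{Tr}(a_\sigma M_\Lambda)\,dm_\Lambda\geq 0$ for $a\geq 0$ independent of $\eta$. The Proposition does only need positivity of the projection $\mu_\Lambda$, and there is a direct fix: for $\eta$-independent $a\geq 0$ the cutoff $\chi(\tau_h\eta(\xi)/R)$ is a Fourier multiplier, so $\Op_h\bigl(a\,\chi(\tau_h\eta(\xi)/R)\bigr)$ equals $\Op_h(\chi^{1/2})\Op_h(a)\Op_h(\chi^{1/2})$ up to $O(h)+O(h\tau_h/R)$, and conjugating the standard sharp G\aa rding bound for $a$ gives positivity after $h\To 0$ then $R\To\infty$. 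As written, however, your Step 2 does not establish the positivity claim at the scale the paper cares most about.
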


This proposition motivates the analysis of the structure of the
accumulation points $\tilde{\mu}_{\Lambda}\left( t,\cdot\right)  $
and $\tilde{\mu}^{\Lambda}\left(  t,\cdot\right)  $.
 It turns out that both $\tilde{\mu}^{\Lambda}$
and $\tilde{\mu}_{\Lambda}$ have some extra regularity in the
variable $x$, although for two different reasons. Our next two
results form one of the key steps towards the proof of Theorem
\ref{t:main}.

\begin{remark}
All the results in this section remain valid if the Hamiltonian
$H_{h}$ depends on $h$ as stated in Remark \ref{r:Hh}. The proofs
are completely
analogous, the only difference being that the the resonant manifolds $%
I_{\Lambda }$ and the coordinate system $F=\left( \sigma ,\eta
\right) $ now also vary with $h$. Therefore, the definitions of
$w_{h,R,\delta }^{I_{\Lambda }}$ and $w_{I_{\Lambda },h,R}$ should
be modified accordingly.
\end{remark}

\subsection{Properties of two-microlocal semiclassical measures}

We define, for $\left(  x,\xi,\eta\right)  \in T^{\ast}
\mathbb{T}^{d}\times \left(\la\Lambda\ra\setminus \{0\}\right)$ and $s\in\mathbb{R}$,
$$\displaylines{
\phi_{s}^{0}\left(  x,\xi,\eta\right)  :=\left(
x+sdH(\xi),\xi,\eta\right)  ,\cr \phi_{s}^{1}\left(
x,\xi,\eta\right)  :=\left(
x+sd^2H(\sigma(\xi))\cdot\frac{\eta}{|\eta|},\xi,\eta\right)
.\cr}$$
This second definition extends in an obvious way to $\eta\in \mathbb{S}\la\Lambda\ra$ (the sphere at infinity). On the other hand, the map
$\left(
x,\xi,\eta\right)\mapsto \phi_{s|\eta|}^{1}\left(
x,\xi,\eta\right)$ extends to $\eta=0$.

\medskip

We first focus on the measure $\tilde\mu^\Lambda$. We point out that because of the existence of
$R_{0}>0$ and of $a_{\rm{hom}}\in C_{c}^{\infty}\left(
T^{\ast }\mathbb{T}^{d}\times\mathbb{S}\left\langle
\Lambda\right\rangle \right)  $ such that
\[
a\left(  x,\xi,\eta\right)  =a_{\rm{hom}}\left(
x,\xi,\frac{\eta }{\left\vert \eta\right\vert }\right)
,\quad\text{for }\left\vert \eta\right\vert \geq R_{0},
\]
 the value $\left\langle
w_{h,R,\delta}^{I_\Lambda}\left( t\right)  ,a\right\rangle $ only
depends on $a_{\rm{hom}}$. Therefore, the limiting object
$\tilde{\mu}^{\Lambda}\left(  t,\cdot\right)\in \left(\mathcal{S}_{\Lambda}^{1}\right)'$ is
zero-homogeneous in the last variable $\eta\in\mathbb{R}^{d}$, supported at infinity, and,
by construction, it is supported on $\xi\in I_\Lambda$. This can
be also expressed as the fact that $\tilde\mu^\Lambda$ is a
``distribution'' on $\T^{d}\times I_{\Lambda }\times
\overline{\la\Lambda\ra}$ (where $\overline{\la\Lambda\ra}$ is the
compactification of $\la\Lambda\ra$ by adding the sphere
$\mathbb{S}\la\Lambda\ra$ at infinity) supported on
$\{\eta\in\mathbb{S}\la\Lambda\ra\}$. Moreover, we have the following result.

\begin{theorem}\label{mu^Lambda}
$\tilde{\mu}^{\Lambda}\left( t,\cdot\right)  $ is a positive
measure on $\T^{d}\times I_{\Lambda }\times
\overline{\la\Lambda\ra}$ supported on the sphere at infinity
$\mathbb{S}\la\Lambda\ra$ in the variable $\eta$. Besides, for
a.e. $t\in\mathbb{R}$, the measure $\tilde{\mu}^{\Lambda}\left(
t,\cdot\right)  $ satisfies the invariance properties:
\begin{equation}
\left(  \phi_{s}^{0}\right)  _{\ast}\tilde{\mu}^{\Lambda}\left(
t,\cdot\right)  =\tilde{\mu}^{\Lambda}\left(  t,\cdot\right)
,\quad\left( \phi_{s}^{1}\right)
_{\ast}\tilde{\mu}^{\Lambda}\left(  t,\cdot\right)
=\tilde{\mu}^{\Lambda}\left(  t,\cdot\right)  ,\quad
s\in\mathbb{R}.
\label{2minv}
\end{equation}
\end{theorem}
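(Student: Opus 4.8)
The plan is to establish the two invariances in \eqref{2minv} separately, since they come from genuinely different mechanisms. The first invariance, under $\phi_s^0$, is the "easy" one: it is inherited from the $H$-invariance of the semiclassical measure $\mu$ and, more precisely, from an exact Egorov relation. Concretely, I would write out $\langle w_{h,R,\delta}^{I_\Lambda}(t),a\circ\phi_s^0\rangle$ using \eqref{eq:w1}, then commute $\Op_h^\Lambda(a_2\circ\phi_s^0)$ with the free propagator $e^{isH(hD_x)/h}$ that generates the translation $x\mapsto x+sdH(\xi)$ at the symbol level. The point is that the symbol $a_2(x,\xi,\tau_h\eta(\xi))$ depends on $\xi$ only through a bounded-derivative dependence (cf.\ \eqref{estforgain}), so conjugating by $e^{isH(hD_x)/h}$ produces $a_2(x+sdH(\xi),\xi,\tau_h\eta(\xi))$ exactly (the flow $\phi_s$ is affine in $x$ and preserves $\xi$), with an error controlled by $h\tau_h$ bounded. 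Passing to the limit $h\to0$, $R\to\infty$, $\delta\to0$ gives $(\phi_s^0)_*\tilde\mu^\Lambda=\tilde\mu^\Lambda$ for a.e.\ $t$. Alternatively, one can simply note that $\tilde\mu^\Lambda$ is a "piece" of the $H$-invariant measure $\mu$ refined in the $\eta$ variable, and $\phi_s^0$ is just the lift of $\phi_s$ acting trivially on $\eta$; $H$-invariance of $\mu$ together with the construction then forces this.

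The second invariance, under $\phi_s^1$, is the substantive one and carries the content of the theorem: it reflects a \emph{secondary} propagation along the resonant submanifold at the two-microlocal scale. Here I would exploit the fact that $\tilde\mu^\Lambda$ lives at $|\eta|=\infty$, where the symbol is $a_{\mathrm{hom}}(x,\xi,\eta/|\eta|)$. The idea is to test against a symbol of the form $\{a,p\}$ (a Poisson bracket) with a well-chosen auxiliary symbol $p$, and to show that this bracket integrates to zero against $\tilde\mu^\Lambda$. The natural choice is $p(x,\xi)=\eta(\xi)\cdot\zeta$ for $\zeta\in\Lambda^\perp$... more precisely, one wants to capture the quadratic term in the Taylor expansion of $H$ transverse to $I_\Lambda$. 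Since for $\xi\in I_\Lambda$ we have $dH(\xi)\perp\Lambda$, write $\xi=\sigma+\eta$ with $\sigma=\sigma(\xi)\in I_\Lambda$, $\eta=\eta(\xi)\in\la\Lambda\ra$; then $dH(\xi)=dH(\sigma)+d^2H(\sigma)\eta+O(|\eta|^2)$, and the component of $dH(\xi)$ along $\Lambda^\perp$ that drives $x$-motion \emph{within} the subtorus is, to leading order, $d^2H(\sigma)\cdot\frac{\eta}{|\eta|}\cdot|\eta|$. When we second-microlocalize at scale $\tau_h\sim h^{-1}$ (i.e.\ $h\tau_h$ bounded, and the relevant regime is $h\tau_h\to$ const) precisely the rescaled quantity $\tau_h\eta(\xi)$ is of order one on the support, so this transverse Hessian term produces, after the limit, a transport at unit speed $d^2H(\sigma)\cdot\frac{\eta}{|\eta|}$ in the $x$-variable — which is exactly $\phi_s^1$.

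To make this rigorous I would proceed as in the analogous arguments of \cite{AnantharamanMacia, MaciaTorus, Fermanian2micro}: apply the equation \eqref{e:eq} to get $\frac{d}{dt}\langle w_h(t),\cdot\rangle$ in terms of the commutator $[\Op_h^\Lambda(a_3), \frac1h H(hD_x)+h\mathbf V_h(t)]$, expand this commutator in powers of $h$ (and of $\tau_h\eta$), isolate the leading surviving term, and check that the subprincipal and potential contributions vanish in the limit on the sphere at infinity. Testing only against symbols supported where $|\eta|\ge R$ and letting $R\to\infty$ kills everything except the homogeneous-degree-zero part, and the surviving identity is precisely $\langle \tilde\mu^\Lambda(t,\cdot), \{a_{\mathrm{hom}}, \mathrm{(transverse\ Hessian\ symbol)}\}\rangle=0$ for a.e.\ $t$, which integrates to the stated invariance $(\phi_s^1)_*\tilde\mu^\Lambda=\tilde\mu^\Lambda$. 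Positivity of $\tilde\mu^\Lambda$ as a measure (not merely an element of $(\mathcal S_\Lambda^1)'$) follows by a standard Gårding-type argument: test against $a=|b|^2$ for $b\in\mathcal S_\Lambda^1$, use sharp Gårding on $\Op_h^\Lambda$ (the Calderón--Vaillancourt bound \eqref{eq:CVeta} guarantees the calculus is well-behaved), and pass to the limit.

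The main obstacle, I expect, is handling the $\eta$-dependence uniformly while taking the \emph{iterated} limits $h\to0$, then $R\to\infty$, then $\delta\to0$ in the correct order, and checking that the remainder terms in the commutator expansion — in particular those involving $\partial_\xi$ acting on the rescaled argument $\tau_h\eta(\xi)$, which pick up factors of $\tau_h h$ — genuinely do not contribute. One has to be careful that the symbol $a_3$, which is cut off at $|\eta|\le R$ \emph{before} rescaling, interacts correctly with the Hessian term whose natural size is $|\eta|$; the homogeneity at infinity is what saves the day, but spelling out that the "bad" region $\delta^{-1}|\eta(\xi)|\lesssim 1$ versus $|\tau_h\eta(\xi)|\le R$ is negligible requires the transversality of $I_\Lambda$ to $\la\Lambda\ra$ (guaranteed by definiteness of $d^2H$ on $\la\Lambda\ra$) and a careful bookkeeping of scales. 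This is exactly the kind of two-microlocal estimate that is routine in spirit but delicate in execution; I would lean on the detailed computations of \cite{AnantharamanMacia} adapted to a general $H$, the only new feature being that $d^2H(\sigma)$ now varies with $\sigma\in I_\Lambda$ rather than being constant.
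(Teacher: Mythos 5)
Your proposal follows essentially the same route as the paper: positivity by the standard two-microlocal G\aa rding argument, $\phi_s^0$-invariance by the Egorov argument of the appendix, and $\phi_s^1$-invariance by Taylor-expanding $dH(\xi)-dH(\sigma(\xi))$ in $\eta(\xi)$ (using $|\eta(\xi)|\le\delta$ on $\supp a_2$ and the fact that $dH(\sigma(\xi))\in\Lambda^\perp$ pairs to zero against symbols with $x$-frequencies in $\Lambda$), then recognizing the surviving term as a commutator with $H(hD_x)+h^2\mathbf{V}_h(t)$, i.e.\ a total time derivative killed by integration by parts in $t$ and the iterated limits $h\to0$, $R\to\infty$, $\delta\to0$. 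The one device you leave vague is the auxiliary symbol: the paper pairs the generator of $\phi_s^1$ with $\frac{i}{h}\bigl[H(hD_x)+h^2\mathbf{V}_h(t),\Op_h\bigl(a_2/|\eta(\xi)|\bigr)\bigr]$, where the division by $|\eta(\xi)|$ is legitimate because $|\eta(\xi)|\ge R/\tau_h$ on $\supp a_2$ and gives $\Op_h(a_2/|\eta(\xi)|)$ operator norm $O(\tau_h/R)$, so the $1/\tau_h$ gained from integrating by parts in $t$ leaves an error vanishing in the limits; your trial choice $p=\eta(\xi)\cdot\zeta$ would not produce the Hessian transport, and note that on $\supp a_2$ the rescaled variable $\tau_h\eta(\xi)$ has size $\ge R$ rather than order one (the order-one regime is the one governing $\tilde\mu_\Lambda$, not $\tilde\mu^{\Lambda}$).
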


Note that this result holds whenever $\tau_h\ll 1/h$ or
$\tau_h=1/h$. This is in contrast with the situation we encounter
when dealing with $\tilde\mu_\Lambda(t,\cdot)$. The regularity of
this object indeed depends on the properties of the scale.

\begin{theorem}
\label{Thm Properties} (1) The distributions
$\tilde\mu_\Lambda(t,\cdot)$ are supported on $\T^d\times
I_\Lambda\times \langle \Lambda\rangle$ and are continuous with
respect to $t\in\R$. \\
(2) If $\tau_h\ll1/h$ then $\tilde{\mu}_{\Lambda}\left(
t,\cdot\right)  $ is a positive measure. \\
(3) If $\tau_h=1/h$,
  the projection of $\tilde{\mu}_{\Lambda}\left(
t,\cdot\right)  $ on $T^{\ast}\mathbb{T}^{d}$ is a positive
measure, whose projection on $\mathbb{T}^{d}$ is absolutely
continuous with respect to the Lebesgue measure.\\
(4) If $\tau_h\ll 1/h$, then $\tilde\mu_{\Lambda}$ satisfy the following
propagation law:
\begin{equation}\label{eq:etafini}
\forall t\in\R,\qquad \tilde\mu_{\Lambda}(t,x,\xi,\eta)=(
\phi^1_{t|\eta|})_* \tilde\mu_{\Lambda}(0,x,\xi,\eta) .
\end{equation}
\end{theorem}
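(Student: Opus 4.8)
The plan is to analyze the three objects $\tilde\mu^\Lambda$ and $\tilde\mu_\Lambda$ by going back to the operator-level expressions \eqref{eq:w1}--\eqref{eq:w2} and exploiting Egorov-type arguments adapted to the rescaled symbols. First I would establish positivity. For $\tilde\mu^\Lambda$, the symbol $a_2$ is, for $|\eta|\geq R_0$ and after the limit $R\to\infty$, a nonnegative function of $(x,\xi,\eta/|\eta|)$ times a cutoff; by a sharp G\aa rding inequality applied to $\Op_h^\Lambda(a_2)$ (whose symbol has controlled derivatives thanks to \eqref{estforgain} and the boundedness of $h\tau_h$), the quadratic form $\langle u_h, S_h^{\tau_h t *}\Op_h^\Lambda(a_2)S_h^{\tau_h t}u_h\rangle$ is nonnegative up to $o(1)$, giving positivity of $\tilde\mu^\Lambda(t,\cdot)$ in the limit. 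The same argument gives positivity of the $T^*\T^d$-projection of $\tilde\mu_\Lambda$ in all cases, and full positivity when $\tau_h\ll 1/h$ (item (2)), the point being that the remainder terms in the symbolic calculus that obstruct positivity carry an extra factor $h\tau_h\to 0$ in that regime. Continuity in $t$ (item (1), first half) follows from the fact that $t\mapsto S_h^{\tau_h t}u_h$ is continuous in $L^2$ uniformly in $h$, so the limiting object inherits continuity; the support statement on $\T^d\times I_\Lambda\times\la\Lambda\ra$ is built into the construction (the cutoff $\chi(\eta(\xi)/\delta)$ forces $\xi\to I_\Lambda$ as $\delta\to 0$, and $a_3$ has $\eta$-support in a ball after $R\to\infty$... actually one must be careful: for $\tilde\mu_\Lambda$ the support in $\eta$ is genuinely in $\la\Lambda\ra$, not at infinity, which is the reason its propagation law is different).

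Next I would prove the invariance properties \eqref{2minv} for $\tilde\mu^\Lambda$. The invariance under $\phi_s^0$ is the usual $H$-invariance coming from the fact that $\mu$ (and its pieces) commute with the flow generated by $H(hD_x)$; one tests against $a\circ\phi_s^0$ and uses that $\Op_h(a\circ\phi_s^0)$ differs from $e^{is H(hD_x)/h}\Op_h(a)e^{-isH(hD_x)/h}$ by $O(h)$ via exact Egorov for the free flow on the torus (the flow being linear, Egorov is exact up to the quantization convention). The new invariance under $\phi_s^1$ is the heart of Theorem~\ref{mu^Lambda}: here one uses that on the region $|\tau_h\eta(\xi)|\gg 1$ (the regime isolated by $a_2$ after $R\to\infty$), the Hamiltonian $H(hD_x)/h$, expanded around $I_\Lambda$, contributes at leading order a term $d^2H(\sigma(\xi))\cdot(\eta,\eta)$ whose Hamiltonian vector field in the rescaled variables is precisely generated by $\phi_s^1$. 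Concretely, I would commute $\Op_h^\Lambda(a_2)$ with the propagator $S_h^{\tau_h t}$ over an infinitesimal time and read off the transport equation; the quadratic part of $H$ in the $\eta$-direction, once divided by $|\eta|$ and after the homogenization $R\to\infty$, produces the vector field $d^2H(\sigma)\cdot\eta/|\eta|\cdot\partial_x$. The subprincipal term $h^2\mathbf V_h$ contributes $O(h^2\tau_h)=o(1)$ since $h\tau_h$ is bounded, so it drops out — which is why $\tilde\mu^\Lambda$ has no potential dependence, consistent with the statement.

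Finally, items (3) and (4) of Theorem~\ref{Thm Properties}. For (4), the regime $\tau_h\ll 1/h$: here the rescaled symbol $a(x,\xi,\tau_h\eta(\xi))$ has all $\xi$-derivatives bounded by $o(1)$ powers beyond first order, and the relevant transport is governed by the \emph{linear} (in $\eta$) part of $dH$ restricted to $\la\Lambda\ra$, namely $d^2H(\sigma)\cdot\eta$, but now \emph{not} renormalized by $|\eta|$ — this is because in this regime one does not need to pass to the sphere at infinity, $\eta$ stays finite, and a careful bookkeeping shows the generator of the evolution on $\tilde\mu_\Lambda$ is $\eta\mapsto$ transport by $d^2H(\sigma(\xi))\cdot\eta\cdot\partial_x$, which is exactly $\phi^1_{t|\eta|}$ evaluated with the direction $\eta/|\eta|$ and time $t|\eta|$, i.e. the flow of the vector field $d^2H(\sigma)\cdot\eta\cdot\partial_x$. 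So \eqref{eq:etafini} is the integrated form of that transport equation. For (3), the critical scale $\tau_h=1/h$: the absolute continuity of the $\T^d$-projection is the crucial dispersive statement. Here the symbol $a(x,\xi,\eta(\xi)/h)$ is genuinely a second-microlocal symbol at the critical scale, and $\Op_h^\Lambda(a)$ acting on $L^2(\T^d)$ is, after conjugation by $\hat U$ and localization, unitarily equivalent to a semiclassical operator on $L^2_\omega(\R^d,\Lambda)$ whose evolution is governed by the Schr\"odinger equation (S$_{\Lambda,\omega,\xi}$); the dispersive smoothing of that Schr\"odinger flow (the Hessian $d^2H(\sigma)$ being definite on $\la\Lambda\ra$) forces the $x$-marginal of any limit to be absolutely continuous — this is the mechanism already used in \cite{AnantharamanMacia, MaciaTorus}. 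I expect the main obstacle to be exactly this last point: making rigorous the reduction to the model Schr\"odinger operator on $L^2_\omega(\R^d,\Lambda)$ and controlling the error terms in the symbolic calculus uniformly as $h\to 0$, $R\to\infty$, $\delta\to 0$ in the correct order, since the three limits do not commute and the intermediate objects live in the awkward dual space $(\mathcal S^1_\Lambda)'$. The bulk of Sections~\ref{s:finitedistance} and~\ref{s:successive} will be devoted to this.
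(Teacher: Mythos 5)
Your treatment of parts (2) and (4) matches the paper: positivity in the regime $h\tau_h\to 0$ comes from the gain $h\tau_h$ in the two-scale calculus, and the transport law \eqref{eq:etafini} comes from commuting $\Op_h^\Lambda(a_3)$ with $H(hD_x)+h^2\mathbf{V}_h(t)$, using $d^2H(\sigma(\xi))\eta(\xi)=dH(\xi)-dH(\sigma(\xi))+O(|\eta(\xi)|^2)$ together with $dH(\sigma(\xi))\cdot\partial_x a=0$ for symbols with $x$-modes in $\Lambda$. But two of your steps do not hold up. First, you deduce continuity in $t$ from the claim that $t\mapsto S_h^{\tau_h t}u_h$ is continuous in $L^2$ \emph{uniformly in} $h$; this is false, since $\partial_t S_h^{\tau_h t}u_h$ has norm of order $\tau_h/h\to\infty$ (indeed the full measure $\mu(t,\cdot)$ is in general only $L^\infty$ in $t$). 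Continuity of $\tilde\mu_\Lambda$ is a \emph{consequence} of the propagation laws: the commutator computation shows $\frac{d}{dt}\la w_{I_\Lambda,h,R}(t),a\ra$ is bounded uniformly in $h$ precisely because $|\eta(\xi)|=O(R/\tau_h)$ on $\supp a_3$ cancels the factor $\tau_h$ — this is (4) when $\tau_h\ll 1/h$ and the Heisenberg equation of Theorem \ref{prop:opvame} when $\tau_h=1/h$.

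Second, and more seriously, part (3) — the core of the theorem — is not proved, and the mechanism you invoke is not the right one. No dispersive smoothing estimate for the model Schr\"odinger flow is used, and none would by itself make a weak-$\ast$ limit of Wigner marginals absolutely continuous. The paper's argument is structural: at $\tau_h=1/h$ the distribution $w_{I_\Lambda,h,R}(t)$ is rewritten, via the partial Bloch--Fourier operator $K_h$ (not the FIO $\hat U$ of Section \ref{sec:gen}) and the Plancherel identity \eqref{e:planche}, as $\la\rho^h(t),Q^h_{a,R}\ra$, where $\rho^h(t)$ is built from the density matrix of $v_h(\sigma,\cdot)=\chi_0\,e^{i\sigma_\Lambda\cdot/h}K_hS_h^{t/h}u_h(\sigma,\cdot)$. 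Its limit is therefore a measure in $(\omega,\sigma)$ with values in \emph{positive trace-class} operators on $L^2_\omega(\R^d,\Lambda)$, giving $\la\tilde\mu_\Lambda(t),a\ra=\int\operatorname{Tr}\bigl(a_\sigma M_\Lambda(t,\omega,\sigma)\bigr)m_\Lambda(d\omega,d\sigma)$. Absolute continuity of the $\T^d$-marginal then follows by inspection: for $a=a(x)$, $a_\sigma$ is a multiplication operator and $\operatorname{Tr}(a_\sigma N)=\int a(y)\sum_j\lambda_j|\varphi_j(y)|^2\,dy$ with $\sum_j\lambda_j<\infty$, an $L^1$ density. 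Neither the definiteness of the Hessian nor any propagation or unique-continuation input enters here; the equation (S$_{\Lambda,\omega,\xi}$) only governs the $t$-dependence of $M_\Lambda$. Since you explicitly defer this reduction and point toward a dispersive argument instead, the proposal leaves (3) unestablished.
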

Note that (4) implies the continuous dependence of $\tilde\mu_\Lambda(t,\cdot)$  with
respect to $t$ in the case $\tau_h\ll 1/h$.
For $\tau_h= 1/h$ the dependence of $\tilde\mu_{\Lambda}(t,x,\xi,\eta)$ on $t$ will be investigated in Section~\ref{s:finitedistance}.

\begin{remark}\label{rem:decmuenL}
Consider the decomposition
$
\mu\left(  t,\cdot\right)
=\sum_{\Lambda\in\mathcal{L}}\mu^{\Lambda}\left( t,\cdot\right)
+\sum_{\Lambda\in\mathcal{L}}\mu_{\Lambda}\left( t,\cdot\right)  .
$
given by Proposition~\ref{p:decomposition}.
When $\tau_h=1/h$, Theorem \ref{Thm Properties}(3) implies that the second term defines a
positive measure whose projection on $\mathbb{T}^{d}$ is
absolutely continuous with respect to the Lebesgue measure.
\end{remark}

Theorem~\ref{Thm Properties} calls for a few comments.

\medskip

The fact that the distribution
$\tilde \mu_\Lambda$ is  supported on $\T^{d}\times I_{\Lambda
}\times \la \Lambda \ra$ is straightforward. Indeed, we have for all~$t$,
\begin{equation}\label{eq:a3decompose}
\langle w_{I_\Lambda, h, R}(t), a(x,\xi,\eta)\rangle = \langle
w_{I_\Lambda,h,R}(t), a(x,\sigma(\xi),\eta) \rangle+O(\tau_h^{-1})
\end{equation}
since, by~(\ref{eq:CVeta}),
\begin{eqnarray*}
\Op_h^\Lambda(a_3(x,\xi,\eta)) & = &
\Op_h^\Lambda(a(x,\sigma(\xi)+\tau_h^{-1} \eta,\eta)
\chi(\eta/R))\\ & = & \Op_h^\Lambda(a(x,\sigma(\xi),\eta)
\chi(\eta/R))+O(\tau_h^{-1})
\end{eqnarray*}
where the $O(\tau_h^{-1})$ term is understood in the sense of the
operator norm of ${\mathcal L}(L^2(\R^d))$ and depends on $R$ (the
fact that we first let $h$ go to $0^+$ is crucial here).

\medskip

When
$\tau_{h}\ll1/h$ the quantization of our symbols
generates a semi-classical pseudodifferential calculus with gain
$h\tau_{h}$. The operators $\Op_{h}^{\Lambda
}(a)$ are semiclassical both in $\xi$ and~$\eta$. This implies that the accumulation points
$\tilde\mu^\Lambda$ and $\tilde\mu_\Lambda$ are positive measures (see for instance~\cite{MillerThesis} or~\cite{FermanianGerardCroisements}). \label{r:positiv}

\medskip

When  $\tau_h=1/h$, we will see in Theorem~\ref{prop:opvame} in Section~\ref{s:finitedistance} that the distributions
$\tilde{\mu}_{\Lambda}\left( t,\cdot\right)$ satisfy an invariance law that can be interpreted in
terms of a Schr\"odinger flow type propagator.

\medskip

Let us now comment on the invariance by the flows. Note first that
of major importance is the observation that
 for all $\xi\in\left(
\mathbb{R}^{d}\right)^*\setminus C_{H}$ (recall that
$C_{H}$ stands for the points where the Hessian
$d^{2}H\left(  \xi\right)  $ is not definite) we have the decomposition $\mathbb{R}^{d}
=\Lambda^{\perp}\oplus d^{2}H\left(  \xi\right)  \left\langle
\Lambda \right\rangle $. Therefore, the flows $\phi_{s}^{0}$ and
$\phi_{s}^{1}$ are independent on $\mathbb{T}^{d}\times\left(
R_{\Lambda}\setminus C_{H}\right) \times\left\langle
\Lambda\right\rangle $. Then, the following remark holds:

\begin{remark}\label{r:nice}
 In the case where $\operatorname*{rk}\Lambda=1$ then (\ref{2minv})
implies that, for a.e. $t\in\mathbb{R}$, and for any
$\nu$ in the $1$-dimensional space $\la\Lambda\ra$, the
measure $\tilde{\mu}^{\Lambda}\left(  t,\cdot\right)  \rceil_{\mathbb{T}
^{d}\times R_{\Lambda}\times\left\langle \Lambda\right\rangle }$
is invariant under
\[
(x,\sigma,\eta)\longmapsto(x+d^{2}H(\sigma)\cdot\nu,\sigma,\eta).
\]
On the other hand, the invariance by the Hamiltonian flow and
Proposition~\ref{prop:decomposition}, imply that
$\tilde{\mu}^{\Lambda}\left( t,\cdot\right)
\rceil_{\mathbb{T}^{d}\times R_{\Lambda}\times\left\langle
\Lambda\right\rangle }$ is also invariant under
\[
(x,\sigma,\eta)\longmapsto(x+v,\sigma,\eta)
\]
for every $v$ in the hyperplane $\Lambda^{\perp}$. Using the independence of the different flows and
the fact that the Hessian
$d^{2}H\left(  \sigma\right)  $ is definite on the support of $\tilde{\mu}^{\Lambda}\left(
t,\cdot\right) \rceil_{\mathbb{T}^{d}\times
R_{\Lambda}\times\left\langle \Lambda \right\rangle }$, we conclude that the
measure $\tilde{\mu}^{\Lambda}\left(  t,\cdot\right)
\rceil_{\mathbb{T}^{d}\times R_{\Lambda}\times\left\langle \Lambda
\right\rangle }$ is constant in $x\in\mathbb{T}^{d}$ in this case. For $\operatorname*{rk}\Lambda >1$, we will develop a similar argument thanks to successive microlocalisations (see section~\ref{s:successive}).
\end{remark}

\medskip

In the next subsection, we
 prove the invariance properties stated in  Theorems \ref{mu^Lambda} and~\ref{Thm Properties} (2) and (4). For $\tau_h=h^{-1}$ the  detailed analysis of the measure $\tilde\mu_\Lambda$ is performed in section~\ref{s:finitedistance} and the proof of the absolute continuity of its projection on $T^*\T^d$ is done in section~\ref{s:successive}.


\subsection{Invariance properties of two-microlocal semiclassical measures}\label{sec:infini}

\begin{proof}[Proof of Theorem~\ref{mu^Lambda}.]
The positivity
of $\tilde{\mu}^{\Lambda}\left(  t,\cdot\right)  $ can be deduced
following the lines of \cite{FermanianGerardCroisements} \S 2.1,
or those of the proof of Theorem~1 in \cite{GerardMDM91}; see also
the appendix of~\cite{AnantharamanMacia}. The proof
of invariance of $\tilde{\mu}^{\Lambda}\left(  t,\cdot\right)  $ under $\phi_s^0$ is similar to the proof of invariance of $\mu$ under $\phi_s$ done in the appendix.
\\
Let us now check the invariance property (\ref{2minv}).
Using~(\ref{def:a2}), we have (along any convergent subsequence)
\begin{align}
\int_{\mathbb{R}}\varphi(t)\langle\tilde{\mu}^{\Lambda}(t,\cdot),a\rangle
dt &
=\lim_{\delta\rightarrow0}\lim_{R\rightarrow+\infty}\lim_{h\rightarrow
0}\int_{\mathbb{R}}\varphi(t)\left\langle
w_{h,R,\delta}^{I_{\Lambda}}\left(
t\right)  ,a\right\rangle dt\label{e:limmul}\\
&
=\lim_{\delta\rightarrow0}\lim_{R\rightarrow+\infty}\lim_{h\rightarrow
0}\int_{\mathbb{R}}\varphi(t)\langle w_{h}(t),a_{2}\left(
x,\xi,\tau_{h} \eta(\xi)\right)  \rangle dt\nonumber.
\end{align}
Notice that the symbol
\[
a_{2}\circ\phi_{s}^{1}(x,\xi,\eta)=a_{2}\left(
x+sd^{2}H(\sigma(\xi )){\frac{\eta}{|\eta|}},\xi,\eta\right) ,
\]
is a well-defined element of ${\mathcal{S}}_{\Lambda}^{1}$, since, for fixed $R$,
$a_{2}$ is identically equal to zero near $\eta=0$; moreover
\[
\forall\omega\in{\mathbb{S}}\la\Lambda\ra ,\;\;(a_{2}
\circ\phi_{s}^{1})_{\mathrm{hom}}(x,\xi,\omega)=a_{\mathrm{hom}}
(x+sd^{2}H(\sigma(\xi))\omega,\xi,\omega).
\]
 We write
$$ \frac{d}{ds}|_{ s=0}\left(  a_{2}\circ\phi_{s}^{1}\right)  \left(
x,\xi,\tau_{h}\eta (\xi)\right)=\left( d^{2}H(\sigma(\xi)){\frac{\eta(\xi)}{|\eta(\xi)|}}\right)\cdot \partial_ x a_{2}\left(  x ,\xi,\tau_{h}\eta (\xi)\right).$$
Using the Taylor expansion
$$d^{2}H(\sigma(\xi))\eta(\xi)
+G(\xi)[\eta(\xi),\eta(\xi)]=dH(\xi)-dH(\sigma(\xi))$$
where \begin{equation}
G(\xi)=\int_{0}^{1}d^{3}H(\sigma(\xi)+t\eta(\xi))(1-t)dt,
\label{e:G}
\end{equation}
is uniformly bounded, and taking into account the fact that $\eta(\xi)=O(\delta)$ on the support of $a_2$, we have
$$\left( d^{2}H(\sigma(\xi)){\frac{\eta(\xi)}{|\eta(\xi)|}}\right)\cdot \partial_ x a_{2}\left(  x ,\xi,\tau_{h}\eta (\xi)\right)=\left( {\frac{dH(\xi)-dH(\sigma(\xi))}{|\eta(\xi)|}}\right)\cdot \partial_ x a_{2}\left(  x ,\xi,\tau_{h}\eta (\xi)\right) +O(\delta).$$
Because $a_2$ has only $x$-Fourier coefficients in $\Lambda$ and $dH(\sigma(\xi))\in\Lambda^\perp$, we can write
$$\left( {\frac{dH(\xi)-dH(\sigma(\xi))}{|\eta(\xi)|}}\right)\cdot \partial_ x a_{2}\left(  x ,\xi,\tau_{h}\eta (\xi)\right) =
\left( {\frac{dH(\xi)}{|\eta(\xi)|}}\right)\cdot \partial_ x a_{2}\left(  x ,\xi,\tau_{h}\eta (\xi)\right).
$$
Note now that
\begin{multline*}\Op_h\left({\frac{dH(\xi)}{|\eta(\xi)|}}\cdot \partial_ x a_{2}\left(  x ,\xi,\tau_{h}\eta (\xi)\right)\right)
=\frac{i}{h}\left[H(hD_x)+h^2 {\mathbf V}_h(t), \Op_h\left( \frac{ a_{2}\left(  x ,\xi,\tau_{h}\eta (\xi)\right)}{|\eta(\xi)|}\right)\right]
\\+ O(h)+O\left(\frac{h\tau_h}{R}\right).
\end{multline*}
For the last term, we have only used that ${\mathbf V}_h(t)$ is a bounded operator on $L^2$ and
$$\left\Vert\Op_h\left( \frac{ a_{2}\left(  x ,\xi,\tau_{h}\eta (\xi)\right)}{|\eta(\xi)|}\right)\right\Vert_{L^2\To L^2}=O\left(\frac{\tau_h}{R}\right)$$
since ${\frac{1}{|\eta(\xi)|}}= O\left(\frac{\tau_h}{R}\right)$ on the support of $a_2$.

To conclude, take a test function $\varphi(t)\in \cC_c^\infty(\R)$
(those are dense in $L^1(\R)$).
$$\displaylines{
\int_{\R}\varphi(t) \left\la w_{h,R, \delta}^{I_\Lambda}\left(  t\right), \frac{d}{ds}|_{ s=0}\left(  a_{2}\circ\phi_{s}^{1}\right)  \left(
x,\xi,\tau_{h}\eta (\xi)\right) \right\ra dt \hfill\cr\hfill
= \int_{\R}\varphi(t)
\left\la S_h^{\tau_h t} u_h,  \frac{i}{h}\left[H(hD_x)+h^2 {\mathbf V}_h(t), \Op_h\left( \frac{ a_{2}\left(  x ,\xi,\tau_{h}\eta (\xi)\right)}{|\eta(\xi)|}\right)\right] S_h^{\tau_h t} u_h\right\ra dt \cr\hfill
+ O(h)+O\left(\frac{h\tau_h}{R}\right)+O(\delta)\qquad\cr\hfill
 =\frac{1}{\tau_h} \int_{\R}\varphi(t)
\frac{d}{dt}\left\la S_h^{\tau_h t} u_h,  \Op_h\left( \frac{ a_{2}\left(  x ,\xi,\tau_{h}\eta (\xi)\right)}{|\eta(\xi)|}\right) S_h^{\tau_h t} u_h\right\ra dt + O(h)+O\left(\frac{h\tau_h}{R}\right)+O(\delta)\cr\hfill
=-\frac{1}{\tau_h} \int_{\R}\varphi'(t)
 \left\la S_h^{\tau_h t} u_h,  \Op_h\left( \frac{ a_{2}\left(  x ,\xi,\tau_{h}\eta (\xi)\right)}{|\eta(\xi)|}\right) S_h^{\tau_h t} u_h\right\ra dt + O(h)+O\left(\frac{h\tau_h}{R}\right)+O(\delta)\cr\hfill
 =O(\tau_h^{-1})+ O(h)+O\left(\frac{h\tau_h}{R}\right)+O(\delta).\cr
}$$
Taking the limit $h\To 0$ followed by $R\To +\infty$ and $\delta\To 0$, we obtain
$$\int_{\R}\varphi(t) \left\la  \tilde\mu^\Lambda(t), \frac{d}{ds}|_{ s=0}\left(  a\circ\phi_{s}^{1}\right)    \right\ra dt =0$$
for any $\varphi$ and $a$, which ends the proof of Theorem~\ref{mu^Lambda}.
\end{proof}

\medskip

\begin{proof}[Proof of (1), (2) and (4) of
Theorem~\ref{Thm Properties} for $h\tau_h\To 0$.]
The statement
on the support of the measure~$\tilde{\mu}_{\Lambda}$ has already been discussed in
Section \ref{s:second} (after Remark~\ref{rem:decmuenL}).
The positivity of $\tilde{\mu}_{\Lambda}$ is standard once we notice that the two-scale quantization admits
the gain $h\tau_{h}$ (in view of~(\ref{estforgain})). Note also that (4) implies the continuous dependence with respect to $t$.\\
Thus let us prove part (4) of the theorem. The propagation law (and hence, the
continuity with respect to $t$) is proved as follows.
Let
$$\tilde \phi^1_t(x, \xi, \eta)= \phi^1_{t|\eta|} (x, \xi, \eta)=(x+t d^2H(\sigma(\xi))\eta, \xi, \eta).$$
We write
$$\frac{d}{dt}_{|t=0}a_3\circ \tilde \phi^1_t(x, \xi, \tau_h\eta(\xi))=\tau_h d^2H(\sigma(\xi))\eta(\xi)\cdot\partial_x a_3(x, \xi, \tau_h\eta(\xi))$$
and the same argument as in the previous proof now yields
$$\tau_h d^2H(\sigma(\xi))\eta(\xi)\cdot\partial_x a_3(x, \xi, \tau_h\eta(\xi))=\tau_h dH(\xi)\cdot\partial_x a_3(x, \xi, \tau_h\eta(\xi)) +O\left(\frac{R^2}{\tau_h}\right)$$
where we now use that $|\eta(\xi)|= O\left(\frac{R}{\tau_h}\right)$ on the support of $a_3$.
Note now that
$$\tau_h \Op_h\left(dH(\xi)\cdot\partial_x a_3(x, \xi, \tau_h\eta(\xi))\right)
=\frac{i\tau_h}{h}\left[H(hD_x)+h^2{\mathbf V}_h(t), \Op_h\left( a_3(x, \xi, \tau_h\eta(\xi)\right)\right]
+O\left(h\tau_h\right),
$$
using only the fact that ${\mathbf V}_h(t)$ is a bounded operator.

To conclude, take a test function $\varphi(t)\in C_c^\infty(\R)$.
$$\displaylines{
\int_{\R}\varphi(t) \left\la w_{I_\Lambda, h,R} \left(  t\right),\frac{d}{dt}_{|t=0}a_3\circ \tilde \phi^1_t(x, \xi, \tau_h\eta(\xi))\right\ra dt \hfill\cr\hfill
= \int_{\R}\varphi(t)
\left\la S_h^{\tau_h t} u_h,  \frac{i \tau_h}{h}\left[H(hD_x)+h^2 {\mathbf V}_h(t), \Op_h\left( a_{3}\left(  x ,\xi,\tau_{h}\eta (\xi)\right)\right)\right] S_h^{\tau_h t} u_h\right\ra dt \cr\hfill
  +O\left(h\tau_h\right)+ O\left(\frac{R^2}{\tau_h}\right)
\qquad\cr\hfill = \int_{\R}\varphi(t)
\frac{d}{dt}\left\la S_h^{\tau_h t} u_h,   \Op_h\left( a_{3}\left(  x ,\xi,\tau_{h}\eta (\xi)\right)\right) S_h^{\tau_h t} u_h\right\ra dt  +O\left(h\tau_h\right)+ O\left(\frac{R^2}{\tau_h}\right)\cr\hfill
=  -\int_{\R}\varphi'(t)
 \left\la S_h^{\tau_h t} u_h,  \Op_h\left( a_{3}\left(  x ,\xi,\tau_{h}\eta (\xi)\right)\right) S_h^{\tau_h t} u_h\right\ra dt  +O\left(h\tau_h\right)+ O\left(\frac{R^2}{\tau_h}\right)
\cr}$$
Taking the limit $h\To 0$ followed by $R\To +\infty$, we obtain
$$\int_{\R}\varphi(t) \left\la  \tilde\mu_\Lambda(t), \frac{d}{dt}|_{ t=0}\left(  a\circ \tilde \phi^1_t\right)    \right\ra dt =-\int_{\R}\varphi'(t) \left\la  \tilde\mu_\Lambda(t),    a   \right\ra dt $$
for any $\varphi$ and $a$, which is the announced result.\end{proof}


\section{Regularity and transport of~$ \tilde\mu_\Lambda$.}\label{s:finitedistance}

In this section, we suppose $\tau _{h}=1/h$ and we prove statement (3) of Theorem \ref{Thm
Properties}. This constitutes a first
step towards the proof of Theorem \ref{t:precise} (and Theorem \ref{t:main}(2)) which will be achieved in Section~\ref{s:successive}.
In Theorem \ref{prop:opvame} below, we give a description of the
measure~$\tilde{\mu}_{\Lambda }$. The
first part of our result implies in particular that the projection of $%
\tilde{\mu}_{\Lambda }$ onto $\mathbb{T}^{d}$ is absolutely
continuous. For this result to hold we only assume that
$\mathbf{V}_h(t)$ is a general bounded self-adjoint perturbation as
described in the Introduction. The second part of Theorem
\ref{prop:opvame} shows that $\tilde{\mu}_{\Lambda }$ satisfies a
propagation law that involves a Heisenberg equation. For that part
we need to assume that $\mathbf{V}_{h}\left( t\right)
=\Op\nolimits_{h}\left( V\left( t,\cdot \right) \right) $ for some
smooth bounded symbol $V$.
\medskip

Recall that for $\omega $ in the torus $\la\Lambda %
\ra/\Lambda $, we denote by $L_{\omega }^{2}(\R^{d},\Lambda )$ the
subspace of $L_{\text{loc}}^{2}(\R^{d})\cap \mathcal{S}^{\prime
}(\mathbb{R}^{d})$ formed by the functions whose Fourier transform
is supported in $\Lambda -\omega $. In other words, $f\in
L_{\omega }^{2}(\R^{d},\Lambda )$ if and
only if $f\in L_{\text{loc}}^{2}(\R^{d})$ and:%
\begin{equation*}
f\left( \cdot +v\right) =f,\quad \forall v\in \Lambda ^{\perp },
\end{equation*}%
\begin{equation}
f\left( \cdot +k\right) =e^{-i\omega \cdot k}f,\quad \forall k\in
2\pi \mathbb{Z}^{d},  \label{e:blochf}
\end{equation}%
where, recall, $\Lambda ^{\perp }$ stands for the orthogonal of
$\Lambda $ in the duality sense. Clearly, $f\in L_{\omega
}^{2}(\R^{d},\Lambda )$ if
and only if there exists $g\in L^{2}(\mathbb{T}^{d},\Lambda )$ (the set of $L^2$ function on~$\mathbb{T}^d$ which have Fourier modes in $\Lambda$) such that $%
f\left( x\right) =e^{-i\omega \cdot x}g\left( x\right) $; this
characterization induces a natural Hilbert structure on $L_{\omega }^{2}(\R%
^{d},\Lambda )$.\medskip

We introduce an auxiliary lattice $\tilde{\Lambda}\subset 2\pi
\Z^{d}$ such
that $2\pi \Z^{d}\subset \Lambda ^{\perp }\oplus \tilde{\Lambda}$. Let $D_{%
\tilde{\Lambda}}\subset \la\tilde{\Lambda}\ra$ be a fundamental
domain of
the action of $\tilde{\Lambda}$ on $\la\tilde{\Lambda}\ra$. Each space $%
L_{\omega }^{2}(\R^{d},\Lambda )$ is naturally isomorphic to $L^{2}(D_{%
\tilde{\Lambda}})$, (simply by extending by continuity the
restriction of functions in $C^{\infty }(\R^{d})\cap L_{\omega
}^{2}(\R^{d},\Lambda )$). Under this isomorphism, the norm in
$L^{2}(D_{\tilde{\Lambda}})$ equals a factor $\left\vert
D_{\tilde{\Lambda}}\right\vert ^{1/2}/\left( 2\pi \right) ^{d/2}$
times the norm in $L_{\omega }^{2}(\R^{d},\Lambda )$.\medskip

Introduce a vector bundle $\mathfrak{F}$ over $(\la\Lambda
\ra/\Lambda
)\times I_{\Lambda }$, formed of pairs $(\omega ,\sigma ,f)$ where $%
(\omega ,\sigma )\in (\la\Lambda \ra/\Lambda )\times I_{\Lambda }$
and $f\in L_{\omega }^{2}(\R^{d},\Lambda )$. This vector bundle is
trivial, it may be
identified with $(\la\Lambda \ra/\Lambda )\times I_{\Lambda }\times L^{2}(D_{%
\tilde{\Lambda}})$ just by taking the above isomorphism (note,
however, that the subbundle formed of triples $(\omega ,\sigma ,f)$
such that $f$ is smooth does not admit a smooth
trivialisation).\medskip

We denote by $\cL(\mathfrak{F)}$ (resp. $\cK(\mathfrak{F})$, $\cL^{1}(%
\mathfrak{F})$) the vector bundles over $(\la\Lambda \ra/\Lambda
)\times
I_{\Lambda }$ formed of pairs $(\omega ,\sigma ,Q)$ where $Q\in \cL(L^{2}(\R%
^{d},\Lambda ,\omega ))$ (resp. $\cK(L^{2}(\R^{d},\Lambda ,\omega ))$, $\cL%
^{1}(L^{2}(\R^{d},\Lambda ,\omega ))$). Again, all these bundles
are trivial. Recall that, given a Hilbert space ${\mathcal H}$,
${\mathcal{L}}({\mathcal H})$, ${\mathcal{K}}({\mathcal H})$ and $\mathcal{L}^{1}\left(
{\mathcal H}\right) $ stands respectively for the space of bounded, compact
and trace-class operators acting on ${\mathcal H}$.

\begin{remark}
The Bloch-Floquet spectral decomposition shows that $L^{\infty }$
sections of $\cL(\mathfrak{F)}$ are in one-to-one correspondence
with $L^{\infty }$ maps $I_{\Lambda }\ni \sigma \longmapsto
Q(\sigma )\in \cL(L^{2}(\R^{d}))$, where, in addition, a.e.
$Q(\sigma )$ commutes with all translations by vectors $k\in 2\pi
\Z^{d}+\Lambda ^{\perp }$.
\end{remark}

We denote by $\Gamma (\cK(\mathfrak{F}))$ the space of continuous
sections of $\cK(\mathfrak{F)}$. Using the previous
trivialisation, it is isomorphic
to $\cC((\la\Lambda \ra/\Lambda )\times I_{\Lambda },\cK(L^{2}(D_{\tilde{%
\Lambda}})))$, the space of continuous functions on $(\la\Lambda \ra/\Lambda )\times I_{\Lambda }$ taking values in $\cK(L^{2}(D_{\tilde{
\Lambda}}))$. The dual space $\Gamma (\cK(\mathfrak{F}))^{\prime }$ to $
\Gamma (\cK(\mathfrak{F))}$ is isomorphic to $\cM((\la\Lambda
\ra/\Lambda )\times I_{\Lambda
},\cL^{1}(L^{2}(D_{\tilde{\Lambda}})))$, the space of measures on $(\la\Lambda
\ra/\Lambda )\times I_{\Lambda
}$ taking values in $\cL^{1}(L^{2}(D_{\tilde{\Lambda}}))$.

\medskip

We denote by $\Gamma _{+}(\cK(\mathfrak{F}))$ the subset of
positive sections, and $\Gamma (\cK(\mathfrak{F}))_{+}^{\prime }$
the positive
elements of the dual, which correspond to elements of $\cM_{+}((\la\Lambda %
\ra/\Lambda )\times I_{\Lambda
},\cL^{1}(L^{2}(D_{\tilde{\Lambda}})))$ (the space of measures taking values in positive trace-class operators). Note that, by the
Radon-Nikodym theorem (see for instance the appendix in
\cite{GerardMDM91}), an element $\rho \in \Gamma
_{+}(\cK(\mathfrak{F}))$ can be written as $\rho =M\nu $ where
$\nu =\mathrm{Tr}\,\rho $ is a positive
measure on $(\la\Lambda \ra/\Lambda )\times I_{\Lambda }$ and $M$ is a $\nu $%
-integrable section of $\cL^{1}(\mathfrak{F)}$. We shall denote by
$\Gamma ^{1}(\cL^{1}(\mathfrak{F)};\nu )\mathfrak{\ }$the set of
such sections.\medskip

In order to state the propagation law obeyed by
$\tilde{\mu}_{\Lambda }$ when
$\mathbf{V}_{h}(t)=\Op\nolimits_{h}\left( V(t,x,\xi)\right) $, let us
introduce one more notation. Write $x=s+y\in \mathbb{R}^{d}$ with
$(s,y)\in \Lambda ^{\perp }\times \la\tilde{\Lambda}\ra$ and let
$\sigma \in I_{\Lambda }$; we denote by $\la V(t,y,\sigma
)\ra_{\Lambda }$ the average of $V(t,s+y,\sigma ) $ w.r.t.~$s$,
thus getting a function that does not depend on $s$. We
denote by $\la V(t)\ra_{\Lambda ,\sigma }$ the multiplication operator on $%
L_{\omega }^{2}(\R^{d},\Lambda )$ associated to the multiplication by the $%
\sigma $-depending function $\la V(t,y,\sigma )\ra_{\Lambda }$.
In the trivialisation introduced above, this operator does not depend on $%
\omega $.
In the case of a function $a(x,\xi, \eta )\in
\cS^1_\Lambda$, the function $%
a(s+y,\xi , \eta)$ does not depend on~$s$ so that $\la a(y,\sigma, \eta
)\ra_{\Lambda }=a(y,\sigma , \eta)$. We denote by $a_{\sigma }$ the
section of $\cL(\mathfrak{F)} $ that associates to $(\omega
,\sigma )$ the operator acting on $L_{\omega
}^{2}(\R^{d},\Lambda )$ by $a(y,\sigma, D_y )$ (Weyl quantization).
 In the case of a function $a(x,\xi )\in
{\mathcal{C}}_{0}^{\infty
}(T^{\ast }\T^{d})$ with Fourier modes in $\Lambda $ (and independent of $\eta$), the function $%
a(s+y,\xi )$ does not depend on~$s$ so that $\la a(y,\sigma
)\ra_{\Lambda }=a(y,\sigma )$. In this case $a_{\sigma }$ is the
section of $\cL(\mathfrak{F)} $ that associates to $(\omega
,\sigma )$ the operator acting on $L_{\omega
}^{2}(\R^{d},\Lambda )$ by multiplication by $a(y,\sigma )$. Finally, $%
(d^{2}H(\sigma )D_{y}\cdot D_{y})_{\omega }$ will be used to
denote the
operator $d^{2}H(\sigma )D_{y}\cdot D_{y}$ acting on $L_{\omega }^{2}(\R%
^{d},\Lambda )$.

\begin{theorem}
\label{prop:opvame}There exists $m_{\Lambda }\in \mathcal{M}_{+}((\la\Lambda %
\ra/\Lambda )\times I_{\Lambda })$ and $M_{\Lambda }^{0}$\emph{\ }a $%
m_{\Lambda }$-integrable section of $\cL^{1}(\mathfrak{F})$, which
only
depend on the sequence of initial data, such that for all $a\in \cS_\Lambda^1$ and all $%
t\in \mathbb{R}$:
\begin{equation}
\langle \tilde{\mu}_{\Lambda }\left( t,\cdot \right) ,a\rangle =\int_{(\la%
\Lambda \ra/\Lambda )\times I_{\Lambda }}\mathrm{Tr}_{L_{\omega }^{2}(%
\mathbb{R}^{d},\Lambda )}\left( a_{\sigma }M_{\Lambda }\left(
t,\omega ,\sigma \right) \right)m_{\Lambda }(d\omega ,d\sigma ) .
\label{e:eqmul}
\end{equation}%
where $M_{\Lambda }(t,\omega ,\sigma )$ solves,
$m_{\Lambda }$-a.e. $(\omega ,\sigma )\in (\la\Lambda \ra/\Lambda
)\times R_{\Lambda }$,
\begin{equation}\tag{{\rm Heis}$_{\Lambda, \omega, \sigma}$}
i\partial _{t}M_{\Lambda }(t,\omega ,\sigma )=\left[
\frac{1}{2}\left( d^{2}H(\sigma )D_{y}\cdot D_{y}\right) +\la V(t)\ra_{\Lambda ,\sigma },M_{\Lambda }(t,\omega ,\sigma
)\right] ,  \label{eq:SchroM(t)}
\end{equation}%
with $M_{\Lambda }(0,\cdot )=M_{\Lambda }^{0}$. In other words,%
\begin{equation}
M_{\Lambda }(t,\omega ,\sigma )=U_{\Lambda ,\omega ,\sigma }\left(
t\right) M_{\Lambda }^{0}(\omega ,\sigma )U_{\Lambda ,\omega
,\sigma }^{\ast }\left( t\right) ,  \label{e:propML}
\end{equation}%
where $U_{\Lambda ,\omega ,\sigma }\left( t\right) $ is the
propagator starting at $t=0$ of the unitary evolution associated
to the operator $\frac{1}{2}\left( d^{2}H(\sigma )D_{y}\cdot D_{y}\right)
+\la V(t)\ra_{\Lambda ,\sigma }$ on $L^2_\omega(\R^d,\Lambda)$.
\end{theorem}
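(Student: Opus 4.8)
The plan is to carry out the standard construction of operator-valued semiclassical measures (in the spirit of \cite{GerardMDM91, GerLeich93, FermanianGerardCroisements} and of the torus case in \cite{AnantharamanMacia}), adapted to the second microlocalisation along $I_\Lambda$ at the critical scale $\tau_h=1/h$. I keep throughout the localisation of the data near $\xi_0\in R_\Lambda$ made at the start of Section~\ref{s:second}, the full statement being recovered by a partition of unity over $R_\Lambda\setminus C_H$; by Proposition~\ref{prop:decomposition} it suffices to test $\tilde\mu_\Lambda(t,\cdot)$ against $a\in\mathcal{S}^1_\Lambda$, acting through $\Op_h^\Lambda(a_3)$. \textbf{Step 1 (operator-valued lift).} For a frequency $k$ with $hk\in B(\xi_0,\eps/2)$ I use the splitting $hk=\sigma(hk)+\eta(hk)$ of \eqref{e:defF}. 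The key point is that, since $a_3$ has $x$-Fourier modes in $\Lambda\subset\la\Lambda\ra$, the operator $\Op_h^\Lambda(a_3)$ shifts $k\mapsto k+\ell$ with $\ell\in\Lambda$ and $\tau_h\eta(h(k+\ell))=\tau_h\eta(hk)+\ell$, while $\sigma(h(k+\ell))=\sigma(hk)$ \emph{exactly} (adding $h\ell\in\la\Lambda\ra$ does not change the $I_\Lambda$-component); this commensurability, precisely what singles out $\tau_h=1/h$, forces $\sigma(hD_x)$ to commute exactly with $\Op_h^\Lambda(a_3)$ on the localised frequency range. A Bloch–Floquet-type transform built from the auxiliary lattice $\tilde\Lambda$ ($2\pi\Z^d\subset\Lambda^\perp\oplus\tilde\Lambda$) and the fundamental domain $D_{\tilde\Lambda}$ then identifies the localised $u_h$ with a family $U_h(\sigma,\omega)\in L^2_\omega(\R^d,\Lambda)$, $(\sigma,\omega)\in(I_\Lambda\cap B(\xi_0,\eps))\times(\la\Lambda\ra/\Lambda)$, for which
\[
\langle u_h,\Op_h^\Lambda(a_3)u_h\rangle=\int\big\langle U_h(\sigma,\omega),\,a(y,\sigma,D_y)\chi(D_y/R)\,U_h(\sigma,\omega)\big\rangle_{L^2_\omega}\,m_h(d\sigma,d\omega)+o(1)
\]
as $h\to0$ for fixed $R$, with $\int\operatorname{Tr}\rho_h(\sigma,\omega)\,m_h(d\sigma,d\omega)=\|u_h\|^2$ bounded, where $\rho_h(\sigma,\omega):=|U_h(\sigma,\omega)\rangle\langle U_h(\sigma,\omega)|$ is a nonnegative rank-one (hence trace-class) operator and $m_h$ is the normalising measure of the transform. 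Replacing $u_h$ by $S_h^{t/h}u_h$ shows that $t\mapsto\rho_h(t,\cdot)$ is bounded in $L^\infty(\R;\Gamma(\cK(\mathfrak{F}))')$.

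\textbf{Step 2 (extraction and integral representation).} Along the subsequence defining $\mu$ I extract a weak-$\ast$ limit $\rho(t,\cdot)$ of $\rho_h(t,\cdot)$ in $L^\infty(\R;\Gamma(\cK(\mathfrak{F}))'_+)$, and then let $R\to\infty$ (licit since $\chi(D_y/R)\to\mathrm{Id}$ strongly and the limit is trace-class); comparison with \eqref{doublelim} identifies $\tilde\mu_\Lambda(t,\cdot)$, tested against the sections $a_\sigma$, with $\rho(t,\cdot)$. The Radon–Nikodym theorem for measures valued in trace-class operators (appendix of \cite{GerardMDM91}) lets me write $\rho(t,\cdot)=M_\Lambda(t,\cdot)\,m_\Lambda(t,\cdot)$ with $m_\Lambda(t,\cdot)=\operatorname{Tr}\rho(t,\cdot)$ a positive measure on $(\la\Lambda\ra/\Lambda)\times I_\Lambda$ and $M_\Lambda(t,\cdot)\in\Gamma^1(\cL^1(\mathfrak{F});m_\Lambda(t,\cdot))$ of unit trace $m_\Lambda(t,\cdot)$-a.e.; this is \eqref{e:eqmul}. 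Since each $M_\Lambda(t,\omega,\sigma)$ is a nonnegative trace-class operator on $L^2_\omega(\R^d,\Lambda)$, isometric to $L^2$ of a torus of dimension $\operatorname{rk}\Lambda$, its Schwartz kernel has a nonnegative $L^1$ diagonal; testing \eqref{e:eqmul} against $a=a(x)$ with $\Lambda$-modes and using that $\mu\rceil_{\T^d\times R_\Lambda}$ is $\Lambda^\perp$-translation invariant (Proposition~\ref{prop:decomposition}) shows that the projection of $\tilde\mu_\Lambda(t,\cdot)$ onto $T^\ast\T^d$, hence onto $\T^d$, is absolutely continuous — which is statement (3) of Theorem~\ref{Thm Properties}.

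\textbf{Step 3 (propagation law).} Assume $\mathbf{V}_h(t)=\Op_h(V(t,\cdot))$. Differentiating $t\mapsto\langle u_h,(S_h^{t/h})^\ast\Op_h^\Lambda(a_3)S_h^{t/h}u_h\rangle$ via the equation produces $\tfrac{i}{h^2}[H(hD_x)+h^2\Op_h(V(t,\cdot)),\Op_h^\Lambda(a_3)]$. Using Step~1, the $H$-part equals $\tfrac{i}{h^2}[H(hD_x)-H(\sigma(hD_x)),\Op_h^\Lambda(a_3)]$, and the Taylor expansion $H(\sigma+\eta)-H(\sigma)=\tfrac12 d^2H(\sigma)\eta\cdot\eta+O(|\eta|^3)$ (the linear term vanishing since $dH(\sigma)\in\Lambda^\perp$ is orthogonal to $\eta\in\la\Lambda\ra$), together with $\eta=h\zeta$, $|\zeta|\le CR$ on $\supp a_3$, turns $\tfrac{1}{h^2}(H(hD_x)-H(\sigma(hD_x)))$, in the lifted picture, into $\tfrac12(d^2H(\sigma)D_y\cdot D_y)_\omega$ up to $O(hR^3)$; the potential contributes, as $h\to0$ then $R\to\infty$, the commutator with $\la V(t)\ra_{\Lambda,\sigma}$, only the $\Lambda$-Fourier modes of $V$ surviving against $\Lambda$-mode symbols. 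This gives continuity of $t\mapsto\tilde\mu_\Lambda(t,\cdot)$ and $i\partial_t\rho(t,\cdot)=[\tfrac12(d^2H(\sigma)D_y\cdot D_y)+\la V(t)\ra_{\Lambda,\sigma},\,\rho(t,\cdot)]$ in the operator-measure sense; taking traces yields $\partial_t m_\Lambda=0$, so $m_\Lambda$ and $M_\Lambda^0:=M_\Lambda(0,\cdot)$ depend only on the initial data, and dividing by $m_\Lambda$ gives \eqref{eq:SchroM(t)} and hence \eqref{e:propML} by uniqueness for the linear Heisenberg flow.

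\textbf{Main obstacle.} I expect the technical heart to be Steps~1–2: making the Bloch–Floquet identification precise and uniform in $\sigma$ (where the non-orthogonality of the splitting $I_\Lambda\oplus\la\Lambda\ra$ costs effort), and, above all, showing that the limiting operators are \emph{trace-class} rather than merely bounded or Hilbert–Schmidt. It is exactly this trace-class property — inherited from the rank-one $\rho_h(t)$ with total trace $\|S_h^{t/h}u_h\|^2=1$ and kept in the limit by lower semicontinuity of the trace — that delivers the absolute continuity conclusion and feeds into Theorem~\ref{t:precise}.
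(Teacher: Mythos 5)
Your overall architecture coincides with the paper's: a Bloch--Floquet lift of the frequency-localised data indexed by $(\omega,\sigma)\in(\la\Lambda\ra/\Lambda)\times I_\Lambda$ (the paper's operator $K_h$), extraction of a positive, trace-class-valued limit by duality $\cK'=\cL^1$, the Radon--Nikodym splitting $\rho_\Lambda=M_\Lambda m_\Lambda$, and a Heisenberg equation obtained from the Taylor expansion $H(\sigma+hD_y)=H(\sigma)+h\,dH(\sigma)\cdot D_y+\tfrac{h^2}{2}d^2H(\sigma)D_y\cdot D_y+O(h^3)$ with the linear term killed on $L^2_\omega(\R^d,\Lambda)$. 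Your exact-commutation observation $\sigma(h(k+\ell))=\sigma(hk)$, $\tau_h\eta(h(k+\ell))=\tau_h\eta(hk)+\ell$ for $\ell\in\Lambda$ at $\tau_h=1/h$ is correct and is indeed what the paper's construction exploits.

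There is, however, a genuine gap in Step 3, at the sentence ``the potential contributes \dots\ the commutator with $\la V(t)\ra_{\Lambda,\sigma}$, only the $\Lambda$-Fourier modes of $V$ surviving against $\Lambda$-mode symbols.'' The commutator $[\Op_h(V),\Op_h^\Lambda(a_3)]$ has $x$-Fourier modes in $m+\Lambda$ for every mode $m$ of $V$, so to evaluate its pairing with the limit object you must know how that object acts on symbols whose modes lie \emph{outside} $\Lambda$ --- equivalently, on symbols depending on the variable $s\in\Lambda^\perp$. Your lift is diagonal in $\sigma$ and rank-one, i.e.\ it retains only the action on $s$-independent ($\Lambda$-mode) symbols, so it cannot see these terms and the claim is unsupported as stated. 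The paper keeps exactly this extra information: $\rho^h_{u_h}$ is defined through a quantization $P^h_Q(hD_\sigma,\sigma)$ of symbols $Q(\omega,s,\sigma)$ depending on $s$ (Definition~\ref{d:Psigma}), the off-diagonal terms in $\sigma$ encoding the $s$-dependence. It then proves Lemma~\ref{p:averaging}: the limit $\rho_\Lambda$ is invariant under $s\mapsto s+\ell\,dH(\sigma)$. For $\sigma\in R_\Lambda$ this orbit equidistributes in $\T_{\Lambda^\perp}$, so $V$ may be replaced by its average over $s$, which is precisely $\la V\ra_{\Lambda,\sigma}$ --- this is where, and only where, the non-$\Lambda$ modes of $V$ disappear. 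You need either to enlarge your lift to carry the $s$-variable and prove this invariance lemma, or to supply an independent argument that the non-$\Lambda$ modes of the commutator do not contribute in the limit; without one of these, equations (Heis$_{\Lambda,\omega,\sigma}$) and \eqref{e:propML}, and hence the $t$-independence of $m_\Lambda$, are not established. The remaining steps (the representation formula \eqref{e:eqmul}, the trace-class structure via $\cK'=\cL^1$ rather than ``lower semicontinuity of the trace,'' and the deduction of absolute continuity) are in line with the paper.
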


\begin{remark}
i) Note that the fact that $M_{\Lambda }\left( t,\cdot \right) $
is given by (\ref{eq:SchroM(t)}) and (\ref{e:propML}) implies that
it is a continuous function of $t$. Therefore,
$\tilde{\mu}_{\Lambda }\left( t,\cdot \right) $
itself can be identified to a family of positive measures depending continuously on time .%
\newline
ii) The proof of Theorem~\ref{prop:opvame} is carried out
using the trivialisation obtained by identifying $L_{\omega
}^{2}(\R^{d},\Lambda )$ with $L^{2}(D_{\tilde{\Lambda}})$ and the
final result does not depend on the choice of $\tilde{\Lambda}$
and $D_{\tilde{\Lambda}}$.\newline iii) Identity (\ref{e:eqmul})
holds when $\mathbf{V}_h(t)$ is a bounded family of
perturbations as described in the introduction. In that case, the measure $%
m_{\Lambda }$ may also depend on time and equation
(\ref{e:propML}) is not available.
\end{remark}

The proof of Proposition~\ref{prop:opvame} is divided in three
steps:

\begin{enumerate}
\item We first define an operator $K_{h}$ which maps functions on
$\R^{d}$ to distributions with Fourier frequencies only in
$\la\Lambda \ra$; in addition, this operator maps $(2\pi
\Z^{d})$-periodic functions to
distributions on $I_{\Lambda }$ taking values in functions satisfying a Bloch-Floquet periodicity
condition.

\item Then, we express $w_{I_{\Lambda },h,R}(t)$ in terms of
$K_{h}$ and take limits, first $h\rightarrow 0^{+}$ then followed
by $R\rightarrow +\infty $. This defines an element $\rho
_{\Lambda }\in L^{\infty }\left( \mathbb{R};\Gamma
_{+}(\cK(\mathfrak{F}))\right) $.

\item We study the dependence in $t$ of the limit object $\rho
_{\Lambda }$
and show that it obeys a Heisenberg equation similar to~(\ref{eq:SchroM(t)}%
). Note that the latter is of lower dimension than the original one %
\eqref{e:eq} as soon as $\mathrm{rk}\,\Lambda <d$.
\end{enumerate}

Each of the next subsections is devoted to one of the steps of the
proof.

\subsection{First Step: Construction of the operator $K_{h}$}

Take $m\in {\mathcal{C}}_{0}^{\infty }((\R^{d})^{\ast })$
supported in the ball $B(\xi _{0},\eps)\subset (\R^{d})^{\ast }$,
and identically equal to~$1$ on $B(\xi _{0},\eps/2)$. For $f$ a
tempered distribution, we let
\begin{equation*}
\mathcal{F}f(\xi ):=\int_{\mathbb{R}^{d}}f(x)e^{-i\xi \cdot x}\frac{dx}{%
\left( 2\pi \right) ^{d/2}}.
\end{equation*}%
In particular, if $f$ is a $2\pi \Z^{d}$-periodic function, we have $%
\mathcal{F}f=\sum_{k\in \Z^{d}}\widehat{f}(k)\delta _{k}$. In what
follows we shall denote $d_{\Lambda }:=\mathrm{rk}\,\Lambda $ and
$d_{\Lambda ^{\perp }}:=\mathrm{rk}\,\Lambda ^{\perp }$.

The operator $K_{h}$ maps a tempered distribution $f$ to a distribution on $%
I_{\Lambda }\times \mathbb{R}^{d}$ as follows:
\begin{align*}
K_{h}f(\sigma ,y)& :=\int_{x\in \R^{d}}f(x)\int_{\eta \in \la\Lambda \ra%
}m(\sigma )e^{{\frac{i}{h}}\eta \cdot y}e^{-\frac{i}{h}(\sigma
+\eta )\cdot
x}\frac{d\eta }{(2\pi h)^{d_{\Lambda }/2}}\frac{dx}{(2\pi h)^{d/2}} \\
& =\frac{m(\sigma )}{h^{d/2}}\int_{\la\Lambda \ra}\mathcal{F}f\left( {\frac{%
\sigma +\eta }{h}}\right) e^{{\frac{i}{h}}\eta \cdot y}\frac{d\eta
}{(2\pi h)^{d_{\Lambda }/2}}.
\end{align*}%
In order to get more insight on the properties of $K_{h}f$ it is
useful to
introduce some notations. Let $\pi _{\tilde{\Lambda}}$ be the projection on $%
\la\tilde{\Lambda}\ra$, in the direction of~$\Lambda ^{\perp }$. We have $%
\pi _{\tilde{\Lambda}}(2\pi \Z^{d})=\tilde{\Lambda}\subset
\Z^{d}$. For $\xi \in (\R^{d})^{\ast }$, we shall denote by $\xi
_{\Lambda }\in \la\Lambda \ra$ the linear form $\xi _{\Lambda
}(y):=\xi \cdot \pi _{\tilde{\Lambda}}(y)$ (in other words, the
projection of $\xi $ on~$\la\Lambda \ra$, in the direction
$\tilde{\Lambda}^{\perp }$). Note that for $\xi \in
\mathbb{Z}^{d}$
one has $\xi _{\Lambda }\in \Lambda $. We fix a bounded fundamental domain $%
D_{\Lambda }$ for the action of $\Lambda $ on $\la\Lambda \ra$.
For~$\eta \in \la\Lambda \ra$, there is a unique $\{\eta \}\in
D_{\Lambda }$ (the
\textquotedblleft fractional part\textquotedblright\ of $\eta $) such that $%
\eta -\{\eta \}\in \Lambda $. \medskip

Sometimes we shall use the decomposition $(\mathbb{R}^{d})^{\ast }=\tilde{%
\Lambda}^{\perp }\oplus \left\langle \Lambda \right\rangle $. This
decomposition is related to the one given by the local coordinate system $%
F\left( \xi \right) =\left( \sigma \left( \xi \right) ,\eta \left(
\xi \right) \right) $ defined in~(\ref{e:defF}) as follows. Let
$\left( \xi _{1},\xi _{2}\right) \in \tilde{\Lambda}^{\perp
}\times \left\langle \Lambda \right\rangle $ such that $F$ is
defined on $\xi =\xi _{1}+\xi _{2}$ (and
therefore $\xi =\sigma \left( \xi \right) +\eta \left( \xi \right) $). Then $%
\sigma \left( \xi \right) =\sigma \left( \xi _{1}\right) $ does
not depend on $\xi _{2}$ and
\begin{equation*}
\xi _{2}=\eta \left( \xi \right) +\sigma \left( \xi _{1}\right)
_{\Lambda }.
\end{equation*}%
In other words, $\left( \xi _{1},\xi _{2}\right) \in
\tilde{\Lambda}^{\perp }\times \left\langle \Lambda \right\rangle
$ corresponds through $F$ (whenever $F\left( \xi \right) $ is
defined) to a pair $\left( \sigma ,\eta
\right) \in I_{\Lambda }\times \left\langle \Lambda \right\rangle $ given by:%
\begin{equation}
\sigma =\sigma \left( \xi _{1}\right) ,\qquad \eta =\xi
_{2}-\sigma \left( \xi _{1}\right) _{\Lambda }.
\label{e:changeco}
\end{equation}%
These relations imply that for every $\xi _{1}\in
\tilde{\Lambda}^{\perp }$
and $y\in \mathbb{R}^{d}$ the following holds:%
\begin{equation}
K_{h}f(\sigma \left( \xi _{1}\right) ,y)=e^{-i\frac{\sigma \left(
\xi _{1}\right) _{\Lambda }}{h}\cdot y}m(\sigma (\xi
_{1}))\int_{\Lambda ^{\perp
}}f(s+\pi _{\Lambda }\left( y\right) )e^{-i\frac{\xi _{1}}{h}\cdot s}\frac{ds%
}{(2\pi h)^{d_{\Lambda ^{\perp }}/2}}.  \label{e:partialFT}
\end{equation}%
In the above formula, $ds$ is the dual density of $d\xi _{1}$,
which in turn is defined to have $d\xi =d\xi _{1}d\xi _{2}$ where
$d\xi _{2}$ stands for
the natural density on $\left\langle \Lambda \right\rangle $. Note that $%
d\xi _{1}$ is a constant multiple of the natural density on $\tilde{\Lambda}%
^{\perp }$.

\medskip

If $f$ is a $2\pi \Z^{d}$-periodic function then:
\begin{equation*}
K_{h}f(\sigma ,y)=\frac{h^{d_{\Lambda ^{\perp }}/2}}{\left( 2\pi
\right) ^{d_{\Lambda }/2}}\sum_{k_{\sigma }\in \sigma
(h\Z^{d})}\delta _{k_{\sigma }}\left( \sigma \right) \sum_{k_{\eta
}\in \la\Lambda \ra,\,\left( k_{\sigma
},k_{\eta }\right) \in F\left( h\Z^{d}\right) }m(k_{\sigma })\widehat{f}%
\left( {\frac{k_{\sigma }+k_{\eta }}{h}}\right)
e^{{\frac{i}{h}}k_{\eta }\cdot y}.
\end{equation*}%
It is clear from the above formula that for every $y\in
\mathbb{R}^{d}$, the
distribution $K_{h}f\left( \cdot ,y\right) $ is supported on the set%
\begin{equation*}
I_{\Lambda }^{h}:=\left\{ \sigma \in I_{\Lambda }~:~\frac{\sigma
}{h}\in \mathbb{Z}^{d}-\left\langle \Lambda \right\rangle \right\}
.
\end{equation*}%
We gather the properties of the operator $K_{h}$ that will be used
in the sequel in the following lemma.

\begin{lemma}
\label{l:Kh}(i) The Fourier transform of $K_{h}f(\sigma ,\cdot )$
w.r.t. the
second variable is:%
\begin{equation}
\mathcal{F}K_{h}f\left( \sigma ,\eta \right) =\left( \frac{2\pi
}{h}\right)
^{d_{\Lambda ^{\perp }}/2}m\left( \sigma \right) \mathcal{F}u\left( \frac{%
\sigma }{h}+\eta \right) \delta _{\left\langle \Lambda
\right\rangle }\left( \eta \right) ,  \label{e:ftkh}
\end{equation}%
in particular it is supported in $\la\Lambda \ra$.\\
 (ii) The
support of $K_{h}f(\sigma ,\cdot )$ is included in $\supp
f+\Lambda ^{\perp }$.\newline (iii) If $f$ is $(2\pi
\Z)^{d}$-periodic, then $\supp K_{h}f\left( \cdot ,y\right)
\subset I_{\Lambda }^{h}$ and $K_{h}f(\sigma ,\cdot )$ satisfies a
Floquet periodicity condition:
\begin{equation}
K_{h}f(\sigma ,y+k)=K_{h}f(\sigma ,y)e^{-i\omega _{h}\left( \sigma
\right) \cdot k}  \label{e:BF}
\end{equation}%
for all $k\in 2\pi \Z^{d}$, where
\begin{equation*}
\omega _{h}:I_{\Lambda }^{h}\longrightarrow \left\langle \Lambda
\right\rangle /\Lambda :\sigma \longmapsto \left\{ \frac{\sigma _{\Lambda }}{%
h}\right\} .
\end{equation*}%
Statement (\ref{e:BF}) is equivalent to the fact that
$K_{h}f(\sigma ,\cdot ) $ has only frequencies in $\Lambda -\omega
_{h}\left( \sigma \right) $.\\
(iv) Let $f$ be a $2\pi \Z^{d}$-periodic function, and let $\chi $
be a
compactly supported function on $\R^{d}$ such that $\sum_{k\in 2\pi \Z%
^{d}}\chi (\cdot +k)\equiv 1$. Then
\begin{equation}
\sum_{k\in \tilde{\Lambda}}K_{h}(\chi f)(\sigma ,y+k)e^{i\omega
_{h}\left( \sigma \right) \cdot k}=K_{h}f(\sigma ,y).
\label{e:periodization}
\end{equation}
(v) If $f\in L^{2}(\mathbb{T}^{d})$ then the following
Plancherel-type formula holds:
\begin{equation}
\sum_{k\in \Z^{d}}|\hat{f}(k)m(hk)|^{2}=\sum_{\sigma \in
I_{\Lambda }^{h}}\int_{\T^{d}}|K_{h}f(\sigma ,y)|^{2}dy.
\label{e:planche}
\end{equation}
\end{lemma}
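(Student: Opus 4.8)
The plan is to obtain all five statements by direct computation from the two equivalent formulas defining $K_{h}$ just above the lemma; each assertion is an elementary Fourier manipulation, and the only real work is bookkeeping of the lattices $\Lambda,\Lambda^{\perp},\tilde{\Lambda},2\pi\Z^{d}$ and of the normalising powers of $2\pi$ and $h$. For (i) I would Fourier-transform $K_{h}f(\sigma,y)=\frac{m(\sigma)}{h^{d/2}(2\pi h)^{d_{\Lambda}/2}}\int_{\la\Lambda\ra}\cF f((\sigma+\eta)/h)\,e^{\frac{i}{h}\eta\cdot y}\,d\eta$ in the $y$ variable, rescale $\eta\mapsto h\eta$, and use $\cF_{y}(e^{i\eta\cdot y})=(2\pi)^{d/2}\delta_{\eta}$; the $\la\Lambda\ra$-integral then collapses to the surface Dirac $\delta_{\la\Lambda\ra}$, and collecting powers (with $d-d_{\Lambda}=d_{\Lambda^{\perp}}$) gives \eqref{e:ftkh}; in particular $\cF_{y}K_{h}f(\sigma,\cdot)$ is carried by $\la\Lambda\ra$, so $K_{h}f(\sigma,\cdot)$ is $\Lambda^{\perp}$-periodic, which I reuse below. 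For (ii), rather than transforming I would carry out the $\eta$-integral directly inside the defining double integral: $\int_{\la\Lambda\ra}e^{\frac{i}{h}\eta\cdot(y-x)}\,d\eta$ equals, up to a constant, the Dirac mass of $\{\,x:x-y\in\Lambda^{\perp}\,\}$, which pins $x$ to $y+\Lambda^{\perp}$ — hence $\supp K_{h}f(\sigma,\cdot)\subset\supp f+\Lambda^{\perp}$ — and, after writing $x=\pi_{\Lambda}(y)+s$ with $s\in\Lambda^{\perp}$ and shifting $s$, produces \eqref{e:partialFT}.

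For (iii) I would substitute $\cF f=\sum_{k\in\Z^{d}}\widehat f(k)\delta_{k}$ (valid for $2\pi\Z^{d}$-periodic $f$) into the second defining formula. The Dirac $\delta_{k}((\sigma+\eta)/h)$ is active exactly when $\sigma+\eta=hk$ with $\eta\in\la\Lambda\ra$, i.e.\ $\sigma/h\in\Z^{d}-\la\Lambda\ra$; this forces $\sigma\in I_{\Lambda}^{h}$ and exhibits the surviving $y$-dependence as $e^{\frac{i}{h}(hk-\sigma)\cdot y}$. For $k'\in 2\pi\Z^{d}$ one has $k\cdot k'\in 2\pi\Z$, hence $e^{\frac{i}{h}(hk-\sigma)\cdot k'}=e^{-\frac{i}{h}\sigma\cdot k'}$; writing $\sigma/h=n-\zeta$ with $n\in\Z^{d},\ \zeta\in\la\Lambda\ra$ and using $n_{\Lambda}\in\Lambda$ identifies $-\zeta$ modulo $\Lambda$ with $\omega_{h}(\sigma)=\{\,\sigma_{\Lambda}/h\,\}$, which gives \eqref{e:BF}. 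The reformulation ``$K_{h}f(\sigma,\cdot)$ has frequencies only in $\Lambda-\omega_{h}(\sigma)$'' then combines \eqref{e:BF} (frequencies in $(\Z^{d})^{\ast}-\omega_{h}(\sigma)$) with the $\la\Lambda\ra$-localisation of (i); here the primitivity of $\Lambda$ enters, through $\la\Lambda\ra\cap(\Z^{d})^{\ast}=\Lambda$.

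For (iv) the key input is the translation covariance $K_{h}(\tau_{v}g)(\sigma,y)=e^{\frac{i}{h}\sigma\cdot v}K_{h}g(\sigma,y+v)$, which follows at once from $\cF(\tau_{v}g)(\zeta)=e^{i\zeta\cdot v}\cF g(\zeta)$. Since $\sum_{k'\in 2\pi\Z^{d}}\chi(\cdot+k')\equiv1$ and $f$ is periodic, $f=\sum_{k'\in2\pi\Z^{d}}\tau_{k'}(\chi f)$, and I would then identify the left-hand side of \eqref{e:periodization} (the $\tilde{\Lambda}$-periodisation of $K_{h}(\chi f)(\sigma,\cdot)$, twisted by the character $k\mapsto e^{i\omega_{h}(\sigma)\cdot k}$) with $K_{h}f(\sigma,\cdot)$ by the Poisson summation formula: its $y$-Fourier transform is the restriction of $\cF_{y}K_{h}(\chi f)(\sigma,\cdot)$ to $\Lambda-\omega_{h}(\sigma)$, and since by (i) $\cF_{y}K_{h}(\chi f)(\sigma,\eta)$ equals (up to the constant of (i)) $m(\sigma)\cF(\chi f)(\sigma/h+\eta)\delta_{\la\Lambda\ra}(\eta)$, sampling at the points of $\Lambda-\omega_{h}(\sigma)$ recovers exactly the coefficients $m(\sigma)\widehat f((\sigma+k_{\eta})/h)$ of $\cF_{y}K_{h}f(\sigma,\cdot)$ found in (iii) (using $\cF(\chi f)(k)=\widehat f(k)$ for $k\in\Z^{d}$). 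That the $\tilde{\Lambda}$-periodisation is already $\Lambda^{\perp}$-periodic matches it with the $2\pi\Z^{d}$-Floquet behaviour of $K_{h}f(\sigma,\cdot)$, via $2\pi\Z^{d}\subset\Lambda^{\perp}\oplus\tilde{\Lambda}$.

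Finally, for (v) I would insert the explicit finite Fourier expansion of $K_{h}f(\sigma,\cdot)$ from (iii) into $\int_{\T^{d}}|K_{h}f(\sigma,y)|^{2}dy$: the frequencies present are the $k_{\eta}/h$ with pairwise differences in $\Lambda\subset(\Z^{d})^{\ast}$, so orthogonality of the corresponding characters on $\T^{d}$ leaves only the diagonal, namely $\sum_{k_{\eta}}|m(\sigma)\widehat f((\sigma+k_{\eta})/h)|^{2}$ times the constant of (iii); summing over $\sigma\in I_{\Lambda}^{h}$ and using that the coordinate change $F$ of \eqref{e:changeco} puts the pairs $(\sigma,k_{\eta})=F(hk)$ in bijection with $k\in\Z^{d}$, with $(\sigma+k_{\eta})/h=k$, yields $\sum_{k}|m(hk)\widehat f(k)|^{2}$, i.e.\ \eqref{e:planche}. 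I expect the main obstacle to be purely mechanical rather than conceptual: matching all normalising constants, and handling the identification $2\pi\Z^{d}/(2\pi\Z^{d}\cap\Lambda^{\perp})\simeq\tilde{\Lambda}$ in (iv) together with the volume factor implicit in $\int_{\T^{d}}$ of a $\Lambda^{\perp}$-periodic integrand in (v) — precisely the places where the normalising powers of $h$ in the definition of $K_{h}$ are calibrated.
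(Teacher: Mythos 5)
Your proposal is correct and, for part (iii) — the only part the paper actually proves (the others being dismissed as ``quite obvious'') — it follows essentially the same route: combine the $\la\Lambda\ra$-localisation of $\mathcal{F}K_hf(\sigma,\cdot)$ from (i) with the integrality of $\supp\mathcal{F}f$, then project along $\tilde\Lambda^\perp$ and reduce modulo $\Lambda$ (using primitivity of $\Lambda$) to identify the Floquet character as $\omega_h(\sigma)=\{\sigma_\Lambda/h\}$. Your sketches of (i), (ii), (iv) and (v) are the standard verifications the paper omits, and you have correctly isolated the genuine (but purely mechanical) delicacies — the normalising powers of $h$ and $2\pi$, the identification of $2\pi\Z^d$ modulo $\Lambda^\perp$ with $\tilde\Lambda$, and the distributional nature of $K_hf(\cdot,y)$ in $\sigma$ for periodic $f$.
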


\begin{proof}
All points are quite obvious except (iii), which we prove below. Formula (%
\ref{e:ftkh}) shows that the Fourier transform of $K_{h}f(\sigma
,\cdot )$
is supported in $\la\Lambda \ra$. On the other hand, if $f$ is $2\pi \Z^{d}$%
-periodic then its Fourier transform is supported in $\Z^{d}$.
Therefore,
because of identity (\ref{e:ftkh}), on the support of $\mathcal{F}%
K_{h}f(\sigma ,\eta )$ one must have:%
\begin{equation*}
\frac{\sigma }{h}+\eta \in \mathbb{Z}^{d},\qquad \eta \in
\la\Lambda \ra.
\end{equation*}%
In other words, $\sigma \in I_{\Lambda }^{h}$ and $\eta \in
\la\Lambda \ra$.
Taking the projection on $\la\Lambda \ra$ in the direction $\tilde{\Lambda}%
^{\perp }$ yields $\eta \in \Lambda -\frac{\sigma _{\Lambda
}}{h}$, which is equivalent to $\eta \in \Lambda -\{\frac{\sigma
_{\Lambda }}{h}\}$. Note that any other choice of the auxiliary
lattice used to define the projection onto $\left\langle \Lambda
\right\rangle $ would lead to a $\sigma _{\Lambda
}^{\prime }\in \left\langle \Lambda \right\rangle $ that differs from $%
\sigma _{\Lambda }$ on an element of $h\Lambda $. This shows, in
particular, that the mapping $\omega _{h}$ is well-defined on
$I_{\Lambda }^{h}$.
\end{proof}

\begin{remark}
\label{r:floq}Let $f$ be $2\pi \Z^{d}$-periodic and let $\theta \in (\mathbb{%
R}^{d})^{\ast }/(\mathbb{Z}^{d})^*$. Let $g_{\theta }\left( y\right)
:=e^{-i\theta \cdot y}f\left( y\right) $; then the proof of Lemma
\ref{l:Kh} (iii) shows that $K_{h}g_{\theta }$ satisfies the
following Bloch-Floquet periodicity condition:
\begin{equation*}
K_{h}g_{\theta }(\sigma ,y+k)=K_{h}g_{\theta }(\sigma
,y)e^{-i\left( \omega _{h}\left( \sigma \right) +\theta _{\Lambda
}\right) \cdot k},
\end{equation*}%
for every $k\in 2\pi \Z^{d}$.
\end{remark}

\subsection{Second step: Link between $w_{I_{\Lambda },h,R}$ and $K_{h}$}

Now we show how the two-microlocal Wigner distributions $\left(
w_{I_{\Lambda },h,R}\right) $ of the sequence $\left(
S_{h}^{t/h}u_{h}\right) $ can be expressed in terms of $\left(
K_{h}S_{h}^{t/h}u_{h}\right) $. Let $\chi \in \mathcal{C}_{c}^{\infty }(\R%
^{d})$ be such that $\sum_{k\in 2\pi \Z^{d}}\chi (\cdot +k)\equiv
1$. All the following identities hold independently of the choice
of such $\chi $. We start expressing the standard Wigner
distributions $w_{u_{h}}^{h}$ in
terms of the decomposition $\xi =(\xi _{1},\xi _{2})\in \tilde{\Lambda}%
^{\perp }\times \la\Lambda \ra$ of $(\mathbb{R}^{d})^{\ast }$. Let $b\in {%
\mathcal{C}}_{c}^{\infty }(\T^{d}\times \R^{d})$, possibly
depending on~$h$. The following holds:
\begin{multline*}
\cI(b, h):=\int_{T^{\ast }\mathbb{T}^{d}}b\left( x,\xi \right)
w_{u_{h}}^{h}(dx,d\xi ) \\
=\frac{1}{(2\pi )^{d/2}}\int_{(\R^{d})^{\ast }\times (\R^{d})^{\ast }}%
\mathcal{F}\left( \chi u_{h}\right) (\xi
)\overline{\mathcal{F}u_{h}(\xi ^{\prime })}\mathcal{F}b\left( \xi
^{\prime }-\xi ,h{\frac{\xi +\xi ^{\prime
}}{2}}\right) d\xi d\xi ^{\prime } \\
=\frac{1}{(2\pi )^{d/2}h^{d_{\Lambda ^{\perp }}}}\int_{\xi
_{1},\xi
_{1}^{\prime }\in \tilde{\Lambda}^{\perp },\xi _{2},\xi _{2}^{\prime }\in \la%
\Lambda \ra}\mathcal{F}\left( \chi u_{h}\right) (\xi _{1},\xi _{2})\overline{%
\mathcal{F}u_{h}(\xi _{1}^{\prime },\xi _{2}^{\prime })} \\
\mathcal{F}b\left( \frac{\xi _{1}^{\prime }-\xi _{1}}{h}+\xi
_{2}^{\prime }-\xi _{2},{\frac{\xi _{1}+\xi _{1}^{\prime
}}{2}}+h{\frac{\xi _{2}+\xi _{2}^{\prime }}{2}}\right) d\xi
_{1}d\xi _{1}^{\prime }d\xi _{2}d\xi _{2}^{\prime }.
\end{multline*}%
Identity (\ref{e:changeco}) shows that:%
\begin{equation*}
\xi =\frac{\xi _{1}}{h}+\xi _{2}=\frac{\sigma }{h}+\eta ,
\end{equation*}%
where $\sigma =\sigma (\xi _{1})$ and $\eta =\xi _{2}-\frac{\sigma
\left( \xi _{1}\right) _{\Lambda }}{h}$. For every $\sigma \in
I_{\Lambda }$, we
denote by $\xi _{1}\left( \sigma \right) $ the element in $\tilde{\Lambda}%
^{\perp }$ characterised by $\xi _{1}\left( \sigma \right) -\sigma \in \la%
\Lambda \ra$. The density $d\xi _{1}$ on $\tilde{\Lambda}^{\perp
}$ is transferred to a density $d\sigma $ on $I_{\Lambda }$ (note
that $d\xi =d\xi _{1}d\xi _{2}=d\sigma d\eta $).

\medskip

With this in mind, we obtain using (\ref{e:ftkh}), provided
we assume that $u_{h}$ has frequencies in $B(\xi _{0},\eps/2)$:
\begin{multline}
\cI(b, h)=\frac{(2\pi h)^{-d_{\Lambda ^{\perp }}}}{(2\pi
)^{d/2}}\int_{\sigma
,\sigma ^{\prime }\in I_{\Lambda },\eta ,\eta ^{\prime }\in \R^{d}}\mathcal{F%
}K_{h}\chi u_{h}(\sigma ,\eta
)\overline{\mathcal{F}K_{h}u_{h}(\sigma
^{\prime },\eta ^{\prime })}  \label{e:transfo} \\
\mathcal{F}b\left( \frac{\xi _{1}(\sigma ^{\prime })-\xi
_{1}\left( \sigma
\right) }{h}+\frac{\sigma _{\Lambda }^{\prime }}{h}+\eta ^{\prime }-\frac{%
\sigma _{\Lambda }}{h}-\eta ,{\frac{\sigma \oplus \sigma ^{\prime }}{2}}%
-\left( {\frac{\sigma \oplus \sigma ^{\prime }}{2}}\right) _{\Lambda }+{%
\frac{\sigma _{\Lambda }+\sigma _{\Lambda }^{\prime
}}{2}}+h{\frac{\eta
+\eta ^{\prime }}{2}}\right) \\
d\sigma d\sigma ^{\prime }d\eta d\eta ^{\prime },
\end{multline}%
where ${\frac{\sigma \oplus \sigma ^{\prime }}{2}}$ is a notation
for the
image under the coordinate map $\sigma \left( \xi \right) $ of ${\frac{%
\sigma +\sigma ^{\prime }}{2}}$.\medskip

If $b$ is invariant in the direction $\Lambda ^{\perp }$ then the
previous integral reduces to an integral over $\sigma =\sigma
^{\prime }$ and takes the simpler form
\begin{equation*}
\cI(b, h)=\frac{(2\pi h)^{-d_{\Lambda ^{\perp }}}}{(2\pi )^{d_{\Lambda }/2}}%
\int_{\sigma \in I_{\Lambda },\eta ,\eta ^{\prime }\in \R^{d}}\mathcal{F}%
K_{h}\chi u_{h}(\sigma ,\eta
)\overline{\mathcal{F}K_{h}u_{h}(\sigma ,\eta ^{\prime
})}\mathcal{F}b\left( \eta ^{\prime }-\eta ,\sigma +h{\frac{\eta
+\eta ^{\prime }}{2}}\right) d\sigma d\eta d\eta ^{\prime },
\end{equation*}%
where now $\mathcal{F}b$ should just be interpreted as a partial
Fourier transform in the direction~$\la\tilde{\Lambda}\ra$.\medskip

In \eqref{e:transfo}, the function $K_{h}\chi u_{h}$ may be
replaced by any other function satisfying the
identity~\eqref{e:periodization}. In particular, we may replace
$K_{h}\chi u_{h}$ by $\chi _{0}K_{h}u_{h}$, where $\chi _{0}$ is a
function that satisfies
\begin{equation*}
\sum_{k\in \tilde{\Lambda}}\chi _{0}(\cdot +k)\equiv 1
\end{equation*}%
and $\chi _{0}$ is constant in the direction $\Lambda ^{\perp }$.
In what
follows, we take $\chi _{0}$ to be the characteristic functions of $D_{%
\tilde{\Lambda}}$, a fundamental domain for the action of
$\tilde{\Lambda}$ on $\R^{d}$.\medskip

By the changes of variables $\eta \To \eta-\frac{\sigma _{\Lambda }}{h}+\frac{1%
}{h}\left( {\frac{\sigma \oplus \sigma ^{\prime }}{2}}\right)
_{\Lambda
}$ and $\eta' \To \eta'-\frac{\sigma' _{\Lambda }}{h}+\frac{1%
}{h}\left( {\frac{\sigma \oplus \sigma ^{\prime }}{2}}\right)
_{\Lambda
}$, \eqref{e:transfo} may be transformed into
\begin{multline}
\cI(b, h)=\frac{(2\pi h)^{-d_{\Lambda ^{\perp }}}}{(2\pi
)^{d/2}}\int_{\sigma
,\sigma ^{\prime }\in I_{\Lambda },\eta ,\eta ^{\prime }\in \R^{d}}\mathcal{F%
}\chi _{0}K_{h}u_{h}\left( \sigma ,\eta -\frac{\sigma _{\Lambda }}{h}+\frac{1%
}{h}\left( {\frac{\sigma \oplus \sigma ^{\prime }}{2}}\right)
_{\Lambda
}\right)  \label{e:transfo2} \\
\times \,\overline{\mathcal{F}K_{h}u_{h}\left( \sigma ^{\prime
},\eta
^{\prime }-\frac{\sigma _{\Lambda }^{^{\prime }}}{h}+\frac{1}{h}\left( {%
\frac{\sigma \oplus \sigma ^{\prime }}{2}}\right) _{\Lambda }\right) } \\
\times \mathcal{F}b\left( \frac{\xi _{1}(\sigma ^{\prime })-\xi
_{1}\left( \sigma \right) }{h}+\eta ^{\prime }-\eta ,{\frac{\sigma
\oplus \sigma ^{\prime }}{2}}+h{\frac{\eta +\eta ^{\prime
}}{2}}\right) d\sigma d\sigma ^{\prime }d\eta d\eta ^{\prime }.
\end{multline}%
Next we write
\begin{equation*}
K_{h}u_{h}\left( \sigma ,y\right) =\sum_{k\in \tilde{\Lambda}}\chi
_{0}(y+k)K_{h}u_{h}\left( \sigma ,y\right) =\sum_{k\in
\tilde{\Lambda}}(\chi _{0}K_{h}u_{h})(\sigma ,y+k)e^{i\omega
_{h}\left( \sigma \right) \cdot k},
\end{equation*}%
so that
\begin{equation*}
\mathcal{F}K_{h}u_{h}(\sigma ,\eta )=\mathcal{F}\chi
_{0}K_{h}u_{h}(\sigma ,\eta )\delta _{\Lambda -\omega _{h}(\sigma
)+\tilde{\Lambda}^{\perp }}\left( \eta \right) ,
\end{equation*}%
Thus \eqref{e:transfo2} is also
\begin{multline}
\cI(b, h)=\frac{(2\pi h)^{-d_{\Lambda ^{\perp }}}}{(2\pi
)^{d/2}}\int_{\sigma
,\sigma ^{\prime }\in I_{\Lambda },\eta ,\eta ^{\prime }\in \R^{d}}\mathcal{F%
}\chi _{0}K_{h}u_{h}\left( \sigma ,\eta -\frac{\sigma _{\Lambda }}{h}+\frac{1%
}{h}\left( {\frac{\sigma \oplus \sigma ^{\prime }}{2}}\right)
_{\Lambda
}\right)  \label{e:transfo3} \\
\overline{\mathcal{F}\chi _{0}K_{h}u_{h}\left( \sigma ^{\prime
},\eta
^{\prime }-\frac{\sigma _{\Lambda }^{^{\prime }}}{h}+\frac{1}{h}\left( {%
\frac{\sigma \oplus \sigma ^{\prime }}{2}}\right) _{\Lambda }\right) } \\
\mathcal{F}b\left( \frac{\xi _{1}(\sigma ^{\prime })-\xi
_{1}\left( \sigma
\right) }{h}+\eta ^{\prime }-\eta ,{\frac{\sigma \oplus \sigma ^{\prime }}{2}%
}+h{\frac{\eta +\eta ^{\prime }}{2}}\right) d\sigma d\sigma
^{\prime }d\eta d\eta ^{\prime }\delta _{\eta ^{\prime }\in
\tilde{\Lambda}^{\perp }+\omega _{h}({\frac{\sigma \oplus \sigma
^{\prime }}{2}})}.
\end{multline}%
This motivates the following definition~:

\begin{definition}\label{d:Psigma}
If $Q(\omega ,s,\sigma )$ is a smooth compactly supported function on $\la%
\Lambda \ra/\Lambda \times \Lambda ^{\perp }\times I_{\Lambda }$,
taking
values in $\cL(L^{2}(D_{\tilde{\Lambda}}))$, we define:%
\begin{equation*}
P_{Q}^{h}\left( s,\sigma \right) :=e^{i\frac{\sigma _{\Lambda }\bullet }{h}%
}Q\left( \omega _{h}\left( \sigma \right) ,s,\sigma \right) e^{-i\frac{%
\sigma _{\Lambda }\bullet }{h}},\quad v_{h}\left( \sigma ,y\right)
:=\chi _{0}\left( y\right) e^{i\frac{\sigma _{\Lambda }}{h}\cdot
y}K_{h}u_{h}\left( \sigma ,y\right) ,
\end{equation*}%
where $e^{i\frac{\sigma _{\Lambda }\bullet }{h}}$ denotes multiplication by $%
e^{i\frac{\sigma _{\Lambda }}{h}\cdot y}$. Define%
\begin{equation}
\la\rho _{u_{h}}^{h},Q\ra:=\left\langle v_{h},P_{Q}^{h}\left(
hD_{\sigma
},\cdot \right) v_{h}\right\rangle _{L^{2}\left( I_{\Lambda };L^{2}(D_{%
\tilde{\Lambda}})\right) },  \label{defop1}
\end{equation}%
where $P_{Q}^{h}\left( hD_{\sigma },\sigma \right) $ is obtained from $%
P_{Q}^{h}$ by Weyl quantization.
\end{definition}

More explicitly, we have:
\begin{multline}
\la\rho _{u_{h}}^{h},Q\ra=\frac{1}{{(2\pi h)^{d_{\Lambda ^{\perp }}}}}%
\int_{\sigma ,\sigma ^{\prime }\in I_{\Lambda }}\int_{s\in \Lambda
^{\perp }}e^{-i\frac{\xi _{1}(\sigma ^{\prime })-\xi _{1}(\sigma
)}{h}\cdot s}
\label{e:defop} \\
\left\langle v_{h}(\sigma ^{\prime },\cdot ),P_{Q}^{h}\left(
s,\frac{\sigma \oplus \sigma ^{\prime }}{2}\right) v_{h}(\sigma
^{\prime },\cdot )\right\rangle
_{L^{2}(D_{\tilde{\Lambda}})}d\sigma d\sigma ^{\prime }ds.
\end{multline}%
The interest of this definition becomes clearer if we realize the
following identity. Let $b\in \mathcal{C}_{c}^{\infty }(T^{\ast
}\mathbb{T}^{d})$, then formula \eqref{e:transfo3} is equivalent
to the identity
\begin{equation}
\la u_{h},\Op_{h}(b)u_{h}\ra_{L^{2}(\mathbb{T}^{d})}=\la\rho
^{h}_{u_h},Q_{b}^{h}\ra,  \label{identityQha}
\end{equation}%
where $Q_{b}^{h}(\omega ,s,\sigma )$ is the operator on $L^{2}(D_{\tilde{%
\Lambda}})$ given by the kernel
\begin{equation}
Q_{b}^{h}(\omega ,s,\sigma )(\tilde{y}^{\prime
},\tilde{y})=\frac{1}{(2\pi
)^{d_{\Lambda }}}\sum_{k\in \tilde{\Lambda}}e^{i\omega k}\int_{\eta \in \la%
\Lambda \ra}b\left( s+\frac{\tilde{y}+\tilde{y}^{\prime
}}{2},\sigma +h\eta \right) e^{i\eta \cdot (\tilde{y}^{\prime
}-\tilde{y}+k)}d\eta , \label{e:pseudo}
\end{equation}%
where, recall we have written $x=s+\tilde{y}\in \Lambda ^{\perp }\oplus \la%
\tilde{\Lambda}\ra$. Note that if one identifies
$L^{2}(D_{\tilde{\Lambda}})$ with $L_{\omega
}^{2}(\mathbb{R}^{d},\Lambda )$ as we did before, the
operator $Q_{b}^{h}(\omega ,s,\sigma )$ then corresponds to the ($\omega $%
-independent) Weyl pseudodifferential operator $b(y,\sigma
+hD_{y})$ acting on $L_{\omega }^{2}(\mathbb{R}^{d},\Lambda )$.
\medskip

Let us now consider $a\in \cS_{\Lambda }^{1}$ and let
$b_{h,R}\left( x,\xi \right) :=a(x,\xi ,\eta (\xi )/h)\chi (\eta
(\xi )/hR)$ . We then have
\begin{eqnarray}
\la S_{h}^{t/h}u_{h},\Op_{h}(b_{h})S_{h}^{t/h}u_{h}\ra_{L^{2}(\mathbb{T}%
^{d})} &=&\la w_{I_{\Lambda },h,R}\left( t\right) ,a\ra  \notag
\label{link}
\\
&=&\la\rho ^{h}(t),Q_{a,R}^{h}\ra+O_{R}(h)
\end{eqnarray}%
where $\rho ^{h}(t):=\rho _{S_{h}^{t/h}u_{h}}^{h}$ and $%
Q_{a,R}^{h}:=Q_{b_{h,R}}^{h}$. Note that $Q_{a,R}^{h}$ does not depend on $s$%
, since as $a$ has only frequencies in $\Lambda $ it is a function
independent on $s$.

\medskip

We now take limits as $h$ tends to zero~:

\begin{proposition}
After extraction of a subsequence, there exist
\begin{equation*}
\rho _{\Lambda }\in L^{\infty }\left( \R_{t},\cD^{\prime }\left( \la\Lambda %
\ra/\Lambda \times \Lambda ^{\perp }\times I_{\Lambda
};\cL^{1}\left( L^{2}\left( D_{\tilde{\Lambda}}\right) \right)
\right) \right)
\end{equation*}%
such that for every $Q\in \mathcal{C}_{c}^{\infty }\left( \la\Lambda \ra%
/\Lambda \times \Lambda ^{\perp }\times I_{\Lambda };\cK\left(
L^{2}\left( D_{\tilde{\Lambda}}\right) \right) \right) $ and every
$\phi \in L^{1}\left( \mathbb{R}\right) $:
\begin{equation*}
\int_{\mathbb{R}}\phi \left( t\right) \la\rho ^{h}(t),Q\ra dt\To\int_{%
\mathbb{R}}\phi \left( t\right) \la\rho _{\Lambda }(t),Q\ra dt.
\end{equation*}%
In addition, $\rho _{\Lambda }$ is positive when restricted to symbols $%
Q(\omega ,s,\sigma )$ that do not depend on $s$.
\end{proposition}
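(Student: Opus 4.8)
The plan is to produce $\rho_\Lambda$ by a weak-$*$ compactness argument, the only substantial input being a uniform (in $h$ and in $t$) bound of Calder\'on--Vaillancourt type for the functionals $Q\mapsto\la\rho^h(t),Q\ra$. By Definition~\ref{d:Psigma}, $\la\rho^h(t),Q\ra=\la v_h(t,\cdot),P^h_{Q}(hD_\sigma,\cdot)v_h(t,\cdot)\ra$, where $\rho^h(t):=\rho^h_{S_h^{t/h}u_h}$ and $v_h(t,\cdot)$ is built from $S_h^{t/h}u_h$ as there. First I would bound $v_h(t,\cdot)$: the Plancherel-type identity of Lemma~\ref{l:Kh}(v), together with the unitarity of $S_h^{t/h}$ and the boundedness of $m$, gives that the relevant ($\sigma$-summed) $L^{2}(D_{\tilde\Lambda})$-norm of $v_h(t,\cdot)$ is $\le C\|u_h\|_{L^{2}(\T^{d})}^{2}$, uniformly in $t$ and $h$. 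Then I would bound $\|P^h_{Q}(hD_\sigma,\cdot)\|$ by finitely many $\cL(L^{2}(D_{\tilde\Lambda}))$-valued seminorms of $Q$, using the operator-valued version of the semiclassical Calder\'on--Vaillancourt theorem (cf.~\cite{CV} and the appendix of~\cite{AnantharamanMacia}) in the variable $\sigma\in I_\Lambda$ with small parameter $h$; this would yield $|\la\rho^h(t),Q\ra|\le C\sum_{|\alpha|\le N}\|\partial^\alpha_{\omega,s,\sigma}Q\|_{L^\infty;\,\cL(L^{2}(D_{\tilde\Lambda}))}$ for all $t\in\R$ and all small $h$.

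The delicate point in this step --- and the place I expect the main difficulty --- is that the symbol $P^h_{Q}(s,\sigma)$ involves $\omega_h(\sigma)=\{\sigma_\Lambda/h\}$, which is \emph{not} smooth in $\sigma$, so a priori $P^h_{Q}(hD_\sigma,\cdot)$ is not a standard semiclassical pseudodifferential operator. The remedy is the conjugation by the unitary multiplication $e^{i\sigma_\Lambda\bullet/h}$ already present in the definition of $P^h_{Q}$: equivalently, one performs the quantization on the trivial Bloch--Floquet bundle $(\la\Lambda\ra/\Lambda)\times I_\Lambda\times L^{2}(D_{\tilde\Lambda})$ furnished by Lemma~\ref{l:Kh}(iii), on which $P^h_{Q}(s,\sigma)$ becomes a genuine operator-valued symbol depending smoothly on $(s,\sigma)$ with all seminorms controlled by those of $Q$ uniformly in $h$ --- exactly the mechanism of~\cite{AnantharamanMacia}. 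Making this reduction precise, in particular handling the discrete $\sigma$-support of $K_hu_h$ coming from Lemma~\ref{l:Kh}(iii), is the technical heart of the matter; once it is in place the estimate above is routine.

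For the extraction: the base $\la\Lambda\ra/\Lambda\times\Lambda^\perp\times I_\Lambda$ is $\sigma$-compact and $L^{2}(D_{\tilde\Lambda})$ is separable, so $\cC_0\big(\la\Lambda\ra/\Lambda\times\Lambda^\perp\times I_\Lambda;\cK(L^{2}(D_{\tilde\Lambda}))\big)$ is separable, and hence so is the Bochner space $L^{1}\big(\R;\cC_0(\la\Lambda\ra/\Lambda\times\Lambda^\perp\times I_\Lambda;\cK(L^{2}(D_{\tilde\Lambda})))\big)$, whose dual, by the vector-valued Riesz representation theorem, is (a weak-$*$ measurable version of) $L^{\infty}\big(\R;\cM(\la\Lambda\ra/\Lambda\times\Lambda^\perp\times I_\Lambda;\cL^{1}(L^{2}(D_{\tilde\Lambda})))\big)$. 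For each $h$, $t\mapsto\rho^h(t)$ is weak-$*$ measurable (indeed $t\mapsto\la\rho^h(t),Q\ra$ is continuous, by strong continuity of the propagator) and, by the previous step, lies in a fixed ball of this dual space. Banach--Alaoglu together with separability then provides a subsequence $h\To0^{+}$ along which $\int_{\R}\phi(t)\la\rho^h(t),Q\ra\,dt$ converges for every $\phi\in L^{1}(\R)$ and every $Q\in\cC_c^\infty$, the limit defining $\rho_\Lambda\in L^{\infty}\big(\R;\cD'(\la\Lambda\ra/\Lambda\times\Lambda^\perp\times I_\Lambda;\cL^{1}(L^{2}(D_{\tilde\Lambda})))\big)$ (in fact an $\cL^{1}$-valued measure in the variables $(\omega,s,\sigma)$).

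It remains to check positivity on symbols $Q=Q(\omega,\sigma)$ independent of $s$. For such $Q$ the symbol $P^h_{Q}$ is independent of $s$ as well, so $P^h_{Q}(hD_\sigma,\cdot)$ reduces to multiplication by the operator-valued function $\sigma\mapsto P^h_{Q}(\sigma)=e^{i\sigma_\Lambda\bullet/h}Q(\omega_h(\sigma),\sigma)e^{-i\sigma_\Lambda\bullet/h}$, whence
\[
\la\rho^h_{u_h},Q\ra=c_h\!\!\sum_{\sigma\in I_\Lambda^h}\big\la e^{-i\frac{\sigma_\Lambda\bullet}{h}}v_h(\sigma,\cdot),\,Q(\omega_h(\sigma),\sigma)\,e^{-i\frac{\sigma_\Lambda\bullet}{h}}v_h(\sigma,\cdot)\big\ra_{L^{2}(D_{\tilde\Lambda})}\ \ge\ 0
\]
for some $c_h>0$, whenever $Q(\omega,\sigma)\ge0$ as an operator for all $(\omega,\sigma)$ (using that multiplication by the unimodular $e^{i\sigma_\Lambda\bullet/h}$ is unitary on $L^{2}(D_{\tilde\Lambda})$). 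Pairing against $\phi\ge0$ and passing to the limit gives $\int_{\R}\phi(t)\la\rho_\Lambda(t),Q\ra\,dt\ge0$, hence $\la\rho_\Lambda(t),Q\ra\ge0$ for a.e.\ $t$. Combined with the uniform bound above and the Radon--Nikodym decomposition recalled before Definition~\ref{d:Psigma}, this gives the positivity of $\rho_\Lambda$ on $s$-independent symbols announced in step~2 of the three-step outline.
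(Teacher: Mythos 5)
Your proposal is correct and follows essentially the same route as the paper: the uniform bound on $\la\rho^h(t),Q\ra$ via Lemma~\ref{l:Kh}(v) and the Calder\'on--Vaillancourt theorem, a diagonal weak-$*$ extraction over a countable dense set of symbols, and positivity for $s$-independent $Q$ read off directly from \eqref{defop1}. The technical point you flag about the non-smoothness of $\omega_h(\sigma)$ is genuine, but the paper deals with it exactly as you suggest --- implicitly, through the conjugation by $e^{i\sigma_{\Lambda}\bullet/h}$ built into the definition of $P^h_Q$ and the Bloch--Floquet trivialisation --- so no new idea is required.
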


\begin{proof}
Note that Lemma \ref{l:Kh}, v) implies that $\left( v_{h}\right) $
is bounded in $L^{2}\left( I_{\Lambda
};L^{2}(D_{\tilde{\Lambda}})\right) $ and
that the Calder\'{o}n-Vaillancout theorem gives that the operators $%
P_{Q}^{h}\left( hD_{\sigma },\sigma \right) $ are uniformly
bounded with respect to $h$. Therefore, the linear map
\begin{equation*}
L_h: Q\mapsto \int_{\mathbb{R}}\la\rho ^{h}(t),Q\left(
t\right) \ra dt
\end{equation*}%
is uniformly bounded as ~$h\To 0$. Therefore, for any~$Q$, up to
extraction of a
subsequence, it has a limit~$l(Q)$. \\
Considering a countable dense subset of~$%
L^{1}\left( \mathbb{R};\mathcal{C}_{c}^{\infty }\left( \la\Lambda \ra%
/\Lambda \times \Lambda ^{\perp }\times I_{\Lambda };\cK\left(
L^{2}\left( D_{\tilde{\Lambda}}\right) \right) \right) \right) $,
and using a diagonal
extraction process, one finds a sequence~$\left( h_{n}\right) $ tending to~$%
0 $ as~$n$ goes to~$+\infty $ such that for any~$Q\in L^{1}\left( \mathbb{R};%
\mathcal{C}_{c}^{\infty }\left( \la\Lambda \ra/\Lambda \times
\Lambda
^{\perp }\times I_{\Lambda };\cK\left( L^{2}\left( D_{\tilde{\Lambda}%
}\right) \right) \right) \right) $, the
sequence~$L_{h_{n}}(Q)$ has a limit as~$n$ goes
to~$+\infty $.\\
 The limit is a linear form on $L^{1}\left( \mathbb{R};%
\mathcal{C}_{c}^{\infty }\left( \la\Lambda \ra/\Lambda \times
\Lambda
^{\perp }\times I_{\Lambda };\cK\left( L^{2}\left( D_{\tilde{\Lambda}%
}\right) \right) \right) \right) $,
characterized by an element $\rho _{\Lambda }$ of the dual
bundle~$L^{\infty }\left( \R_{t},\cD^{\prime }\left( \la\Lambda
\ra/\Lambda \times \Lambda
^{\perp }\times I_{\Lambda };\cL^{1}\left( L^{2}\left( D_{\tilde{\Lambda}%
}\right) \right) \right) \right) $. Finally, note that if
$Q(t,\omega ,\sigma )$ is a positive operator independent of $s$,
equation~\eqref{defop1} gives $L_{h}(Q)\geq 0$, whence the
positivity of $\rho _{\Lambda }$ when restricted to symbols that
do not depend on $s$.
\end{proof}

As a consequence and in view of~(\ref{link}), letting $h$ going to $0$, then~%
$R$ to~$+\infty $, we have (possibly along a subsequence) for every $a\in {%
\mathcal{S}}_{\Lambda }^{1}$ and $\phi \in L^{1}\left( \mathbb{R}\right) $:%
\begin{eqnarray*}
\lim_{R\rightarrow +\infty }\lim_{h\rightarrow
0^{+}}\,\int_{\mathbb{R}}\phi \left( t\right) \la w_{I_{\Lambda
},h,R}(t),a\ra dt &=&\lim_{R\rightarrow
+\infty }\lim_{h\rightarrow 0^{+}}\,\,\int_{\mathbb{R}}\phi \left( t\right) %
\la\rho ^{h}(t),Q_{a,R}\ra dt \\
&=&\lim_{R\rightarrow +\infty }\,\int_{\mathbb{R}}\phi \left( t\right) \la%
\rho _{\Lambda }(t),Q_{a,R}\ra dt \\
&=&\int_{\mathbb{R}}\phi \left( t\right) \la\rho _{\Lambda }(t),Q_{a,\infty }%
\ra dt,
\end{eqnarray*}%
where $Q_{a,\infty }(\omega ,s,\sigma )$ is the bounded operator on $%
L^{2}(D_{\tilde{\Lambda}})$ given by the kernel
\begin{equation}
Q_{a,\infty }(\omega ,s,\sigma )(\tilde{y}^{\prime },\tilde{y})=\frac{1}{%
(2\pi )^{d_{\Lambda }}}\sum_{k\in \tilde{\Lambda}}e^{i\omega
k}\int_{\eta\in\la\Lambda\ra} a\left( s+\frac{\tilde{y}+\tilde{y}^{\prime }}{2},\sigma
,\eta \right) e^{i\eta \cdot (\tilde{y}^{\prime
}-\tilde{y}+k)}d\eta .
\end{equation}%
As discussed before, the operator $Q_{a,\infty }$ corresponds to
the Weyl
operator $a(s+y,\sigma ,D_{y})$ acting on $L_{\omega }^{2}(\mathbb{R}^{d}\la%
\Lambda \ra)$. In particular, when $a\in {\mathcal{C}}_{c}^{\infty
}(\T^{d})$ has only frequences in $\Lambda $, the operator
$Q_{a,\infty }$ is the multiplication operator $a_{\sigma }$
appearing in identity (\ref{e:eqmul}) of
Theorem~\ref{prop:opvame}.

\medskip

At this stage of the analysis, we have
completed the proof of the first part of
Theorem~\ref{prop:opvame} (equation \eqref{e:eqmul}), using only the fact that $\mathbf{V}_h(t)$ is a bounded perturbation~:
we let $m_{\Lambda }\left( t,\omega ,s, \sigma \right)
:=\mathrm{Tr}_{L_{\omega }^{2}(\mathbb{R}^{d},\Lambda )}\rho
_{\Lambda }(t,\omega ,s, \sigma )$. We have $\rho _{\Lambda }=M_{\Lambda }m_{\Lambda }$ where $\mathrm{Tr}M_{\Lambda } =1$.

\medskip

As already noted, equation \eqref{e:eqmul} implies the absolute continuity result, Theorem \ref{t:main} (2).

\subsection{Step 3: Showing a propagation law}

From now on we shall assume $\mathbf{V}_{h}\left( t\right)
=\Op_{h}\left(
V\left( t,\cdot \right) \right) $ and prove the propagation law~(\ref%
{eq:SchroM(t)}). The first crucial observation is the following
lemma.

\begin{lemma}
\label{p:averaging} The measure $\rho _{\Lambda }$ is invariant by
the
Hamiltonian flow. More precisely, for every $Q\in \mathcal{C}_{c}^{\infty }(%
\la\Lambda \ra/\Lambda \times \Lambda ^{\perp }\times I_{\Lambda },\cK%
(L^{2}(D_{\tilde{\Lambda}})))$ and a.e. $t$,
\begin{equation*}
\la\rho _{\Lambda }\left( t\right) ,dH(\sigma)\cdot \partial _{s}Q\ra=0.
\end{equation*}
\end{lemma}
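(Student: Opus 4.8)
The plan is to argue as in the proofs of Theorems~\ref{mu^Lambda} and~\ref{Thm Properties}(4): transfer the statement to the level of $\psi_h(t):=S_h^{t/h}u_h$, commute with $H(hD_x)$, and integrate by parts in~$t$. Fix $\varphi\in\cC_c^\infty(\R)$ (dense in $L^1(\R)$) and $Q\in\cC_c^\infty(\la\Lambda\ra/\Lambda\times\Lambda^\perp\times I_\Lambda,\cK(L^2(D_{\tilde\Lambda})))$; it suffices to prove $\int_\R\varphi(t)\la\rho_\Lambda(t),dH(\sigma)\cdot\partial_s Q\ra\,dt=0$. Using Definition~\ref{d:Psigma} and formulas \eqref{e:pseudo}, \eqref{identityQha} and \eqref{link}, one first produces, for each $R>1$, an $h$-dependent symbol $b_h=b_{h,R}\in\cC_c^\infty(T^*\T^d)$, supported where $\xi$ lies in an $O(hR)$-neighbourhood of $I_\Lambda\cap B(\xi_0,\eps)$, such that $\la\rho^h(t),Q\ra=\la\psi_h(t),\Op_h(b_h)\psi_h(t)\ra+o(1)$ as $h\To0$, up to an error that vanishes as $R\To\infty$; concretely, in the variables $(y,\eta(\xi)/h)$ the symbol $b_h(s+y,\xi)$ is (a fiberwise truncation at scale $R$ of) the Weyl symbol of $Q(\omega_h(\sigma(\xi)),s,\sigma(\xi))$. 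Since $\partial_x=\partial_s\oplus\partial_y$ in the splitting $x=s+y\in\Lambda^\perp\oplus\la\tilde\Lambda\ra$ and $dH(\sigma)\in\Lambda^\perp$ for $\sigma\in I_\Lambda$, the symbol $dH(\sigma(\xi))\cdot\partial_x b_h=dH(\sigma(\xi))\cdot\partial_s b_h$ realizes $dH(\sigma)\cdot\partial_s Q$ in the same way, so that $\la\rho^h(t),dH(\sigma)\cdot\partial_s Q\ra=\la\psi_h(t),\Op_h(dH(\sigma(\xi))\cdot\partial_x b_h)\psi_h(t)\ra$ up to the same errors.

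Next I would compute the commutator with $H(hD_x)=\Op_h(H(\xi))$. Since $H$ depends on $\xi$ only, its Moyal bracket with $b_h$ involves only $\xi$-derivatives of $H$ (bounded) paired with $x$-derivatives of $b_h$, which stay $O(1)$ — in contrast with $\partial_\xi b_h=O(h^{-1})$, produced by the rescaling $\eta\mapsto\eta(\xi)/h$; this is precisely why one commutes with $H(hD_x)$ rather than with a general operator. Hence $\tfrac{i}{h}[H(hD_x),\Op_h(b_h)]=\Op_h(dH(\xi)\cdot\partial_x b_h)+O(h^2)$ in operator norm. On $\supp b_h$ one has $|\eta(\xi)|=O(hR)$ and $|\eta(\xi)/h|=O(R)$, so expanding $dH(\xi)=dH(\sigma(\xi))+d^2H(\sigma(\xi))\eta(\xi)+O(|\eta(\xi)|^2)$ and invoking the Calder\'on--Vaillancourt bound \eqref{eq:CVeta} one gets
\[
\tfrac{i}{h}\big[H(hD_x)+h^2\mathbf{V}_h(t/h),\,\Op_h(b_h)\big]=\Op_h\big(dH(\sigma(\xi))\cdot\partial_x b_h\big)+O(hR),
\]
where the $O(hR)$ absorbs the $d^2H$-term (which carries an extra factor $h$ once $\eta(\xi)$ is written as $h\cdot\eta(\xi)/h$), the quadratic Taylor remainder, the Moyal remainder, and $ih[\mathbf{V}_h(t/h),\Op_h(b_h)]$, the latter being $O(h)$ uniformly in $t$ because $\|\mathbf{V}_h(t/h)\|\le C$.

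Finally, $\psi_h$ solves $ih^2\partial_t\psi=(H(hD_x)+h^2\mathbf{V}_h(t/h))\psi$ and $b_h$ does not depend on~$t$, so $h\tfrac{d}{dt}\la\psi_h(t),\Op_h(b_h)\psi_h(t)\ra=\la\psi_h(t),\tfrac{i}{h}[H(hD_x)+h^2\mathbf{V}_h(t/h),\Op_h(b_h)]\psi_h(t)\ra$; combining with the identities above, integrating against $\varphi$ and integrating by parts in $t$,
\[
\int_\R\varphi(t)\la\psi_h(t),\Op_h(dH(\sigma(\xi))\cdot\partial_x b_h)\psi_h(t)\ra\,dt=-h\int_\R\varphi'(t)\la\psi_h(t),\Op_h(b_h)\psi_h(t)\ra\,dt+O(hR).
\]
The right-hand side is $O(h\|\varphi'\|_{L^1}\|\Op_h(b_h)\|_{\cL(L^2)})+O(hR)=o(1)$ as $h\To0$ for $R$ fixed, so letting $h\To0$ and then $R\To+\infty$ makes the left-hand side converge to $\int_\R\varphi(t)\la\rho_\Lambda(t),dH(\sigma)\cdot\partial_s Q\ra\,dt$, which is therefore $0$; since $\varphi$ and $Q$ are arbitrary this gives the claim for a.e.~$t$. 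The main obstacle is organizational rather than analytic: setting up the correspondence $Q\leftrightarrow b_h$ for a general compact-operator-valued symbol and justifying the order of the limits in $h$ and $R$, while keeping track that $\partial_\xi b_h$ is only $O(h^{-1})$ — harmless here exactly because commuting with $H(hD_x)$ sees only $x$-derivatives of $b_h$.
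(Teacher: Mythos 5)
Your argument is correct and is essentially the paper's own: the paper derives the lemma from the (already established) invariance $\la w_{I_\Lambda,h,R}(t),a\ra=\la w_{I_\Lambda,h,R}(t),a\circ\phi_\ell^0\ra+o(1)$ together with the observation that, for $\sigma\in I_\Lambda$, the shift by $\ell\,dH(\sigma)$ acts purely on the $s$-variable, while you carry out the same Egorov-type commutator computation in its infinitesimal form (commuting with $H(hD_x)+h^2\mathbf{V}_h$, Taylor-expanding $dH(\xi)$ about $\sigma(\xi)$ on the support where $|\eta(\xi)|=O(hR)$, and integrating by parts in $t$). The one step you flag as ``organizational'' --- passing from pseudodifferential test operators $Q_{a,R}$ to arbitrary $Q\in\cC_c^\infty(\cdot,\cK(L^2(D_{\tilde\Lambda})))$ --- is exactly the point the paper settles by the weak density of compact pseudodifferential operators in $\cK(L^2(D_{\tilde\Lambda}))$, so your proof needs that same one-line density remark to be complete.
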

In particular, the restriction of $\rho _{\Lambda }\left( t\right)$ to $\sigma\in R_\Lambda$  is invariant under the action of $\Lambda^\perp$ by translation on the parameter $s$.

\begin{proof}
This lemma may be understood as a consequence of the invariance by
the classical flow of semi-classical measures. Indeed, the same
arguments than those of Appendix~\ref{sec:Ap1} give that for all
$\ell \in \R$, we have
\begin{equation*}
\la w_{I_{\Lambda },h,R}(t),a\ra=\la w_{I_{\Lambda
},h,R}(t),a\circ \phi _{\ell }^{0}\ra+o(1)
\end{equation*}%
as $h$ goes to $0$ ($R$ fixed). As a consequence, we
deduce that for all $\ell \in \R$,
\begin{equation*}
\la\rho ^{h}(t),Q_{a,R}\ra=\la\rho ^{h}(t),Q_{a\circ \phi _{\ell }^{0},R}\ra%
+o(1)
\end{equation*}%
as $h$ goes to $0$ and $R$ to $+\infty $. Recall that $\phi _{\ell
}^{0}(x,\xi )=\left( x+\ell dH(\xi ),\xi \right) $ and that for
$\sigma \in
I_{\Lambda }$, $dH(\sigma )\in \Lambda ^{\perp }$. As a consequence, if $%
x=s+y\in \Lambda ^{\perp }\oplus \la\widetilde{\Lambda }\ra$, then for $%
\sigma \in I_{\Lambda }$, the vector $x+\ell dH(\sigma )$
decomposes as
\begin{equation*}
x+\ell dH(\sigma )=\left( s+\ell dH(\sigma )\right) +y\in \Lambda
^{\perp }\oplus \la\widetilde{\Lambda }\ra,
\end{equation*}%
and the kernel of the operator $Q_{a\circ \phi _{\ell
}^{0},R}(\omega ,s,\sigma )$ is the function
\begin{equation*}
\displaylines{\qquad
Q_{a\circ\phi^0_\ell,R}(\omega,s,\sigma)(\tilde y',\tilde y)=
\frac{1}{(2\pi)^{d_\Lambda}}\sum_{k\in\tilde\Lambda}e^{i\omega
k}\int a\left(s+\ell dH(\sigma)+ \frac{\tilde y+\tilde y'}2,
\sigma,\eta\right) \hfill\cr\hfill \times\,
\chi\left(\eta/R\right)e^{i\eta\cdot(\tilde y'-\tilde
y+k)}d\eta.\qquad\cr}
\end{equation*}
The result follows if we note that compact pseudodifferential
operators (e.g. operators of the form $Q_{a, R}(\omega, s,
\sigma)$) are dense in $\cK (L^{2}(D_{\tilde{\Lambda}}))$ for the
weak topology of operators .
\end{proof}

We finally show that $\rho _{\Lambda }$, restricted to $\sigma \in
R_{\Lambda }$ obeys the propagation law~(\ref{eq:SchroM(t)}). From now on, we only consider test symbols $Q(\omega, \sigma)$ that do not depend on the parameter $s\in \Lambda^\perp$.
We recall that we write $V(t,x,\xi )$ as $V(t,s+\tilde{y},\sigma
+\eta )$ and that we use the notation $\la V(t,\tilde{y},\sigma
+\eta )\ra_{\Lambda }$ to mean that we are averaging
$V(t,s+\tilde{y},\sigma +\eta )$ w.r.t.~$s$, thus getting a
function that does not depend on $s$ (nor $\eta $ when $\xi
\in I_{\Lambda }$). In the notation of~\eqref{e:pseudo}, we note that $Q_{%
\la V(t,\cdot ,\sigma )\ra_{\Lambda }}^{h=0}=\la V(t)\ra_{\Lambda
,\sigma }$ defines a multiplication operator on
$L^{2}(D_{\tilde{\Lambda}})$ (for which the Floquet-Bloch periodicity conditions are transparent).
To simplify the notation, we set in what follows:
\begin{equation*}
A\left( \sigma ,\eta \right) :=\frac{1}{2}d^{2}H(\sigma )\eta
\cdot \eta .
\end{equation*}%
To prove that $\rho _{\Lambda }$ satisfies a Schr\"{o}dinger-type
equation,
we note that 
\begin{eqnarray*}
K_{h}H(hD_{x})f(\sigma ,y) &=&\frac{1}{h^{d/2}}m(\sigma )\int_{\eta \in \la%
\Lambda \ra}H(\sigma +\eta )\mathcal{F}f\left( {\frac{\sigma +\eta }{h}}%
\right) e^{{\frac{i}{h}}\eta \cdot y}\frac{d\eta }{(2\pi
h)^{d_{\Lambda }/2}}
\\
&=&H(\sigma +hD_{y})K_{h}f(\sigma ,y),
\end{eqnarray*}%
and that
\begin{equation*}
K_{h}\Op_{h}\left( V\left( t,\cdot \right) \right) f(\sigma
,y)=P_{Q_{V}^{h}}^{h}(hD_{\sigma },\sigma )K_{h}f(\sigma ,y)
\end{equation*}%
(we used the notation of Definition \ref{d:Psigma}).
Therefore, $w_{h}\left( t,\cdot \right) :=K_{h}S_{h}^{t/h}u_{h}$ solves:%
\begin{equation*}
i\partial _{t}w_{h}\left( t,\sigma ,y\right) =\left( h^{-2}H\left(
\sigma +hD_{y}\right) +P_{Q_{V}^{h}}^{h}(hD_{\sigma },\sigma
)\right) w_{h}\left( t,\sigma ,y\right) .
\end{equation*}%
 Note that if $Q(\omega ,s,\sigma )$ does not depend on $s$, we have
 \begin{multline*}
\la\rho^{h}(t),[h^{-2}H(\sigma +hD_{y}) ,Q]\ra \\=
 \la \chi _{0} e^{i\frac{\sigma _{\Lambda }}{h}\cdot
}\left( h^{-2}H\left(
\sigma +hD_{y}\right)\right) w_h(t) , P_{Q}^{h}\left(
hD_{\sigma
},\cdot \right)  \chi _{0} e^{i\frac{\sigma _{\Lambda }}{h}\cdot
} w_h(t) \ra  \\-  \la \chi _{0} e^{i\frac{\sigma _{\Lambda }}{h}\cdot
}w_h(t) , P_{Q}^{h}\left(
hD_{\sigma
},\cdot \right)  \chi _{0} e^{i\frac{\sigma _{\Lambda }}{h}\cdot
} \left( h^{-2}H\left(
\sigma +hD_{y}\right)\right) w_h(t) \ra_{L^{2}\left( I_{\Lambda };L^{2}(D_{%
\tilde{\Lambda}})\right) }
\end{multline*}
Hence, passing to the limit $h\To 0$,
\begin{equation*}
\int_{\mathbb{R}}\phi ^{\prime }\left( t\right) \la\rho ^{h}(t),Q\ra %
dt=i\int_{\mathbb{R}}\phi \left( t\right) \la\rho
^{h}(t),[h^{-2}H(\sigma +hD_{y})_{\omega }+Q_{V}^{h}(s,\sigma
),Q]\ra dt+o(1).
\end{equation*}%
Above, the index $\omega $ in $H(\sigma +hD_{y})_{\omega }$
indicates that the operator acts on $L^{2}(\la
D_{\tilde{\Lambda}}\ra)$ with Floquet
periodicity conditions (\ref{e:BF}). We perform a Taylor expansion of $%
H(\sigma +hD_{y})$ and write, in ${\mathcal{L}}(L^{2}(\la D_{\tilde{\Lambda}}%
\ra))$, for any $Q\in {\mathcal{K}}\left( L^{2}(\la D_{\tilde{\Lambda}}\ra%
)\right) $,
\begin{equation*}
H(\sigma +hD_{y})Q=H(\sigma )Q+hdH(\sigma )D_{y}Q+h^{2}A(\sigma
,D_{y})Q+O(h^{3}).
\end{equation*}%
At this point, note that $dH(\sigma )D_{y}=0$ (since $\sigma \in
I_{\Lambda } $ one has $dH(\sigma )\in \Lambda ^{\perp }$).
Therefore, for $Q\in \mathcal{C}_{c}^{\infty }(\la\Lambda
\ra/\Lambda \times \Lambda ^{\perp }\times I_{\Lambda
},\cK(L^{2}(D_{\tilde{\Lambda}})))$,
\begin{equation*}
\left[ h^{-2}H(\sigma +hD_{y})_{\omega },Q(\omega ,hD_{\sigma },\sigma )%
\right] =\left[ A(\sigma ,D_{y})_{\omega },Q(\omega ,hD_{\sigma },\sigma )%
\right] +O(h).
\end{equation*}%
As a consequence, we obtain:
\begin{equation*}
\int_{\mathbb{R}}\phi ^{\prime }\left( t\right) \la\rho ^{h}(t),Q\ra %
dt=i\int_{\mathbb{R}}\phi \left( t\right) \la\rho
^{h}(t),[A(\sigma ,D_{y})_{\omega }+Q_{V}^{h}(hD_{\sigma },\sigma
),Q\left( \omega ,hD_{\sigma },\sigma \right) ]\ra dt+o(1).
\end{equation*}%
Taking limits, we obtain
\begin{eqnarray*}
\int_{\mathbb{R}}\phi ^{\prime }\left( t\right) \la\rho _{\Lambda }(t),Q\ra %
dt &=&i\int_{\mathbb{R}}\phi \left( t\right) \la\rho _{\Lambda
}(t),[A(\sigma ,D_{y})_{\omega }+Q_{V}^{0}(s,\sigma ),Q]\ra dt \\
&=&i\int_{\mathbb{R}}\la\rho _{\Lambda }(t),[A(\sigma ,D_{y})_{\omega }+Q_{%
\la V\ra}^{0}(s,\sigma ),Q]\ra dt\label{e:props}
\end{eqnarray*}%
where the potential $\la V\ra(s,\sigma )$ is averaged along the flow $%
s\mapsto s+tdH(\sigma )$ (because of Lemma~\ref{p:averaging}). But
$\la V\ra$ does not depend on $s$ for $\sigma \in R_{\Lambda }$,
and it is simply the average of~$V$ w.r.t. $s$. Hence $Q_{\la
V\ra}^{0}=\la V(t)\ra_{\Lambda ,\sigma }$ and $\rho _{\Lambda }$
satisfies the following Heisenberg
equation for $\sigma \in R_{\Lambda }$:%
\begin{equation*}
i\partial _{t}\rho _{\Lambda }(t,\omega ,\sigma )=\left[ A(\sigma
,D_{y})_{\omega }+\la V(t)\ra_{\Lambda ,\sigma },\rho _{\Lambda
}(t,\omega ,\sigma )\right]
\end{equation*}%
(note that $\rho _{\Lambda }(t,\omega ,\sigma )$ does not depend on $s$ for $\sigma\in R_\Lambda$).
Let $$m_{\Lambda }\left( t,\omega ,s, \sigma \right)
:=\mathrm{Tr}_{L_{\omega }^{2}(\mathbb{R}^{d},\Lambda )}\rho
_{\Lambda }(t,\omega ,s, \sigma );$$ the propagation law \eqref{e:props}
implies that $m_{\Lambda }$ does not depend on
$t$. Therefore, $\rho _{\Lambda }=M_{\Lambda }m_{\Lambda }$ where $%
M_{\Lambda }(\cdot, \omega, \sigma)$ solves (\ref{eq:SchroM(t)}) for $\sigma\in R_\Lambda$ and $\mathrm{Tr}M_{\Lambda } =1$. This concludes the
proof of Theorem~\ref{prop:opvame} (in the statement, the parameter $s$ disappeared since all test functions are independent of $s$).


\section{An iterative procedure for computing $\protect\mu $}

\label{s:successive}

In this section, we develop the iterative procedure which leads to
the proof of
Theorem~\ref{t:precise}

\subsection{First step of the construction}

\label{s:firststep}

What was done in the previous section can be considered as the
first step of an iterative procedure that allows to effectively
compute the measure~$\mu (t,\cdot )$ solely in terms of the sequence of
initial data $\left( u_{h}\right) $. Recall that we assumed in~\S
\ref{s:second}, without loss of generality, that the projection on
$\xi $ of $\mu \left( t,\cdot \right) $ was supported in a ball
contained in $\mathbb{R}^{d}\setminus C_{H}$. We have decomposed
this measure as a sum
\begin{equation*}
\mu (t,\cdot )=\sum_{\Lambda \in {\mathcal{L}}}\mu _{\Lambda
}(t,\cdot )+\sum_{\Lambda \in {\mathcal{L}}}\mu ^{\Lambda
}(t,\cdot ),
\end{equation*}%
where $\Lambda $ runs over the set of primitive submodules of
$\IZ^{d}$, and where
\begin{equation*}
\mu _{\Lambda }(t,\cdot )=\int_{\left\langle \Lambda \right\rangle }\tilde{%
\mu}_{\Lambda }(t,\cdot ,d\eta )\rceil _{\IT^{d}\times R_{\Lambda
}},\qquad
\mu ^{\Lambda }(t,.)=\int_{\overline{\left\langle \Lambda \right\rangle }}%
\tilde{\mu}^{\Lambda }(t,\cdot ,d\eta )\rceil _{\IT^{d}\times
R_{\Lambda }}.
\end{equation*}%
From Theorem \ref{Thm Properties}, the distributions
$\tilde{\mu}_{\Lambda }$ have the following properties~:\smallskip

\begin{enumerate}
\item $\tilde{\mu}_{\Lambda}(t,dx,d\xi,d\eta)$ is in
$\mathcal{C}\left( \IR;
(\cS_\Lambda^1)^{\prime }\right) $ and all its $x$-Fourier modes are in $%
\Lambda$; with respect to the variable $\xi$,
$\tilde{\mu}_{\Lambda} (t,dx,d\xi,d\eta)$ is supported in
$I_{\Lambda}$;\smallskip\smallskip

\item if $\tau_{h}\ll1/h$ then for every $t\in\mathbb{R}$, $\tilde{\mu }%
_{\Lambda}\left( t,\cdot\right) $ is a positive measure and:
\begin{equation*}
\tilde{\mu}_{\Lambda}\left( t,\cdot\right) =\left(
\tilde{\phi}_{t} ^{1}\right) _{\ast}\tilde{\mu}_{\Lambda}\left(
0,\cdot\right) ,
\end{equation*}
where:
\begin{equation*}
\tilde{\phi}_{s}^{1}:(x,\xi,\eta)\longmapsto(x+sd^{2}H(\sigma(\xi))\eta
,\xi,\eta);
\end{equation*}

\item if $\tau _{h}=1/h$ then$\int_{\left\langle \Lambda \right\rangle }%
\tilde{\mu}_{\Lambda }(t,\cdot ,d\eta )$ is in $\mathcal{C}(\IR;\cM%
_{+}(T^{\ast }\IT^{d}))$ and $\int_{\IR^{d}\times \left\langle
\Lambda \right\rangle }\tilde{\mu}_{\Lambda }(t,\cdot ,d\xi ,d\eta
)$ is an absolutely continuous measure on $\IT^{d}$. In fact, with
the notations of Section \ref{sec:infini}, we have, for every
$a\in \mathcal{C}_{c}^{\infty }\left( T^{\ast }\IT^{d}\right) $
with Fourier modes in $\Lambda $,
\begin{equation*}
\int_{\IT^{d}\times I_{\Lambda }\times \left\langle \Lambda
\right\rangle }a(x,\xi )\tilde{\mu}_{\Lambda }(t,dx,d\xi ,d\eta
)=\int_{(\langle \Lambda \rangle /\Lambda )\times I_{\Lambda
}}\Tr\left( a_{\sigma }\rho _{\Lambda }(t,d\omega ,d\sigma
)\right)
\end{equation*}%
where $\rho _{\Lambda }\in L^{\infty }\left( \R_{t},\Gamma \left( \mathcal{K}%
({\mathfrak{F}})\right) _{+}^{\prime }\right) $ and $a_{\sigma }$
is the section of ${\mathcal{L}}({\mathfrak{F}})$ defined by the
map~$(\omega
,\sigma )\mapsto $ multiplication by $a(y,\sigma )$. In addition, if $%
\mathbf{V}_{h}\left( t\right) =\Op_{h}(V\left( t,\cdot \right) )$
then $\rho
_{\Lambda }=M_{\Lambda }m_{\Lambda }$ where $m_{\Lambda }\in \mathcal{M}%
_{+}((\la\Lambda \ra/\Lambda )\times I_{\Lambda })$, $M_{\Lambda
}$ is a
section of $\mathcal{L}^{1}({\mathfrak{F}})$ integrable with respect to $%
m_{\Lambda }$. Moreover, $\mathrm{Tr}_{L_{\omega }^{2}(\mathbb{R}%
^{d},\Lambda )}M_{\Lambda }\left( t,\omega ,\sigma \right) =1$ and $%
M_{\Lambda }(\cdot, \omega, \sigma)$ satisfies a Heisenberg equation
(\ref{eq:SchroM(t)}).
\end{enumerate}

On the other hand, the measures $\tilde{\mu}^{\Lambda}$
satisfy:\smallskip

\begin{enumerate}
\item for $a\in\cS_{\Lambda}^{1}$,
$\la\tilde{\mu}^{\Lambda}(t,dx,d\xi ,d\eta),a(x,\xi,\eta)\ra$ is
obtained as the limit of
\begin{equation*}
\left\langle w_{h,R,\delta}^{I_{\Lambda}}\left( t\right)
,a\right\rangle
=\int_{T^{\ast}\mathbb{T}^{d}}\chi\left( \frac{\eta\left( \xi\right) }{\delta%
}\right) \left( 1-\chi\left( \frac{\tau_{h}\eta(\xi)}{R}\right)
\right) a\left( x,\xi,\tau_{h}\eta(\xi)\right) w_{h}\left(
t\right) \left( dx,d\xi\right) ,
\end{equation*}
in the weak-$\ast$ topology of $L^{\infty}(\IR,\left(
\cS_{\Lambda}
^{1}\right) ^{\prime})$, as $h\To0^{+}$, $R\To+\infty$ and then $\delta\To%
0^{+}$ (possibly along subsequences);\smallskip

\item $\tilde{\mu}^{\Lambda}(t,dx,d\xi,d\eta)$ is in $L^{\infty} (\IR,\cM%
_{+}(T^{\ast}\IT^{d}\times\overline{\la\Lambda\ra}))$ and all its $x$-Fourier
modes are
in $\Lambda$. With respect to the variable $\eta$, the measure $\tilde{\mu}%
^{\Lambda }(t,dx,d\xi,d\eta)$ is $0$-homogeneous and supported at
infinity~: we see it as a measure on the sphere at infinity
$\mathbb{S}\la\Lambda\ra$. With respect to the variable~$\xi$, it
is supported on $\{\xi\in I_{\Lambda} \}$;\smallskip

\item $\tilde{\mu}^{\Lambda}$ is invariant by the two flows,
\begin{equation*}
\phi_{s}^{0}:(x,\xi,\eta)\longmapsto(x+sdH(\xi),\xi,\eta),\text{\quad
and\quad}\phi_{s}^{1}:(x,\xi,\eta)\longmapsto(x+sd^{2}H(\sigma(\xi))\frac {%
\eta}{\left\vert \eta\right\vert },\xi,\eta).
\end{equation*}
\end{enumerate}

This is the first step of an iterative procedure; the next step is
to decompose the measure $\mu^{\Lambda}(t,\cdot)$ according to
primitive
submodules of $\Lambda$. We need to adapt the discussion of~\cite%
{AnantharamanMacia}; to this aim, we introduce some additional
notation.

\medskip

Fix a primitive submodule $\Lambda\subseteq\mathbb{Z}^{d}$ and
$\sigma\in I_{\Lambda}\setminus C_{H}$. For
$\Lambda_{2}\subseteq\Lambda_{1} \subseteq \Lambda$ primitive
submodules of $(\Z^d)^*$, for $\eta\in\la\Lambda_1\ra$, we denote
\begin{eqnarray*}
\Lambda_{\eta}\left( \sigma, \Lambda_1\right) &:=&
\left(\Lambda_1^\perp
\oplus \R\, d^2H(\sigma).\eta\right)^\perp \cap (\Z^d)^* \\
&=& \left( \R\, d^2H(\sigma).\eta\right)^\perp \cap \Lambda_1,
\end{eqnarray*}
where the orthogonal is always taken in the sense of duality. We note that $%
\Lambda_{\eta}\left( \sigma, \Lambda_1\right) $ is a primitive submodule of $%
\Lambda_1$, and that the inclusion $\Lambda_{\eta}\left( \sigma,
\Lambda_1\right) \subset \Lambda_1$ is strict if $\eta\not= 0$ since $%
d^2H(\sigma)$ is definite. We define:
\begin{equation*}
R_{\Lambda_{2}}^{\Lambda_{1}}(\sigma):= \{\eta\in \la\Lambda_1\ra,
\Lambda_{\eta}\left( \sigma, \Lambda_1\right)=\Lambda_2\}.
\end{equation*}
Because $d^2H(\sigma)$ is definite, we have the decomposition
$(\R^d)^*=
(d^2H(\sigma).\Lambda_2)^\perp\oplus \la\Lambda_2\ra.$ We define $%
P^\sigma_{\Lambda_{2}}$ to be the projection onto
$\la\Lambda_2\ra$ with respect to this decomposition.

\subsection{Step $k$ of the construction\label{s:stepk}}

In the following, we set $\Lambda=\Lambda_{1}$, corresponding to
step $k=1$. We now describe the outcome of our decomposition at
step $k$ ($k\geq 1$); we will indicate in \S \ref{s:rec} how to go
from step $k$ to $k+1$, for $k\geq 1$.

\medskip

At step $k$, we have decomposed $\mu(t,\cdot)$ as a sum
\begin{equation*}
\mu(t,\cdot)=\sum_{1\leq l\leq
k}\sum_{\Lambda_{1}\supset\Lambda_{2}
\supset\ldots\supset\Lambda_{l}}\mu_{\Lambda_{l}}^{\Lambda_{1}\Lambda
_{2}\ldots\Lambda_{l-1}}(t,\cdot)+\sum_{\Lambda_{1}\supset\Lambda_{2}
\supset\ldots\supset\Lambda_{k}}\mu^{\Lambda_{1}\Lambda_{2}\ldots\Lambda_{k}
}(t,\cdot),
\end{equation*}
where the sums run over the \emph{strictly decreasing} sequences
of primitive submodules of $(\IZ^{d})^*$ (of lengths $l\leq k$ in
the first term, of length $k$ in the second term). We have
\begin{equation*}
\mu_{\Lambda_{l}}^{\Lambda_{1}\Lambda_{2}\ldots\Lambda_{l-1}}(t,x,\xi
)=\int_{R_{\Lambda_{2}}^{\Lambda_{1}}(\xi)\times\ldots\times
R_{\Lambda_{l}
}^{\Lambda_{l-1}}(\xi)\times \la\Lambda_l\ra}\tilde{\mu}_{\Lambda_{l}}^{%
\Lambda
_{1}\Lambda_{2}\ldots\Lambda_{l-1}}(t,x,\xi,d\eta_{1},\ldots,d\eta_{l}
)\rceil_{\IT^{d}\times R_{\Lambda_{1}}},
\end{equation*}
\begin{equation*}
\mu^{\Lambda_{1}\Lambda_{2}\ldots\Lambda_{k}}(t,x,\xi)=\int_{R_{\Lambda_{2}
}^{\Lambda_{1}}(\xi)\times\ldots\times
R_{\Lambda_{k}}^{\Lambda_{k-1}}
(\xi)\times \mathbb{S}\la\Lambda_k\ra}\tilde{\mu}^{\Lambda_{1}\Lambda_{2}%
\ldots\Lambda_{k}
}(t,x,\xi,d\eta_{1},\ldots,d\eta_{k})\rceil_{\IT^{d}\times
R_{\Lambda_{1}}}.
\end{equation*}
The distributions
$\tilde{\mu}_{\Lambda_{l}}^{\Lambda_{1}\Lambda_{2}
\ldots\Lambda_{l-1}}$ have the following properties~:

\begin{enumerate}
\item $\tilde{\mu}_{\Lambda_{l}}^{\Lambda_{1}\Lambda_{2}\ldots\Lambda_{l-1}}%
\in \mathcal{C}\left( \IR,\mathcal{D}^{\prime}\left(
T^{\ast}\T^d\times
\mathbb{S}\la\Lambda_1\ra\times \ldots \times\mathbb{S}\la\Lambda_{l-1}\ra %
\times \la\Lambda_l\ra \right) \right) $ and all its $x$-Fourier
modes are in $\Lambda_{l}$; with respect to $\xi$ it is supported
in $I_{\Lambda_{1}}$;\smallskip

\item for every $t\in\mathbb{R}$, $\tilde{\mu }_{\Lambda_{l}}^{\Lambda_{1}%
\Lambda_{2}\ldots\Lambda_{l-1}}(t,\cdot)$ is invariant under the flows $%
\phi_s^j$ ($j=0,1, \ldots, l-1$) defined by
\begin{equation*}
\phi_s^0(x,\xi,\eta_{1},...,\eta_{l})=(x+sdH(\xi),\xi,\eta_{1},...,%
\eta_{l-1},\eta_{l});
\end{equation*}
\begin{equation*}
\phi_s^j(x,\xi,\eta_{1},..., \eta_{l})=(x+sd^2H(\xi)\frac{\eta_j}{|\eta_j|}%
,\xi,\eta_{1},..., \eta_{l});
\end{equation*}

\item if $\tau_{h}\ll1/h$ then for every $t\in\mathbb{R}$, $\tilde{\mu }%
_{\Lambda_{l}}^{\Lambda_{1}\Lambda_{2}\ldots\Lambda_{l-1}}(t,\cdot)$
is a positive measure and
\begin{equation*}
\tilde{\mu}_{\Lambda_{l}}^{\Lambda_{1}\Lambda_{2}\ldots\Lambda_{l-1}}
(t,\cdot)=\left( \tilde{\phi}_{t}^{l}\right)
_{\ast}\tilde{\mu}_{\Lambda
_{l}}^{\Lambda_{1}\Lambda_{2}\ldots\Lambda_{l-1}}(0,\cdot),
\end{equation*}
where, for $\left( x,\xi,\eta_{1},..,\eta_{l}\right) \in
T^{\ast}\T^d\times
\mathbb{S}\la\Lambda_1\ra\times \ldots \times\mathbb{S}\la\Lambda_{l-1}\ra %
\times \la\Lambda_l\ra $ we define:
\begin{equation*}
\tilde{\phi}_{s}^{l}:(x,\xi,\eta_{1},...,\eta_{l})\longmapsto(x+sd^{2}
H(\xi)\eta_{l},\xi,\eta_{1},...,\eta_{l});
\end{equation*}

\item if $\tau _{h}=1/h$ then $\int_{\la\Lambda
_{l}\ra}\tilde{\mu}_{\Lambda _{l}}^{\Lambda _{1}\Lambda _{2}\ldots
\Lambda _{l-1}}(t,\cdot ,d\eta _{l})$ is in
$\mathcal{C}(\IR,\cM_{+}(T^{\ast }\IT^{d}\times
\mathbb{S}\la\Lambda _{1}\ra\times \ldots \times
\mathbb{S}\la\Lambda _{l-1}\ra))$ and the measure
$\int_{(\R^{d})^{\ast }\times \mathbb{S}\la\Lambda _{1}\ra\times
\ldots \times \mathbb{S}\la\Lambda _{l-1}\ra\times \la\Lambda _{l}\ra}\tilde{%
\mu}_{\Lambda _{l}}^{\Lambda _{1}\Lambda _{2}\ldots \Lambda
_{l-1}}(t,\cdot ,d\xi ,d\eta _{1},\ldots ,d\eta _{l})$ is an
absolutely continuous measure
on $\IT^{d}$. Besides, if $a\in \mathcal{C}_{c}^{\infty }\left( T^{\ast }\IT%
^{d}\right) $ has only Fourier modes in $\Lambda _{l}$, then, define ${%
\mathcal{L}}({\mathfrak{F}}_{l})$ the bundle over $(\langle
\Lambda _{l}\rangle /\Lambda _{l})\times I_{\Lambda _{1}}\times
\mathbb{S}\la\Lambda _{1}\ra\times \ldots \times
\mathbb{S}\la\Lambda _{l-1}\ra$ formed of
elements $(\omega ,\sigma ,\eta _{1},\cdots ,\eta _{l-1},Q)$ where $Q\in {%
\mathcal{L}}(L^{2}_\omega(\R^{d},\Lambda _{l}))$, define similarly ${%
\mathcal{K}}({\mathfrak{F}}_{l})$ and ${\mathcal{L}}^{1}({\mathfrak{F}}_{l})$%
, then
\begin{multline*}
\int_{T^{\ast }\T^{d}\times \mathbb{S}\la\Lambda _{1}\ra\times
\ldots \times
\mathbb{S}\la\Lambda _{l-1}\ra\times \la\Lambda _{l}\ra}a(x,\xi )\tilde{\mu}%
_{\Lambda _{l}}^{\Lambda _{1}\Lambda _{2}\ldots \Lambda
_{l-1}}(t,dx,d\xi
,d\eta _{1},\ldots ,d\eta _{l})= \\
\int_{(\langle \Lambda _{l}\rangle /\Lambda _{l})\times I_{\Lambda
_{1}}\times \mathbb{S}\la\Lambda _{1}\ra\times \ldots \times \mathbb{S}\la%
\Lambda _{l-1}\ra}\Tr\left( a_{\sigma }\rho _{\Lambda
_{l}}^{\Lambda _{1}\Lambda _{2}\cdots \Lambda _{l-1}}(t,d\sigma
,d\eta _{1},\cdots ,d\eta _{\ell -1})\right) ,
\end{multline*}%
where $\rho _{\Lambda _{l}}^{\Lambda _{1}\Lambda _{2}\cdots
\Lambda _{l-1}}$
is $L^{\infty }$ in $t$, a positive section element of $\Gamma (K({\mathfrak{%
F}}_{l}))^{\prime }$ and where $a_{\sigma }$ is the section of ${\mathcal{L}}%
({\mathfrak{F}}_{l})$ defined by multiplication by $a(\sigma ,y)$. \\
When $%
\mathbf{V}_{h}\left( t\right) =\Op_{h}(V\left( t,\cdot \right) )$
then $\rho _{\Lambda _{l}}^{\Lambda _{1}\Lambda _{2}\cdots \Lambda
_{l-1}}=M_{\Lambda _{l}}^{\Lambda _{1}\Lambda _{2}\cdots \Lambda
_{l-1}}m_{\Lambda _{l}}^{\Lambda _{1}\Lambda _{2}\cdots \Lambda
_{l-1}}$ where $$m_{\Lambda
_{l}}^{\Lambda _{1}\Lambda _{2}\cdots \Lambda _{l-1}}\in \mathcal{M}%
_{+}((\langle \Lambda _{l}\rangle /\Lambda _{l})\times I_{\Lambda
_{1}}\times \mathbb{S}\la\Lambda _{1}\ra\times \ldots \times \mathbb{S}\la%
\Lambda _{l-1}\ra),$$ $M_{\Lambda _{l}}^{\Lambda _{1}\Lambda
_{2}\cdots \Lambda _{l-1}}$ is a section of
$\mathcal{L}^{1}({\mathfrak{F}}_{l})$ integrable with respect to
$m_{\Lambda _{l}}^{\Lambda _{1}\Lambda _{2}\cdots
\Lambda _{l-1}}$. Moreover, $\mathrm{Tr}_{L_{\omega }^{2}(\mathbb{R}%
^{d},\Lambda _{l})}M_{\Lambda _{l}}^{\Lambda _{1}\Lambda
_{2}\cdots \Lambda _{l-1}}=1$ and $M_{\Lambda _{l}}^{\Lambda
_{1}\Lambda _{2}\cdots \Lambda _{l-1}}$ satisfies a Heisenberg
equation (\ref{eq:SchroM(t)}) with $\Lambda =\Lambda _{l}$.
\end{enumerate}

On the other hand
$\tilde{\mu}^{\Lambda_{1}\Lambda_{2}\ldots\Lambda_{k}}$ satisfy:

\begin{enumerate}
\item $\tilde{\mu}^{\Lambda_{1}\Lambda_{2}\ldots\Lambda_{k}}$ is in $%
L^{\infty}(\IR,\cM_{+}(T^{\ast}\IT^{d}\times\mathbb{S}\la\Lambda_1\ra\times
\ldots \times\mathbb{S}\la\Lambda_{k}\ra))$ and all its
$x$-Fourier modes are in $\Lambda_{k}$;\smallskip

\item $\tilde{\mu}^{\Lambda_{1}\Lambda_{2}\ldots\Lambda_{k}}$ is
invariant by the $k+1$ flows,
$\phi_{s}^{0}:(x,\xi,\eta)\mapsto(x+sdH(\xi),\xi,\eta
_{1},\ldots,\eta_{k})$, and
$\phi_{s}^{l}:(x,\xi,\eta_{1},\ldots,\eta
_{k})\longmapsto(x+sd^{2}H(\sigma(\xi))\frac{\eta_{l}}{\left\vert
\eta _{l}\right\vert },\xi,\eta_{1},\ldots,\eta_{k})$ (where
$l=1,\ldots,k$).
\end{enumerate}

Finally, we define the space $\cS_{\Lambda_{k}}^{k}$ which is the
class of
smooth functions $a(x,\xi ,\eta_{1},\ldots,\eta_{k})$ on $T^{\ast}\IT%
^{d}\times \la \Lambda_1\ra\times\ldots\times \la\Lambda_k\ra$
that are

\begin{enumerate}
\item[(i)] smooth and compactly supported in $(x, \xi)\in
T^{*}\IT^{d}$;

\item[(ii)] homogeneous of degree $0$ at infinity in each variable $%
\eta_{1}, \ldots, \eta_{k}$;

\item[(iii)] such that their non-vanishing $x$-Fourier
coefficients correspond to frequencies in $\Lambda_{k}$.
\end{enumerate}

\subsection{From step $k$ to step $k+1$ ($k\geq 1$)\label{s:rec}}

After step $k$, we leave untouched the term $\sum_{1\leq l\leq k}
\sum_{\Lambda_{1}\supset\Lambda_{2}\supset\ldots\supset\Lambda_{l}}
\mu_{\Lambda_{l}}^{\Lambda_{1}\Lambda_{2}\ldots\Lambda_{l-1}}$ and
decompose further $\sum_{\Lambda
_{1}\supset\Lambda_{2}\supset\ldots\supset\Lambda_{k}}\mu^{\Lambda_{1}
\Lambda_{2}\ldots\Lambda_{k}}$. Using the positivity of $\tilde{\mu}%
^{\Lambda_{1} \Lambda_{2}\ldots\Lambda_{k}}$, we use the procedure
described in Section \ref{s:decompo} to write
\begin{equation}  \label{restriction}
\tilde{\mu}^{\Lambda_{1}\Lambda_{2}\ldots\Lambda_{k}}(\sigma,\cdot
)=\sum_{\Lambda_{k+1}\subset\Lambda_{k}}\tilde{\mu}^{\Lambda_{1}\Lambda
_{2}\ldots\Lambda_{k}}\rceil_{\eta_{k}\in
R_{\Lambda_{k+1}}^{\Lambda_{k} }(\sigma)},
\end{equation}
where the sum runs over all primitive submodules $\Lambda_{k+1}$ of $%
\Lambda_{k}$. Moreover, by Proposition~\ref{prop:decomposition}, all the $x$%
-Fourier modes of
$\tilde{\mu}^{\Lambda_{1}\Lambda_{2}\ldots\Lambda_{k}
}\rceil_{\eta_{k}\in R_{\Lambda_{k+1}}^{\Lambda_{k} }(\sigma)}$ are in $%
\Lambda_{k+1}$. To generalize the analysis of Section
\ref{s:second}, we consider test functions in
$\cS_{\Lambda_{k+1}}^{k+1}$. We let
\begin{equation*}
\displaylines{\qquad
w_{h,R_{1},\ldots,R_{k+1}}^{\Lambda_{1}\Lambda_{2}\ldots
\Lambda_{k+1}}\left( t,x,\xi,\eta_1,\cdots,\eta_{k+1}\right) :=
\left( 1-\chi\left( {\eta_{k+1}\over R_{k+1}} \right) \right)
\hfill\cr\hfill \times\,
w^{\Lambda_1\Lambda_2\cdots\Lambda_k}_{h,R_1,\cdots,R_k}(t,x,\xi,\eta_1,%
\cdots,\eta_k)\otimes\delta_{
P^{\xi}_{\Lambda_{k+1}}(\eta_k)}(\eta_{k+1}) ,\qquad\cr}
\end{equation*}
and
\begin{equation*}
\displaylines{\qquad
w_{\Lambda_{k+1}h,R_{1},\ldots,R_{k+1}}^{\Lambda_{1}\Lambda_{2}\ldots
\Lambda_{k}}\left( t,x,\xi,\eta_1,\cdots,\eta_{k+1}\right) :=
\chi\left( { \eta_{k+1}\over R_{k+1}}\right) \hfill\cr\hfill
\times\,
w^{\Lambda_1\Lambda_2\cdots\Lambda_k}_{h,R_1,\cdots,R_k}(t,x,\xi,\eta_1,%
\cdots,\eta_k)\otimes\delta_{
P^{\xi}_{\Lambda_{k+1}}(\eta_k)}(\eta_{k+1} ) .\qquad\cr}
\end{equation*}

By the Calder\'{o}n-Vaillancourt theorem, both $w_{\Lambda_{k+1}
,h,R_{1},\ldots,R_{k}}^{\Lambda_{1}\Lambda_{2}\ldots\Lambda_{k}}$ and $%
w_{h,R_{1},\ldots,R_{k}}^{\Lambda_{1}\Lambda_{2}\ldots\Lambda_{k+1}}$
are bounded in
$L^{\infty}(\IR,(\cS_{\Lambda_{k+1}}^{k+1})^{\prime}).$ After
possibly extracting subsequences, we can take the following
limits~:
\begin{equation*}
\lim_{R_{k+1}\To+\infty}\cdots\lim_{R_{1}\To+\infty}\lim_{h\To0}\left\langle
w_{h,R_{1},\ldots,R_{k}}^{\Lambda_{1}\Lambda_{2}\ldots\Lambda_{k+1}}\left(
t\right) ,a\right\rangle =:\left\langle
\tilde{\mu}^{\Lambda_{1}\Lambda
_{2}\ldots\Lambda_{k+1}}(t),a\right\rangle ,
\end{equation*}
and
\begin{equation*}
\lim_{R_{k+1}\To+\infty}\cdots\lim_{R_{1}\To+\infty}\lim_{h\To0}\left\langle
w_{\Lambda_{k+1},h,R_{1},\ldots,R_{k}}^{\Lambda_{1}\Lambda_{2}\ldots
\Lambda_{k}}\left( t\right) ,a\right\rangle =:\left\langle \tilde{\mu }%
_{\Lambda_{k+1}}^{\Lambda_{1}\Lambda_{2}\ldots\Lambda_{k}}(t),a\right\rangle
.
\end{equation*}
Then the properties listed in the preceding subsection are a
direct generalisation of Theorems~\ref{mu^Lambda} and~\ref{Thm
Properties} (see also~\cite{AnantharamanMacia}, Section 4) and of
the identity
\begin{eqnarray}  \label{decomposition}
\widetilde\mu^{\Lambda_1\Lambda_2\cdots
\Lambda_k}(t,.)\rceil_{\eta_{k}\in R_{\Lambda_{k+1}}^{\Lambda_{k}
}(\sigma)} & = & \int_{\langle \Lambda_{k+1}\rangle} \widetilde
\mu^{\Lambda_1\Lambda_2\cdots \Lambda_{k+1}}(t,.,d\eta_{k+1})
\rceil_{\eta_{k}\in
R_{\Lambda_{k+1}}^{\Lambda_{k}}(\sigma)} \\
& & + \int_{\langle \Lambda_{k+1}\rangle} \widetilde
\mu^{\Lambda_1\Lambda_2\cdots
\Lambda_{k}}_{\Lambda_{k+1}}(t,.,d\eta_{k+1}) \rceil_{\eta_{k}\in
R_{\Lambda_{k+1}}^{\Lambda_{k}}(\sigma)} .  \notag
\end{eqnarray}

\begin{remark}
By construction, if $\Lambda_{k+1}=\{0\}$, we have
$\tilde\mu^{\Lambda _{1}\Lambda_{2}\ldots\Lambda_{k+1}}=0$, and
the induction stops. Similarly
to Remark \ref{r:nice}, one can also see that if $\operatorname{rk} \Lambda_{k+1}=1$%
, the invariance properties of $\tilde\mu^{\Lambda_{1}
\Lambda_{2}\ldots\Lambda_{k+1}}$ imply that it is constant in $x$.
\end{remark}

\begin{remark}
Note that in the preceding definition of $k$-microlocal Wigner
transform for $k\geq 1$, we did not use a parameter $\delta$
tending to $0$ as we did when
$k=0$ in order to isolate the part of the limiting measures supported above $%
R^{\Lambda_k}_{\Lambda_{k+1}}(\sigma)$. This comes directly from
the restrictions made in~(\ref{restriction})
and~(\ref{decomposition}).
\end{remark}

\subsection{Proof of Theorem \protect\ref{t:precise}}

This iterative procedure allows to decompose $\mu $ along
decreasing sequences of submodules. In particular, when $\tau
_{h}\sim 1/h$, it implies
Theorem~\ref{t:precise}. Indeed, to end the proof of Theorem \ref{t:precise}
, we let after the final step of the induction
\begin{eqnarray*}
{\mathbb{\mu}}^{{\rm final}} _{\Lambda }(t,\cdot ) &=&\sum_{0\leq k\leq d}\sum_{\Lambda
_{1}\supset \Lambda _{2}\supset \cdots \supset \Lambda _{k}\supset
\Lambda }\mu
_{\Lambda }^{\Lambda _{1}\Lambda _{2}\ldots \Lambda _{k}}(t,\cdot ) \\
&=&\sum_{0\leq k\leq d}\sum_{\Lambda _{1}\supset \Lambda
_{2}\supset \cdots \supset \Lambda _{k}\supset \Lambda
}\int_{R_{\Lambda _{2}}^{\Lambda
_{1}}(\xi )\times \ldots \times R_{\Lambda }^{\Lambda _{k}}(\xi )\times \la%
\Lambda \ra}\tilde{\mu}_{\Lambda }^{\Lambda _{1}\Lambda _{2}\ldots
\Lambda _{k}}(t,\cdot ,d\eta _{1},\ldots ,d\eta _{k})\rceil
_{\T^{d}\times R_{\Lambda _{1}}},
\end{eqnarray*}%
where $\Lambda _{1},\ldots ,\Lambda _{k}$ run over the set of
strictly decreasing sequences of submodules ending with $\Lambda
$.  
We know that $\mu
_{\Lambda }^{\Lambda _{1}\Lambda _{2}\ldots \Lambda _{k}}$ is supported on $%
\{\xi \in I_{\Lambda _{1}}\}$, and since $\Lambda \subset \Lambda
_{1}$ we have $I_{\Lambda _{1}}\subset I_{\Lambda }.$

\medskip

 We also let
\begin{equation*}
\rho^{{\rm final}} _{\Lambda }(t, \omega ,\sigma )=\sum_{0\leq k\leq
d}\sum_{\Lambda _{1}\supset \Lambda _{2}\supset \cdots \supset
\Lambda _{k}\supset \Lambda }\int_{R_{\Lambda _{2}}^{\Lambda
_{1}}(\xi )\times \ldots \times R_{\Lambda }^{\Lambda _{k}}(\xi
)}\tilde{\rho}_{\Lambda }^{\Lambda _{1}\Lambda _{2}\ldots \Lambda
_{k}}\left(t,  \omega ,\sigma ,d\eta _{1},\ldots ,d\eta _{k}\right)
\rceil _{\sigma \in R_{\Lambda _{1}}},
\end{equation*}%
where the $\tilde{\rho}_{\Lambda }^{\Lambda _{1}\Lambda _{2}\ldots
\Lambda _{k}}$ are the operator-valued measures appearing in \S
\ref{s:stepk}.

\begin{remark}
\label{r:rhoL}It is clear from this construction that $\rho^{{\rm final}}
_{\Lambda }$ and
$\mu^{{\rm final}} _{\Lambda }(t,\cdot )$ can only charge those $\sigma \in (\mathbb{R}%
^{d})^{\ast }$ with $\Lambda \subseteq dH(\sigma)^\perp$. Moreover, if $%
\mathbf{V}_{h}\left( t\right) =\Op_{h}(V\left( t,\cdot \right) )$
the measure $\rho^{{\rm final}}_{\Lambda }$ admits a decomposition $\rho^{{\rm final}}
_{\Lambda }=N_{\Lambda }{\overline\mu}_{\Lambda }$ where ${\overline\mu}_{\Lambda }$ is a
measure that does not depend on $t$ and $N_{\Lambda }(\cdot, \omega, \sigma)$ is a family
of positive, trace-class operators on $L^2_\omega(\R^d, \Lambda)$ with $\Tr N_{\Lambda }\equiv 1$, satisfying the propagation law (\ref{eq:SchroM(t)}).
\end{remark}

As already mentioned, Theorem~\ref{t:precise} implies Theorem
\ref{t:main} in the case $\tau _{h}\sim 1/h$. The proof of
Theorem~\ref{t:main} in the case $\tau _{h}\ll 1/h$ is discussed
in Section \ref{s:halpha} and in the case $\tau _{h}\gg 1/h$, in
Section \ref{s:hierarchy}.

\subsection{Sufficient assumptions\label{s:suff}}

In the induction, we used the fact that
\begin{equation}  \label{e:S}
\la \Lambda_k\ra = \la \Lambda_{k+1}\ra \oplus (d^2H(\xi)\cdot\la %
\Lambda_{k+1}\ra^\perp\cap \la \Lambda_k\ra) \text{ for all }k.
\end{equation}

Definiteness of the Hessian $d^2 H(\xi)$ is certainly a sufficient
assumption for this, but we see that we actually need less if we
note we are not using this property for arbitrary $\Lambda_k$, but
only for the ones arising in the construction (remember for
instance that for $k=1$ we only need \eqref{e:S} for $\Lambda_1=
dH(\xi)^\perp\cap \Z^d$).

\medskip

A careful analysis of the proof shows that a sufficient set of
assumptions is the following~:

\begin{ass} \label{ass:A}
For every integer $k$, for all $\xi, \eta_1, \ldots,
\eta_k \in (\R^d)^*$, for every \emph{strictly} decreasing
sequence of primitive submodules $\Lambda_1 \supset
\Lambda_2\supset \cdots \supset \Lambda_k\supset \{ 0\}$ such
that:
\begin{equation*}
\Lambda_1= dH(\xi)^\perp\cap \Z^d, \Lambda_2= (d^2 H(\xi)\cdot
\eta_1)^\perp \cap \Lambda_1,\ldots, \Lambda_k= (d^2 H(\xi)\cdot
\eta_{k-1})^\perp \cap \Lambda_{k-1},
\end{equation*}
and
\begin{equation*}
\eta_1\in \la\Lambda_1\ra\setminus \{0\}, \eta_2\in (d^2
H(\xi)\cdot \eta_1)^\perp\setminus (d^2 H(\xi)\cdot
\Lambda_1)^\perp, \eta_k\in (d^2 H(\xi)\cdot \eta_{k-1})^\perp
\setminus(d^2 H(\xi)\cdot \Lambda_{k-1})^\perp
\end{equation*}
then $d^2 H(\xi)\cdot \eta_k\not\in \Lambda_k^\perp$.
\end{ass}

We leave it to the reader to check that Assumptions~\ref{ass:A} implies \eqref{e:S}
and thus is a sufficient assumption for all our results.

\medskip

In dimension $d=2$, Assumptions~\ref{ass:A} is implied by isoenergetic non-degeneracy
(whereas we saw that it is no longer the case for $d\geq 3$). In
dimension $2$, what happens is that, either $dH(\xi)$ is a vector
with rationally independent entries (in which case
$\Lambda_1=\{0\}$ and the conditions of Assumptions~\ref{ass:A} are empty), or
$dH(\xi)$ is a non zero vector with rationally dependent entries :
in this case (and this is very special to dimension $2$),
$\Lambda_1^\perp$ is one-dimensional and coincides with $\R
dH(\xi)$. Thus Assumptions~\ref{ass:A} just says that
\begin{equation*}
dH(\xi)\cdot \eta_1 =0, \eta_1\not=0 \Longrightarrow
dH^2(\xi)\cdot \eta_1\not\in \R dH(\xi)
\end{equation*}
which is isoenergetic non-degeneracy. Remark that $dH(\xi)=0$ is
forbidden by isoenergetic non-degeneracy.

\medskip

Note, finally, that isoenergetic non-degeneracy is only a local
condition at $\xi$ (since it involves only $dH(\xi), d^2 H(\xi)$)
whereas condition Assumptions~\ref{ass:A} contains some global features, namely the
relations between $dH(\xi), d^2 H(\xi)$ and the ring $\Z^d$, which
is the homology group of $\T^d$.
\section{Some examples of singular concentration\label{s:halpha}}

In Subsection \ref{s:exsing}, assuming $\mathbf{V}_h(t)=0$, we present some
examples of singular concentrations for the scales $\tau_h\ll h$
and, in that manner, we conclude the proof of Theorem~\ref{t:main}
by proving the only remaining point (1). Then the two other
subsections are devoted to the analysis of other cases of singular
concentration which arise when the assumptions of
Theorem~\ref{t:main} are not satisfied.

\subsection{Singular concentration for time scales $\tau_{h}\ll1/h$\label{s:exsing}}

Assume $\mathbf{V}_h(t)=0$ and consider $\rho\in\cS\left(
\mathbb{R}^{d}\right)  $ with $\left\Vert \rho\right\Vert
_{L^{2}\left(  \mathbb{R}^{d}\right)  }=1$ and such that the
Fourier transform $\widehat{\rho}$ is compactly supported. Let
$\left(  x_{0},\xi_{0}\right)
\in\mathbb{R}^{d}\times\mathbb{R}^{d}$ and $\left(
\varepsilon_{h}\right)  $ a sequence of positive real numbers that
tends to zero as $h\longrightarrow
0^{+}$. Form the wave-packet:%
\begin{equation}
v_{h}\left(  x\right)  :=\frac{1}{\left(  \varepsilon_{h}\right)  ^{d/2}}%
\rho\left(  \frac{x-x_{0}}{\varepsilon_{h}}\right)  e^{i\frac{\xi_{0}}{h}\cdot
x}. \label{e:defv_h}%
\end{equation}
Define
$
u_{h}:=\mathbf{P}v_{h},
$
where $\mathbf{P}$ denotes the periodization operator $\mathbf{P}v\left(
x\right)  :=\sum_{k\in\mathbb{Z}^{d}}v\left(  x+2\pi k\right)  $. Since $\rho$ is rapidly decreasing, we have
$\left\Vert u_{h}\right\Vert _{L^{2}\left(  \mathbb{T}^{d}\right)
}\Lim_{h\To 0} 1$. The family $\left(  u_{h}\right)  $ is $h$-oscillatory if $\vareps_h\gg h$.

Theorem \ref{t:main}(1) is a consequence of our next result.

\begin{proposition}
\label{prop:pWP}Let $\left(  \tau_{h}\right)  $ be such that $\lim
_{h\rightarrow0^{+}}h\tau_{h}=0$; suppose that $\varepsilon_{h}\gg h\tau_{h}$.
Then the Wigner distributions of the solutions $S_{h}^{\tau_{h}t}u_{h}$
converge weakly-$\ast$ in $L^{\infty}\left(  \mathbb{R};\mathcal{D}^{\prime
}\left(  T^{\ast}\mathbb{T}^{d}\right)  \right)  $ to $\mu_{\left(  x_{0}%
,\xi_{0}\right)  }$, defined by:%
\begin{equation}
\int_{T^{\ast}\mathbb{T}^{d}}a\left(  x,\xi\right)  \mu_{\left(  x_{0},\xi
_{0}\right)  }\left(  dx,d\xi\right)  =\lim_{T\rightarrow\infty}\frac{1}%
{T}\int_{0}^{T}a\left(  x_{0}+tdH\left(  \xi_{0}\right)  ,\xi_{0}\right)
dt,\quad\forall a\in\cC_{c}(T^{\ast}\T^{d}). \label{e:orbitm}%
\end{equation}

\end{proposition}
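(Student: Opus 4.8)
The plan is to compute the limit of the Wigner distributions by hand, exploiting two very special features of the data. First, since $\mathbf{V}_h(t)=0$, the propagator $S_h^t=e^{-itH(hD_x)/h}$ is diagonal in the Fourier basis, multiplying the mode $e^{ik\cdot x}$ by $e^{-itH(hk)/h}$. Second, Poisson summation gives $\widehat{u_h}(k)=c\,\varepsilon_h^{d/2}\widehat\rho\big(\varepsilon_h(k-\xi_0/h)\big)e^{-ix_0\cdot(k-\xi_0/h)}$ up to an absolute constant $c$, so because $\widehat\rho$ is supported in some $B(0,r)$ the semiclassical frequency support $\{\,hk:\widehat{u_h}(k)\neq0\,\}$ lies in $B(\xi_0,rh/\varepsilon_h)$, which shrinks to $\{\xi_0\}$. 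Writing $a\in\cC_c^\infty(T^*\T^d)$ as $a(x,\xi)=\sum_j\widehat a_j(\xi)e^{ij\cdot x}$ and using the (exact) matrix coefficients of the Weyl quantization on the torus, one gets
\[
\langle w^h_{S_h^{\tau_ht}u_h},a\rangle=(2\pi)^d\sum_{j}\sum_{k}e^{\frac{i\tau_ht}{h}\big(H(hk)-H(h(k-j))\big)}\,\overline{\widehat{u_h}(k)}\,\widehat{u_h}(k-j)\,\widehat a_j\!\big(h(k-\tfrac j2)\big).
\]
Since $a$ is smooth, $\sup_\xi|\widehat a_j(\xi)|$ is rapidly decreasing in $j$, and since $\sum_k|\widehat{u_h}(k)|^2$ is bounded (its limit being $(2\pi)^{-d}$, by $\|u_h\|_{L^2}\to1$), the tail $\sum_{|j|>N}$ is uniformly small; so it suffices to analyse each fixed $j$ and then let $N\to\infty$.

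For a fixed $j$, Taylor expansion of the phase gives $\tfrac{\tau_ht}{h}\big(H(hk)-H(h(k-j))\big)=\tau_ht\,dH(hk)\cdot j+O(h\tau_h|j|^2)$, where the error tends to $0$ since $h\tau_h\to0$. On the frequency support of $u_h$ one has $dH(hk)=dH(\xi_0)+O(h/\varepsilon_h)$ uniformly in $k$, hence $e^{i\tau_ht\,dH(hk)\cdot j}=e^{i\tau_ht\,dH(\xi_0)\cdot j}+O(\tau_hh/\varepsilon_h)$: this is the step that \emph{forces} the hypothesis $\varepsilon_h\gg h\tau_h$, which makes the accumulated phase error negligible. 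After these replacements the $k$-sum factors: $\widehat a_j(h(k-j/2))\to\widehat a_j(\xi_0)$ uniformly on the support, and a Riemann-sum argument (the translation of the summation lattice by $\xi_0/h\to\infty$ being harmless) gives $\sum_k\overline{\widehat{u_h}(k)}\widehat{u_h}(k-j)\to e^{ix_0\cdot j}\int_{\R^d}|\widehat\rho|^2$, which equals $(2\pi)^{-d}e^{ix_0\cdot j}$ with the normalisation above. Therefore, for each fixed $j$, $\langle w^h_{S_h^{\tau_ht}u_h},a\rangle$ is asymptotic to $e^{i\tau_ht\,dH(\xi_0)\cdot j}\,e^{ix_0\cdot j}\,\widehat a_j(\xi_0)$.

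Testing against $\varphi\in L^1(\R)$ (the $j$-sum being finite after truncation) and using Riemann–Lebesgue, which applies since $\tau_h\to+\infty$, one finds $\int_\R\varphi(t)e^{i\tau_ht\,dH(\xi_0)\cdot j}\,dt\to0$ whenever $dH(\xi_0)\cdot j\neq0$ and $=\int_\R\varphi$ when $dH(\xi_0)\cdot j=0$. Summing over $|j|\le N$ and then letting $N\to\infty$ yields
\[
\int_\R\varphi(t)\langle w^h_{S_h^{\tau_ht}u_h},a\rangle\,dt\ \longrightarrow\ \Big(\int_\R\varphi\Big)\sum_{j:\,dH(\xi_0)\cdot j=0}e^{ij\cdot x_0}\widehat a_j(\xi_0).
\]
To conclude, one identifies the right-hand side with $\big(\int_\R\varphi\big)\int_{T^*\T^d}a\,d\mu_{(x_0,\xi_0)}$: expanding $a(x_0+s\,dH(\xi_0),\xi_0)=\sum_j\widehat a_j(\xi_0)e^{ij\cdot x_0}e^{is\,dH(\xi_0)\cdot j}$ and taking the Cesàro average $\frac1T\int_0^T\cdots\,ds$ kills exactly the frequencies $j$ with $dH(\xi_0)\cdot j\neq0$, producing $\sum_{j:\,dH(\xi_0)\cdot j=0}\widehat a_j(\xi_0)e^{ij\cdot x_0}$. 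This establishes the weak-$*$ convergence to $\mu_{(x_0,\xi_0)}$, which is indeed independent of $t$, as it must be.

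The main difficulty is the simultaneous bookkeeping of the three scales $h\ll h\tau_h\ll\varepsilon_h$: one must truncate the $j$-sum (using smoothness of $a$) \emph{before} expanding the phase, then control the error $\tau_h\times(\text{frequency spread }h/\varepsilon_h)$, which is precisely what $\varepsilon_h\gg h\tau_h$ renders negligible, and carefully justify the Riemann-sum limit for the lattice translated by $\xi_0/h\to\infty$. Once the order of limits — $h\to0$ first, then $N\to\infty$, with $\varphi$ and $a$ fixed — is respected, the remaining estimates are routine.
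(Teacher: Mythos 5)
Your proof is correct, and its computational core coincides with the paper's: both rest on the Poisson-summation formula for $\widehat{u_h}(k)$, the explicit Fourier expression for the Wigner distribution, and the key estimate that on the frequency support $|hk-\xi_0|\lesssim h/\varepsilon_h$ the relevant phase is controlled by $\tau_h h/\varepsilon_h\to0$. The structural difference lies in how the non-resonant Fourier modes of $a$ are disposed of. The paper first invokes the machinery it has already built: property (4) of the appendix gives $\bar\mu=\delta_{\xi_0}$, the partition over the sets $R_\Lambda$ then forces $\mu=\mu\rceil_{\T^d\times R_{\Lambda_{\xi_0}}}$, and Proposition \ref{prop:decomposition} reduces the computation to symbols whose $x$-Fourier modes lie in $\Lambda_{\xi_0}$; for those, $dH(\xi_0)\cdot(k-j)=0$ makes the \emph{entire} phase $o(1)$, so only $\widehat\varphi(0)$ survives and no oscillation argument is needed. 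You instead keep all modes $j$ and kill the non-resonant ones ($dH(\xi_0)\cdot j\neq0$) by Riemann--Lebesgue in $t$, which makes the argument self-contained at the cost of an extra interchange of limits (truncation in $j$ before $h\to0$), which you handle correctly. Two small points of hygiene: your error terms $O(h\tau_h|j|^2)$ and $O(\tau_h h/\varepsilon_h)$ from the phase expansions actually carry a factor of $|t|$, so you should test against compactly supported $\varphi$ (dense in $L^1$, which suffices by the uniform bound on the Wigner distributions); and the constants in your Wigner formula and in the limit of the $k$-sum must be matched consistently with the normalisation $\|\rho\|_{L^2}=1$ — neither affects the conclusion.
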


We call the measures $\mu_{x_0,\xi_0}$ ``uniform orbit measures'' for $\phi_s$ (their definition and existence as a limit is specific to translation flows on the torus). They are $H$-invariant and the convex hull of the set of uniform orbit measures is dense in the set of $H$-invariant measures. Considering initial data that are linear combinations of wave packets of the form \eqref{e:defv_h}, we see that the convex hull of uniform orbit measures is contained in $\widetilde{\mathcal M}(\tau)$, and since the latter is closed, it contains all  measures invariant by $\phi_s$ as stated in Theorem~\ref{t:main}(1).

\begin{proof}
Start noticing that the sequence of initial conditions $\left(  u_{h}\right)  $ possesses the unique
semiclassical measure $\mu_{0}=\delta_{x_{0}}\otimes\delta_{\xi_{0}}$. Using
property (4) in the appendix, we deduce that the image $\overline{\mu}$ of
$\mu\left(  t,\cdot\right)  $ by the projection from $\mathbb{T}^{d}%
\times\mathbb{R}^{d}$ onto $\mathbb{R}^{d}$ satisfies:%
\[
\overline{\mu}=\sum_{\Lambda\in\mathcal{L}}\overline{\mu}\rceil_{ R_\Lambda}%
=\delta_{\xi_{0}}.
\]
Since the sets
$R_{\Lambda}$ form a partition of
$\mathbb{R}^{d}$, we conclude that $\overline {\mu}\rceil_{ R_\Lambda}=0$
unless $\Lambda=\Lambda_{\xi_{0}}$ and therefore $\mu
=\mu\rceil_{\T^d\times R_{\Lambda_{\xi_{0}}}}$. Therefore, in order to characterize
$\mu$ it suffices to test it against symbols with Fourier
coefficients in $\Lambda _{\xi_{0}}$. Let
$a\in\cC_{c}^{\infty}\left(  T^{\ast}\mathbb{T}^{d}\right)  $ be
such a symbol; we can restrict our attention to the case where $a$
is a trigonometric polynomial in $x$. Let $\varphi\in L^{1}\left(
\mathbb{R}\right)  $. Recall that the Wigner distributions
$w_{h}\left(
t\right)  $ of $S_{h}^{\tau_{h}t}u_{h}$ satisfy%
\[
\int_{\mathbb{R}}\varphi\left(  t\right)  \left\langle w_{h}\left(  t\right)
,a\right\rangle dt=\int_{\mathbb{R}}\varphi\left(  t\right)  \left\langle
w_{h}\left(  0\right)  ,a\circ\phi_{\tau_{h}t}\right\rangle dt+o\left(
1\right)  ;
\]
moreover the Poisson summation formula ensures that the Fourier coefficients
of $u_{h}$ are given by:%
\[
\widehat{u_{h}}\left(  k\right)  =\frac{\left(  \varepsilon_{h}\right)
^{d/2}}{\left(  2\pi\right)  ^{d/2}}\widehat{\rho}\left(  \frac{\varepsilon
_{h}}{h}\left(  hk-\xi_{0}\right)  \right)  e^{-i\left(  k-\xi
_{0}/h\right)  \cdot x_{0}}.
\]
Combining this with the explicit formula (\ref{e:Weylq}) for the Wigner
distribution presented in the appendix we get:%
\begin{equation}%
\begin{split}
\int_{\mathbb{R}}\varphi\left(  t\right)  \left\langle w_{h}\left(  t\right)
,a\right\rangle dt=\frac{\left(  \varepsilon_{h}\right)  ^{d}}{\left(
2\pi\right)  ^{3d/2}}  &  \sum_{k-j\in\Lambda_{\xi_{0}}}\widehat{\varphi
}\left(  \tau_{h}dH\left(  h\frac{k+j}{2}\right)  \cdot\left(  k-j\right)
\right)  \widehat{a}_{j-k}\left(  h\frac{k+j}{2}\right) \\
&  \widehat{\rho}\left(  \frac{\varepsilon_{h}}{h}\left(  hk-\xi_{0}\right)
\right)  \overline{\widehat{\rho}\left(  \frac{\varepsilon_{h}}{h}\left(
hj-\xi_{0}\right)  \right)  }e^{-i\left(  k-j\right)  \cdot x_{0}}+o\left(
1\right)  .
\end{split}
\label{e:wignWP}%
\end{equation}
Now, since $k-j\in\Lambda_{\xi_{0}}$ we can write:%
\begin{align*}
\left\vert dH\left(  h\frac{k+j}{2}\right)  \cdot\left(  k-j\right)
\right\vert  &  =\left\vert \left[  dH\left(  h\frac{k+j}{2}\right)
-dH\left(  \xi_{0}\right)  \right]  \cdot\left(  k-j\right)  \right\vert \\
&  \leq C\left\vert h\frac{k+j}{2}-\xi_{0}\right\vert \left\vert
k-j\right\vert .
\end{align*}
By hypothesis, both $\widehat{\rho}$ and $k\longmapsto\widehat{a_{k}}\left(
\xi\right)  $ are compactly supported, and hence the sum (\ref{e:wignWP}) only involves terms satisfying:%
\[
\frac{\varepsilon_{h}}{h}\left\vert h\frac{k}{2}-\xi_{0}\right\vert \leq
R,\;\frac{\varepsilon_{h}}{h}\left\vert h\frac{j}{2}-\xi_{0}\right\vert \leq
R\text{ and }\left\vert j-k\right\vert \leq R
\]
for some fixed $R$.
This in turn implies%
\[
\left\vert \tau_{h}dH\left(  h\frac{k+j}{2}\right)  \cdot\left(  k-j\right)
\right\vert \leq CR^{2}\,\frac{\tau_{h}h}{\varepsilon_{h}}.
\]
This shows that the limit of (\ref{e:wignWP}) as $h\longrightarrow0^{+}$
coincides with that of:%
\begin{align*}
&  \frac{\left(  \varepsilon_{h}\right)  ^{d}}{\left(  2\pi\right)  ^{3d/2}%
}\sum_{k-j\in\Lambda_{\xi_{0}}}\widehat{\varphi}\left(  0\right)
a_{j-k}\left(  h\frac{k+j}{2}\right)  \widehat{\rho}\left(  \frac
{\varepsilon_{h}}{h}\left(  hk-\xi_{0}\right)  \right)  \overline
{\widehat{\rho}\left(  \frac{\varepsilon_{h}}{h}\left(  hj-\xi_{0}\right)
\right)  }e^{-i\left(  k-j\right)  \cdot x_{0}}\\
&  =\widehat{\varphi}\left(  0\right)  \left\langle w_{h}\left(  0\right)
,a\right\rangle ,
\end{align*}
which is precisely:%
\[
\widehat{\varphi}\left(  0\right)  a\left(  x_{0},\xi_{0}\right)
=\widehat{\varphi}\left(  0\right)  \lim_{T\rightarrow\infty}\frac{1}{T}%
\int_{0}^{T}a\left(  x_{0}+tdH\left(  \xi_{0}\right)  ,\xi_{0}\right)  dt,
\]
since $a$ has only Fourier modes in $\Lambda_{\xi_{0}}$.
\end{proof}

We next present a slight modification of the previous example in order to
illustrate the two-microlocal nature of the elements of $\widetilde
{\mathcal{M}}\left(  \tau\right)  $. Define now, for $\eta_{0}\in
\mathbb{R}^{d}$:%
\[
u_{h}\left(  x\right)  =\mathbf{P}\left[  v_{h}\left(  x\right)  e^{i\eta
_{0}/\left(  h\tau_{h}\right)  }\right]  ,
\]
where $v_{h}$ was defined in (\ref{e:defv_h}).

\begin{proposition}
\label{prop:Diracmasses} Suppose that $\lim_{h\rightarrow0^{+}}h\tau_{h}=0$
and $\varepsilon_{h}\gg h\tau_{h}$. Suppose moreover that $d^{2}H\left(
\xi_{0}\right)  $ is definite and that $\eta_{0}\in\left\langle \Lambda
_{\xi_{0}}\right\rangle $. Then the Wigner distributions of $S_{h}^{\tau_{h}
t}u_{h}$ converge weakly-$\ast$ in $L^{\infty}\left(  \mathbb{R}
;\mathcal{D}^{\prime}\left(  T^{\ast}\mathbb{T}^{d}\right)  \right)  $ to the
measure:
\[
\mu\left(  t,\cdot\right)  =\mu_{\left(  x_{0}+td^{2}H\left(  \xi_{0}\right)
\eta_{0},\xi_{0}\right)  },\qquad t\in\mathbb{R},
\]
where $\mu_{\left(  x_{0},\xi_{0}\right)  }$ is the uniform orbit measure
defined in (\ref{e:orbitm}).
\end{proposition}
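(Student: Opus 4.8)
The plan is to mimic the proof of Proposition~\ref{prop:pWP}, keeping track of the extra phase $e^{i\eta_0/(h\tau_h)}$, which produces a shift in the $x$-concentration point that evolves linearly in $t$. First I would record the Fourier coefficients of the new initial datum: by the Poisson summation formula,
\[
\widehat{u_h}(k)=\frac{(\varepsilon_h)^{d/2}}{(2\pi)^{d/2}}\,\widehat{\rho}\!\left(\frac{\varepsilon_h}{h}\Bigl(hk-\xi_0-\frac{h}{\tau_h}\eta_0\Bigr)\right)e^{-i(k-\xi_0/h-\eta_0/(h\tau_h))\cdot x_0},
\]
so that $u_h$ still has a unique semiclassical measure $\delta_{x_0}\otimes\delta_{\xi_0}$ (since $h/\tau_h\to 0$, hence $\xi_0+\tfrac{h}{\tau_h}\eta_0\to\xi_0$), and the same argument as in Proposition~\ref{prop:pWP} shows $\mu(t,\cdot)=\mu(t,\cdot)\rceil_{\T^d\times R_{\Lambda_{\xi_0}}}$; thus it suffices to test against trigonometric polynomials $a$ with $x$-Fourier modes in $\Lambda_{\xi_0}$.

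Next I would insert this into the explicit Wigner formula~(\ref{e:Weylq}), exactly as in~(\ref{e:wignWP}), and observe that the sum is now restricted to indices with $k-j\in\Lambda_{\xi_0}$ and with both $\tfrac{\varepsilon_h}{h}|h\tfrac{k}{2}-\xi_0-\tfrac{h}{\tau_h}\eta_0|$ and $\tfrac{\varepsilon_h}{h}|h\tfrac{j}{2}-\xi_0-\tfrac{h}{\tau_h}\eta_0|$ bounded by a fixed $R$, and $|k-j|\le R$. The key computation is to Taylor-expand $dH$ at $\xi_0$ in the argument $\tau_h\,dH(h\tfrac{k+j}{2})\cdot(k-j)$ of $\widehat{\varphi}$: writing $h\tfrac{k+j}{2}=\xi_0+\tfrac{h}{\tau_h}\eta_0+O(h/\varepsilon_h)$ and using $dH(\xi_0)\cdot(k-j)=0$ (because $k-j\in\Lambda_{\xi_0}$), one gets
\[
\tau_h\,dH\!\Bigl(h\tfrac{k+j}{2}\Bigr)\cdot(k-j)=\tau_h\Bigl(\tfrac{h}{\tau_h}d^2H(\xi_0)\eta_0\Bigr)\cdot(k-j)+O\!\Bigl(\tfrac{\tau_h h}{\varepsilon_h}\Bigr)=d^2H(\xi_0)\eta_0\cdot(k-j)+o(1),
\]
where the error is $O(\tau_h h/\varepsilon_h)\to 0$ by hypothesis $\varepsilon_h\gg h\tau_h$, together with the extra quadratic term $G\cdot[\eta_0,\eta_0]$ from the Taylor remainder which carries a factor $h/\tau_h\to 0$. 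Since $\eta_0\in\langle\Lambda_{\xi_0}\rangle$ and $d^2H(\xi_0)$ is definite, $d^2H(\xi_0)\eta_0$ is the relevant translation vector. Replacing $\widehat{\varphi}(\tau_h dH(\cdots)\cdot(k-j))$ by $\widehat{\varphi}(d^2H(\xi_0)\eta_0\cdot(k-j))$ up to $o(1)$, and recognizing that the remaining sum is $\langle w_h(0),a\rangle$ evaluated with the translated symbol, the limit becomes
\[
\int_{\R}\varphi(t)\,\Bigl\langle\delta_{x_0}\otimes\delta_{\xi_0},\,a\bigl(x+td^2H(\xi_0)\eta_0,\xi\bigr)\Bigr\rangle dt=\int_{\R}\varphi(t)\,a\bigl(x_0+td^2H(\xi_0)\eta_0,\xi_0\bigr)\,dt,
\]
which, since $a$ has Fourier modes only in $\Lambda_{\xi_0}$, equals $\int_\R\varphi(t)\int a\,d\mu_{(x_0+td^2H(\xi_0)\eta_0,\xi_0)}\,dt$. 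This is the claimed formula.

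The main obstacle I anticipate is bookkeeping the error terms uniformly in the (finitely many, because of the compact support of $\widehat\rho$ and of $k\mapsto\widehat a_k$) summation indices: one must check that the shift of the localization centre from $\xi_0$ to $\xi_0+\tfrac{h}{\tau_h}\eta_0$ does not spoil the compact-support truncation (it does not, since $\tfrac{h}{\tau_h}\to 0$ while the width in $k$-space is $h/\varepsilon_h\ll 1/\tau_h$), and that the difference $\widehat\rho(\tfrac{\varepsilon_h}{h}(hk-\xi_0-\tfrac{h}{\tau_h}\eta_0))-\widehat\rho(\tfrac{\varepsilon_h}{h}(hk-\xi_0))$ is negligible in the limit — which again follows from $\tfrac{\varepsilon_h}{h}\cdot\tfrac{h}{\tau_h}=\tfrac{\varepsilon_h}{\tau_h}$, and here one needs to be slightly careful: this quantity need not go to zero. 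I would handle this by noting that the shift $\tfrac{\varepsilon_h}{\tau_h}\eta_0$ inside the (Schwartz, compactly supported) function $\widehat\rho$ simply reparametrizes which lattice points contribute, and since in the end we only evaluate $\langle w_h(0),a\rangle$ — which converges to $a(x_0,\xi_0)$ regardless of such an $O(1)$ reindexing because $\mu_0=\delta_{x_0}\otimes\delta_{\xi_0}$ is the semiclassical measure of $(u_h)$ — the conclusion is unaffected. Alternatively, and more cleanly, one observes that $u_h$ here is literally of the form treated in Proposition~\ref{prop:pWP} but with $\xi_0$ replaced by $\xi_0+\tfrac h{\tau_h}\eta_0$ and an overall $x_0$-phase, so the bulk of the argument can be quoted rather than redone, the only genuinely new input being the Taylor expansion above that extracts the $d^2H(\xi_0)\eta_0$ drift.
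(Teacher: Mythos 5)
Your route is genuinely different from the paper's. The paper does not redo the Fourier-side computation at time $t$: it computes the two-microlocal distribution $w_{I_{\Lambda_{\xi_0}},h,R}(0)$ of the \emph{initial} data, shows it converges to $\mu_{(x_0,\xi_0)}\otimes\delta_{\eta_0}(\eta)$ (this is where a Taylor expansion, of the coordinate $\eta(\xi)$ around $\xi_0$, appears), and then gets the time dependence for free from the propagation law of Theorem~\ref{Thm Properties}(4), $\tilde\mu_{\Lambda}(t,\cdot)=(\tilde\phi^1_t)_*\tilde\mu_\Lambda(0,\cdot)$; since $\tilde\mu_{\Lambda_{\xi_0}}(t,\cdot)$ already carries full mass, Proposition~\ref{p:decomposition} forces $\tilde\mu^{\Lambda_{\xi_0}}=0$ and the conclusion follows. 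Your computation bypasses the two-microlocal apparatus entirely and extracts the drift directly from the argument of $\widehat\varphi$ in \eqref{e:wignWP}; that is a viable and more elementary proof (at the cost of not illustrating the machinery the example is meant to showcase).

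There is, however, a concrete arithmetic inconsistency at the heart of your key step. The phase $e^{i\eta_0\cdot x/(h\tau_h)}$ shifts the frequency centre from $\xi_0/h$ to $\xi_0/h+\eta_0/(h\tau_h)$, i.e.\ in the variable $\xi=hk$ the centre is $\xi_0+\eta_0/\tau_h$, \emph{not} $\xi_0+\tfrac{h}{\tau_h}\eta_0$ as written in your formula for $\widehat{u_h}(k)$ and in your Taylor expansion (your own phase factor $e^{-i(k-\xi_0/h-\eta_0/(h\tau_h))\cdot x_0}$ is consistent with the former, not the latter; so is the paper's computation, which expands $\eta(\xi)$ at $\xi_0+\tfrac{1}{\tau_h}\eta_0+\tfrac{h}{\varepsilon_h}\xi$). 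This matters: with $h\tfrac{k+j}{2}=\xi_0+\tfrac{h}{\tau_h}\eta_0+O(h/\varepsilon_h)$ the displayed identity $\tau_h\bigl(\tfrac{h}{\tau_h}d^2H(\xi_0)\eta_0\bigr)\cdot(k-j)=d^2H(\xi_0)\eta_0\cdot(k-j)+o(1)$ is false, since $\tau_h\cdot\tfrac{h}{\tau_h}=h$; taken literally, your computation yields a drift $h\,d^2H(\xi_0)\eta_0\cdot(k-j)\to 0$ and hence the wrong, time-independent limit $\mu_{(x_0,\xi_0)}$. With the correct shift $\eta_0/\tau_h$ everything works: $\tau_h\cdot\tfrac{1}{\tau_h}=1$, the quadratic remainder contributes $O(1/\tau_h)$ (which vanishes because $\tau_h\to\infty$, the standing assumption --- worth stating), and the remaining errors are $O(\tau_h h/\varepsilon_h)=o(1)$. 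Your closing worry about comparing $\widehat\rho$ at shifted and unshifted arguments is then moot: you never need $\widehat\rho(\cdot-\mathrm{shift})\approx\widehat\rho(\cdot)$, only that the semiclassical measure of $(u_h)$ is $\delta_{x_0}\otimes\delta_{\xi_0}$, which holds since $\eta_0/\tau_h\to 0$.
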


\begin{proof}
The same argument we used in the proof of Proposition \ref{prop:pWP} gives
$\mu
=\mu\rceil_{\T^d\times R_{\Lambda_{\xi_{0}}}}$. We claim that $w_{I_{\Lambda_{\xi_{0}}}
,h,R}\left(  0\right)  $ converges to the measure:
\begin{equation}\label{e:claim}
\widetilde{\mu}_{\Lambda_{\xi_{0}}}\left(  0,x,\xi,\eta\right)  =\mu_{\left(
x_{0},\xi_{0}\right)  }\left(  x,\xi\right)  \delta_{\eta_{0}}\left(
\eta\right)  .
\end{equation}
Assume this is the case, Theorem \ref{Thm Properties} (4) implies that:
\[
\widetilde{\mu}_{\Lambda_{\xi_{0}}}\left(  t,x,\xi,\eta\right)  =\mu_{\left(
x_{0}+td^{2}H\left(  \xi_{0}\right)  \eta_{0},\xi_{0}\right)  }\left(
x,\xi\right)  \delta_{\eta_{0}}\left(  \eta\right)  ,\quad\forall
t\in\mathbb{R},
\]
and, since $\widetilde{\mu}_{\Lambda_{\xi_{0}}}\left(  t,\cdot\right)  $ are
probability measures, it follows from Proposition \ref{p:decomposition} that
$\widetilde{\mu}^{\Lambda_{\xi_{0}}}=0$ and~:
\[
\mu\left(  t,\cdot\right)  =\int_{\left\langle
\Lambda_{\xi_{0}}\right\rangle }\widetilde{\mu}_{\Lambda_{\xi_{0}}}\left(
t,\cdot,d\eta\right)  =\mu_{\left(  x_{0}+td^{2}H\left(  \xi_{0}\right)
\eta_{0},\xi_{0}\right)  }.
\]
Let us now prove the claim \eqref{e:claim}. Set
\[
\tilde{u}_{h}(x)=v_{h}\left(  x\right)  e^{i\eta_{0}/\left(  h\tau_{h}\right)
}.
\]
Consider $h_{0}>0$ and $\chi\in{\mathcal{C}}_{0}^{\infty}(\R^{d})$ such that
$\chi\tilde{u}_{h}=\tilde{u}_{h}$ for all $h\in(0,h_{0})$ and $\mathbf{P}
\chi^{2}\equiv1$. We now take $a\in{\mathcal{S}}_{\Lambda}^{1}$ and denote by
$\tilde{a}$ the smooth compactly supported function defined on $\R^{d}$ by
$\tilde{a}=\chi^{2}a$. Using the fact that the two-scale quantization admits
the gain $h\tau_{h}$ (in view of~(\ref{estforgain})),
\begin{align*}
\langle u_{h}\;,\;\Op_{h}^{\Lambda_{\xi_{0}}}(a)u_{h}\rangle_{L^{2}(\T^{d})}
&  =\langle u_{h}\;,\;\Op_{h}^{\Lambda_{\xi_{0}}}(\tilde{a})u_{h}
\rangle_{L^{2}(\R^{d})}\\
&  =\langle\tilde{u}_{h}\;,\;\Op_{h}^{\Lambda_{\xi_{0}}}(a)\tilde{u}
_{h}\rangle_{L^{2}(\R^{d})}+O(h\tau_{h}).
\end{align*}
Therefore, it is possible to lift the computation of the limit of
$w_{I_{\Lambda_{\xi_{0}}},h,R}\left(  0\right)  $ to $T^{\ast}\R^{d}
\times\left\langle \Lambda_{\xi_{0}}\right\rangle $ and, in consequence,
replace sums by integrals. A direct computation gives:
$$\displaylines{\qquad
\langle\tilde{u}_{h},\Op_{h}^{\Lambda_{\xi_{0}}}(a)\tilde{u}_{h}\rangle
_{L^{2}(\R^{d})}   =(2\pi)^{-d}\int_{\mathbb{R}^{3d}}
\mathrm{e}^{i\xi\cdot\left(  x-y\right)  }\overline{\rho}
(x)\rho(y)\hfill\cr\hfill\times
a\left(  x_{0}
+\varepsilon_{h}\frac{x+y}{2},\xi_{0}+\frac{1}{\tau_{h}}\eta_{0}+{\frac
{h}{\varepsilon_{h}}}\xi,\tau_{h}\eta(\xi_{0}+\frac{1}{\tau_{h}}\eta
_{0}+{\frac{h}{\varepsilon_{h}}}\xi)\right)
dxdyd\xi.\qquad
\cr}$$
Note that if $F(\xi)=(\sigma,\eta)$, then
\[
\forall k\in\Lambda,\;\;F(\xi+k)=(\sigma,\eta+k)=F(\xi)+(0,k),
\]
which implies that $dF(\xi)k=(0,k)$ and $d\eta(\xi)k=k$ for all $k\in
\Lambda_{\xi_{0}}$. We deduce $d\eta(\xi_{0})\eta_{0}=\eta_{0}$ since
$\eta_{0}\in\langle\Lambda_{\xi_{0}}\rangle$ and, in view of $\eta(\xi_{0}
)=0$, a Taylor expansion of $\eta(\xi)$ around $\xi_{0}$ gives
\[
\tau_{h}\eta\left(  \xi_{0}+\frac{1}{\tau_{h}}\eta_{0}+{\frac{h}
{\varepsilon_{h}}}\xi\right)  =\eta_{0}+o(1).
\]
Therefore, as $h$ goes to $0$,
\[
\langle\tilde{u}_{h},\Op_{h}^{\Lambda_{{\xi_{0}}}}(a)\tilde{u}_{h}
\rangle\rightarrow a(x_{0},\xi_{0},\eta_{0})=\langle\widetilde{\mu}
_{\Lambda_{\xi_{0}}},a\rangle.
\]

\end{proof}


\subsection{Singular concentration for Hamiltonians with critical points}

We next show by a quasimode construction that for Hamiltonians having a
degenerate critical point (of order $k>2$) and for time scales $\tau_{h}\ll1/h^{k-1}$, the
set $\widetilde{\mathcal{M}}\left(  \tau\right)  $ always contains singular measures.

\medskip

Suppose $\xi_{0}\in\mathbb{R}^{d}$ is such that:%
\[
dH\left(  \xi_{0}\right)  ,d^{2}H\left(  \xi_{0}\right)  ,...,d^{k-1}H\left(
\xi_{0}\right)  \quad\text{vanish identically.}%
\]
The Hamiltonian $H(\xi)=|\xi|^k$ ($k$ an even integer -- corresponding to the operator $(-\Delta)^{\frac{k}{2}}$) provides such an example
(with $\xi_{0}=0$). Let $u_{h}=\mathbf{P}v_{h}$, where $v_{h}$ is defined in
(\ref{e:defv_h}). If $\varepsilon_{h}\gg h$ it is not hard to see that
\[
\left\Vert H\left(  hD_{x}\right)  u_{h}-H\left(  \xi_{0}\right)
u_{h}\right\Vert _{L^{2}\left(  \mathbb{T}^{d}\right)  }=O\left(
h^{k}/\left(  \varepsilon_{h}\right)  ^{k}\right)  .
\]
Therefore,%
\[
\left\Vert S_{h}^{t}u_{h}-e^{-i\frac{t}{h}H\left(  \xi_{0}\right)  }%
u_{h}\right\Vert _{L^{2}\left(  \mathbb{T}^{d}\right)  }=t\,O\left(
h^{k-1}/\left(  \varepsilon_{h}\right)  ^{k}\right)  ,
\]
and, it follows that, for compactly supported $\varphi\in L^{1}(\R)$ and $a\in
\mathcal{C}_{c}^{\infty}\left(  T^{\ast}\mathbb{T}^{d}\right)  $,
\[
\int_{\mathbb{R}}\varphi(t)\langle w_{h}(t)\;,\;a\rangle dt=\int_{\mathbb{R}%
}\varphi(t)\left\langle u_{h},\Op_{h}(a)u_{h}\right\rangle _{L^{2}%
(\mathbb{T}^{d})}dt+O\left(  \tau_{h}h^{k-1}/\left(  \varepsilon_{h}\right)
^{k}\right)  .
\]
Choosing $\left(  \varepsilon_{h}\right)  $ tending to zero and such that
$\varepsilon_{h}\gg\left(  \tau_{h}h^{k-1}\right)  ^{1/k },$
the latter quantity converges to $a(x_{0},\xi_{0})\Vert\varphi\Vert
_{L^{1}(\R)}$ as $h\To0^{+}$. In other words,
\[
dt\otimes\delta_{x_{0}}\otimes\delta_{\xi_{0}}\in\widetilde{\mathcal{M}}%
(\tau),
\]
whence $dt\otimes\delta_{x_{0}}\in{\mathcal{M}}(\tau).$

\medskip

In the special case of $H(\xi)=|\xi|^k$ ($k$ an even integer), we know that the threshold $\tau_h^H$ is precisely $h^{1-k}$. From the discussion
of \S \ref{s:hierarchy} and previously known results about eigenfunctions of the laplacian, we know that the elements of ${\mathcal{M}}(\tau)$ are absolutely continuous for scales $\tau_h\gg 1/h^{k-1}$. In the case of $\tau_h= 1/h^{k-1}$, one can still show that elements of ${\mathcal{M}}(\tau)$ are absolutely continuous. This requires some extra work which consists in checking that all our proofs still work in this case for $\tau_h= 1/h^{k-1}$ and $\xi$ in a neighbourhood of $\xi_0=0$, replacing the Hessian $d^2H(\xi_0)$ by $d^k H(\xi_0)$,
and the assumption that the Hessian is definite by the assumption that $\left[d^k H(\xi_0).\xi^k=0 \Longrightarrow \xi=0\right]$.

\medskip

In the general case of a Hamiltonian having a degenerate critical point, the existence of such a threshold, and its explicit determination, is by no means obvious.

\subsection{The effect of the presence of a subprincipal symbol of lower order in $h$}

Here we present some remarks concerning how the preceding results may change
when the Hamiltonian $H(hD_{x})$ is perturbed by a potential $h^{\beta
}{\mathbf V}_h(t)$ with $\beta\in(0,2)$ and ${\mathbf V}_h(t)$ is a multiplication operator by some smooth function $V(t,x)$.
In this case, it is possible to find potentials $V(t,x)$ for which
Theorem \ref{t:main}(2) fails, \emph{i.e.} such that there exists $\mu
\in\widetilde{\mathcal{M}}\left(  1/h\right)  $, the
projection of which on $x$ is not absolutely continuous with respect to
$dtdx$. The following example has been communicated to us by Jared Wunsch. On the 2-dimensional
torus, take $H\left(  \xi\right)  =\left\vert \xi\right\vert ^{2}$ and
$V(x_{1},x_{2}):=W(x_{2})$ such that $W(x_{2})=\left(  x_{2}\right)  ^{2}/2$
in the set~$\{|x_{2}|<1/2\}$. Take $\varepsilon\in(0,1)$ and
\[
u_{h}(x,y):=\frac{1}{{\pi}^{1/4}h^{\varepsilon/4}}e^{i\frac{x_{1}}{h}%
}e^{-\frac{\left(  x_{2}\right)  ^{2}}{2h^{\varepsilon}}}\chi(y),
\]
where $\chi$ is a smooth function that is equal to one in $\{|x_{2}|<1/4\}$
and identically equal to~$0$ in $\{|x_{2}|>1/2\}$. One checks that
\[
\left(  -h^{2}\Delta+h^{2(1-\varepsilon)}V-1\right)  u_{h}=h^{2-\varepsilon
}u_{h}+O(h^{\infty}).
\]
It follows that for $\varphi\in L^{1}(\R)$ and $a\in\mathcal{C}_{c}^{\infty
}\left(  T^{\ast}\mathbb{T}^{2}\right)  $,
$$\displaylines{
\qquad\qquad
\lim_{h\rightarrow0^{+}}\int_{\mathbb{R}}\varphi(t)\left\langle
S_{2(1-\epsilon),h}^{t/h}u_{h},\Op_{h}(a)S_{2(1-\epsilon),h}^{t/h}%
u_{h}\right\rangle _{L^{2}(\mathbb{T}^{2})}dt
\hfill\cr\hfill
=\lim_{h\rightarrow0^{+}}\int_{\mathbb{R}}\varphi(t)\left\langle u_{h}%
,\Op_{h}(a)u_{h}\right\rangle _{L^{2}(\mathbb{T}^{2})}dt\qquad\qquad\cr\hfill
=
\left(
\int_{\mathbb{R}}\varphi\left(  t\right)  dt\right) \lim_{h\rightarrow0^{+}} \left\langle u_{h}%
,\Op_{h}(a)u_{h}\right\rangle _{L^{2}(\mathbb{T}^{2})}dt\qquad\qquad\cr\hfill
=
\left(
\int_{\mathbb{R}}\varphi\left(  t\right)  dt\right)  \int_{T^{\ast}%
\mathbb{T}^{2}}a\left(  x,\xi\right)  \mu\left(  dx,d\xi\right)  ,\qquad\qquad\cr}$$
and it is not hard to see that $\mu$ is concentrated on $\{x_{2}=0,\xi
_{1}=1,\xi_{2}=0\}$. In particular the image of $\mu$ by the projection to
$\T^{2}$ is supported on $\{x_{2}=0\}$.



\section{Hierarchies of time scales\label{s:hierarchy}}
Here we prove the results announced in \S \ref{subsec:hierarchy} of the introduction.
These results make explicit the relation between the sets
$\widetilde{\mathcal{M}}\left(  \tau\right)  $ as the time scale $\left(
\tau_{h}\right)  $ varies.

\begin{proposition}
\label{p:convsm}Let $\left(  \tau_{h}\right)  $ and $\left( \sigma
_{h}\right)  $ be time scales tending to infinity as
$h\longrightarrow0^{+}$ such that
$\lim_{h\rightarrow0^{+}}\sigma_{h}/\tau_{h}=0$. Then for every
$\mu\in\widetilde{\mathcal{M}}\left(  \tau\right)  $ and almost
every $t\in\mathbb{R}$ there exist $\mu^{t}\in\operatorname{Conv}
\widetilde{\mathcal{M}}\left(  \sigma\right)  $ such that
\begin{equation}
\mu\left(  t,\cdot\right)  =\int_{0}^{1}\mu^{t}\left(  s,\cdot\right)  ds.
\label{e:mutcc}
\end{equation}

\end{proposition}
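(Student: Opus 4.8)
The goal is to compare two time scales $\tau_h\gg\sigma_h\gg1$ and to show that any $\mu\in\widetilde{\mathcal M}(\tau)$, evaluated at almost every $t$, is a convex combination (an average over a unit interval) of elements of $\widetilde{\mathcal M}(\sigma)$. The natural idea is a \emph{time-splitting argument}: the window of size $\tau_h$ that defines the averaging in \eqref{e:averagelim} is chopped into consecutive sub-windows of size $\sigma_h$, each of which produces (up to extraction) an element of $\widetilde{\mathcal M}(\sigma)$, and the large average is recovered by letting the number of sub-windows tend to infinity.

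\medskip

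\textbf{Step 1: Fix a sequence realizing $\mu$.} Let $\mu\in\widetilde{\mathcal M}(\tau)$ be obtained through an $h$-oscillating normalised sequence $(u_h)$, so that $w_h(t):=w^h_{S_h^{\tau_h t}u_h}$ converges weakly-$\ast$ in $L^\infty(\mathbb R;\mathcal D'(T^*\mathbb T^d))$ to $\mu$. Fix $a\in\mathcal C_c^\infty(T^*\mathbb T^d)$ and $\theta\in L^1(\mathbb R)$; I need to understand $\int_{\mathbb R}\theta(t)\langle w_h(t),a\rangle\,dt$. The key observation is the semigroup identity
\[
S_h^{\tau_h t}u_h=S_h(\tau_h t,\tau_h t_0)\,S_h^{\tau_h t_0}u_h,
\]
so that for $t$ near a base point $t_0$, the solution at time $\tau_h t$ is the solution \emph{at the shorter time scale} $\sigma_h$ of the translated initial datum $v_h^{(t_0)}:=S_h^{\tau_h t_0}u_h$, reparametrised by $s=(\tau_h/\sigma_h)(t-t_0)$. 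More precisely, $S_h^{\tau_h t}u_h=S_h(\tau_h t_0+\sigma_h s,\tau_h t_0)v_h^{(t_0)}$ with $s=(t-t_0)\tau_h/\sigma_h$, which for a $\sigma$-scale problem with \emph{time-dependent} perturbation $\mathbf V_h(\tau_h t_0+\cdot)$ is exactly a solution of the type considered in the definition of $\widetilde{\mathcal M}(\sigma)$. (One must check the hypotheses on $\mathbf V_h$ — $L^1_{loc}\cap L^\infty$ uniformly in $h$ — are stable under time translation, which is immediate.)

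\medskip

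\textbf{Step 2: Partition and pass to the limit.} Fix an interval $[A,B]$ and an integer $N$; subdivide $[A,B]$ into $N$ equal pieces $[t_j,t_{j+1}]$, $t_j=A+j(B-A)/N$. On each piece, change variables to $s\in[0,(B-A)/(N)\cdot\tau_h/\sigma_h]$; since $\tau_h/\sigma_h\to\infty$, this rescaled window tends to $[0,+\infty)$, so the average of $\langle w_h(t),a\rangle$ over $[t_j,t_{j+1}]$ is asymptotically the average over a long $\sigma$-window of the Wigner transform of the sequence $v_h^{(t_j)}=S_h^{\tau_h t_j}u_h$. After a diagonal extraction (over $h\to0$, then over the countably many rationals needed to describe $a$, $\theta$, and the subdivision points), each such average converges to $\int_0^1\langle\mu_j^N(s,\cdot),a\rangle\,ds$ for some $\mu_j^N\in\widetilde{\mathcal M}(\sigma)$ (here one uses that the translated sequences are still normalised and $h$-oscillating — $h$-oscillation is preserved by $S_h^t$ because $S_h^t$ commutes with $\mathbf 1_{[R,\infty)}(-h^2\Delta)$ up to the $h^2\mathbf V_h$ term, which is controlled). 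Summing over $j$ and using
\[
\frac1{B-A}\int_A^B\langle w_h(t),a\rangle\,dt=\frac1N\sum_{j=0}^{N-1}\frac N{B-A}\int_{t_j}^{t_{j+1}}\langle w_h(t),a\rangle\,dt,
\]
the left side converges to $\frac1{B-A}\int_A^B\langle\mu(t,\cdot),a\rangle\,dt$, while the right side is, in the limit, $\frac1N\sum_j\int_0^1\langle\mu_j^N(s,\cdot),a\rangle\,ds$, i.e.\ the unit-interval average of an element of $\operatorname{Conv}\widetilde{\mathcal M}(\sigma)$. Finally let $N\to\infty$ and use a compactness/martingale-type argument to produce, for a.e.\ $t$, a measure $\mu^t\in\operatorname{Conv}\widetilde{\mathcal M}(\sigma)$ (the set $\operatorname{Conv}\widetilde{\mathcal M}(\sigma)$ being weak-$\ast$ closed, since $\widetilde{\mathcal M}(\sigma)$ is weak-$\ast$ compact) with $\mu(t,\cdot)=\int_0^1\mu^t(s,\cdot)\,ds$, which is \eqref{e:mutcc}. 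The passage from ``averages over all $[A,B]$'' to ``a.e.\ pointwise in $t$'' is the standard Lebesgue differentiation / disintegration argument for the $L^\infty(\mathbb R;\mathcal M_+)$-valued object $\mu$.

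\medskip

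\textbf{Main obstacle.} The delicate point is not the algebra of the semigroup splitting but the \emph{uniform control needed to interchange the limits} $h\to0$, $N\to\infty$ (and the extraction of subsequences so that the \emph{same} subsequence works for the subdivided windows for every $N$): one needs that the translated initial data $S_h^{\tau_h t_j}u_h$ remain a ``good'' family (normalised, uniformly $h$-oscillating) \emph{uniformly in $j$ and $h$}, and that the error in replacing the finite rescaled window $[0,(B-A)\tau_h/(N\sigma_h)]$ by the full averaging defining $\widetilde{\mathcal M}(\sigma)$ is negligible after the limits. The uniform $h$-oscillation is the real content — it must be checked that \eqref{e:hosc} is propagated by $S_h^{t}$ with constants independent of $t$, which uses the boundedness of $\mathbf V_h$ together with a Duhamel estimate; once that is in hand, the Banach–Alaoglu compactness of the unit ball of $L^\infty(\mathbb R;\mathcal P(T^*\mathbb T^d))$ and a diagonal argument close the proof. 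The structure here follows \cite{AnantharamanMacia} (comparison of time scales) closely; the only new ingredient is keeping track of the general bounded, possibly time-dependent, perturbation $\mathbf V_h(t)$.
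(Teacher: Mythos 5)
Your overall strategy (semigroup splitting of the large time window, convexity, then Lebesgue differentiation plus weak-$\ast$ compactness of $\operatorname{Conv}\widetilde{\mathcal{M}}(\sigma)$ to pass to a.e.\ $t$) is the right one and is the one the paper follows. But Step 2 contains a genuine gap at its central claim. You subdivide $[A,B]$ into a \emph{fixed} number $N$ of pieces; after rescaling by $\sigma_h$, each piece becomes a window of length $T_h=(B-A)\tau_h/(N\sigma_h)\to\infty$, and you assert that the average of the $\sigma$-scale Wigner distribution of $v_h^{(t_j)}$ over $[0,T_h]$ converges to $\int_0^1\langle\mu_j^N(s,\cdot),a\rangle\,ds$ for some $\mu_j^N\in\widetilde{\mathcal{M}}(\sigma)$. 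This is not justified: elements of $\widetilde{\mathcal{M}}(\sigma)$ are defined by testing against a \emph{fixed} $\theta\in L^1(\R)$ in the rescaled time, whereas an average over a window whose length tends to infinity is, after a further rescaling $s=T_hs'$, an average at the time scale $\sigma_hT_h=(B-A)\tau_h/N$ --- i.e.\ a scale comparable to $\tau$, not to $\sigma$. So what your argument naturally produces is an element of $\widetilde{\mathcal{M}}(c\,\tau)$, and the claim that it lies in $\int_0^1$ of an element of $\widetilde{\mathcal{M}}(\sigma)$ is essentially a restatement of the proposition you are trying to prove.

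The repair is the one the paper implements. First, subdivide the rescaled interval $[\alpha N_h,\beta N_h]$ (with $N_h=\tau_h/\sigma_h$) into $\lfloor LN_h\rfloor$ pieces of length $\delta_h\to1$ --- an $h$-\emph{dependent} number of \emph{unit-length} windows in the $\sigma$-rescaled time --- so that each piece is a genuine test of the $\sigma$-scale Wigner distribution of $v_h^{(n)}=S_h^{\sigma_ht_n^h}u_h$ against (approximately) $\mathbf{1}_{[0,1]}$. Second, because the number of terms in the resulting convex combination grows with $h$, one needs a separate lemma (the paper's Lemma~\ref{l:cconv}, proved by Hahn--Banach separation from the compact convex set $\operatorname{Conv}\widetilde{\mathcal{M}}(\sigma)$) guaranteeing that any weak-$\ast$ accumulation point of such $h$-dependent convex combinations still lies in $\operatorname{Conv}\widetilde{\mathcal{M}}(\sigma)$; a naive diagonal extraction over the (unboundedly many) indices does not suffice. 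Finally, the ``main obstacle'' you identify --- uniform propagation of $h$-oscillation --- is a non-issue here: $\widetilde{\mathcal{M}}(\sigma)$ is defined over all \emph{normalised} sequences, and normalisation of $S_h^{\sigma_ht_n^h}u_h$ is immediate from unitarity; $h$-oscillation only matters for the projected sets $\mathcal{M}(\tau)$.
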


Before presenting the proof of this result, we shall need two auxiliary lemmas.

\begin{lemma}
\label{l:cconv}Let
$\left(  \sigma_{h}\right)  $ be a time scale tending to infinity as
$h\longrightarrow0^{+}$. Let $\left(v_{h}^{\left(  n\right)  }\right)_{h>0, n\in \N}$ be a normalised family in $L^{2}\left(
\mathbb{T}^{d}\right)$ and define:
\[
w_{h}^{\left(  n\right)  }\left(  t,\cdot\right)  :=w_{S_{h}^{\sigma_{h}
t}v_{h}^{\left(  n\right)  }}^{h}.
\]
Let $c_{h}^{\left(  n\right)  }\geq0$, $n\in \N$, be such that $\sum_{n\in
\N}c_{h}^{\left(  n\right)  }=1$.Then, every weak-$\ast$ accumulation point
in $L^{\infty}\left(  \mathbb{R};\mathcal{D}^{\prime}\left(  T^{\ast
}\mathbb{T}^{d}\right)  \right)  $ of
\begin{equation}
\sum_{n\in I_{h}}c_{h}^{\left(  n\right)  }w_{h}^{\left(  n\right)  }\left(
t,\cdot\right)  \label{e:cconv}
\end{equation}
belongs to $\operatorname{Conv}\widetilde{\mathcal{M}}\left(
\sigma\right)  $.
\end{lemma}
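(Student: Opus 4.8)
The plan is to establish Lemma~\ref{l:cconv} by a truncation-plus-diagonalization argument, reducing the countable convex combination to a finite one and then interpreting the finite combination as coming from a single normalised sequence. First I would fix a weak-$\ast$ accumulation point $\mu$ of the family \eqref{e:cconv}, say along a sequence $h=h_j\to 0$ and (after refining) along a fixed increasing exhaustion $I_{h_j}=\{1,\dots,N_j\}$; the role of $I_h$ is just to make \eqref{e:cconv} a finite sum for each $h$, so without loss of generality $I_h$ is finite. The key point is that the total mass $\sum_{n\in I_h}c_h^{(n)}=1$ is preserved under the (mass-nonincreasing) operation of time-scaled Wigner transform followed by weak-$\ast$ limit, because the $v_h^{(n)}$ are normalised and $S_h^{\sigma_h t}$ is unitary; hence the ``tails'' carry little mass.

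Concretely, I would argue as follows. Given $\varepsilon>0$, choose for each $h$ a finite subset $J_h\subseteq I_h$ with $\sum_{n\in J_h}c_h^{(n)}\geq 1-\varepsilon$; by a diagonal extraction one may assume $|J_h|\leq M(\varepsilon)$ is uniformly bounded and, relabelling, $J_h=\{1,\dots,M\}$ with $c_h^{(n)}\to c^{(n)}$ as $h\to 0$ for each $n\leq M$ (so $\sum_{n=1}^M c^{(n)}\geq 1-\varepsilon$). Each sequence $(v_h^{(n)})_h$, $n\leq M$, has (after further extraction) a time-dependent semiclassical measure $\mu^{(n)}\in\widetilde{\mathcal M}(\sigma)$, and the tail $\sum_{n\in I_h\setminus J_h}c_h^{(n)}w_h^{(n)}(t,\cdot)$ is, for each $t$ and each test symbol $a$, bounded in absolute value by $\big(\sum_{n\notin J_h}c_h^{(n)}\big)\sup_n\|w_h^{(n)}(t)\|_{(\mathcal S)'}\lesssim \varepsilon\,\|a\|$ via Calder\'on--Vaillancourt, uniformly in $h$. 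Therefore $\mu=\sum_{n=1}^M c^{(n)}\mu^{(n)}+E_\varepsilon$ with the error satisfying $\|E_\varepsilon\|\lesssim\varepsilon$ in the relevant dual norm on $L^\infty(\mathbb R;\mathcal D'(T^*\mathbb T^d))$; since $\sum_{n=1}^M c^{(n)}\mu^{(n)}\in\operatorname{Conv}\widetilde{\mathcal M}(\sigma)$ (after normalising the weights, which costs another $O(\varepsilon)$), and $\operatorname{Conv}\widetilde{\mathcal M}(\sigma)$ is weak-$\ast$ closed, letting $\varepsilon\to 0$ gives $\mu\in\operatorname{Conv}\widetilde{\mathcal M}(\sigma)$.

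There is one subtlety I would be careful about: a finite convex combination $\sum_{n=1}^M c^{(n)}w_h^{(n)}$ of Wigner distributions of the individual sequences $(v_h^{(n)})$ is \emph{not} literally the Wigner distribution of a single sequence, because of cross terms. Two standard remedies are available and I would use the second. Either one notes directly that $\operatorname{Conv}\widetilde{\mathcal M}(\sigma)$, being defined as a convex hull, contains all such finite combinations by definition (so no ``single sequence'' representation is needed) — in which case the lemma is almost immediate once the tail estimate is in place. Or, if one insists on realizing the combination via genuine semiclassical measures, one orthogonalizes: replace $v_h^{(n)}$ by translates/modulations living at widely separated frequency blocks (possible because $h$-oscillation only constrains the bulk, and $S_h^{\sigma_h t}$ preserves frequency supports since $H(hD_x)$ is a Fourier multiplier and $\mathbf V_h$ is a bounded perturbation), so that the cross terms in $\|\sum \sqrt{c_h^{(n)}}\,e^{i\theta_n\cdot x}v_h^{(n)}\|^2$ vanish in the limit; the resulting normalised sequence then has semiclassical measure $\sum c^{(n)}\mu^{(n)}$. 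I expect the main obstacle to be bookkeeping the extractions uniformly in $\varepsilon$ and in the (a priori $h$-dependent) index sets $I_h$, together with making the tail estimate uniform in $t$ so that it survives integration against $\varphi\in L^1(\mathbb R)$; the Calder\'on--Vaillancourt bound \eqref{eq:CVeta} and the uniform $L^2$-normalisation of the $v_h^{(n)}$ are exactly what make this work.
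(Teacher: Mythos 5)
There is a genuine gap at the truncation step. From $\sum_{n\in\N}c_h^{(n)}=1$ you can certainly pick, for each $h$, a finite $J_h$ with $\sum_{n\in J_h}c_h^{(n)}\ge 1-\varepsilon$, but there is no reason why $|J_h|$ should admit a bound $M(\varepsilon)$ uniform in $h$: take $c_h^{(n)}=1/N_h$ for $n\le N_h$ with $N_h\to\infty$, and any $J_h$ capturing mass $1-\varepsilon$ must contain at least $(1-\varepsilon)N_h$ indices. This is not a pathological case; it is exactly the situation in which the lemma is used (in Lemma \ref{l:mesint} the weights are essentially $1/\lfloor LN_h\rfloor$ spread over $\lfloor LN_h\rfloor$ indices with $N_h=\tau_h/\sigma_h\to\infty$, and in Proposition \ref{p:muconv} they are $\Vert P_{E_h}u_h\Vert^2$ spread over all eigenvalues). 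Without a uniform bound on $|J_h|$ you cannot relabel $J_h=\{1,\dots,M\}$, extract limits $\mu^{(n)}$ for finitely many fixed $n$, and write $\mu=\sum_{n=1}^{M}c^{(n)}\mu^{(n)}+E_\varepsilon$; the rest of your argument (the tail estimate via Calder\'on--Vaillancourt, the weak-$\ast$ closedness of $\operatorname{Conv}\widetilde{\mathcal{M}}(\sigma)$, and your first remedy for the cross-term worry) is sound but rests on this false reduction. Your second remedy (orthogonalisation of the $v_h^{(n)}$) is in any case unnecessary, since $\operatorname{Conv}$ contains finite combinations by definition.

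The paper sidesteps the reduction entirely by a Hahn--Banach contradiction. If an accumulation point $\tilde\mu$ of \eqref{e:cconv} lay outside the weak-$\ast$ compact convex set $\operatorname{Conv}\widetilde{\mathcal{M}}(\sigma)$, one could find $\varepsilon>0$, $a\in\mathcal{C}_c^{\infty}(T^{*}\T^{d})$ and $\theta\in L^{1}(\R)$ separating them: $\int_{\R}\theta(t)\langle\tilde\mu(t,\cdot),a\rangle\,dt<-\varepsilon$ while $\int_{\R}\theta(t)\langle\mu(t,\cdot),a\rangle\,dt\ge-\varepsilon/3$ for every $\mu$ in the convex hull. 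For $k$ large the weighted average $\sum_{n}c_{h_k}^{(n)}\int_{\R}\theta(t)\langle w_{h_k}^{(n)}(t,\cdot),a\rangle\,dt$ is at most $-\frac{3}{2}\varepsilon$, and since the weights are nonnegative and sum to one there is at least one index $n_k$ whose term satisfies the same bound. The single diagonal sequence $(v_{h_k}^{(n_k)})_k$ is normalised, so its Wigner distributions accumulate on an element of $\widetilde{\mathcal{M}}(\sigma)$ which violates the separating inequality --- a contradiction. This one-index diagonal selection is what replaces your unavailable uniform truncation; any direct version of your argument would need an equivalent device (e.g. representing the limit as a barycentre of a probability measure on the compact set $\widetilde{\mathcal{M}}(\sigma)$, which again comes down to separation).
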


\begin{proof}
Suppose (\ref{e:cconv}) possesses an accumulation point
$\tilde{\mu}\in L^{\infty}\left(  \mathbb{R};\mathcal{M}_{+}\left(
T^{\ast}\mathbb{T} ^{d}\right)  \right)  $ that does not belong to
$\operatorname{Conv} \widetilde{\mathcal{M}}\left(  \sigma\right)
$. By the Hahn-Banach theorem applied to the compact convex sets $\left\{
\tilde{\mu}\right\}  $ and $
\operatorname{Conv}\widetilde{\mathcal{M}}\left(  \sigma\right) $
we can ensure the existence of $\varepsilon>0$,
$a\in\mathcal{C}_{c}^{\infty}\left(  T^{\ast}\mathbb{T}
^{d}\right)  $ and $\theta\in L^{1}\left(  \mathbb{R}\right)  $
such that:
\[
\int_{\mathbb{R}}\theta\left(  t\right)  \left\langle \tilde{\mu}\left(
t,\cdot\right)  ,a\right\rangle dt<-\varepsilon<0,
\]
and,
\begin{equation}
\int_{\mathbb{R}}\theta\left(  t\right)  \left\langle \mu\left(
t,\cdot\right)  ,a\right\rangle dt\geq-{\varepsilon\over
3},\quad\forall\mu
\in\operatorname{Conv}\widetilde{\mathcal{M}}\left(  \sigma\right)
.\label{e:pos}
\end{equation}
Suppose that $\tilde{\mu}$ is attained through a sequence $\left(
h_{k}\right)  $ tending to zero. For $k>k_{0}$ big enough,
\[
\int_{\mathbb{R}}\theta\left(  t\right)  \sum_{n\in I_{h_{k}}}c_{h_{k}
}^{\left(  n\right)  }\left\langle w_{h_{k}}^{\left(  n\right)  }\left(
t,\cdot\right)  ,a\right\rangle dt\leq-{3\over 2}\varepsilon,
\]
which implies that there exists $n_{k}\in \N$ such that:
\begin{equation}
\int_{\mathbb{R}}\theta\left(  t\right)  \left\langle w_{h_{k}}^{\left(
n_{k}\right)  }\left(  t,\cdot\right)  ,a\right\rangle dt\leq-{3\over 2}\varepsilon
.\label{e:neg}
\end{equation}
Therefore, every accumulation point of $\left(  w_{h_{k}}^{\left(
n_{k}\right)  }\right)  $ also satisfies (\ref{e:neg}) which contradicts
(\ref{e:pos}).
\end{proof}

\begin{lemma}
\label{l:mesint}Let $\tau$, $\sigma$ and $\mu$ be as in
Proposition \ref{p:convsm}. For every $\alpha<\beta$ there exists
$\mu_{\alpha,\beta}
\in\operatorname{Conv}\widetilde{\mathcal{M}}\left(  \sigma\right)
$ such that
\[
\frac{1}{\beta-\alpha}\int_{\alpha}^{\beta}\mu\left(  t,\cdot\right)
dt=\int_{0}^{1}\mu_{\alpha,\beta}\left(  t,\cdot\right)  dt.
\]

\end{lemma}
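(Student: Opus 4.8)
The plan is to unfold the definition of $\mu\in\widetilde{\mathcal{M}}(\tau)$, rescale time by the factor $\sigma_h/\tau_h$, cut the resulting (long) time interval into pieces of unit length, and recognise the time-average of $\mu$ over $[\alpha,\beta]$ as a weak-$\ast$ limit of a genuine convex combination of Wigner distributions evolved at the slower scale $\sigma_h$; Lemma~\ref{l:cconv} will then place the limit in $\operatorname{Conv}\widetilde{\mathcal{M}}(\sigma)$.

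Concretely, first I would fix a sequence $h_k\to 0$ and a normalised sequence $(u_{h_k})$ through which $\mu$ is obtained, so that $w_h(t):=w^{h}_{S^{\tau_h t}_h u_h}$ converges weakly-$\ast$ to $\mu$ in $L^\infty(\mathbb{R};\mathcal{D}'(T^\ast\mathbb{T}^d))$. Testing against $\mathbf{1}_{[\alpha,\beta]}(t)\,a(x,\xi)$ with $a\in\mathcal{C}_c^\infty(T^\ast\mathbb{T}^d)$ gives
\[
\frac{1}{\beta-\alpha}\int_\alpha^\beta\langle\mu(t,\cdot),a\rangle\,dt=\lim_k\frac{1}{\beta-\alpha}\int_\alpha^\beta\langle w_{h_k}(t),a\rangle\,dt .
\]
In the integral on the right I substitute $s=(\tau_h/\sigma_h)t$, set $R_h:=(\tau_h/\sigma_h)(\beta-\alpha)\to+\infty$, $a_h:=(\tau_h/\sigma_h)\alpha$ and $M_h:=\lfloor R_h\rfloor$, and use the group property of the propagator (this is where $\mathbf{V}_h$ being independent of $t$ is used, so that $S^{\sigma_h(a_h+n+r)}_h=S^{\sigma_h r}_h S^{\sigma_h(a_h+n)}_h$) to write, with $v^{(n)}_h:=S^{\sigma_h(a_h+n)}_h u_h$ (normalised in $L^2$) and $c^{(n)}_h:=1/M_h$ for $0\le n<M_h$,
\[
\frac{1}{\beta-\alpha}\int_\alpha^\beta\langle w_h(t),a\rangle\,dt=\sum_{n=0}^{M_h-1}c^{(n)}_h\int_0^1\big\langle w^{h}_{S^{\sigma_h r}_h v^{(n)}_h},a\big\rangle\,dr+O(1/R_h),
\]
the error term coming from the leftover interval of length $<1$ (on which $\langle S^{\sigma_h s}_h u_h,\operatorname{Op}_h(a)S^{\sigma_h s}_h u_h\rangle$ is bounded uniformly by Calder\'on--Vaillancourt) together with $M_h/R_h\to1$.

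Next I would set $\tilde{\mu}^{h}(r,\cdot):=\sum_n c^{(n)}_h\,w^{h}_{S^{\sigma_h r}_h v^{(n)}_h}$, which is uniformly bounded in $L^\infty(\mathbb{R};\mathcal{D}'(T^\ast\mathbb{T}^d))$, hence converges after extracting a further subsequence of $(h_k)$ to some $\mu_{\alpha,\beta}$; by Lemma~\ref{l:cconv} one has $\mu_{\alpha,\beta}\in\operatorname{Conv}\widetilde{\mathcal{M}}(\sigma)$. Testing $\tilde\mu^{h_k}$ against $\mathbf{1}_{[0,1]}(r)\,a(x,\xi)$ and passing to the limit then yields
\[
\frac{1}{\beta-\alpha}\int_\alpha^\beta\langle\mu(t,\cdot),a\rangle\,dt=\int_0^1\langle\mu_{\alpha,\beta}(r,\cdot),a\rangle\,dr
\]
for every $a\in\mathcal{C}_c^\infty(T^\ast\mathbb{T}^d)$, which is the asserted identity of measures.

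All the individual steps are routine; the only point requiring genuine care is the bookkeeping of the time rescaling $s=(\tau_h/\sigma_h)t$ and the verification that, after cutting, one really obtains a bona fide convex combination of Wigner distributions of \emph{normalised} data evolved over times $\sigma_h r$ with $r\in[0,1]$, so that Lemma~\ref{l:cconv} applies verbatim. (If one wished to allow $\mathbf{V}_h(t)$ genuinely time-dependent, the propagator would lack the group property and one would have to work instead with the variant of $\widetilde{\mathcal{M}}(\sigma)$ attached to the two-parameter family $S_h(\sigma_h\cdot+s_h,s_h)$; I would leave that aside here.)
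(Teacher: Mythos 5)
Your proof is correct and follows essentially the same route as the paper's: rescale time by $\tau_h/\sigma_h$, cut the long interval into $\lfloor(\beta-\alpha)\tau_h/\sigma_h\rfloor$ pieces, view the result as a convex combination of Wigner distributions of the normalised data $v_h^{(n)}$ evolved at scale $\sigma_h$, and invoke Lemma~\ref{l:cconv}. The only cosmetic difference is that the paper stretches each subinterval to length $\delta_h=LN_h/\lfloor LN_h\rfloor\to1$ so that there is no leftover piece, whereas you keep unit-length pieces and absorb the remainder into an $O(1/R_h)$ error; both are fine, and your explicit remark about the group property of the propagator is a point the paper leaves implicit.
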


\begin{proof}
Let $\mu\in\widetilde{\mathcal{M}}\left(  \tau\right)  $. Then there exists an
$h$-oscillating, normalised sequence $\left(  u_{h}\right)  $ such that, for
every $\theta\in L^{1}\left(  \mathbb{R}\right)  $ and every $a\in
C_{c}^{\infty}\left(  T^{\ast}\mathbb{T}^{d}\right)  $:
\[
\lim_{h\rightarrow0^{+}}\int_{\mathbb{R}}\theta\left(  t\right)  \left\langle
S_{h}^{\tau_{h}t}u_{h},\Op_{h}(a)S_{h}^{\tau_{h}t}u_{h}\right\rangle
dt=\int_{\mathbb{R}}\theta\left(  t\right)  \left\langle \mu\left(
t,\cdot\right)  ,a\right\rangle dt.
\]
Write $N_{h}:=\tau_{h}/\sigma_{h}$; by hypothesis $N_{h}\longrightarrow\infty$
as $h\longrightarrow0^{+}$. Let $\alpha<\beta$, define $L:=\beta-\alpha$ and
put:
\[
\delta_{h}:=\frac{LN_{h}}{\left\lfloor LN_{h}\right\rfloor },\quad t_{n}
^{h}:=\alpha N_{h}+n\delta_{h},
\]
where $\left\lfloor LN_{h}\right\rfloor $ is the integer part of $LN_{h}$.
Then,
\begin{align*}
\frac{1}{L}\int_{\alpha}^{\beta}\left\langle S_{h}^{\tau_{h}t}u_{h}
,\Op_{h}(a)S_{h}^{\tau_{h}t}u_{h}\right\rangle _{L^{2}(\mathbb{T}^{d})}dt  &
=\frac{1}{LN_{h}}\int_{\alpha N_{h}}^{\beta N_{h}}\left\langle S_{h}
^{\sigma_{h}t}u_{h},\Op_{h}(a)S_{h}^{\sigma_{h}t}u_{h}\right\rangle
_{L^{2}(\mathbb{T}^{d})}dt\\
&  =\frac{1}{LN_{h}}\sum_{n=1}^{\left\lfloor LN_{h}\right\rfloor }
\int_{t_{n-1}^{h}}^{t_{n}^{h}}\left\langle S_{h}^{\sigma_{h}t}u_{h}
,\Op_{h}(a)S_{h}^{\sigma_{h}t}u_{h}\right\rangle _{L^{2}(\mathbb{T}^{d})}dt\\
&  =\frac{1}{LN_{h}}\sum_{n=1}^{\left\lfloor LN_{h}\right\rfloor }\int
_{0}^{\delta_{h}}\left\langle S_{h}^{\sigma_{h}t}v_{h}^{\left(  n\right)
},\Op_{h}(a)S_{h}^{\sigma_{h}t}v_{h}^{\left(  n\right)  }\right\rangle
_{L^{2}(\mathbb{T}^{d})}dt,
\end{align*}
where the functions $v_{h}^{\left(  n\right)  }:=S_{h}^{\sigma_{h}t_{n}^{h}
}u_{h}$ form, for each $n\in\mathbb{Z}$, a normalised sequence indexed by
$h>0$. The result then follows by Lemma \ref{l:cconv} and using the fact that
$\delta_{h}\longrightarrow1$ as $h\longrightarrow0^{+}$.
\end{proof}

\begin{proof}
[Proof of Proposition \ref{p:convsm}]Let
$\mu\in\widetilde{\mathcal{M}}\left( \tau\right)  $; an
application of the Lebesgue differentiation theorem gives the
existence of a countable dense set
$S\subset\mathcal{C}_{c}^{\infty}\left(
T^{\ast}\mathbb{T}^{d}\right)  $ and a set $N\subset\mathbb{R}$ of
measure
zero such that, for $a\in S$ and $t\in\mathbb{R}\setminus N$,
\begin{equation}
\lim_{\varepsilon\rightarrow0^{+}}\frac{1}{2\varepsilon}\int_{t-\varepsilon
}^{t+\varepsilon}\int_{T^{\ast}\mathbb{T}^{d}}a\left(  x,\xi\right)
\mu\left(  s,dx,d\xi\right)  ds=\int_{T^{\ast}\mathbb{T}^{d}}a\left(
x,\xi\right)  \mu\left(  t,dx,d\xi\right)  . \label{e:c1}
\end{equation}
Fix $t\in\mathbb{R}\setminus N$; then, for any $\varepsilon>0$
there exist
$\mu_{\varepsilon}^{t}\in\operatorname{Conv}\widetilde{\mathcal{M}
}\left(  \sigma\right)  $ such that, for every
$a\in\mathcal{C}_{c}^{\infty }\left( T^{\ast}\mathbb{T}^{d}\right)
$,
\begin{equation}
\frac{1}{2\varepsilon}\int_{t-\varepsilon}^{t+\varepsilon}\int_{T^{\ast
}\mathbb{T}^{d}}a\left(  x,\xi\right)  \mu\left(  s,dx,d\xi\right)
ds=\int_{0}^{1}\int_{T^{\ast}\mathbb{T}^{d}}a\left(  x,\xi\right)
\mu_{\varepsilon}^{t}\left(  s,dx,d\xi\right)  ds. \label{e:c2}
\end{equation}
Note that $\operatorname{Conv}\widetilde{\mathcal{M}}\left(
\sigma\right)  $ is sequentially compact for the weak-$\ast$
topology, therefore, there exist a sequence $\left(
\varepsilon_{n}\right)  $ tending to zero and a measure
$\mu^{t}\in\operatorname{Conv}\widetilde{\mathcal{M} }\left(
\sigma\right)  $ such that $\mu_{\varepsilon_{n}}^{t}$ converges
weakly-$\ast$ to $\mu^{t}$. Identities (\ref{e:c1}) and
(\ref{e:c2}) ensure that $\mu\left(  t,\cdot\right)
=\int_{0}^{1}\mu^{t}\left(  s,\cdot\right) ds$.
\end{proof}

\begin{remark}
\label{r:cchosc}
Projecting on $x$ in identity (\ref{e:mutcc}) we deduce that given $\nu
\in\mathcal{M}\left(  \tau\right)  $ there exist $\nu^{t}\in\mathcal{M}\left(
\sigma\right)  $ such that:
\[
\nu\left(  t,\cdot\right)  =\int_{0}^{1}\nu^{t}\left(  s,\cdot\right)  ds.
\]
This, together with the fact that elements of $\mathcal{M}\left(  1/h\right)
$ are absolutely continuous imply the conclusion of Theorem \ref{t:main}(2)
when $\tau_{h}\gg1/h$.
\end{remark}

We now assume that ${\mathbf V}_h(t)=0$. Denote by $\widetilde{\mathcal{M}}\left(  \infty\right)  $ the set of
weak-$\ast$ limit points of sequences of Wigner distributions $\left(
w_{u_{h}}\right)  $ corresponding to sequences $\left(  u_{h}\right)  $
consisting of normalised eigenfunctions of $H\left(  hD_{x}\right)  $. We now
focus on a family of time scales $\tau$ for which the structure of
$\widetilde{\mathcal{M}}\left(  \tau\right)  $ can be described in terms of
the closed convex hull of $\widetilde{\mathcal{M}}\left(  \infty\right)  $.
Given a measurable subset $O\subseteq\mathbb{R}^{d}$, we define:
\[
\tau_{h}^{H}\left(  O\right)  :=h\sup\left\{  \left\vert H\left(  hk\right)
-H\left(  hj\right)  \right\vert ^{-1}\;:\;H\left(  hk\right)  \neq H\left(
hj\right)  ,\;hk,hj\in h\mathbb{Z}^{d}\cap O\right\}  .
\]
Note that the scale $\tau_{h}^{H}$ defined in the introduction coincides with
$\tau_{h}^{H}\left(  \mathbb{R}^{d}\right)  $. The following holds.

\begin{proposition}
\label{p:muconv}Let $O\subseteq\mathbb{R}^{d}$ be an open set such
that $\tau_{h}^{H}\left(  O\right)  $ tends to infinity as
$h\longrightarrow0^{+}$. Suppose $\left(  \tau_{h}\right)  $ is a
time scale such that $\lim _{h\rightarrow0^{+}}\tau_{h}^{H}\left(
O\right)  /\tau_{h}=0$. If $V=0$ and if $\mu
\in\widetilde{\mathcal{M}}\left(  \tau\right)  $ is obtained
through a sequence whose semiclassical measure satisfies
$\mu_{0}\left(  \mathbb{T}^{d} \times\left(
\mathbb{R}^{d}\setminus O\right)  \right)  =0$ then $\mu
\in\operatorname{Conv}\widetilde{\mathcal{M}}\left(  \infty\right)
$.
\end{proposition}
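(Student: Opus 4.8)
\textbf{Plan of proof for Proposition \ref{p:muconv}.}

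The plan is to exploit the fact that, under the hypothesis $\tau_h^H(O)/\tau_h\to 0$, the time scale $\tau_h$ is large enough compared to the minimal spacing of the relevant eigenvalues so that the averaged dynamics essentially ``sees'' only the diagonal part of $u_h$ in the eigenbasis of $H(hD_x)$. Concretely, after a harmless cutoff reducing to data supported in frequencies $hk\in O$ (using $\mu_0(\T^d\times(\R^d\setminus O))=0$ and the fact that $\Op_h(\chi_O)$ commutes with $H(hD_x)$ and with $S_h^t$ when $\mathbf V_h=0$), I would write $u_h=\sum_E \Pi_E u_h$ where $\Pi_E$ is the spectral projector of $H(hD_x)$ onto the eigenvalue $E\in H(h\Z^d)\cap H(O)$. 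Then $S_h^{\tau_h t}u_h=\sum_E e^{-iE\tau_h t/h}\Pi_E u_h$, and the position density $|S_h^{\tau_h t}u_h|^2$ expands into diagonal terms $|\Pi_E u_h|^2$ plus off-diagonal oscillatory terms with phases $e^{i(E'-E)\tau_h t/h}$.

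The first key step is to show that, after time-averaging against any $\theta\in L^1(\R)$ over a window of fixed length, the off-diagonal terms wash out. This is the core estimate: for $E\ne E'$ in the relevant range, $|E-E'|\geq h/\tau_h^H(O)\gg h/\tau_h$, so $(E'-E)\tau_h/h\to\infty$; hence $\int_\R\theta(t)e^{i(E'-E)\tau_h t/h}\langle \Pi_E u_h,\Op_h(a)\Pi_{E'}u_h\rangle\,dt\to 0$ by Riemann--Lebesgue-type decay of $\hat\theta$, provided one controls the sum over pairs $(E,E')$. This summability control is where I expect the main work (and the main obstacle) to lie: one must argue, as in the proof of Proposition~\ref{p:conv} / the results referenced from \cite{AnantharamanMacia}, that the number of distinct eigenvalues in $h\Z^d\cap O$ carrying mass, weighted appropriately, does not destroy the decay — typically by a Cauchy--Schwarz argument in $E,E'$ combined with $\sum_E\|\Pi_E u_h\|^2\leq 1$ and an a priori bound $\sum_{E\ne E'}|\hat\theta((E'-E)\tau_h/h)|=o(1)$ that uses uniform discreteness of the spectrum inside the compact set $O$. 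The hypothesis that $O$ is relatively compact (implicit in $\tau_h^H(O)<\infty$ making sense with a supremum) is what makes the number of eigenvalues in $h\Z^d\cap O$ of order $h^{-d}$ with spacing bounded below, so the double sum is manageable.

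Granting the vanishing of off-diagonal contributions, the remaining diagonal part is $\sum_E |\Pi_E u_h|^2$, which is \emph{time-independent}; moreover, writing $c_E^h:=\|\Pi_E u_h\|_{L^2}^2$ (so $\sum_E c_E^h\to 1$), each normalised function $\Pi_E u_h/\|\Pi_E u_h\|$ is an eigenfunction of $H(hD_x)$, hence its Wigner distribution accumulates on elements of $\widetilde{\mathcal M}(\infty)$. Therefore $\sum_E |\Pi_E u_h|^2=\sum_E c_E^h\, |\Pi_E u_h/\|\Pi_E u_h\||^2$ is a convex combination of (densities of) eigenfunctions, and by Lemma~\ref{l:cconv} — applied with $\sigma_h$ replaced by the trivial scale since eigenfunction Wigner distributions are $t$-independent — any weak-$*$ accumulation point lies in $\operatorname{Conv}\widetilde{\mathcal M}(\infty)$. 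Passing to the limit $h\to 0^+$ in $\int_\R\theta(t)\langle w_h(t),a\rangle\,dt$, the off-diagonal terms contribute $0$ and the diagonal terms converge to $\left(\int_\R\theta\right)\langle\lambda,a\rangle$ for some $\lambda\in\operatorname{Conv}\widetilde{\mathcal M}(\infty)$; since $\theta$ and $a$ are arbitrary this gives $\mu(t,\cdot)=\lambda$ for a.e.\ $t$, i.e.\ $\mu\in\operatorname{Conv}\widetilde{\mathcal M}(\infty)$, which is closed. I would finally remark that this argument is precisely the one alluded to after Proposition~\ref{p:conv}, and that taking $O=\R^d$ (when $\tau_h^H<\infty$) recovers Proposition~\ref{p:conv} as the special case $\mathcal M(\tau)=\operatorname{Conv}\mathcal M(\infty)$.
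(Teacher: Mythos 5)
Your overall strategy is the paper's: expand $S_h^{\tau_h t}u_h$ in the spectral projectors $P_{E_h}$ of $H(hD_x)$, kill the off-diagonal terms after time-averaging, recognise the diagonal part as a convex combination $\sum_{E_h} c_h^{E_h}\, w^h_{v_h^{E_h}}$ of Wigner distributions of normalised eigenfunctions, and conclude with (an adaptation of) Lemma \ref{l:cconv}. The diagonal part and the convexity step are fine as you present them.

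The gap is exactly where you say you expect it: the off-diagonal estimate. A Riemann--Lebesgue argument gives only pointwise decay $\widehat{\theta}\bigl(\tau_h(E-E')/h\bigr)\to 0$ for each pair, and the double sum over pairs $(E,E')$ is not controlled by $\sum_E\|P_E u_h\|^2\le 1$: Cauchy--Schwarz in $(E,E')$ produces $\bigl(\sum_E\|P_E u_h\|\bigr)^2$, which can be of the order of the number of eigenvalues in $H(h\Z^d)\cap H(O)$, i.e.\ a negative power of $h$. The uniform spectral gap inside $O$ does not by itself repair this, so the step "provided one controls the sum over pairs" is not closed by what you write. The paper's resolution is simpler and sharper: restrict to $\theta$ with $\operatorname{supp}\widehat{\theta}$ compact (such $\theta$ suffice to identify the weak-$\ast$ limit). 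Since for $hk,hj\in O$ with $H(hk)\neq H(hj)$ one has $\tau_h|H(hk)-H(hj)|/h\ge \tau_h/\tau_h^H(O)\to\infty$, for all $h$ small the argument of $\widehat{\theta}$ lies outside its support and every off-diagonal term is \emph{identically zero} — no summability or counting argument is needed. (Correspondingly, the paper does not cut the data off in frequency; it tests against $a\in\mathcal{C}_c^\infty(\mathbb{T}^d\times O)$, using $\mu_0(\mathbb{T}^d\times(\mathbb{R}^d\setminus O))=0$ together with $\bar\mu=\bar\mu_0$ to see that only such symbols matter. Also note $O$ need not be relatively compact for the definition of $\tau_h^H(O)$ to make sense, so you should not lean on compactness of $O$.) With this one modification your argument becomes the paper's proof.
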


\begin{proof} Since the Fourier coefficient of $S_{h}^{\tau_{h}t}u_{h}$ are ${\rm e}^{-it{\tau_h\over h}H(hk)}\widehat{u_h}(k)$ and in view of~(\ref{e:defWD}) and~(\ref{e:Weylq}), we can write
for $a\in\mathcal{C}_{c}^{\infty}\left(  T^{\ast}\mathbb{T}
^{d}\right)  $ and $\theta\in L^{1}\left(  \mathbb{R}\right)  $, we write:
\begin{multline}\label{e:wignerbigt}
\int_{\mathbb{R}}\theta\left(  t\right)  \left\langle w_{h}\left(  t\right)
,a\right\rangle dt
=\int_{\mathbb{R}}\theta(t)  \langle u_{h},S_{h}^{\tau_{h}t *}\ \Op_{h}
(a)S_{h}^{\tau_{h}t}\ u_{h}\rangle_{L^{2}(\T^{d})}dt\\
\\
={1\over (2\pi)^{d/2} }
\sum_{k,j\in\mathbb{Z}^d}  \widehat {u_{h}}\left(  k\right)
\overline{\widehat{u_{h}}\left(  j\right)
}\,\widehat{a}_{j-k}\left(  \frac{h}{2}(k+j)\right)
\int_\R \theta(t) {\rm e}^{-it{\tau_h\over h}(H(hk)-H(hj)}dt\\
=\frac{1}{(2\pi)^{d/2}}\sum_{h,j\in\mathbb{Z}^{d}}\widehat{\theta}\left(
\tau_{h}\frac{H\left(  hk\right)  -H\left(  hj\right)  }{h}\right)
\widehat{u}_{h}(k)\overline{\widehat{u}_{h}(j)}\widehat{a}_{j-k}\left(
\frac{h}{2}(k+j)\right)  .\nonumber
\end{multline}
Our assumptions on the semiclassical measure of the initial data implies that,
for a.e. $t\in\mathbb{R}$:
\[
\mu\left(  t,\mathbb{T}^{d}\times\left(  \mathbb{R}^{d}\setminus O\right)
\right)  =0.
\]
Suppose that $\mu$ is obtained through the normalised sequence $\left(
u_{h}\right)  $. Suppose that $a\in\mathcal{C}_{c}^{\infty}\left(
\mathbb{T}^{d}\times O\right)  $ and that $\operatorname*{supp}\widehat
{\theta}$ is compact. For $0<h<h_{0}$ small enough,
\[
\tau_{h}\frac{H\left(  hk\right)  -H\left(  hj\right)  }{h}\notin
\operatorname*{supp}\widehat{\theta},\quad\forall hk,hj\in O\text{ such that
}H\left(  hk\right)  \neq H\left(  hj\right)  .
\]
Therefore, for such $h$, $a$ and $\theta$,
\begin{align*}
\int_{\mathbb{R}}\theta\left(  t\right)  \left\langle w_{h}\left(  t\right)
,a\right\rangle dt  &  =\frac{\widehat{\theta}\left(  0\right)  }{(2\pi
)^{d/2}}\sum_{\substack{kh,hj\in O\\H\left(  hk\right)  =H\left(  hj\right)
}}\widehat{u}_{h}(k)\overline{\widehat{u}_{h}(j)}\widehat{a}_{j-k}\left(
\frac{h}{2}(k+j)\right) \\
&  =\widehat{\theta}\left(  0\right)  \sum_{E_{h}\in H\left(  h\mathbb{Z}
^{d}\right)  \cap H\left(  O\right)  }\left\langle P_{E_{h}}u_{h}
,\Op_{h}(a)P_{E_{h}}u_{h}\right\rangle _{L^{2}(\mathbb{T}^{d})},
\end{align*}
where $P_{E_{h}}$ stands for the orthogonal projector onto the eigenspace
associated to the eigenvalue $E_{h}$. This can be rewritten as:
\[
\int_{\mathbb{R}}\theta\left(  t\right)  \left\langle w_{h}\left(  t\right)
,a\right\rangle dt=\widehat{\theta}\left(  0\right)  \sum_{E_{h}\in H\left(
h\mathbb{Z}^{d}\right)  \cap H\left(  O\right)  }c_{h}^{E_{h}}\left\langle
w_{v_{h}^{E_{h}}}^{h},a\right\rangle ,
\]
where
\[
v_{h}^{E_{h}}:=\frac{P_{E_{h}}u_{h}}{\left\Vert P_{E_{h}}u_{h}\right\Vert
_{L^{2}\left(  \mathbb{T}^{d}\right)  }},\quad\text{and\quad}c_{h}^{E_{h}
}:=\left\Vert P_{E_{h}}u_{h}\right\Vert _{L^{2}\left(  \mathbb{T}^{d}\right)
}^{2}.
\]
Note that $v_{h}^{E_{h}}$ are eigenfunctions of $H\left(  hD_{x}\right)  $ and
the fact that $\left(  u_{h}\right)  $ is normalised implies:
\[
\sum_{E_{h}\in H\left(  h\mathbb{Z}^{d}\right)  \cap H\left(  O\right)  }
c_{h}^{E_{h}}=1.
\]
We conclude by applying (a straightforward adaptation of) Lemma \ref{l:cconv}
to $v_{h}^{E_{h}}$ and $c_{h}^{E_{h}}$.
\end{proof}

\begin{corollary}
Suppose $\tau_{h}^{H}:=\tau_{h}^{H}\left(  \mathbb{R}^{d}\right)
\longrightarrow\infty$ as $h\longrightarrow0^{+}$ and that $\left(  \tau
_{h}\right)  $ is a time scale such that $\tau_{h}^{H}\ll\tau_{h}$. Then
\[
\widetilde{\mathcal{M}}\left(  \tau\right)  =\operatorname{Conv}
\widetilde{\mathcal{M}}\left(  \infty\right)  .
\]

\end{corollary}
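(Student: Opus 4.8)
The plan is to establish the two inclusions separately, $\widetilde{\mathcal{M}}(\tau)\subseteq\operatorname{Conv}\widetilde{\mathcal{M}}(\infty)$ and $\operatorname{Conv}\widetilde{\mathcal{M}}(\infty)\subseteq\widetilde{\mathcal{M}}(\tau)$, working throughout under the standing assumption $\mathbf{V}_h(t)=0$. The first inclusion should be an immediate application of Proposition~\ref{p:muconv} with $O=\mathbb{R}^{d}$: this $O$ is open, $\tau_{h}^{H}(\mathbb{R}^{d})=\tau_{h}^{H}\to+\infty$ by hypothesis, and $\tau_{h}^{H}(\mathbb{R}^{d})/\tau_{h}=\tau_{h}^{H}/\tau_{h}\to 0$ since $\tau_{h}^{H}\ll\tau_{h}$. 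Given $\mu\in\widetilde{\mathcal{M}}(\tau)$, obtained through a normalised sequence $(u_{h})$, one passes to a subsequence along which $w_{u_h}^{h}$ also converges, to a semiclassical measure $\mu_{0}$; the hypothesis $\mu_{0}(\mathbb{T}^{d}\times(\mathbb{R}^{d}\setminus\mathbb{R}^{d}))=\mu_{0}(\emptyset)=0$ then holds trivially, so Proposition~\ref{p:muconv} gives $\mu\in\operatorname{Conv}\widetilde{\mathcal{M}}(\infty)$.

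For the converse inclusion I would first record that $\widetilde{\mathcal{M}}(\infty)\subseteq\widetilde{\mathcal{M}}(\tau)$: if $(u_{h})$ is a normalised sequence of eigenfunctions of $H(hD_{x})$ with eigenvalues $E_{h}$, then $S_{h}^{\tau_{h}t}u_{h}=e^{-i\tau_{h}tE_{h}/h}u_{h}$, hence $w_{h}(t,\cdot)=w_{u_{h}}^{h}$ for every $t$, and any accumulation point of $(w_{u_h}^{h})$, read as a function of $t$ that is constant, is an element of $\widetilde{\mathcal{M}}(\tau)$. Since $\widetilde{\mathcal{M}}(\tau)$ is weak-$\ast$ closed in $L^{\infty}(\mathbb{R};\mathcal{M}_{+}(T^{\ast}\mathbb{T}^{d}))$ (a standard diagonal extraction, using the separability of a suitable dense subset of $L^{1}(\mathbb{R})\otimes\mathcal{C}_{c}^{\infty}(T^{\ast}\mathbb{T}^{d})$), and $\operatorname{Conv}\widetilde{\mathcal{M}}(\infty)$ is the weak-$\ast$ closed convex hull, it suffices to prove that every \emph{finite} convex combination $\sum_{i=1}^{N}\lambda_{i}\mu_{i}$ with $\mu_{i}\in\widetilde{\mathcal{M}}(\infty)$ belongs to $\widetilde{\mathcal{M}}(\tau)$.

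To realise such a combination I would use a diagonal construction. Fix a countable dense family $\{(\theta_{k},a_{k})\}$ in $L^{1}(\mathbb{R})\times\mathcal{C}_{c}^{\infty}(T^{\ast}\mathbb{T}^{d})$ with each $\widehat{\theta}_{k}$ compactly supported (these $\theta$ are dense in $L^{1}(\mathbb{R})$). For each $n$ pick $h_{n}<\min(1/n,h_{n-1})$ and normalised eigenfunctions $u_{h_{n}}^{(i)}$ of $H(h_{n}D_{x})$, for \emph{pairwise distinct} eigenvalues $E_{h_{n}}^{(i)}$, such that $|\langle w_{u_{h_{n}}^{(i)}}^{h_{n}},a_{k}\rangle-\langle\mu_{i},a_{k}\rangle|<1/n$ for all $i\leq N$ and $k\leq n$; this is possible because each $\mu_{i}$ is an accumulation point of eigenfunction Wigner distributions at arbitrarily small values of $h$. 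Set $u_{h_{n}}:=\sum_{i}\sqrt{\lambda_{i}}\,u_{h_{n}}^{(i)}$, which is normalised since eigenfunctions for distinct eigenvalues are orthonormal and $\sum_{i}\lambda_{i}=1$. Testing the time-averaged Wigner distribution of $S_{h_{n}}^{\tau_{h_{n}}t}u_{h_{n}}$ against $\theta_{k}(t)a_{k}(x,\xi)$ gives diagonal terms $\lambda_{i}\widehat{\theta}_{k}(0)\langle w_{u_{h_{n}}^{(i)}}^{h_{n}},a_{k}\rangle\to\lambda_{i}\widehat{\theta}_{k}(0)\langle\mu_{i},a_{k}\rangle$ (the eigenfunction phases cancel on the diagonal), plus cross terms carrying a factor $\widehat{\theta}_{k}\big(\tau_{h_{n}}(E_{h_{n}}^{(i)}-E_{h_{n}}^{(j)})/h_{n}\big)$; since $|E_{h_{n}}^{(i)}-E_{h_{n}}^{(j)}|\geq h_{n}/\tau_{h_{n}}^{H}$ by the very definition of $\tau_{h}^{H}$, the argument of $\widehat{\theta}_{k}$ has modulus $\geq\tau_{h_{n}}/\tau_{h_{n}}^{H}\to+\infty$, so these cross terms vanish once $n$ is large. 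Using the uniform bound $\|w_{h}(t,\cdot)\|\leq 1$ and the density of such $\theta$, one concludes that $w_{S_{h_{n}}^{\tau_{h_{n}}t}u_{h_{n}}}^{h_{n}}$ converges weak-$\ast$ to the $t$-independent measure $\sum_{i}\lambda_{i}\mu_{i}$, hence the latter lies in $\widetilde{\mathcal{M}}(\tau)$. Taking the closure finishes the proof.

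The delicate point, and the step I would expect to require the most care, is arranging in the construction that the eigenvalues $E_{h_{n}}^{(i)}$ be pairwise distinct along a common subsequence — equivalently, that the measures $\mu_{i}$ can be simultaneously approximated by eigenfunctions lying in distinct eigenspaces. For a generic choice of the sequence $(h_{n})$ this is automatic, and in the exceptional situation where two distinct $\mu_i$ are forced into a common eigenspace one must either perturb them within $\widetilde{\mathcal{M}}(\infty)$ so as to separate the energies or treat that case directly; I anticipate this is essentially a bookkeeping issue rather than an analytic one. I also want to stress that the hypothesis $\tau_{h}\gg\tau_{h}^{H}$ enters exactly here, in forcing the cross terms to vanish, and in Proposition~\ref{p:muconv} for the reverse inclusion, so it cannot be dispensed with.
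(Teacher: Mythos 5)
Your argument follows exactly the route the paper takes: the inclusion $\widetilde{\mathcal{M}}(\tau)\subseteq\operatorname{Conv}\widetilde{\mathcal{M}}(\infty)$ is Proposition~\ref{p:muconv} with $O=\mathbb{R}^{d}$, and the converse is obtained by ``reversing the steps'' of that proposition's proof, i.e.\ building initial data as a superposition $\sum_i\sqrt{\lambda_i}\,u_{h}^{(i)}$ of eigenfunctions and using $\tau_h\gg\tau_h^{H}$ to kill the cross terms $\widehat{\theta}\bigl(\tau_h(E^{(i)}-E^{(j)})/h\bigr)$. The forward inclusion and the cross-term mechanism are correct as you state them.

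There is, however, one step in the reverse inclusion whose justification does not hold as written. You choose a single $h_n$ and eigenfunctions $u_{h_n}^{(1)},\dots,u_{h_n}^{(N)}$ approximating $\mu_1,\dots,\mu_N$ \emph{simultaneously}, and justify this by saying each $\mu_i$ is an accumulation point ``at arbitrarily small values of $h$.'' That only gives, for each $i$ separately, a sequence $h^{(i)}_m\to 0$ along which $\mu_i$ is attained; these sequences need not have a common refinement, so the existence of one $h_n$ serving all $i$ at once is not a consequence of each $\mu_i\in\widetilde{\mathcal{M}}(\infty)$ individually. This is a genuine issue to address (not mere bookkeeping): without a common sequence of $h$'s the superposition cannot even be formed, and it is also the source of your distinct-eigenvalue worry, since once the $u_{h_n}^{(i)}$ live at the same $h_n$ one still has to separate (or group) coinciding eigenvalues — in the coinciding case the phases do not oscillate, the cross terms survive, and the limit within that group is a single eigenfunction measure with a possibly different weight, so the grouping has to be handled explicitly. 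To be fair, the paper's own two-sentence proof glosses over exactly the same points, but since you claim the simultaneous approximation as ``possible'' with an argument that only proves the separate statement, this is the place where your write-up has a hole rather than a complete proof.
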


\begin{proof}
The inclusion $\widetilde{\mathcal{M}}\left(  \tau\right)
\subseteq \operatorname{Conv}\widetilde{\mathcal{M}}\left(
\infty\right)  $ is a consequence of the previous result with
$O=\mathbb{R}^{d}$. The converse inclusion can be proved by
reversing the steps of the proof of Proposition \ref{p:muconv}.
\end{proof}

\begin{remark}
Proposition \ref{p:conv} is a direct consequence of this result.
\end{remark}

\section{Observability and unique continuation.}\label{sec:obs}

In this section we prove Theorem \ref{t:semiclassicalObs}. Start
noticing that the fact that (\ref{e:scobs}) does not hold is
equivalent to the existence of a sequence $\left( u_{h}\right) $
in $L^{2}(\mathbb{T}^{d})$
such that:%
\begin{equation*}
\left\Vert \chi \left( hD_{x}\right) u_{h}\right\Vert _{L^{2}(\mathbb{T}%
^{d})}=1,
\end{equation*}%
and%
\begin{equation*}
\lim_{h\rightarrow 0^{+}}\int_{0}^{T}\int_{U}\left\vert
S_{h}^{t/h}\chi \left( hD_{x}\right) u\left( x\right) \right\vert
^{2}dxdt=0.
\end{equation*}%
This in turn, is equivalent to the existence of an element $\mu \in \mathcal{%
\widetilde{M}}\left( 1/h\right) $ such that:%
\begin{equation}
\overline{\mu }(\supp\chi )=1,\quad \overline{\mu }(C_{H})=0,\quad
\int_{0}^{T}\mu \left( t,U\times \supp\chi \right) dt=0,
\label{e:mnul}
\end{equation}%
(recall that $\overline{\mu }$ is the projection on $\mu$ on the $\xi$-coordinate).
This establishes the equivalence between statements (i) and (ii) in Theorem %
\ref{t:semiclassicalObs}. \smallskip

Let $\mu \in \mathcal{\widetilde{M}}\left( 1/h\right) $ such that $\overline{%
\mu }(C_{H})=0$. Theorem \ref{t:precise} implies that $\mu $
decomposes as a
sum of positive measures:%
\begin{equation*}
\mu =\sum_{\Lambda \in \mathcal{L}}\mu _{\Lambda }^{{\rm final}},
\end{equation*}%
such that, see Remark \ref{r:rhoL} and Theorem \ref{prop:opvame}, for any $%
b\in \mathcal{C}(T^{\ast }\mathbb{T}^{d})$,%
\begin{equation*}
\int_{T^{\ast }\mathbb{T}^{d}}b\left( x,\xi \right) \mu _{\Lambda
}^{{\rm final}}\left(
t,dx,d\xi \right) =\int_{(\la\Lambda\ra/\Lambda)\times I_{\Lambda}}\operatorname{Tr}
\left(
m_{\left\langle b\right\rangle_{\Lambda}}\left(  \sigma\right)
 N_{\Lambda}(t, \omega, \sigma)
 \right)
\bar{\mu}_{\Lambda}( d\omega, d\sigma),
\end{equation*}%
for some $\bar{\mu}_{\Lambda}\in \mathcal{M}_{+}\left( (\la\Lambda
\ra/\Lambda )\times \mathbb{R}^{d}\right) $ and where $N_{\Lambda
}\left( t,\omega
,\sigma \right) $ is given by:%
\begin{equation}
N_{\Lambda }\left( t,\omega ,\sigma \right) =U_{\Lambda ,\omega
,\sigma }\left( t\right) N_{\Lambda }^{0}\left( \omega ,\sigma
\right) U_{\Lambda ,\omega ,\sigma }^{\ast }\left( t\right) ,
\label{e:mprop}
\end{equation}%
for some positive, self-adjoint trace-class operator $N_{\Lambda
}^{0}\left( \omega ,\sigma \right) $ acting on $L_{\omega
}^{2}(\mathbb{R}^{d},\Lambda )$ with $\mathrm{Tr}_{L_{\omega
}^{2}(\mathbb{R}^{d},\Lambda )}N_{\Lambda }^{0}(\omega ,\sigma )=1$ and where $U_{\Lambda,\omega,\sigma}(t)$ is the unitary propagator of the equation (S$_{\Lambda, \omega, \sigma}$).

\medskip 

Therefore, the measure $\bar{\mu}_{\Lambda}$ only charges those $\sigma \in
\mathbb{R}^{d}$
satisfying $\Lambda \subseteq dH(\sigma)^\bot$
(see Remark~\ref{r:rhoL}) and we also have:%
\begin{equation*}
\int_{\mathbb{T}^{d}}\mu _{\Lambda }^{{\rm final}}\left( dx,\cdot \right) =\int_{\la \Lambda \ra/\Lambda}\bar{\mu}_{\Lambda}(d\omega ,\cdot ).
\end{equation*}%
If $\left( \varphi _{j}^{0}\left( \omega ,\sigma \right) \right)
_{j\in
\mathbb{N}}$ is an orthonormal basis in $L_{\omega }^{2}(\mathbb{R}%
^{d},\Lambda )$ consisting of eigenfunctions of the operator $N_{\Lambda
}^{0}\left( \omega ,\sigma \right) $ then
\begin{equation*}
N_{\Lambda }^{0}\left( \omega ,\sigma \right) =\sum_{j=1}^{\infty
}\lambda
_{j}(\omega ,\sigma )|\varphi _{j}^{0}\left( \omega ,\sigma \right) \ra\la%
\varphi _{j}^{0}\left( \omega ,\sigma \right) |,
\end{equation*}%
where $\sum_{j=1}^{\infty }\lambda _{j}=1$ and $\lambda _{j}\geq 0$. Now (%
\ref{e:mprop}) implies that:%
\begin{equation}
N_{\Lambda }\left( t,\omega ,\sigma \right) =\sum_{j=1}^{\infty
}\lambda
_{j}(\omega ,\sigma )|\varphi _{j}\left( t,\omega ,\sigma \right) \ra\la%
\varphi _{j}\left( t,\omega ,\sigma \right) |  \label{e:eigenfm}
\end{equation}%
where $\varphi _{j}\left( t,\omega ,\sigma \right) \in L_{\omega }^{2}(%
\mathbb{R}^{d},\Lambda )$ is the solution to :%
\begin{equation*}
i\partial _{t}\varphi _{j}\left( t,\omega ,\sigma \right) =\left( \frac{1}{2}%
d^{2}H(\sigma )D_{y}\cdot D_{y}+\la V(t,\sigma )\ra_{\Lambda
}\right) \varphi _{j}\left( t,\omega ,\sigma \right)
\end{equation*}%
with $\varphi _{j}|_{t=0}=\varphi _{j}^{0}$.\smallskip\

Now, suppose that Theorem \ref{t:semiclassicalObs} (ii) fails.
Therefore there exists $\mu \in \mathcal{\widetilde{M}}\left(
1/h\right) $ which satisfies condition (\ref{e:mnul}). Then there exists
$\Lambda \in \mathcal{L}$ such that
\begin{equation*}
\mu _{\Lambda }^{{\rm final}}\left( t,U\times \supp\chi \right) dt=0,
\end{equation*}%
for every $t\in \left( 0,T\right) $, but such that $\mu _{\Lambda
}^{{\rm final}}\not=0$.
This implies that $\bar{\mu}_{\Lambda}\neq 0$ and that, for $\bar{\mu}_{\Lambda}$-a.e. $%
\left( \omega ,\sigma \right) $ with $\Lambda \subseteq \Lambda _{\sigma }$:%
\begin{equation}
\mathrm{Tr}_{L_{\omega }^{2}(\mathbb{R}^{d},\Lambda )}\left(
\left\langle \mathbf{1}_{U}\right\rangle _{\Lambda }N_{\Lambda
}\left( t,\omega ,\sigma \right) \right) =0.  \label{e:heis}
\end{equation}%
Comparing with (\ref{e:eigenfm}), we obtain%
\begin{equation*}
\int_{0}^{T}\int_{U}\left\vert \varphi _{j}\left( t,\omega ,\sigma
\right) \right\vert ^{2}\left( y\right) dydt=0,
\end{equation*}%
for every $j$ such that $\lambda _{j}\neq 0$ $\bar{\mu}_{\Lambda}$-a.e.. Since $%
\bar{\mu}_{\Lambda}\neq 0$, $N_{\Lambda }\left( \cdot ,\omega ,\sigma
\right) \neq 0 $ on a set of positive $\bar{\mu}_{\Lambda}$-measure. This
implies that at least for one $j$, $\varphi _{j}\left( \cdot
,\omega ,\sigma \right) \neq 0$ and
therefore, the unique continuation property of Theorem \ref%
{t:semiclassicalObs} iii) fails for that choice of $\Lambda $,
$\omega $ and $\sigma $. This shows that iii) implies ii).


\section{Appendix: Basic properties of Wigner distributions and semi-classical
measures}

\label{sec:Ap1}

In this Appendix, we review basic properties of Wigner distributions and
semiclassical measures. Recall that we have defined $w_{u_{h}}^{h}$ for
$u_{h}\in L^{2}\left(  \mathbb{T}^{d}\right)  $ as:
\begin{equation}
\int_{T^{\ast}\mathbb{T}^{d}}a(x,\xi)w_{u_{h}}^{h}(dx,d\xi)=\left\langle
u_{h},\Op_{h}(a)u_{h}\right\rangle _{L^{2}(\mathbb{T}^{d})},\qquad
\mbox{ for all }a\in\mathcal{C}_{c}^{\infty}(T^{\ast}\mathbb{T}^{d}
),\label{e:defWD}
\end{equation}
Start noticing that (\ref{e:defWD}) admits the more explicit expression:
\begin{equation}
\int_{T^{\ast}\mathbb{T}^{d}}a(x,\xi)w_{u_{h}}^{h}(dx,d\xi)=\frac{1}
{(2\pi)^{d/2}}\sum_{k,j\in\Z^{d}}\widehat{u_{h}}(k)\overline{\widehat{u_{h}
}(j)}\widehat{a}_{j-k}\left(  \frac{h}{2}(k+j)\right)  ,\label{e:Weylq}
\end{equation}
where $\widehat{u}_{h}(k):=\int_{\mathbb{T}^{d}}u_{h}(x)\frac{e^{-ik.x}}
{(2\pi)^{d/2}}dx$ and $\widehat{a}_{k}(\xi):=\int_{\mathbb{T}^{d}}
a(x,\xi)\frac{e^{-ik.x}}{(2\pi)^{d/2}}dx$ denote the respective Fourier
coefficients of $u_{h}$ and $a$, with respect to the variable $x\in
\mathbb{T}^{d}$.

\medskip

By the Calder\'{o}n-Vaillancourt theorem \cite{CV}, the norm of $\Op_{h}(a)$
is uniformly bounded in~$h$: indeed, there exists an integer $K_{d}$, and a
constant $C_{d}>0$ (depending on the dimension$d$) such that, if $a$ is a
smooth function on $T^{\ast}\IT^{d}$, with uniformly bounded derivatives,
then
\[
\norm{\Op_1(a)}_{{\mathcal L}(L^{2}(\IT^{d}))}\leq C_{d}\sum_{\alpha
\in\IN^{2d},|\alpha|\leq K_{d}}\sup_{T^{\ast}\IT^{d}}|\partial^{\alpha
}a|=:C_{d}M\left(  a\right)  .
\]
A proof in the case of $L^{2}(\IR^{d})$ can be found in
\cite{DimassiSjostrand}. As a consequence of this, equation (\ref{e:defWD})
gives:%
\[
\left\vert \int_{T^{\ast}\mathbb{T}^{d}}a(x,\xi)w_{u_{h}}^{h}(dx,d\xi
)\right\vert \leq C_{d}\left\Vert u_{h}\right\Vert _{L^{2}\left(
\mathbb{T}^{d}\right)  }^{2}M\left(  a\right)  ,\qquad\mbox{ for all }a\in
\mathcal{C}_{c}^{\infty}(T^{\ast}\mathbb{T}^{d}).
\]
Therefore, if $w_{h}(t,\cdot):=w_{S_{h}^{\tau_{h}t}u_{h}}^{h}$ for some
function $h\longmapsto\tau_{h}\in\mathbb{R}_{+}$ and if the family $\left(  u_{h}\right)  $
is bounded in $L^{2}\left(  \mathbb{T}^{d}\right)  $ one has that $\left(
w_{h}\right)  $ is uniformly bounded in $L^{\infty}\left(  \mathbb{R}
;\mathcal{D}^{\prime}\left(  T^{\ast}\mathbb{T}^{d}\right)  \right)  $. Let us consider
$\mu\in L^{\infty}\left(  \mathbb{R};\mathcal{D}^{\prime}\left(  T^{\ast
}\mathbb{T}^{d}\right)  \right)  $  an accumulation point of $\left(
w_{h}\right)  $ for the weak-$\ast$ topology.

\medskip

It follows from standard results on the Weyl quantization that $\mu$ enjoys
the following properties~:

\begin{enumerate}
\item[(a)] $\mu\in L^{\infty}(\R;\cM_{+}(T^{\ast}\mathbb{T}^{d}))$, meaning that
for almost all $t$, $\mu(t,\cdot)$ is a positive measure on $T^{\ast
}\mathbb{T}^{d}$.

\item[(b)] The unitary character of $S_{h}^{t}$ implies that $\int_{T^{\ast
}\mathbb{T}^{d}}\mu(t,dx,d\xi)$ does not depend on $t$; from the normalization
of $u_{h}$, we have $\int_{T^{\ast}\mathbb{T}^{d}}\mu(\tau,dx,d\xi)\leq1$, the
inequality coming from the fact that $T^{\ast}\mathbb{T}^{d}$ is not compact,
and that there may be an escape of mass to infinity. Such escape does not
occur if and only if $\left(  u_{h}\right)  $ is $h$-oscillating, in which
case $\mu\in L^{\infty}\left(  \mathbb{R};\mathcal{P}\left(  T^{\ast
}\mathbb{T}^{d}\right)  \right)  $.

\item[(c)] If $\tau_{h}\To\infty$ as $h\To0^{+}$ then the measures $\mu(t,\cdot)$
are invariant under $\phi_{s}$, for almost all $t$ and all $s$.

\item[(d)] Let $\bar{\mu}$ be the measure on $\mathbb{R}^{d}$ given by the  image of $\mu
(t,\cdot)$ under the projection map $(x,\xi)\longmapsto\xi$. If ${\mathbf V}_h(t)=\Op_h(V(t, x, \xi))$ is a pseudodifferential operator and if $\tau_h \ll h^{-2}$ then $\bar{\mu}$
does not depend on $t$. Moreover, if $\overline{\mu_{0}}$ stands for the image
under the same projection of any semiclassical measure corresponding to the
sequence of initial data $\left(  u_{h}\right)  $ then $\bar{\mu}
=\overline{\mu_{0}}$.
\end{enumerate}

For the reader's convenience, we next prove statements (c) and (d)  (see also
\cite{MaciaAv} for a proof of these results in the context of the
Schr\"{o}dinger flow $e^{iht\Delta}$ on a general Riemannian manifold). Let us
begin with the invariance through the Hamiltonian flow. We set
\[
a_{s}(x,\xi):=a(x+sdH(\xi),\xi)=a\circ\phi_{s}(x,\xi).
\]
The symbolic calculus for Weyl's quantization implies:
\begin{align*}
{\frac{d}{ds}}S_{h}^{s}\Op_{h}(a_{s})S_{h}^{s *} &  =S_{h}^{s}\Op_{h}
(\partial_{s}a_{s})S_{h}^{s *}-{\frac{i}{h}}S_{h}^{s}\left[  H(hD)+h^2 {\mathbf V}_h(t)\;,\;\Op_{h}
(a_{s})\right]  S_{h}^{s *}
&  =O(h).
\end{align*}
Therefore, for fixed $s$, $S_{h}^{s}\Op_{h}(a_{s})S_{h}^{s *}=\Op_{h}(a)+O(h)$ (note that we have only used here the boundedness of the operator ${\mathbf V}_h(t)$)  and for
$\theta\in L^{1}(\R)$,
\begin{align*}
\int_{\mathbb{R}}\theta(t)\left\langle w_{h}\left(  t\right)  ,a\right\rangle
dt &  =\int_{\mathbb{R}}\theta(t)\la u_{h}\;,\;S_{h}^{\tau_{h}t *}
\Op_{h}(a)S_{h}^{\tau_{h}t}u_{h}\ra dt\\
&  =\int_{\mathbb{R}}\theta(t)\la u_{h}\;,\;S_{h}^{\tau_{h}(t-s/\tau_{h}
) *}\Op_{h}(a\circ\phi_{s})S_{h}^{\tau_{h}(t-s/\tau_{h})}u_{h}\ra dt+O(h)\\
&  =\int_{\mathbb{R}}\theta(t+s/\tau_{h})\la u_{h}\;,\;S^{\tau_{h}t *}
\Op_{h}(a\circ\phi_{s})S^{\tau_{h}t}u_{h}\ra dt+O(h)\\
&  =\int_{\mathbb{R}}\theta(t+s/\tau_{h})\la w_{h}\left(  t\right)
,a\circ\phi_{s}\ra dt+O(h).
\end{align*}
Since $\Vert\theta(\cdot+s/\tau_{h})-\theta\Vert_{L^{1}}\To0$ (recall that we
have assumed that  $\tau_{h}\To\infty$ as $h\To0^{+}$) we obtain
\[
\int_{\mathbb{R}}\theta(t)\left\langle w_{h}\left(  t\right)  ,a\right\rangle
dt-\int_{\mathbb{R}}\theta(t)\la w_{h}\left(  t\right)  ,a\circ\phi_{s}\ra
dt\To0,\text{ as }h\To0^{+},
\]
whence the invariance under $\phi_{s}$.

\medskip

Let us now prove property (d). Consider $\overline{\mu}$ the image of $\mu$ by
the projection $(x,\xi)\longmapsto\xi$, we have for $a\in{\mathcal{C}}
_{0}^{\infty}(\R^{d})$ :
\begin{align*}
\left\langle w_{h}\left(  t\right)  ,a\left(  \xi\right)  \right\rangle
-\left\langle w_{u_{h}}^{h},a\left(  \xi\right)  \right\rangle  &  =\int
_{0}^{t}\frac{d}{ds}\langle w_{h}(s)\;,\;a(\xi)\rangle ds\\
&  =\int_{0}^{t}\la u_{h}\;,\;\frac{d}{ds}\left(  S_{h}^{\tau_{h}s *}
\Op_{h}(a)S_{h}^{\tau_{h}s}\right)  u_{h}\ra ds\\
&  = O\left(\tau_h h \norm{[ {\mathbf V}_h(t), \Op_h(a)]}_{{\mathcal L}(L^2(\T^d))}\right),
\end{align*}
(for $a$ only
depending on $\xi$ we have $\Op_{h}(a)=a(hD_{x})$, which commutes with
$H(hD_{x})$).
If ${\mathbf V}_h(t)=\Op_h(V(t, x, \xi))$ then
$$\norm{[ {\mathbf V}_h(t), \Op_h(a)]}_{{\mathcal L}(L^2(\T^d))}=O(h) \;\;{\rm and} \;\;\tau_h h \left\| \left[ {\mathbf V}_h(t), \Op_h(a)\right ]\right\|_{{\mathcal L}(L^2(\T^d)}=O(\tau_h h^2).$$ Therefore, if $\tau_h\ll h^{-2}$, we find, for every $\theta\in
L^{1}\left(  \mathbb{R}\right)  $:
\[
\int_{\mathbb{R}}\theta\left(  t\right)  \int_{T^{\ast}\mathbb{T}^{d}}a\left(
\xi\right)  \mu\left(  t,dx,d\xi\right)  =\left(  \int_{\mathbb{R}}
\theta\left(  t\right)  dt\right)  \int_{T^{\ast}\mathbb{T}^{d}}a\left(
\xi\right)  \mu_{0}\left(  dx,d\xi\right)  ,
\]
where $\mu_{0}$ is any accumulation point of $\left(  w_{u_{h}}^{h}\right)  $.
As a consequence of this, we find that $\overline{\mu}$ does not depend on $t$
and:
\[
\overline{\mu}\left(  \xi\right)  =\int_{\mathbb{T}^{d}}\mu_{0}\left(
dy,\xi\right)  .
\]


\def\cprime{$'$} \def\cprime{$'$}

\end{document}